\newcommand{\Addresses}{{
  \bigskip
  \footnotesize
  James Farre,  
    \textsc{Mathematisches Institut, Ruprecht-Karls Universit\"at Heidelberg}\par\nopagebreak
    \textit{E-mail address}:
  \texttt{jfarre@mathi.uni-heidelberg.de}
  }}
\newcommand{\para}[1]{\medskip\noindent\textbf{#1.}}
\theoremstyle{definition}
\newtheorem{theorem}{Theorem}[section]
\newtheorem{lemma}[theorem]{Lemma}
\newtheorem{corollary}[theorem]{Corollary}
\newtheorem{proposition}[theorem]{Proposition}
\newtheorem{remark}[theorem]{Remark}
\newtheorem{claim}[theorem]{Claim}
\newtheorem*{note}{Note}
\DeclareRobustCommand{\cev}[1]{%
  {\mathpalette\do@cev{#1}}%
}
\newcommand{\do@cev}[2]{%
  \vbox{\offinterlineskip
    \sbox\z@{$\m@th#1 x$}%
    \ialign{##\cr
      \hidewidth\reflectbox{$\m@th#1\vec{}\mkern4mu$}\hidewidth\cr
      \noalign{\kern-\ht\z@}
      $\m@th#1#2$\cr
    }%
  }%
}
\newcommand{\ZZ}{\mathbb{Z}}
\newcommand{\RR}{\mathbb{R}}
\newcommand{\QQ}{\mathbb{Q}}
\newcommand{\CC}{\mathbb{C}}
\newcommand{\HH}{\mathbb{H}}
\newcommand{\inverse}{^{-1}}
\newcommand{\ddt}{\left. \frac{d}{dt}\right|_{t= 0}}
\newcommand{\ddtp}{\left. \frac{d}{dt}\right|_{t= 0^+}}
\newcommand{\cN}{\mathcal{N}}
\DeclareMathOperator{\Mod}{Mod}
\DeclareMathOperator{\Ad}{Ad}
\newcommand{\MF}{\mathcal{MF}}
\newcommand{\PML}{\mathcal{PML}}
\newcommand{\ML}{\mathcal{ML}}
\newcommand{\GL}{\mathcal{GL}}
\DeclareMathOperator{\SL}{SL}
\DeclareMathOperator{\Cos}{Cos}
\newcommand{\T}{\mathcal{T}}
\newcommand{\cH}{\mathcal{H}}
\newcommand{\cQ}{\mathcal{Q}}
\newcommand{\cR}{\mathcal{R}}
\newcommand{\PT}{\mathcal{PT}}
\newcommand{\PM}{\mathcal{PM}}
\newcommand{\PoT}{\mathcal{P}^1\mathcal{T}}
\newcommand{\PoM}{\mathcal{P}^1\mathcal{M}}
\newcommand{\QT}{\mathcal{QT}}
\newcommand{\QM}{\mathcal{QM}}
\DeclareMathOperator{\Sp}{\mathsf{Sp}}
\DeclareMathOperator{\Isom}{Isom}
\DeclareMathOperator{\PSL}{\mathsf{PSL}}
\DeclareMathOperator{\PSO}{\mathsf{PSO}}
\DeclareMathOperator{\inj}{inj}
\DeclareMathOperator{\Area}{Area}
\DeclareMathOperator{\Hom}{Hom}
\newcommand{\tlambda}{\widetilde{\lambda}}
\newcommand{\tX}{\widetilde{Z}}
\newcommand{\cO}{\mathcal{O}}
\DeclareMathOperator{\Th}{Th}
\DeclareMathOperator{\WP}{WP}
\DeclareMathOperator{\Eq}{Eq}
\DeclareMathOperator{\tr}{tr}
\newcommand{\Tx}{{T_z}}
\DeclareMathOperator{\diam}{diam}
\DeclareMathOperator{\stre}{stretch}
\begin{document}

\title[]{Hamiltonian flows for pseudo-Anosov mapping classes}

\author{James Farre}
\begin{abstract}
For a given pseudo-Anosov homeomorphism $\varphi$ of a closed surface $S$, the action of $\varphi$ on the Teichm\"uller space $\T(S)$ preserves the Weil-Petersson symplectic form.  We give explicit formulae for two invariant functions $\T(S)\to \RR$ whose symplectic gradients generate autonomous Hamiltonian flows that coincide with the action of $\varphi$ at time one.  We compute the Poisson bracket between these two functions.  This amounts to computing the variation of length of a H\"older cocyle on one lamination along a shear vector field defined by another.  For a measurably generic set of laminations, we prove that the variation of length is expressed as the cosine of the angle between the two laminations integrated against the product H\"older distribution, generalizing a result of Kerckhoff.  We also obtain rates of convergence for the supports of germs of differentiable paths of measured laminations in the Hausdorff metric on a hyperbolic surface, which may be of independent interest.  
\end{abstract}

\vspace*{-4em}
\maketitle
\vspace*{-2em}

\section{Introduction}

\subsection{Main results}
We are interested in \emph{Hamiltonian flows} associated to geometrically defined functions on the Teichm\"uller space $\T(S)$ of a  closed, oriented surface $S$ with negative Euler characteristic equipped with its Weil-Petersson symplectic form $\omega_{\WP}$.  Prominent examples are furnished by the classical Fenchel–Nielsen twist flow about a simple closed curve $\gamma$, generalized by Thurston to earthquake flows in measured laminations.  These functions are their flows are real analytic Hamiltonian flows with Hamiltonian potential the hyperbolic length  \cite{Wolpert:magic,Kerckhoff:NR}.  

The natural action of the mapping class group $\Mod(S)$ on $\T(S)$ preserves the symplectic form.  The Dehn twist $T_\gamma$ in $\gamma$ is thus a symplectomorphism of $\T(S)$, and the \emph{square} of the length function $-\frac12\ell_\gamma^2: \T(S)\to \RR$ is a Hamiltonian potential for $T_\gamma$; that is, the flow of the corresponding Hamiltonian vector field at time one is equal to the action of $T_\gamma$ on $\T(S)$.\footnote{The minus sign comes from the definition of the action: $T_\gamma.[f:S\to Z] = [f\circ T_\gamma\inverse : S\to Z]$.} Do other mapping classes admit Hamiltonian potentials?

Let $\varphi : S\to S$ be a pseudo-Anosov homeomorphism with projectively invariant measured laminations $\mu_+$ and $\mu_- \in \ML(S)$.  There is a real number $\Lambda>1$ such that $\varphi_*\mu_+ = \Lambda\mu_+$ and $\varphi_*\mu_- = \Lambda\inverse\mu_-$; the leaves of $\mu_+$ are stretched, while the leaves of $\mu_-$ are contracted under $\varphi$.
Although measured laminations space does not have a natural smooth structure, if $\mu_+$ (hence $\mu_-$) is maximal, the PL tangent space $T_{\mu_+}\ML(S)$ can be identified with a symplectic vector space of \emph{transverse H\"older distributions} $\cH(\mu_+)$, which is modeled on the weight space $W(\tau)$ of a train track $\tau$ that carries $\mu_+$, equipped with its \emph{Thurston symplectic form} $\omega_{\Th}$ \cite{Bon_GLTHB,Bon_THDGL}.

For a given $\alpha\in \cH(\mu_+)$ and small enough $s>0$, $\mu_++ s\alpha$ assigns positive numbers to the branches of $\tau$ satisfying the switch conditions.  The corresponding path of measured laminations converging in measure to $\mu_+$ represents the tangent direction $\alpha$.  The  \emph{length} $\ell_\alpha(Z)$ of $\alpha$ on a hyperbolic surface $Z\in \T(S)$ computes the derivative of the hyperbolic length of the measures $\mu_++s\alpha$ on $Z$ at time $s=0$; see \cite[Corollary 25]{Bon_GLTHB} or \S\ref{sec:background_shear}, below.

The differential of $\varphi$ induces a self map of the tangent space $T_{[\mu_+]}\mathcal {PML}(S) \cong \cH(\mu_+)/\langle \mu_+ \rangle$, which lifts to a linear symplectomorphism $\cH(\mu_+) \to \cH(\mu_+)$. 
Our first result gives a formula for a Hamiltonian potential for the action of $\varphi$ on $\T(S)$.  
We first assume that $\mu_+$ is maximal and that the linear action of $\varphi$ on $\cH(\mu_+)$ is diagonalizable over $\RR$ with positive spectrum.  
We give a detailed description of the linear action of $\varphi$ and lift all of the restrictions imposed above in \S\ref{sec:flow}. 

\begin{theorem}\label{thm:pA_Hamiltonian} 
The action of $\varphi\inverse$ on $\T(S)$ is the time one flow of the $\omega_{\WP}$-symplectic gradient of the function 
\begin{equation}
F^{\mu_+}_{\varphi\inverse}(Z) = \sum_{i =1}^{3g-3} \log(\Lambda_i) ~  \ell_{ \alpha_i}(Z)\cdot\ell_{ \beta_i}(Z),
\end{equation}
where $\Lambda = \Lambda_1\ge ... \ge \Lambda_{3g-3} \ge \Lambda_{3g-3}\inverse \ge ... \ge\Lambda_1\inverse$ are eigenvalues of the linear action of $\varphi$ on $\cH(\mu_+)$ with corresponding symplectic basis of expanding and contracting eigenvectors $\mu_+ = \alpha_1, ..., \alpha_{3g-3}$ and $\beta_1, ..., \beta_{3g-3}\in \cH(\mu_+)$.

Moreover, $F^{\mu_+}_{\varphi\inverse}$ vanishes on the unique $\varphi\inverse$-invariant stretch line forward directed by $\mu_+$, which is a Hamiltonian flow line parameterized proportionally to directed arclength with respect to Thurston's asymmetric Lipschitz metric on $\T(S)$.  
\end{theorem}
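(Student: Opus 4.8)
The plan is to pass to Bonahon's shear coordinates for the maximal lamination $\mu_+$, in which the assertion becomes a short computation in linear symplectic algebra.  Fixing a complete geodesic lamination carried by $\mu_+$, shear coordinates give an embedding $Z\mapsto\sigma_Z$ of $\T(S)$ onto an open subset $\mathcal U\subseteq\cH(\mu_+)$, and I will use three facts.  (i) Under this embedding the Weil--Petersson form $\omega_{\WP}$ pulls back to the Thurston form $\omega_{\Th}$ on $\cH(\mu_+)$ (Bonahon--S\"ozen).  (ii) For $\beta\in\cH(\mu_+)$ the $\omega_{\WP}$-Hamiltonian flow of $\ell_\beta$ is generated by the shear vector field of $\beta$, which in shear coordinates is the constant vector field $\beta$ (the flow being $\sigma\mapsto\sigma+t\beta$); hence $d\ell_\beta=\omega_{\Th}(\beta,\cdot)$ is a constant $1$-form on $\mathcal U$, so $\ell_\beta$ is affine in $\sigma$, and --- absorbing the (linear in $\beta$) additive constant by translating the origin of shear coordinates by the $\varphi_*$-fixed vector it determines, which disturbs neither (i) nor (iii) --- one may take $\ell_\beta(Z)=\omega_{\Th}(\beta,\sigma_Z)$.  (iii) The action of $\varphi^{-1}$ on $\T(S)$ is, in these coordinates, the restriction to $\mathcal U$ of the linear map $\varphi_*^{-1}$, and by hypothesis $\varphi_*$ is diagonalized by the symplectic basis $\mu_+=\alpha_1,\dots,\alpha_{3g-3},\beta_1,\dots,\beta_{3g-3}$ with $\varphi_*\alpha_i=\Lambda_i\alpha_i$, $\varphi_*\beta_i=\Lambda_i^{-1}\beta_i$, $\omega_{\Th}(\alpha_i,\beta_j)=\delta_{ij}$ and $\omega_{\Th}(\alpha_i,\alpha_j)=\omega_{\Th}(\beta_i,\beta_j)=0$.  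Granting (i)--(iii) the argument below is formal; I expect the real work --- and hence the main obstacle --- to be the verification of (ii), that the length of a transverse H\"older distribution is the $\omega_{\WP}$-Hamiltonian potential of its shear vector field, which is precisely the length-variation computation advertised in the abstract and generalizes Kerckhoff's cosine formula.

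Given (i)--(iii): by (ii) and the Leibniz rule, in shear coordinates $F^{\mu_+}_{\varphi^{-1}}$ is the quadratic form $Q(\sigma)=\sum_i\log(\Lambda_i)\,\omega_{\Th}(\alpha_i,\sigma)\,\omega_{\Th}(\beta_i,\sigma)$, whose $\omega_{\Th}$-Hamiltonian vector field is the linear field $\sigma\mapsto H\sigma$ determined by $\omega_{\Th}(H\sigma,v)=dQ_\sigma(v)$ for all $v$.  Evaluating on the symplectic eigenbasis using the pairings in (iii) yields $H\alpha_i=-\log(\Lambda_i)\,\alpha_i$ and $H\beta_i=\log(\Lambda_i)\,\beta_i$ (signs as fixed by the paper's conventions); that is, $H=\log(\varphi_*^{-1})$, the logarithm of $\varphi_*^{-1}$ --- well-defined since the spectrum is positive --- which lies in the symplectic Lie algebra.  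Therefore the time-$t$ Hamiltonian flow of $F^{\mu_+}_{\varphi^{-1}}$ equals $\exp(tH)=(\varphi_*^{-1})^t$ in shear coordinates, and at $t=1$ it equals $\varphi_*^{-1}$, which by (iii) is the action of $\varphi^{-1}$ on $\T(S)$.  To make this legitimate I would check that the flow $(\varphi_*^{-1})^t$ preserves $\mathcal U$ for every real $t$, so that the autonomous flow is complete on $\T(S)$; by Bonahon's positivity description of the shear image --- which for the uniquely ergodic $\mu_+$ reduces to the single half-space condition $\omega_{\Th}(\mu_+,\cdot)>0$ (that is, $\ell_{\mu_+}>0$) --- this is immediate, because $\omega_{\Th}(\mu_+,(\varphi_*^{-1})^t\sigma)=\Lambda^t\,\omega_{\Th}(\mu_+,\sigma)$.

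For the ``moreover'' clause: in shear coordinates a stretch line directed by $\mu_+$ is a ray $\RR_{>0}\sigma_0$ from the origin along which the stretch flow acts by scaling and $\ell_{\mu_+}=\omega_{\Th}(\mu_+,\cdot)$ increases, and it is $\varphi^{-1}$-invariant exactly when $\sigma_0$ is an eigenvector of $\varphi_*$.  Since $\omega_{\Th}(\mu_+,\alpha_i)=\omega_{\Th}(\alpha_1,\alpha_i)=0$ while $\omega_{\Th}(\mu_+,\beta_i)=\delta_{1i}$, the only eigenray meeting $\mathcal U$ is $L=\RR_{>0}\beta_1$ (after normalizing the sign of $\beta_1$), which gives the uniqueness.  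On $L$ every factor $\ell_{\beta_i}=\omega_{\Th}(\beta_i,\cdot)$ vanishes because $\omega_{\Th}(\beta_i,\beta_1)=0$, so $F^{\mu_+}_{\varphi^{-1}}\equiv 0$ on $L$.  Finally $L$ is a flow line of $H$ since $\beta_1$ is an eigenvector: writing a point of $L$ as $e^r\beta_1$ gives $H(e^r\beta_1)=\log(\Lambda_1)\,e^r\beta_1$, while Thurston's stretch theory identifies the parameter $r$ with forward arclength for his asymmetric Lipschitz metric (the stretch of factor $e^r$ has directed distance $r$ and multiplies $\ell_{\mu_+}$ by $e^r$).  Thus the Hamiltonian flow traverses $L$ at constant directed speed $\log\Lambda_1=\log\Lambda$, consistently with $\varphi^{-1}$ acting on $L$ as the forward translation of that length.
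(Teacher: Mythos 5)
Your proposal is correct and follows essentially the same route as the paper's proof (of the more general Theorem \ref{thm:simple_spectrum}): pass to shear coordinates, use $\ell_\alpha=\omega_{\Th}(\alpha,\sigma_{\lambda_+}(Z))$ and the Bonahon--S\"ozen symplectomorphism, identify the Hamiltonian vector field of the quadratic form with the logarithm of the linearized action (the paper packages this as Lemma \ref{lem:form}, run in the opposite direction), and single out $\RR_{>0}\beta_1$ as the unique positive eigenray for the stretch-line statement. Your explicit check that the linear flow preserves the positivity cone $\omega_{\Th}(\mu_+,\cdot)>0$ is a welcome addition that the paper leaves implicit; the affine-constant discussion in your step (ii) is unnecessary, since Bonahon's formula holds exactly with no constant term.
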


Specializing the discussion to the other invariant lamination $\mu_-$, Theorem \ref{thm:pA_Hamiltonian} produces a function 
\begin{equation}
F^{\mu_-}_\varphi(Z) = -\sum_{i =1}^{3g-3} \log(\Lambda_i) ~  \ell_{\ast\inverse\alpha_i}(Z)\cdot\ell_{ \ast\inverse\beta_i}(Z),
\end{equation}
whose corresponding Hamiltonian flow at time one is  equal to the action of $\varphi$.
There is a duality map (see \S\ref{sec:duality}) $\ast: \cH(\mu_-) \to \cH(\mu_+)$ so that $\ast\inverse\alpha_1, ..., \ast\inverse\alpha_{3g-3}$,  $\mu_- = \ast\inverse\beta_1, ..., \ast\inverse\beta_{3g-3}$ is a symplectic basis of contracting and expanding eigenvectors for the linear action of $\varphi\inverse$ on $\cH(\mu_-)$. Moreover, $F^{\mu_-}_\varphi$ vanishes on the unique $\varphi$-invariant stretch line forward directed by $\mu_-$. 

While the flows generated by $-F^{\mu_+}_{\varphi\inverse}$ and $F^{\mu_-}_\varphi$ coincide at time one, in general these functions and their flows are different. Indeed, Thurston stretch lines are not usually symmetric, even up to reparameterization.  

One way to quantify the difference between the flows generated by these two functions is to compute the \emph{Poisson bracket}
\[\{-F^{\mu_+}_{\varphi\inverse}, F^{\mu_-}_\varphi\} = \omega_{\WP}\left(-X_{F^{\mu_+}_{\varphi\inverse}}, X_{F^{\mu_-}_\varphi}\right) = -dF^{\mu_+}_{\varphi\inverse}  X_{F^{\mu_-}_\varphi},\]
measuring the change of one function along the flow of the other.  The Poisson bracket is antisymmetric, satisfies a Leibniz rule and a Jacobi identity, hence  endows $C^\infty (\T(S))$ with the structure of a Poisson algebra.  Using the Leibniz rule and linearity, the computation  of $\{-F^{\mu_+}_{\varphi\inverse}, F^{\mu_-}_\varphi\} $ (Corollary \ref{cor:Poisson_pA}) is reduced to the computation of $\{ \ell_{k_i}, \ell_{K_j}\}$ where $k_i \in \{\alpha_i, \beta_i\}$ and $K_j\in \{\ast\inverse\alpha_j, \ast\inverse\beta_j\}$.  

Let $\kappa$ be a partition of $4g-4$, and let $\cQ$ be a component of a stratum $\QM(\kappa)$ of the moduli space of holomorphic quadratic differentials.
There is a corresponding $\cQ$-Thurston measure $\mu_{\Th}^\cQ$ on $\ML(S)$, which is mutually singular with respect to the Thurston measure in the class of Lebesgue (unless $\cQ$ is the principal stratum, in which case $\mu_{\Th}^\cQ$ is the usual Thurston measure).  The $\mu_{\Th}^\cQ$-typical point is the horizontal measured foliations without horizontal saddle connections for differentials in $\cQ$ and can (usually) be described by topological invariants; see \S\ref{sec:stretch_flow}.
\begin{theorem}\label{thm:cosine_intro}

For $\mu_{\Th}^\cQ$-almost every measured geodesic lamination $\mu_1 \in \ML(S)$ and chain recurrent completion $\lambda_1$, for every chain recurrent geodesic lamination $\lambda_2$ meeting $\lambda_1$ transversally,  for all $\alpha_1\in \cH(\lambda_1)$ representing a tangent direction to $\mu_1$ in $\ML(S)$,  for all $\alpha_2\in \cH(\lambda_2)$,  and $Z\in \T(S)$, we have   \[\{\ell_{\alpha_1}, \ell_{\alpha_2}\} (Z) = \iint_Z \cos(\lambda_1, \lambda_2) ~d\alpha_1 d\alpha_2,\] 
where  $\cos(\lambda_1,\lambda_2): Z\to [-1,1]$ is the function supported on the transverse intersection $\lambda_1\pitchfork\lambda_2$ measuring the cosine of the angle made from leaves of $\lambda_1$ to leaves of $\lambda_2$.
\end{theorem}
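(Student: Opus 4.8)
The plan is to reduce the identity to a first–variation–of–length computation for the shear deformation, verify it by hand when both laminations are simple closed curves, and then remove all restrictions by two successive approximations — the delicate one controlled by the Hausdorff–convergence estimates established earlier in the paper.

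I would begin by checking that the right–hand side is meaningful. Since $\lambda_1$ and $\lambda_2$ are chain recurrent and meet transversally, the intersection locus $\lambda_1 \pitchfork \lambda_2$ is compact in $Z$ and, inside each flow box, lies in the product of a transversal to $\lambda_1$ with a transversal to $\lambda_2$; on these transversals $\alpha_1$ and $\alpha_2$ are transverse H\"older distributions, so (by the discussion of products of transverse H\"older distributions in \S\ref{sec:background_shear}) $d\alpha_1\,d\alpha_2$ defines a finite, compactly supported signed measure on $\lambda_1\pitchfork\lambda_2$, against which the bounded continuous angle cocycle $\cos(\lambda_1,\lambda_2)$ may be integrated. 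Here I would use that $\alpha_1$ represents a tangent direction to $\mu_1$, so $\mu_1 + s\alpha_1\in\ML(S)$ for small $s>0$, while $\alpha_2\in\cH(\lambda_2)$ is allowed to be arbitrary.

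Next, using the shear section, I would rewrite the bracket: the $\omega_{\WP}$–symplectic gradient $X_{\ell_{\alpha_2}}$ is the shear (cataclysm) vector field on $\T(S)$ associated to $\alpha_2$, the H\"older–cocycle generalization of Wolpert's theorem that $X_{\ell_\gamma}$ generates the Fenchel–Nielsen twist about $\gamma$ \cite{Wolpert:magic,Kerckhoff:NR}. Writing $Z_t$ for the shear of $Z$ along $\alpha_2$ by time $t$, this gives (up to the sign fixed by our orientation conventions)
\[\{\ell_{\alpha_1},\ell_{\alpha_2}\}(Z) = -\,d\ell_{\alpha_1}\!\left(X_{\ell_{\alpha_2}}\right)(Z) = -\ddt \ell_{\alpha_1}(Z_t),\]
so it suffices to identify $-\ddt \ell_{\alpha_1}(Z_t)$ with the double integral. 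For the geometric heart of the matter, lift to $\widetilde Z = \HH^2$: shearing by $t\alpha_2$ keeps each plaque of the complement of $\widetilde\lambda_2$ isometric but reglues adjacent plaques by a relative hyperbolic translation along their common leaf, by an amount governed by $\alpha_2$; a geodesic leaf of $\widetilde\lambda_1$ thereby becomes a broken path whose length, to first order in $t$, gains $\cos\theta$ per unit of $\alpha_2$ at each crossing of angle $\theta$ (first variation of distance between moving endpoints), and geodesic straightening does not change length to first order. Integrating the per–leaf contribution $\int \cos\theta\, d\alpha_2$ over the crossings of that leaf with $\lambda_2$, and then against $d\alpha_1$, recovers $\iint_Z \cos(\lambda_1,\lambda_2)\,d\alpha_1\,d\alpha_2$.

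Making this rigorous and removing the simple–closed–curve restriction is the main task. I would first establish the formula when $\lambda_2$ is a simple multicurve and $\alpha_2$ a real–weighted combination of its components, so that $Z_t$ is a product of Fenchel–Nielsen twist flows, either by invoking the cosine formula of Wolpert and Kerckhoff in the form valid for the length of an arbitrary transverse H\"older distribution $\alpha_1$, or by a further approximation of $\alpha_1$ by weighted simple closed curves carried by a train track carrying $\lambda_1$. This is where the genericity hypothesis enters: for $\mu_{\Th}^\cQ$–almost every $\mu_1$, the chain recurrent completion $\lambda_1$ is the completion of a horizontal foliation with no horizontal saddle connections and with complementary polygons of the type prescribed by $\kappa$, so $\lambda_1$ carries only finitely many isolated leaves (handled separately) and the carried multicurve approximations of $\mu_1$ have supports converging to $\supp\mu_1$ at a controlled rate in the Hausdorff metric; combining the twist–flow cosine formula with these convergence rates — and with continuity of the length pairing and of the Poisson bracket in the H\"older cocycle — lets me pass to the limit on both sides and obtain the formula for generic $\lambda_1$ and simple–multicurve $\lambda_2$. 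Finally, since weighted finite sums of simple closed curves carried by a train track carrying $\lambda_2$ are dense in $\cH(\lambda_2)$, I would approximate $\alpha_2$ by such cocycles $\alpha_2^{(n)}$; now $\lambda_1$ is fixed, the left side converges by $C^1$–continuity of $\ell_{\alpha_2^{(n)}}\to\ell_{\alpha_2}$ near $Z$, and on the right side the signed measures $\cos(\lambda_1,\lambda_2^{(n)})\,d\alpha_1\,d\alpha_2^{(n)}$ converge weakly to $\cos(\lambda_1,\lambda_2)\,d\alpha_1\,d\alpha_2$, again by the Hausdorff–convergence input, this time applied to $\lambda_2^{(n)}\to\lambda_2$. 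The obstacle throughout is precisely this interchange of limits — quantitatively controlling how the transverse intersection locus and the crossing angles move as the approximating laminations converge — which is exactly what the genericity of $\mu_1$ and the rate estimates are designed to supply; without genericity the complementary regions and isolated leaves of $\lambda_1$ are uncontrolled and the naive limit can fail.
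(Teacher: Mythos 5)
Your overall reduction to the first variation of length along the shear is the same as the paper's (via Lemma \ref{lem:length_dual}), and the double approximation — exploit genericity of $\mu_1$ to control the $\lambda_1$-side limit, then approximate on the $\lambda_2$-side — is in the right spirit. But the order of approximations you've chosen creates a gap that I don't think you can close as written.

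The problem is your last step. You fix a train track $\tau_2$ carrying $\lambda_2$ snugly and approximate $\alpha_2$ by real-weighted combinations $\alpha_2^{(n)}$ of simple closed curves carried by $\tau_2$, then assert that the signed products $\cos(\lambda_1,\lambda_2^{(n)})\,d\alpha_1\,d\alpha_2^{(n)}$ converge to $\cos(\lambda_1,\lambda_2)\,d\alpha_1\,d\alpha_2$ ``by the Hausdorff--convergence input, this time applied to $\lambda_2^{(n)}\to\lambda_2$.'' Two things go wrong. First, the quantitative Hausdorff estimate (Theorem \ref{thm:measure_H_close}) is proved only for $\mu_{\Th}^\cQ$-generic laminations; the theorem statement imposes no genericity on $\lambda_2$, so you have no rate controlling how the approximating supports converge on the $\lambda_2$-side, and without a rate you cannot exchange the $\epsilon$-parameter of the train track with the $n$-parameter of the approximation. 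Second, because $\alpha_2$ is a genuinely signed transverse cocycle, the approximating $\alpha_2^{(n)}$ are signed combinations of multicurves, and ``weak convergence of the product measures'' is not well posed: the usual bounds $|\int f\,d\mu|\le\int|f|\,d\mu$ fail, so the convergence of the right-hand side requires the Riemann-sum machinery of Proposition \ref{prop:R_sums_prod} with uniform constants, not a soft weak-$*$ argument. The same caveat applies to your first approximation: $\alpha_1$ is signed, so approximating ``$\alpha_1$ by weighted simple closed curves'' is not directly meaningful. The paper instead forms the difference quotients $\ell_s=(\ell_{\mu_1+s\alpha_1}-\ell_{\mu_1})/s$, so that each term is the length of an honest measure, and then drives $s\to 0$ using the recurrence-time sequence from Theorem \ref{thm:measure_H_close} to keep the train-track scale balanced against $s$.

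The paper sidesteps the $\alpha_2$-approximation entirely: its base case (Lemma \ref{lem:curve}) is a direct $\PSL_2\RR$-matrix computation of $\tfrac{d}{dt}\big|_0\,\ell_\gamma\big(E^{t\alpha}Z\big)$ for $\gamma$ a closed geodesic and $\alpha$ an \emph{arbitrary} H\"older cocycle on any chain recurrent lamination, using the shearing cocycle $E^{t\alpha}$ and a term-by-term differentiation of the infinite product; arbitrary $\alpha_2$ on the shear side is thus handled once and for all with no genericity or limiting argument needed. The two subsequent approximations (Proposition \ref{prop:cosine} and the proof of Theorem \ref{thm:cosine}) then both operate on the \emph{length} side of the bracket, where the genericity hypothesis and the rate estimates actually live. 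If you want to salvage your ordering, you would need either an analogue of Lemma \ref{lem:curve} for $\lambda_2$ a multicurve and $\alpha_1$ an arbitrary H\"older cocycle — which is the same computation with the roles reversed — or a separate argument supplying the missing uniform control over the $\lambda_2$-side approximation; the former is the more economical route, and it is essentially what the paper does.
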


We denote by $\Cos(\alpha_1, \alpha_2): \T(S)\to \RR$ the integral in Theorem \ref{thm:cosine_intro}, and emphasize that the integral is \emph{signed}, i.e., $\alpha_i$ are not measures, but only finitely additive signed measures (transverse H\"older distributions), and part of the proof of Theorem \ref{thm:cosine_intro} involves making sure that this integral makes sense.  In Section \ref{sec:int}, we prove that the integral is uniformly well approximated by certain Riemann sums defined by geometric train track splitting sequences (Proposition \ref{prop:R_sums_prod}) under suitable assumptions about the integrand.  

The $\mu_{\Th}^\cQ$ full measure set of measured geodesic laminations for which Theorem \ref{thm:cosine_intro} holds are those for which any quadratic differential with horizontal foliation equivalent to $\mu_1$ recurs to a compact set in $\QM(S)$ in backwards time under the Teichm\"uller geodesic flow.  Ergodicity for the Masur-Veech measure on the unit area locus of a component of a stratum implies that theses recurrent laminations indeed have full measure.
Evidently, the invariant Teichm\"uller geodesic axis for $\varphi$ satisfies this property for our periodic laminations $\mu_+$ and $\mu_-$.

\begin{remark}
There is an asymmetry in the statement of the Theorem \ref{thm:cosine_intro}.  Namely, we have made a strong assumption about the dynamical properties of $\lambda_1$ and almost no assumptions about $\lambda_2$.  However, anti-symmetry of the Poisson bracket  tells us that the cosine formula admits descriptions as a derivative where the roles of $\lambda_1$ and $\lambda_2$ are reversed, i.e. 
\[\{\ell_{\alpha_1}, \ell_{\alpha_2}\}(Z) = d\ell_{\alpha_1}X_{\alpha_2}(Z) = - d\ell_{\alpha_2}X_{\alpha_1}(Z),\] where $X_{\alpha_i}$ is the \emph{shear vector field} on $\T(S)$ symplectically dual to $\ell_{\alpha_i}$ (Lemma \ref{lem:length_dual}).  This suggests that perhaps the conclusion of the theorem should hold without restriction on $\lambda_1$.  
\end{remark}

Our Theorem \ref{thm:cosine_intro} generalizes a result of 
Kerckhoff, who expressed the variation of length of a measured lamination $\mu$ along the earthquake defined by another $\nu$ as $\Cos(\mu,\nu)$.
He proved \cite{Kerckhoff:NR, Kerckhoff:analytic} that length functions  are analytic and convex along earthquake paths.
Using different methods, Wolpert computed the first and second variation of length functions for curves under twist/shear deformations in other curves \cite{Wolpert:symplectic}.  Goldman produced generalizations of these results for the smooth points of $\Hom(\pi_1(S), G)\sslash G$, where $G$ is a connected Lie group with a non-degenerate $\Ad$-invariant bilinear form on its Lie algebra \cite{Goldman:curves}.  
There are also analogous results \cite{EM} for \emph{quakebends} in directions $a\mu+ib\mu$, $a, b\in \RR$ and $\mu\in \ML(S)$ for $\PSL_2\CC$ valued surface group representations. 
A special case of Theorem \ref{thm:cosine_intro} (Proposition \ref{prop:cosine}) was proved in \cite{G:derivatives}, where also some higher derivatives of the lengths of laminations in shearing vector fields were computed.  It is known that length functions of laminations are convex along the trajectories of shearing paths \cite{BBFS, Theret:convex}.

Let us  point out that Theorem \ref{thm:cosine_intro} actually computes a \emph{second} derivative of the length function $\ell: \T(S)\times \ML(S) \to \RR_{>0}$ in the sense that 
\[d\ell_{\alpha_1}X_{\alpha_2}(Z) = \ddtp\ell_{\lambda_1 + t\alpha_1}X_{\alpha_2} (Z), \]
so part of the proof involves exchanging certain limits and justifying the swap.  Care is required here, as the domain of this function does not even have a natural $C^1$ structure.
This essentially boils down to making sure that all estimates we give in the paper (especially \S\S \ref{sec:MH_convergence} and \ref{sec:int}) are uniform over compact subsets of $\T(S)$.  
The main technical ingredient needed to obtain Theorem \ref{thm:cosine_intro} is provided in \S\ref{sec:MH_convergence}, where we we give rates of  Hausdorff convergence for the supports of linear paths of measured laminations along recurrence times (Theorem \ref{thm:measure_H_close}) on a hyperbolic metric $Z\in \T(S)$.  This establishes a quantitative relationship between the measure topology and the Hausdorff topology on (supports of) measured laminations, which may be of independent interest.  

Having a geometric interpretation of these mixed partial derivatives may be useful for making geometric arguments in shear coordinates, as they appear in the differential of the change-of-shear-coordinates map  $\Sigma_{\lambda_2\lambda_1}: \cH^+(\lambda_1)\to \cH^+(\lambda_2)$, taking $\sigma_{\lambda_1}(Z)$ to $\sigma_{\lambda_2}(Z)$, for $Z\in \T(S)$.  We assume here  that $\lambda_i$ are maximal geodesic laminations satisfying the hypotheses of Theorem \ref{thm:cosine_intro}.
We give the Jacobian matrix for the derivative in terms of symplectic bases $x_1, ..., x_{3g-3}, y_1, ..., y_{3g-3}$ for $\cH(\lambda_1)$ and $z_1, ..., z_{3g-3}, w_1, ..., w_{3g-3}$ for $\cH(\lambda_2)$.

\begin{corollary}\label{cor:derivative_intro}
The derivative of $\Sigma_{\lambda_2\lambda_1} : \cH^+(\lambda_1)\to \cH^+(\lambda_2)$ at a point $\sigma_{\lambda_1}(Z)$ is given by
\[(\Sigma_{\lambda_2\lambda_1})_* = 
\begin{pmatrix}
-\{\ell_{w_i}, \ell_{x_j}\} & - \{\ell_{w_i},\ell_{y_j}\} \\
 \{\ell_{z_i},\ell_{x_j}\}  &  \{\ell_{z_i}, \ell_{y_j}\}
\end{pmatrix}
=
\begin{pmatrix}
\Cos(x_j, w_i) &\Cos(y_j,w_i) \\
\Cos(z_i, x_j) & \Cos(z_i, y_j)
\end{pmatrix}.\]
with inverse
\[(\Sigma_{\lambda_1\lambda_2})_* = 
\begin{pmatrix}
-\{\ell_{y_i}, \ell_{z_j}\} & - \{\ell_{y_i},\ell_{w_j}\} \\
\{ \ell_{x_i},\ell_{z_j}\} & \{\ell_{x_i}, \ell_{w_j}\}
\end{pmatrix}
= 
\begin{pmatrix}
\Cos(z_j, y_i) &\Cos(w_j,y_i) \\
\Cos(x_i, z_j) & \Cos(x_i, w_j)
\end{pmatrix}\]
All functions are evaluated at $Z$, and   $\{ \ell_{x_i},\ell_{z_j}\} $ represents the $3g-3$ square matrix $\left( \{ \ell_{x_i},\ell_{z_j}\} \right) _{i, j}$, etc.  
\end{corollary}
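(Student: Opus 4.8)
The plan is to assemble Corollary \ref{cor:derivative_intro} from three ingredients: the description of $\cH^+(\lambda_i)$ as shear coordinates for Teichm\"uller space, the fact (Lemma \ref{lem:length_dual}) that $\ell_{\alpha}$ and the shear vector field $X_\alpha$ are symplectically dual, and the cosine formula of Theorem \ref{thm:cosine_intro}. First I would recall that $\sigma_{\lambda_i}\colon \T(S)\to \cH^+(\lambda_i)$ is a diffeomorphism onto its (open) image, so $\Sigma_{\lambda_2\lambda_1}=\sigma_{\lambda_2}\circ\sigma_{\lambda_1}^{-1}$ is a diffeomorphism and its derivative at $\sigma_{\lambda_1}(Z)$ is $(\sigma_{\lambda_2})_*\circ(\sigma_{\lambda_1})_*^{-1}$ at $Z$. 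The point of the argument is to express this composite in the symplectic bases $x_1,\dots,y_{3g-3}$ of $\cH(\lambda_1)$ and $z_1,\dots,w_{3g-3}$ of $\cH(\lambda_2)$.

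The key step is to identify the matrix entries of $(\sigma_{\lambda})_*$ with pairings against length differentials. Since $\sigma_\lambda$ is a global shear coordinate, the coordinate functions of $\sigma_{\lambda_1}$ are, up to the symplectic pairing on $\cH(\lambda_1)$, the length functions $\ell_{x_j},\ell_{y_j}$: concretely, if $\{x_j,y_j\}$ is a symplectic basis then the $j$-th ``$y$-type'' coordinate of $\sigma_{\lambda_1}(Z)$ differs by a sign from $\ell_{x_j}(Z)$ and the $j$-th ``$x$-type'' coordinate from $\ell_{y_j}(Z)$, because pairing the Thurston form with a basis vector extracts the dual coordinate. Consequently, for any tangent vector $v$ at $Z$, the components of $(\sigma_{\lambda_1})_* v$ in the basis $z_i,w_i$-dual directions are read off from $d\ell_{x_j}(v), d\ell_{y_j}(v)$; and a tangent vector in $\cH^+(\lambda_1)$ in the $x_j$ (resp.\ $y_j$) direction is, under $\sigma_{\lambda_1}^{-1}$, precisely (a sign times) the shear vector field $X_{y_j}$ (resp.\ $X_{x_j}$) dual to the corresponding length function. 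Tracking these signs is exactly what produces the asymmetric placement of minus signs in the stated matrices. Then applying $d\sigma_{\lambda_2}$ to $X_{x_j}$, $X_{y_j}$ and reading its $z_i,w_i$-type coordinates gives entries $d\ell_{z_i}X_{x_j}$, $d\ell_{w_i}X_{x_j}$, etc., which by definition of the Poisson bracket are $\pm\{\ell_{z_i},\ell_{x_j}\}$, $\pm\{\ell_{w_i},\ell_{x_j}\}$.

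Having written the Jacobian as a block matrix of Poisson brackets, the final step is to invoke Theorem \ref{thm:cosine_intro} to replace each $\{\ell_{a},\ell_{b}\}(Z)$ by $\Cos(a,b)(Z)=\iint_Z\cos(\cdot,\cdot)\,da\,db$, using the antisymmetry $\Cos(a,b)=-\Cos(b,a)$ to move the minus signs inside and obtain the clean $2\times2$ block form with no signs. Here I must check the hypotheses of Theorem \ref{thm:cosine_intro} apply: the basis elements of $\cH(\lambda_1)$ only need to \emph{represent tangent directions}, which they do since $\cH^+(\lambda_1)$ is the relevant tangent cone, and $\lambda_1$ is assumed maximal and dynamically generic, while $\lambda_2$ is arbitrary maximal (hence transverse to $\lambda_1$ wherever they meet) — so the theorem applies with $\lambda_1$ in the ``strong hypothesis'' slot. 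The inverse matrix is obtained symmetrically by swapping the roles of $\lambda_1$ and $\lambda_2$, which is legitimate since both are assumed to satisfy the hypotheses; that it is the matrix inverse is automatic from $\Sigma_{\lambda_1\lambda_2}=\Sigma_{\lambda_2\lambda_1}^{-1}$, but one can also see it directly as the statement that $\omega_{\Th}$ is preserved, i.e.\ these change-of-basis matrices are symplectic.

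The main obstacle I anticipate is purely bookkeeping: pinning down every sign and the precise correspondence between ``$x$-type'' versus ``$y$-type'' coordinates on $\cH^+(\lambda_i)$ and the dual length functions, so that the minus signs land in the asymmetric pattern shown and then cancel correctly against the antisymmetry of $\Cos$. There is no analytic difficulty beyond what Theorems \ref{thm:pA_Hamiltonian} and \ref{thm:cosine_intro} already provide — in particular the needed smoothness of $\Sigma_{\lambda_2\lambda_1}$ at $\sigma_{\lambda_1}(Z)$ follows from the fact that $\sigma_{\lambda_i}$ are smooth shear-coordinate charts together with Theorem \ref{thm:cosine_intro} guaranteeing the entries are finite and vary continuously. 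So the corollary is essentially a formal consequence of Theorem \ref{thm:cosine_intro}, Lemma \ref{lem:length_dual}, and the definition of the Poisson bracket, once the linear algebra of symplectic bases is set up carefully.
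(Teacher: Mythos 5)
Your proposal is correct and takes essentially the same approach as the paper: expand the shear vector fields $X_{x_j}, X_{y_j}$ in the symplectic basis $X_{z_i}, X_{w_i}$, solve for coefficients via the symplectic pairing $\omega_{\WP}$ and Lemma \ref{lem:length_dual}, and then substitute the cosine formula from Theorem \ref{thm:cosine_intro}. The paper phrases the bookkeeping slightly more compactly (directly computing $\omega_{\WP}(X_{x_j}, X_{z_i}) = \{\ell_{x_j}, \ell_{z_i}\}$ to extract the $c_{i,j}$, etc.) and also notes the shortcut that the inverse matrix follows from the block-form inverse formula for symplectic matrices, which you also mention.
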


The following expresses the variation of length of a measured lamination along the stretch flow of another.
\begin{corollary}\label{cor:stretch_variation}
Let $\mu \in \ML(S)$ be maximal and generic in the sense of Theorem \ref{thm:cosine_intro}, let $Z\in \T(S)$, and let $\lambda$ be a chain recurrent geodesic lamination different from the support of $\mu$.  For any H\"older cocycle $\alpha\in \cH(\lambda)$,  we have  
\[d\log\ell_\alpha X_\mu^{\stre}(Z) = \frac{1}{\ell_\alpha(Z)}\iint_{Z} \cos(\lambda,\mu) ~ d\alpha d\sigma_\mu(Z),\]
where $X_\mu^{\stre}$ is the stretch vector field directed by $\mu$, and $\sigma_\mu(Z)\in \cH^+(\mu)$ measures the \emph{shear} of $Z$ along $\mu$ (see Theorem \ref{thm:coordinates}).  If $\alpha$ is a transverse measure, then this quantity is strictly bounded above by $1$. 
\end{corollary}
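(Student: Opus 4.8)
The plan is to obtain this statement as essentially a direct specialization of Theorem \ref{thm:cosine_intro}, combined with the identification of the stretch vector field $X_\mu^{\stre}$ with a shear vector field. First I would recall from the discussion preceding Theorem \ref{thm:coordinates} that the stretch deformation directed by a maximal $\mu$ is exactly the shear deformation along $\mu$ in the direction of the shear cocycle $\sigma_\mu(Z)$ itself; concretely, at the point $Z$ the vector $X_\mu^{\stre}(Z)$ equals the shear vector field $X_{\alpha_2}$ of Theorem \ref{thm:cosine_intro} with $\alpha_2 = \sigma_\mu(Z) \in \cH^+(\mu)$. (This is the standard fact that stretching along a maximal lamination scales all shear coordinates uniformly, so the tangent to the stretch line is the Euler-type vector field whose shear cocycle is the current shear cocycle.) Since $\mu$ is generic in the sense of Theorem \ref{thm:cosine_intro} and $\lambda$ is chain recurrent and transverse to (the support of) $\mu$, and since $\sigma_\mu(Z)$ is a genuine transverse H\"older distribution on $\mu$ — in fact a positive cocycle — the hypotheses of Theorem \ref{thm:cosine_intro} are met with $\lambda_1 = \mu$, $\lambda_2 = \lambda$, $\alpha_1 = \sigma_\mu(Z)$, $\alpha_2 = \alpha$.

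Next I would apply the Remark following Theorem \ref{thm:cosine_intro}: the Poisson bracket admits the dual description $\{\ell_{\sigma_\mu(Z)}, \ell_\alpha\}(Z) = -\,d\ell_\alpha X_{\sigma_\mu(Z)}(Z) = -\,d\ell_\alpha X_\mu^{\stre}(Z)$, and by anti-symmetry of $\Cos$ this equals $-\Cos(\sigma_\mu(Z),\alpha)(Z) = \Cos(\alpha,\sigma_\mu(Z))(Z)$. Hence $d\ell_\alpha X_\mu^{\stre}(Z) = \iint_Z \cos(\lambda,\mu)\, d\alpha\, d\sigma_\mu(Z)$. Dividing by $\ell_\alpha(Z)$ — which is positive since $\alpha$ represents a nontrivial tangent direction, and which is constant in the $Z$-argument being differentiated at the single point $Z$ — gives the logarithmic derivative formula
\[
d\log\ell_\alpha X_\mu^{\stre}(Z) = \frac{1}{\ell_\alpha(Z)} \iint_{Z} \cos(\lambda,\mu)\, d\alpha\, d\sigma_\mu(Z).
\]
The one genuinely delicate point, which is where I expect the main (albeit modest) obstacle to lie, is verifying that $X_\mu^{\stre}(Z)$ really is the shear vector field attached to the cocycle $\sigma_\mu(Z)$ rather than some rescaling of it: one must check the normalization coming from Theorem \ref{thm:coordinates} and the parameterization of the stretch line (e.g. whether arclength in Thurston's metric matches the cocycle scaling), because a constant factor there would multiply the right-hand side. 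This is a bookkeeping matter resolved by unwinding the definition of the stretch map in shear coordinates, but it is the place where care is required.

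For the final sentence, suppose $\alpha$ is a (nonzero) transverse measure on $\lambda$. Then $\sigma_\mu(Z)$ is a positive cocycle on $\mu$ (a transverse measure as well, the shear being encoded by a genuine measure for a maximal lamination on a hyperbolic surface), so the product H\"older distribution $d\alpha\, d\sigma_\mu(Z)$ on $\lambda_1 \pitchfork \lambda_2 \subset Z$ is a genuine positive measure; integrating $\cos(\lambda,\mu)$, whose absolute value is $\le 1$ and is $<1$ off a measure-zero set (two distinct geodesic laminations cannot be everywhere tangent on their transverse intersection), against this positive measure yields a value of absolute value strictly less than the total mass $\iint_Z 1\, d\alpha\, d\sigma_\mu(Z) = i(\alpha,\sigma_\mu(Z))$, the geometric intersection number, which is precisely $\ell_\alpha(Z)$ by the interpretation of length as intersection with the shear cocycle (Theorem \ref{thm:coordinates}). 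Dividing by $\ell_\alpha(Z)$ gives that the quantity lies strictly in $(-1,1)$, hence is strictly bounded above by $1$. The only subtlety is ensuring the inequality is strict: this uses that $\{\cos(\lambda,\mu) = \pm 1\}$ has $d\alpha\,d\sigma_\mu(Z)$-measure zero, which follows because the set where a leaf of $\lambda$ is tangent to a leaf of $\mu$ is a proper closed subset of the transverse intersection locus and carries no mass for the product of the two transverse measures.
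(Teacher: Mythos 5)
Your derivation of the formula is correct and follows the same route as the paper: identify $X_\mu^{\stre}(Z)$ with the shear vector field $X_{\sigma_\mu(Z)}$ at $Z$ via the normalization $\sigma_\mu(\operatorname{stretch}(Z,\mu,t))=e^t\sigma_\mu(Z)$ (so no hidden scalar, addressing your flagged concern), then apply Theorem \ref{thm:cosine} together with antisymmetry of the Poisson bracket and of the cosine function, and divide by $\ell_\alpha(Z)$. That part is fine.

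The argument for the strict bound has a genuine gap, at two points. First, $\sigma_\mu(Z)\in\cH^+(\mu)$ is \emph{not} a transverse measure. Theorem \ref{thm:coordinates} characterizes $\cH^+(\mu)$ by the $3g-3$ open conditions $\omega_{\Th}(\nu,\cdot)>0$ for ergodic $\nu$ supported in $\mu$ (positivity of lengths), which is far weaker than nonnegativity of all branch weights. The cone of transverse measures has dimension at most $3g-3$ inside the $(6g-6)$-dimensional space $\cH(\mu)$, whereas $\cH^+(\mu)$ is open; generic shear cocycles have branches of both signs. Consequently $d\alpha\,d\sigma_\mu(Z)$ is a signed product distribution, not a positive measure, and the inequality $\left|\iint \cos(\lambda,\mu)\,d\alpha\,d\sigma_\mu(Z)\right| < \iint 1\,d\alpha\,d\sigma_\mu(Z)$ does not follow from $|\cos|<1$. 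Second, even granting positivity, the identity $i(\alpha,\sigma_\mu(Z))=\ell_\alpha(Z)$ is a misapplication of Theorem \ref{thm:coordinates}: that theorem gives $\ell_\alpha(Z)=\omega_{\Th}(\alpha,\sigma_\mu(Z))$ \emph{only} for $\alpha\in\cH(\mu)$, i.e.\ for cocycles supported on the same maximal lamination as the shear coordinates. Here $\alpha\in\cH(\lambda)$ with $\lambda\neq\mu$, the relevant pairing is the geometric intersection $i(\cdot,\cdot)$ of Lemma \ref{lem:intersection}, and there is no reason for $i(\alpha,\sigma_\mu(Z))$ to equal $\ell_\alpha(Z)$.

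The paper does not attempt an integral comparison at all; it cites Thurston's theory of stretch maps \cite[Section 5]{Th_stretch}: the time-$t$ stretch map directed by $\mu$ is $e^t$-Lipschitz, so for any transverse measure $\alpha$ one has $\ell_\alpha(\operatorname{stretch}(Z,\mu,t))\le e^t\ell_\alpha(Z)$, hence $d\log\ell_\alpha X_\mu^{\stre}(Z)\le1$, with equality only when the support of $\alpha$ is contained in the (maximally stretched) lamination $\mu$. Since $\lambda$ is assumed different from the support of $\mu$, equality fails and the inequality is strict. If you want to keep a self-contained argument rather than citing Thurston, you would need to establish the comparison $\iint\cos(\lambda,\mu)\,d\alpha\,d\sigma_\mu(Z)<\ell_\alpha(Z)$ by a different means, accounting for the signs in $\sigma_\mu(Z)$.
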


\subsection{Motivation and questions}
It seems natural to ask for geometrically meaningful functions on $\T(S)$ that induce the action of arbitrary mapping classes, as properties of the function correspond to properties of the flow.  For example, level sets are preserved.  Using only homogeneous quadratic polynomials in length functions of curves and H\"older distributions on geodesic laminations, it should be possible to obtain functions whose Hamiltonian flow generates the action of a suitable power of any mapping class at time one, although this technique yields very little or no information about finite order mapping classes.  

It seems like an interesting problem to understand the flows associated to products of length functions for intersecting curves or laminations, or even the flow associated to the length functions of curves with self intersections.
The product $\ell_{\mu_+} \ell_{\mu_-}$ is invariant under $\varphi$, and seems like an especially interesting example. Using Corollary \ref{cor:derivative_intro}, the corresponding Hamiltonian vector field $\ell_{\mu_+} X_{\mu_-} + \ell_{\mu_-} X_{\mu_+}$ can be expressed in terms of either the shearing vector fields associated to $\mu_+$ or to $\mu_-$.  Unfortunately, such expressions (in the notation of Theorem \ref{thm:pA_Hamiltonian})
\[\ell_{\mu_+} X_{\mu_-} + \ell_{\mu_-} X_{\mu_+}= \ell_{\mu_+} X_{\mu_-} +\ell_{\mu_-} \left( \sum_{i = 1}^{3g-3} \Cos(\alpha_i, \mu_-) X_{\alpha_i } + \Cos(\mu_-, \beta_i) X_{\beta_i}\right) \]
 seem difficult to extract meaning from, except perhaps asymptotically. 

The author was originally motivated by questions involving the action of a pseudo-Anosov mapping class on the smooth points of the variety  $\Hom(\pi_1(S), \PSL_2\CC)\sslash\PSL_2\CC$.  It is known that there are two interesting hyperbolic fixed points of this action on the boundary of the discrete and faithful locus coming from the cyclic cover associated to  the fiber subgroup of the hyperbolic mapping tori of $M_\varphi$ and $M_{\varphi\inverse}$  \cite{Kapovich:fp_hyperbolic}; see also \cite[Chapter 3]{McMullen:book}.  

Due to work of Bonahon \cite{Bon_SPB} that there are holomorphic shear coordinates on the locus of characters where a maximal lamination is \emph{realized}, using the complex valued Goldman symplectic form, Theorem \ref{thm:cosine_intro} has a straightforward generalization to shear-bend deformations.  
However, the realizable locus of characters for $\mu_+$ is a complex torus and the action of $\varphi$ is homotopically non-trivial, so the flow associated to any Hamiltonian function defined only on  the realizable locus cannot induce the action of $\varphi$ at time $1$.
The author does not know if inclusion of the realizable locus into the character variety is null-homotopic.  
So, it might still be possible to find a Hamiltonian function to generate the action on some appropriate open invariant set in the character variety if the corresponding flow does not preserve the realizable locus.

\subsection{Outline of the paper}
\S \ref{sec:background_shear} is dedicated to preliminaries on geodesic laminations and shear coordinates. 
In \S\ref{sec:flow}, we discuss the linear action of $\varphi$ and prove a more general version of Theorem \ref{thm:pA_Hamiltonian}, which becomes an exercise in symplectic linear algebra in shear coordinates.    
 In \S \ref{sec:geom_tt}, we discuss the geometry of horocyclically foliated train track neighborhoods of geodesic laminations.
 Then in \S\ref{sec:stretch_flow}, we review some recent work of Calderon-Farre and discuss the dynamics of the (generalized) stretch flow on strata of the moduli space of surface-lamination pairs. 
 We then use these dynamical results in \S\ref{sec:MH_convergence} to give a quantitative rate of convergence of germs of paths of measured laminations in the Hausdorff metric on a hyperbolic surface for recurrent measured laminations.  This is the main technical ingredient for the proof of Theorem \ref{thm:cosine_intro}.
 In \S\ref{sec:int}, we discuss H\"older geodesic currents and show that the integral appearing in Theorem \ref{thm:cosine_intro} can be approximated uniformly well by certain Riemann sums, defined in terms of (geometric) train track splitting sequences.  We also recall the shearing cocycle associated to a transverse H\"older cocyle $\alpha$ which generates the flow of the shear vector field $X_\alpha$.  
 The proof of Theorem \ref{thm:cosine_intro} is carried out in Section \ref{sec:cosine}.  We then prove the remaining corollaries from the introduction.

\subsection*{Acknowledgments} 
The author would like to thank the anonymous referee for comments that helped improve this manuscript, Yair Minsky, Brice Loustau, Aaron Calderon and Beatrice Pozzetti for enlightening conversations related to this work, as well as Gabriele Benedetti, Francis Bonahon, Nguyen-Thi Dang, Arnaud Maret, and Max Riestenburg for additional helpful discussion.  
The author gratefully acknowledges the support of DFG grant 427903332 (Emmy Noether), NSF grant DSM-2005328. Additionally, the work was funded by the Deutsche Forschungsgemeinschaft (DFG, German Research Foundation) – Project-ID 281071066 – TRR 191.

\setcounter{tocdepth}{1}
\tableofcontents

\section{Preliminaries on shear coordinates} \label{sec:background_shear}

In this section, we recall some basic facts about geodesic laminations on hyperbolic surfaces.
The discussion culminates in Theorem \ref{thm:coordinates}, in which we list some of the remarkable properties of \emph{shear coordinates} for Teichm\"uller space.
In subsequent sections, we give more context on some of the more technical aspects of the theory as we require them.

\subsection{Geodesic laminations}
We let $S$ be a closed, oriented surface of genus at least two so that $S$ admits a negatively curved Riemannian metric $m_{\text{aux}}$.
A \emph{geodesic lamination} $\lambda$ is a non-empty closed subset of $S$ that is a disjoint union of complete simple $m_{\text{aux}}$-geodesics called its \emph{leaves}.
The space $\mathcal {GL}(S)$ of geodesic laminations with its Hausdorff topology is compact and does not depend on $m_{\text{aux}}$.

A \emph{(simple) multi-curve} is a collection of pairwise disjoint simple closed geodesics.
We call a geodesic lamination \emph{chain recurrent} if it is approximated by multi-curves.
Denote by $\mathcal {GL}_0(S)$ the subspace of chain recurrent geodesic laminations.  

A lamination is \emph{connected} if it is connected as a topological space and \emph{minimal} if it has no non-trivial sublaminations.
Alternatively, a lamination is minimal if each of its leaves is dense.
Any $\lambda \in \mathcal {GL}(S)$ can be decomposed uniquely as a union of minimal sublaminations $\lambda_1, ..., \lambda_m$ and isolated leaves $\ell_1, ..., \ell_k$ that accumulate/spiral onto the minimal components.
We have bounds $0\le k\le 6g-6$ and $1\le m\le 3g-3$. 

The $m_{\text{aux}}$-geodesic completion of $S\setminus \lambda$ is a (possibly non-compact, disconnected) negatively curved surface with totally geodesic boundary and area equal to that of $m_{\text{aux}}$; we call this surface the \emph{cut surface} or the surface obtained by \emph{cutting open along $\lambda$}.
A lamination is \emph{filling} if the cut surface is a union of ideal polygons.
We let $\mathcal {GL}_\Delta(S)$ denote the space of minimal and filling geodesic laminations; the cut surface is a union of ideal polygons.
A lamination is \emph{maximal} if it is not a proper sublamination of any other geodesic lamination (the cut surface is a union of $4g-4$ ideal triangles).
One can always make a choice of adding finitely many isolated leaves that spiral onto the components of $\lambda$ to form a \emph{completion} $\lambda'$ that  is maximal.  If $\lambda$ is chain recurrent, we can find a completion $\lambda'$ that is also chain recurrent.

An extremely useful property of geodesic laminations is that their tangent line fields are \emph{universally} Lipschitz continuous.
Let $\hat d$ be a left invariant metric on $\PSL_2\RR \cong T^1\HH^2$ that is right invariant under $\PSO(2)$ and induces the hyperbolic metric on $\HH^2$. 

\begin{lemma}[{\cite[Lemma 1.1]{Kerckhoff:NR} or \cite[Lemma 5.2.6]{CEG}}]\label{lem:lipschitz}
There is a universal constant $c$ such that if $x$ and $x'\in \HH^2$ are at most $\epsilon<1$ apart and  $l$ and $l'$ are disjoint complete geodesics passing through $x$ and $x'$, respectively, then $\hat d (x_l, x'_{l'}) \le c \epsilon$, where $x_l$ and $x'_{l'}\in T^1\HH^2$ are tangent to $l$ and $l'$ at $x$ and $x'$ respectively.  
\end{lemma}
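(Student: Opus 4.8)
The plan is to reduce the statement to a computation in the upper half-plane and then invoke the fact that the tangent line field of a geodesic through a point varies smoothly in that point, with a derivative that can be estimated uniformly. First I would set up coordinates: place $x \in \HH^2$ at a convenient location (say $i$, using the transitivity of $\PSL_2\RR$ on $T^1\HH^2$ and the left-invariance of $\hat d$), and parameterize nearby points $x'$ by $\HH^2$ itself. A complete geodesic through $x$ is determined by its pair of endpoints on $\partial_\infty \HH^2 = \RR \cup \{\infty\}$, or equivalently by the unit tangent vector $x_l \in T^1_x\HH^2 \cong S^1$. The map sending a point $y$ and an unoriented geodesic $l \ni y$ to the tangent direction at $y$ is real-analytic on the open set where it is defined, so on any compact neighborhood of $(x, l)$ it is Lipschitz; the issue is that we need this with a \emph{universal} constant, independent of which geodesics $l, l'$ we pick, which is where the disjointness hypothesis enters.

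Next I would exploit disjointness. If $l$ and $l'$ are disjoint complete geodesics, their closures in $\overline{\HH^2}$ are disjoint or share at most an endpoint at infinity; in particular $l'$ cannot ``turn sharply'' relative to $l$ near the points $x, x'$ when those points are close. Concretely, if $x = i$ and $x'$ is within hyperbolic distance $\epsilon < 1$ of $i$, then $x'$ lies in a fixed compact region, and the geodesic $l$ through $i$ divides $\HH^2$ into two half-planes. The disjoint geodesic $l'$ through $x'$ must stay on one side of $l$ (or meet it only at infinity), which forces its endpoints on $\partial_\infty \HH^2$ to lie in configurations bounded away from the ``bad'' ones that would make the tangent direction at $x'$ nearly antipodal or otherwise wildly different from the direction of $l$ at $i$. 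I would make this quantitative: show that $\hat d(x_l, x'_{l'})$ is controlled by (a) the hyperbolic distance $d(x, x') \le \epsilon$, contributing a translation term, and (b) the angle between the parallel transport of $x_l$ to $x'$ and the direction $x'_{l'}$, contributing a rotation term, and that the disjointness of $l, l'$ bounds this angle linearly in $\epsilon$.

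The cleanest route for step (b): lift to the endpoints. Write the endpoints of $l$ as $p, q \in \partial_\infty\HH^2$ and those of $l'$ as $p', q'$. As $x' \to x$ along any path and $l' \to l$ forced by disjointness, the endpoints $p', q'$ must converge to $p, q$ (one can see this from the fact that a geodesic through a given point with given ``side'' constraints has endpoints depending continuously on the point), and the rate is linear because the relevant maps are smooth with non-degenerate derivative away from the cases excluded by disjointness. Then the unit tangent vector $x'_{l'}$ depends smoothly (in fact real-analytically) on $(x', p', q')$ on the relevant compact set, giving the Lipschitz bound $\hat d(x_l, x'_{l'}) \le c\epsilon$ with $c$ depending only on the fixed compact region — i.e. universal. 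The main obstacle is making the disjointness-to-angle-bound step genuinely quantitative and uniform rather than merely qualitative: I expect to need an explicit estimate, perhaps via an elementary hyperbolic-trigonometry argument showing that if two geodesics through points $\epsilon$-apart make angle $\theta$ with the segment joining those points differing by more than $C\epsilon$, then they must cross, contradicting disjointness. Since this is a cited lemma (Kerckhoff, \cite{CEG}), I would follow one of those arguments; the role here is just to record the statement and the idea, so a complete self-contained proof is not required, but the sketch above is the shape it would take.
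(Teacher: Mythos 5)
The paper does not prove this lemma at all --- it is stated as a citation to Kerckhoff and to \cite{CEG}, so there is no in-paper argument to compare against. Your sketch is a faithful reconstruction of the standard proof in those references: split $\hat d(x_l,x'_{l'})$ into a translation part, controlled directly by $d(x,x')\le\epsilon$ via left-invariance of $\hat d$, and a rotation part, and then show that disjointness forces the angular discrepancy to be $O(\epsilon)$. The decisive quantitative step is exactly the one you name at the end: if the tangent directions (compared along the segment $[x,x']$) differ by more than $C\epsilon$, the two geodesics must cross. A model computation in the upper half-plane with $l$ the imaginary axis and $x'$ at Euclidean height $1$ and horizontal offset $\approx\epsilon$ shows that the extremal disjoint geodesic through $x'$ is the one with an endpoint at $0$, and its tangent at $x'$ makes angle $\approx 2\epsilon$ with the vertical; this gives the universal linear constant.

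The one soft spot in your write-up is the ``cleanest route for step (b)'': the claim that the endpoints $p',q'$ converge to $p,q$ \emph{at a linear rate because the relevant maps are smooth with non-degenerate derivative} is not really an argument --- the map from (point, boundary endpoints) to tangent vector degenerates precisely when the endpoints collide, and controlling the endpoint positions from disjointness is no easier than controlling the angle directly. I would drop that paragraph and promote your final ``elementary hyperbolic-trigonometry'' estimate to the main argument; with that, the sketch is complete and correct.
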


Geodesic laminations were introduced by Bill Thurston in \cite{Thurston:notes, Thurston:bulletin} and have become an important tool in various problems in Teichm\"uller theory, low dimensional geometry topology and dynamics. A comprehensive introduction to the structure theory for geodesic laminations can be found in \cite{CB}; see also \cite{CEG}.

\subsection{Measured laminations}
Let $\lambda\in \mathcal {GL}_0(S)$ be minimal.
If $\lambda$ is not a closed curve, then its intersection with any transversal $k$, i.e., a $C^1$ simple arc meeting $\lambda$ transversally, is a cantor set of Hausdorff dimension $0$ \cite{BS}.
We can associate a dynamical system on $k\cap \lambda$ by giving a transversal $k$ an orientation, which then gives a local orientation to the leaves of $\lambda$.
Following the leaves of $\lambda$ induces a homeomorphic first return map $P : k\cap \lambda \to k\cap \lambda$.
It can be shown that there are at most $3g-3$ $P$-invariant ergodic probability measures on $k\cap \lambda$ (see, e.g., \cite{Veech:IET}).
If $k'$ is a transversal isotopic to $k'$ through arcs transverse to $\lambda$, then the corresponding dynamical systems are conjugated by the isotopy, and the invariant  measures are in bijective correspondence.

A \emph{transverse measure} $\mu$ supported on $\lambda$ is the assignment of a positive Borel measure $\mu(k)$ to each transversal $k$ supported on $k\cap \lambda$ that is invariant under transverse isotopy and natural with respect to inclusion.  
We sometimes conflate the Borel measure $\mu(k)$ with its integral against the constant function $1$.   
A geodesic lamination $\lambda$ equipped with a transverse measure $\mu$ is a \emph{measured lamination}.  

\begin{note}
We will reserve the notation $\lambda \in \mathcal {GL}(S)$ for geodesic laminations without any additional structure and use the letter $\mu$ to refer to a measured geodesic lamination.  Abusing notation, we will also use the letter $\mu$ to refer to the support of $\mu$, if it should not cause any confusion.
\end{note}

The weak-$*$ topology for continuous, flow invariant functions on $T^1S$ makes the space $\ML(S)$ of measured laminations on $S$ a piecewise-integer-linear (PIL) cone manifold homeomorphic to $\mathbb{R}^{6g-6}\setminus \{0\}$ \cite{Thurston:notes, Thurston:bulletin} (see also \cite{CB, FLP, PennerHarer} for expositions).
The Dirac masses on multi-curves, i.e.,  simple multi-curves with the transverse measure assigning positive integer weighted Dirac masses to transverse intersections,  form a lattice of integer points; the positive cone over the set of multi-curves is dense in $\ML(S)$.

\subsection{Train tracks}\label{sec:tt}
Thurston invented \emph{train tracks} as an extremely useful tool for understanding both the geometry of and dynamical systems associated to  geodesic laminations.
We refer the reader to \cite{PennerHarer}, \cite{Penner:pA}, and \cite{Thurston:notes}.

A \emph{train track} $\tau\subset S$ is an embedded graph in $S$ with a $C^1$ structure at the vertices satisfying some additional geometrical and topological constraints.
The train tracks that are relevant to our discussion can all be obtained by looking at a suitable small neighborhood of a chain recurrent geodesic lamination on a hyperbolic surface; we formalize this later on in \S\ref{sec:geom_tt}.
The edges are called \emph{branches} and are denoted by $b(\tau)$.
The vertices are called \emph{switches} and are denoted by $s(\tau)$.
A train track $\tau$ is called \emph{generic} or \emph{trivalent} if all switches are trivalent.
A $C^1$ immersion of a ((bi)-infinite) interval into $\tau$ is called a \emph{train path}.
We usually consider two train tracks as identical if they are $C^1$-isotopic.

We say that a train track $\tau$ \emph{carries} a geodesic lamination $\lambda$ and write $\lambda\prec \tau$ if there is a $C^1$ map $S\to S$ (called a \emph{carrying map}) homotopic to the identity that takes every leaf of $\lambda$ to a train path in $\tau$.
If $\mu$ is a transverse measure with  support contained in $\lambda$, then to each branch $b$ of $\tau$ we can associate the $\mu$-measure of the fiber of the carrying map at any point $p$ of $b$.  
We identify $w_\mu$ with a vector in $\RR_{\ge0}^{b(\tau)}$.
The invariance property of the transverse measure implies that this vector satisfies the \emph{switch conditions}.
That is, the sum of the weights on branches coming in to $s$ is equal to the sum of the weights on branches going out.

Let $W(\tau) \subset \RR^{b(\tau)}$ be the linear subspace (called the \emph{weight space}) cut out by all the switch conditions.
Let $W^+(\tau) = W(\tau) \cap \RR_{\ge 0 }^{b(\tau)}$ denote the non-negative cone.  
If $W^+(\tau)$ contains any strictly positive vectors, i.e., a weight vector giving positive mass to every branch of $\tau$, then  $\tau$ is \emph{recurrent}.

If $\tau$ can be embedded in a hyperbolic surface with long branches and small geodesic curvature, then we call $\tau$ \emph{transversely recurrent}.\footnote{see \cite[Section 1.4]{PennerHarer} for a more precise formulation.  In \S\ref{sec:geom_tt}, we construct train tracks as quotients of certain neighborhoods of geodesic laminations on a hyperbolic surface, and so they are  transversely recurrent.}
To a recurrent and transversely recurrent train track $\tau$, and to any $w\in W^+(\tau)\setminus \{0\}$, we can construct a geodesic lamination $\lambda_w$ that is carried by $\tau$ and equipped with a transverse measure $\mu_w$ that corresponds to $w$ via the carrying map.

Thus to a bi-recurrent (recurrent and transversely recurrent) train track $\tau$, there is a map
\[W^+(\tau) \to \ML(S).\]
If such a train track is \emph{maximal}, i.e., every component of $S\setminus \tau$ is a triangle, then restricted to the strictly positive weights $U(\tau)\subset W^+(\tau)$, this map is a local homeomorphism.
We often abuse notation and identify the set of positive weights $U(\tau)$ for a bi-recurrent track $\tau$ with its image in $\ML(S)$.

Say that $\tau'$ is \emph{carried} by $\tau$ and write $\tau'\prec \tau$ if there is a $C^1$ carrying map $h:S\to S$ homotopic to the identity mapping all train paths in $\tau'$ to train paths in $\tau$.   
A carrying map induces a linear map $A_h: \RR^{b(\tau')} \to \RR^{b(\tau)}$ as follows:
Enumerate the branches $b(\tau) = \{b_1, ..., b_k\}$ and $b(\tau') = \{b_1', .., b_\ell'\}$, choose points $p_i$ in the interior of $b_i$, and define
\begin{equation}\label{eqn:incidence}
M_h = \left( \# h\inverse(p_i) \cap b_j \right).
\end{equation}
Then $M_h$ is a non-negative integer matrix that restricts to a map of weight spaces \[A_h: W(\tau') \to W(\tau)\] taking the non-negative cone in $W(\tau')$ into the non-negative cone in $W(\tau)$.
While $M_h$ depends on the specific carrying map and the points $p_i$, $A_h$ does not \cite[Section 2.1]{PennerHarer}.

There is a finite set of maximal bi-recurrent train tracks $\{\tau_i\}$ such that $\cup_i U(\tau_i) = \ML(S)$ and the transition functions are piecewise restrictions of integer linear isomorphisms on the overlaps in these coordinate chats.
Thus train track coordinate charts give $\ML(S)$ the structure of a PIL manifold, as asserted in the previous subsection.

\subsection{The mapping class group and pseudo-Anosov mappings}
The mapping class group is the collection of homotopy classes of topological symmetries of $S$, i.e., $\Mod(S) = \pi_0(\operatorname{Homeo}^+(S))$.
The natural action of $\Mod(S)$ on $\ML(S)$ by pushing forward measures is by PIL homeomorphisms.
Ahistorically, we call a mapping class $\varphi \in \Mod(S)$ \emph{pseudo-Anosov} if no proper power of $\varphi$ fixes any simple closed curve, i.e., for any essential simple closed curve $\gamma$,
\[ ~\varphi^n(\gamma) \sim \gamma \text{ implies $n=0$.}\]
The works of Nielsen and Thurston give the following equivalent description of pseudo-Anosov mapping classes.
There exists a scalar $\Lambda >1$ and measured geodesic laminations $\mu_{\pm} \in \ML(S)$ such that 
\[\varphi_*\mu_{+} = \Lambda \mu_{+} \text{ and } \varphi_* \mu_- = \Lambda\inverse \mu_-.\]
The $\varphi$-action on $\PML(S) = \ML(S) / \RR_{>0}$  has exactly two fixed points corresponding to the projective classes $[\mu_{+}]$ and $[\mu_-]$, which are attracting and repelling, respectively.

\subsection{The intersection and symplectic forms on $\ML(S)$}

There is a continuous \emph{geometric intersection form} 
\[i: \ML(S)\times \ML(S) \to \RR_{\ge 0}\]
that is homogeneous in each factor and that can be described by the formula
\[i(\mu, \nu) = \iint_S d\mu  d\nu,\]
where the measure $d\mu  d\nu$ is the local product measure supported on the set of transverse intersections of the supporting geodesic laminations.

The intersection is locally bi-linear; that is,  given measured laminations $\mu_1$ and $\mu_2$ with distinct supports, one can find maximal bi-recurrent train tracks $\tau_1$ and $\tau_2$ such that $\mu_i\prec \tau_i$ inducing positive weight systems on $\tau_i$, and the intersection pairing is bi-linear on $U(\tau_1) \times U(\tau_2)$.

There is also a symplectic form $\omega_{\Th}$ on $\ML(S)$, though one has to take care to interpret this correctly, as $\ML(S)$ does not have a natural smooth structure.
Rather, at each point $\mu\in \ML(S)$, there is a PL tangent cone $T_\mu\ML(S)$ consisting of one-sided tangent vectors that correspond to directions pointing into a train track coordinate chart where $\mu$ lies on the boundary.

For the (measurably) generic set of maximal measured laminations, the tangent cone  has a linear structure that can be identified with the weight space $W(\tau)$ of any train track $\tau$ such that $\mu\prec \tau$.  
More generally, there is always a subspace of the tangent cone with a well defined linear structure,  coming from the weight space of a train track that carries $\mu$ \emph{snugly}, i.e., that $\mu\prec \tau$ and that the interior of the complement of an arbitrarily small neighborhood of $\mu$ in $S$ has the same topology as the complement of $\tau$ in $S$.  
See \cite[Section 6]{Th_stretch} or \cite{Bon_GLTHB}. 

Let us consider a maximal measured lamination $\mu$ and a maximal bi-recurrent generic train track $\tau$ carrying $\mu$ so that $T_\mu\ML(S) \cong W(\tau)$.  
For vectors $\alpha, \beta \in W(\tau)$, we define the anti-symmetric bi-linear pairing
\[\omega_{\Th}(\alpha, \beta) = \frac{1}{2}\sum_{s\in s(\tau)} 
\begin{vmatrix}
\alpha(r_s) & \beta(r_s)\\
\alpha(\ell_s) & \beta(\ell_s)
\end{vmatrix},
\]
where at the trivalent switch $s$ there is an incoming branch and two outgoing branches $\ell_s$ and $r_s$ exiting $s$ to the left and to the right, respectively.
 An alternate description of $\omega_{\Th}$ as the homological intersection pairing on a certain double branched cover $\hat S \to S$ on which $\mu$ lifts to an orientable lamination $\hat \mu$ illustrates that in fact $\omega_{\Th}$ is non-degenerate, hence symplectic (where defined); see \cite[Section 3.2]{PennerHarer}.
See also \cite[Appendix]{BW:symplectic} for a description of the (degenerate) symplectic form on an arbitrary generic train track $\tau$.  
The action of the mapping class group preserves the symplectic form.

\subsection{Transverse H\"older distributions}\label{sub:THD}
To a geodesic lamination $\lambda \in\mathcal{GL}(S)$ and transversal $k$, we consider the space of H\"older distributions $\cH(k\cap \lambda)$ on $k\cap \lambda$; these are functionals on the space of H\"older continuous functions  on $k\cap \lambda$ that are bounded when restricted to functions with a fixed H\"older exponent.
A H\"older continuous isotopy transverse to $\lambda$ between two transversals $k$ and $k'$ induces a linear isomorphism between the H\"older functions and also the dual spaces, i.e.,  $\cH(k\cap \lambda) \cong \cH(k'\cap \lambda)$.  
A \emph{transverse H\"older distribution}  $\alpha$ to $\lambda$  is an assignment to each transversal $k$, a H\"older distribution on $k\cap \lambda$.  This assignment is required to be invariant under transverse H\"older isotopy and natural under inclusion.  

The vector space $\cH(\lambda)$ of transverse H\"older distributions to a given geodesic lamination has dimension
\[\dim_{\RR}\cH(\lambda) = -\chi(\lambda) +n_0(\lambda),\]
where $n_0(\lambda)$ is the number of orientable components of $\lambda$ and $\chi(\lambda)$ is its Euler characteristic.
The Euler characteristic can be computed via a train track snugly carrying $\lambda$  \cite[Section 5]{Bon_THDGL}.

There is a natural isomorphism between the weight space $W(\tau)$ of a snug train track $\tau$ for $\lambda$ and $\cH(\lambda)$.
Indeed, to $\alpha \in \cH(\lambda)$ this isomorphism associates the weight system $w_\alpha\in W(\tau)$ obtained  by pairing $\alpha$ with the constant function $1$ on a small transversal to each branch of $\tau$.  
This rule exhibits a transverse H\"older distribution as a ``kind of finitely additive signed transverse measure'' to $\lambda$ called a \emph{transverse cocycle} \cite[Sections 5 and 6]{Bon_THDGL}.

If $\lambda$ is maximal and measured, then by virtue of the identifications $T_\mu\ML(S)\cong W(\tau) \cong \cH(\tau)$, the space of H\"older distributions is the vector space of tangent directions to $\mu$ in $\ML(S)$, where the correspondence is given by 
\[\alpha \in \cH(\tau) \mapsto \dot \alpha =[\mu+ t\alpha]_{t\in [0,\epsilon)}\in T_\mu\ML(S).\]
Thus when $\mu$ is maximal and measured, the Thurston form $\omega_{\Th}$ gives $\cH(\mu)$ the structure of a $(6g-6)$-dimensional symplectic vector space.

Let $\mu\in \ML(S)$ be a measure whose support is contained in some $\lambda\in \mathcal {GL}(S)$. 
Let $\alpha \in \cH(\lambda)$, and consider the path $\mu+t\alpha \subset \cH(\lambda) $ with $t\in [ 0, \epsilon)$.
If $\mu+t\alpha$ gives all branches  non-negative mass for some train track $\tau$ carrying $\lambda$ snugly, then we can build a measured lamination corresponding to that weight system and attempt to extract a one-sided tangent vector $\dot \alpha = [\mu +t\alpha]_{t\in [0,\epsilon)}$.  
\begin{theorem}[Theorem 21 of \cite{Bon_GLTHB}]\label{thm:tangent}
With notation as above, $\alpha\in \cH(\lambda)$ corresponds to a tangent vector to a measure $\mu$ supported in $\lambda$ if and only if the following three conditions hold:
\begin{itemize}
\item every infinite isolated geodesic of $\lambda$ has non-negative $\alpha$-mass; and
\item every infinite isolated leaf of $\lambda$ which is asymptotic to a minimal sublamination of $\lambda$ that is not contained in the support of $\mu$ has $\alpha$-mass $0$; and
\item the restriction of $\alpha$ to each minimal sublamination of $\lambda$ that is not contained in the support of $\mu$ is a transverse measure.  
\end{itemize}
\end{theorem}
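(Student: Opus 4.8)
The plan is to characterize exactly when the path $t \mapsto \mu + t\alpha$ in $\cH(\lambda)$ stays inside the cone of weight systems that actually arise from measured laminations, for $t$ in a small interval $[0,\epsilon)$. The strategy is to fix a train track $\tau$ carrying $\lambda$ \emph{snugly}, so that $W(\tau) \cong \cH(\lambda)$, and to translate the intrinsic conditions on $\alpha$ into positivity/support conditions on the associated weight vector $w_\alpha$. The three bulleted conditions are each a local requirement --- one on the isolated (non-compact) leaves, one on isolated leaves spiraling onto ``foreign'' minimal components, and one on the foreign minimal components themselves --- so I would organize the proof around the canonical decomposition of $\lambda$ into minimal sublaminations $\lambda_1,\dots,\lambda_m$ and isolated leaves $\ell_1,\dots,\ell_k$ recalled in \S\ref{sec:background_shear}, and analyze the behavior of $\alpha$ on each piece separately.

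First I would prove necessity. Suppose $\alpha = \dot\alpha$ is realized as a one-sided tangent vector, i.e.\ for small $t>0$ the weight vector $w_\mu + t\,w_\alpha$ is a nonnegative point of $W^+(\tau)$ giving rise to a measured lamination $\mu_t \prec \tau$ converging to $\mu$ in measure. Nonnegativity of $w_\mu + t w_\alpha$ on every branch, together with the fact that branches crossed by an infinite isolated leaf of $\lambda$ can carry weight coming only from that leaf (this is where snugness is essential: the combinatorics of $\tau$ separate the contributions of the pieces of $\lambda$), forces every infinite isolated leaf to have $\alpha$-mass $\ge 0$ --- otherwise some branch weight would go negative for all $t > 0$. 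For the second and third conditions I would use that $\mu_t \to \mu$ in the weak-$*$ (measure) topology: if a minimal component $\lambda_j$ is not in $\supp\mu$, then $\mu$ assigns it zero transverse mass, so the total mass on the block of branches carrying $\lambda_j$ must tend to $0$; since this mass is affine in $t$ and nonnegative, $w_\alpha$ restricted to that block must itself be nonnegative and, in fact, the $\mu_t$-measures there define genuine transverse measures on $\lambda_j$ for each $t$, whence $\alpha|_{\lambda_j}$ is a transverse measure (a limit/derivative of transverse measures that is itself a signed cocycle and is squeezed to be nonnegative). The leaves spiraling onto such a $\lambda_j$ carry, for each $t>0$, nonnegative weight that must also vanish in the limit, and since the constraint is affine in $t$ with the $t=0$ value equal to $0$, the $\alpha$-mass of such a leaf is forced to be exactly $0$.

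Next I would prove sufficiency, which is the more substantial direction and the step I expect to be the main obstacle. Given $\alpha$ satisfying the three conditions, I need to produce, for all small $t>0$, a bona fide measured lamination representing $w_\mu + t w_\alpha$. The idea is: the conditions guarantee that $w_\mu + t w_\alpha$ lies in $W^+(\tau)$ for small $t$ --- on branches met only by isolated leaves of $\lambda$ the weight is $w_\mu \ge$ (something) plus $t$ times a nonnegative number, hence stays nonnegative; on the blocks for foreign minimal components the weight is $t$ times a nonnegative transverse-measure cocycle, hence nonnegative; on the remaining branches $w_\mu$ is strictly positive (here recurrence of $\tau$ and snugness enter, guaranteeing the core carrying $\supp\mu$ gets strictly positive mass) so small perturbations remain positive. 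Then, invoking the construction recalled in \S\ref{sec:tt} --- that a recurrent, transversely recurrent train track turns any $w \in W^+(\tau)\setminus\{0\}$ into a carried measured lamination $\mu_w$ --- I get measured laminations $\mu_t := \mu_{w_\mu + t w_\alpha}$, and the continuity of $w \mapsto \mu_w$ (using the local homeomorphism property of the chart $U(\tau) \to \ML(S)$ where applicable, and compactness/continuity of the carrying construction on the boundary strata otherwise) shows $\mu_t \to \mu$ in measure, so $[\mu + t\alpha]_{t \in [0,\epsilon)}$ is a well-defined one-sided tangent vector equal to $\dot\alpha$. The delicate point throughout --- and the crux of the argument --- is handling the boundary behavior: $\mu$ is on the boundary of the chart $U(\tau)$ precisely because of the foreign minimal components getting zero mass, so one is perturbing off a face of the cone $W^+(\tau)$, and one must check that the ``new'' minimal components created with small weight $t\,\alpha|_{\lambda_j}$ genuinely assemble with the core of $\mu$ into a single geodesic lamination carried by $\tau$ --- which is exactly what snugness of $\tau$ was set up to ensure, since it forces the complementary regions of $\tau$ to match those of $\lambda$ and so controls how the leaves of the perturbed lamination spiral and fit together.
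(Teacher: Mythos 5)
A preliminary remark: the paper does not prove this statement. It is quoted as Theorem~21 of \cite{Bon_GLTHB} and used as a black box, so there is no in-paper argument to measure yours against; your general framing (fix a snug track $\tau$ with $W(\tau)\cong\cH(\lambda)$, decompose $\lambda$ into minimal pieces and isolated leaves, and ask when $w_\mu+t\,w_\alpha$ is realizable for small $t>0$) is the natural starting point and consistent with how the surrounding text uses the result.

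There is, however, a genuine gap in your necessity argument for the second condition. You assert that because the mass deposited on a leaf $\ell$ spiraling onto a foreign minimal component $\lambda_j$ is ``affine in $t$ with the $t=0$ value equal to $0$'' and nonnegative, the $\alpha$-mass of $\ell$ must be exactly $0$. That reasoning only forces the slope to be $\ge 0$ --- i.e.\ it reproves the first condition and says nothing about why equality holds. The true mechanism has to be different, and it must account for the asymmetry between leaves spiraling onto components of $\supp\mu$ (where positive $\alpha$-mass is allowed) and leaves spiraling onto foreign components (where it is forbidden). The relevant geometric fact is that $\ell$ returns infinitely often to any transversal $k$ of $\lambda_j$, each return point being isolated in $k\cap\lambda$ and hence carrying a small disjoint subarc on which $\mu$ vanishes and $\mu+t\alpha$ evaluates to $t\,\alpha(\ell)$; finite additivity over arbitrarily large finite subcollections of these subarcs, combined with the requirement that $\mu+t\alpha$ restrict to an honest locally finite nonnegative measure in a region where $\mu$ contributes nothing, is what rules out $\alpha(\ell)>0$. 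Your proposal never engages with this infinite recurrence, which is precisely where conditions (1) and (2) diverge.

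A second soft spot sits in your sufficiency direction, which you yourself flag as the crux. Your argument derives realizability of $w_\mu+t\,w_\alpha$ from nonnegativity of $w_\alpha$ on the branches of a single snug train track. But for a H\"older cocycle on a minimal lamination, nonnegativity on the branches of one track is strictly weaker than being a transverse measure (the latter is a condition on \emph{all} transversals, equivalently on all splittings of $\tau$), and your sketch never explains where the full strength of condition (3) --- or the exact vanishing in condition (2) --- is actually consumed. If branch-nonnegativity alone sufficed, the theorem's ``only if'' direction would fail; since the statement is an equivalence, a correct sufficiency proof must use the stronger hypotheses in an essential way, most plausibly to control the weight system under all refinements of $\tau$ near the foreign minimal components and their spiraling leaves. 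As written, the proposal proves too much from too little.
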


\subsection{Teichm\"uller space}
From now on, we will restrict ourselves to \emph{hyperbolic} metrics on $S$, i.e., those metrics with constant curvature everywhere equal to $-1$. 
The \emph{Teichm\"uller space} $\T(S)$ is the collection of equivalence classes of \emph{marked hyperbolic structures} on $S$.  
Note that we often suppress notation for the marking and write $Z\in \T(S)$.
The $\delta$-thick part of $\T(S)$ consists of those hyperbolic surfaces $Z$ where the injectivity radius $\inj(Z)\ge \delta$; equivalently, every closed geodesic loop has length least $2\delta$. 

To each $Z\in \T(S)$, there is a holonomy representation $\pi_1(S) \to  \Isom^+(\widetilde Z) \cong \PSL_2\RR$ that records the transitions between local isometric charts around the loops in $S$, well defined up to conjugation.
The image of $\T(S)$ in $\Hom(\pi_1(S), \PSL_2\RR)/\PSL_2\RR$ is a component of discrete and faithful representations and endows $\T(S)$ with the structure of an $\RR$-analytic manifold.

\subsection{The symplectic form on $\T(S)$}
Work of Goldman \cite{Goldman:symplectic}  provides us with a natural symplectic structure $\omega_{\text{G}}$ on the smooth locus in $\Hom(\pi_1(S), \PSL_2\RR)/\PSL_2\RR$.
It turns out that $\omega_{\text{G}}$ is a constant multiple of the Weil-Petersson K\"ahler form $\omega_{\WP}$ when restricted to $\T(S)$ \cite[Proposition 2.5]{Goldman:symplectic}. 

The symplectic form induces a bundle isomorphism $T^*\T(S) \to T\T(S)$, hence a linear isomorphism between  $1$-forms and vector fields.
Concretely, to a $1$-form $\eta$, there is a vector field $X$ defined by the rule that for all vector fields $Y$, we have  \[\omega_{\WP}(X, Y) = \eta(Y).\]

To a smooth function $f: \T(S)\to \RR$, we associate the \emph{Hamiltonian vector field} $X_f$ by the rule $\omega_{\WP}(X_f, \bullet) = df$.  
The symplectic form induces a \emph{Poisson bracket} on the space of smooth functions:
\[\{f, g\}(Z) = \omega_{\WP}(X_f(Z),X_g(Z)) = \left.\frac{d}{dt}\right|_{t=0} f(H_g(Z,t)) = \left. -\frac{d}{dt}\right|_{t=0} g(H_f(Z,t)),\]
where $H_g$ and $H_f$ are the (local) \emph{Hamiltonian flows} of the Hamiltonian vector fields $X_g$ and $X_f$, respectively. 

\subsection{Length}\label{sec:length}
To each point $Z\in \T(S)$, there is a homogeneous continuous length function 
\[\ell_\bullet (Z): \ML(S) \to \RR_{>0}\]
extending the rule that if $\mu$ corresponds to a weighted multi-curve then $\ell_\mu(Z)$ is the weighted sum of the lengths of the geodesic representatives on $Z$ of each component of the support of $\mu$.
Alternatively,  length is the integral of 
the product of the length element $d\ell$ along the geodesic leaves of $\mu$ in $Z$ and the transverse measure:
\[\ell_\mu(Z) = \iint_Z ~d\ell  d\mu.\]

For a simple closed curve $\gamma$ (with weight $1$), we have
\[2\cosh(\ell_\gamma(Z)/2) = |\tr(f_*(\gamma))|,\]
where $f_*: \pi_1(S) \to \PSL_2\RR$ is the holonomy representation corresponding to the marking $f: S\to Z$.  
Thus length functions for weighted multi-curves are manifestly $\RR$-analytic.  
Kerckhoff proved \cite{Kerckhoff:analytic} that $\ell_{\mu}: \T(S) \to \RR$ is analytic for every $\mu\in \ML(S)$.  

There is also a notion of length for transverse H\"older distributions.
Namely, for $Z\in \T(S)$, $\lambda\in \mathcal {GL}(S)$, and $\alpha\in \cH(\lambda)$, we define
\[\ell_\alpha(Z)= \iint_Z d\ell  d\alpha,\]
where $d\ell$ is the length element along leaves of $\lambda$ on $Z$, but since $\alpha$ is only finitely additive and signed, some care has to be used when interpreting the meaning of this integral; see \cite[\S6]{Bon_GLTHB} or \S\ref{sec:int}, below.

Let $\lambda\in \mathcal {GL}(S)$, and $\mu\in \ML(S)$ be a measure whose support is contained in $\lambda$.  For any $\alpha\in \cH(\lambda)$ representing a tangent direction to $\mu$ in $\ML(S)$, Bonahon proved \cite[Corollary 25]{Bon_GLTHB} that
\begin{equation}\label{eqn:dlength}
\ell_\alpha(Z) = \ddtp \ell_{\mu+t\alpha}(Z). 
\end{equation}

\subsection{Angle}\label{sec:angle}
The Poisson bracket between the length functions for  measured laminations $\mu$ and $\nu$ relates the change in length of one along the \emph{earthquake flow} in the other.
Formally, for $t\in \RR$, the earthquake flow $\Eq_{t \mu}: \T(S) \to \T(S)$  in $\mu$ is the (complete) $\RR$-analytic Hamiltonian flow associated to the length function $\ell_\mu$.  
Geometrically, to a hyperbolic surface $Z\in \T(S)$, the earthquake flow at time $t>0$ in a weighted curve $\gamma$ with weight $w>0$ is the surface $\Eq_{t\gamma}(Z)$ obtained by cutting open $Z$ along $\gamma$ and regluing with a twist of displacement $tw$ to the left.  
This rule extends continuously from weighted curves to measured laminations.

Kerckhoff \cite{Kerckhoff:NR} and Wolpert \cite{Wolpert:magic} proved that for weighted curves $\mu$ and $\nu$,
\[\{ \ell_\mu, \ell_\nu\}(Z) = \ddt \ell_{\mu}(\Eq_{t\nu}(Z)) =  \iint_Z \cos(\mu, \nu) \ d\mu  d\nu.\]
where  $\cos(\mu,\nu): Z\to [-1,1]$ is the function supported on the transverse intersection $\mu \pitchfork\nu$ measuring the cosine of the angle made from leaves of $\mu$ to leaves of $\nu$.
This formula extends continuously to arbitrary $\mu$ and $\nu\in \ML(S)$ \cite{Kerckhoff:NR}.

\subsection{Measured foliations}
	A (singular) measured foliation on a surface $S$ is a $C^1$ foliation $\mathcal F$ of $S$ away from a finite set (called singular points) and equipped with a transverse measure $\nu$.
	The transverse measure is required to be invariant under holonomy and every singularity is modeled on a standard $k$-pronged singularity; see  \cite{Thurston:bulletin, FLP} for details and further development. 
	Isotopic measured foliations are considered identical.
	
	The space  $\MF(S)$ of \emph{Whitehead equivalence classes} of singular measured foliations  
	is equipped with a topology coming from the geometric intersection number with homotopy classes of simple closed curves on $S$.

		In a given hyperbolic metric, we can pull each non-singular leaf of a measured foliation $(\mathcal F, \nu)$ tight in the universal cover to obtain a geodesic.  The closure  is a geodesic lamination invariant under the group of covering transformations that projects to a geodesic lamination $\lambda$ on $S$, and $\lambda$ carries a measure of full support $\mu$ obtained in a natural way from $\nu$.

This procedure defines a natural homeomorphism $\MF(S)\to \ML(S)$ \cite{Levitt:MFML}, so that we may pass between  equivalence classes of measured foliations and the corresponding measured lamination at will.

\subsection{The horocycle foliation}\label{sub:horofoliation}
The following construction is essentially due to Thurston \cite{Th_stretch,Thurston:notes}.
Let $P$ be a \emph{regular} ideal $n$-gon in $\HH^2$, i.e., $P$ has $n$-fold rotational symmetry.
There is a rotationally symmetric horocyclic $n$-gon $h_P$ whose edges are horocyclic arcs facing the ends of $P$. 
The unbounded components of $P\setminus h_P$ can be foliated by horocyclic segments facing the ideal points. 
This (partial) foliation $H(P)$ of $P$  is called the \emph{horocycle foliation} and it carries a transverse measure: homotope a transverse arc into the boundary of $P$ where it inherits Lebesgue measure.

Recall that  $\GL_\Delta(S)$ consists of minimal and filling geodesic laminations.
Given $\lambda\in \GL_\Delta(S)$, we define the \emph{regular locus}
\[\T^\lambda(S) = \{Z\in \T(S): \text{every component of $Z\setminus \lambda$ is regular}\}.\]
Note that if $\lambda$ is maximal, then $\T^\lambda(S) = \T(S)$, but otherwise $\T^\lambda(S)$ is an analytic submanifold of positive co-dimension.

For $\lambda\in \GL_\Delta(S)$ and $Z\in \T^\lambda(S)$, the horocycle foliation on each complementary component of $\lambda$ in $Z$ defines a partial foliation of $Z\setminus \lambda$.
The line field othogonal to $\lambda$ is locally Lipschitz (Lemma \ref{lem:lipschitz}), so the horocycle foliation extends uniquely across the measure $0$ set $\lambda$ to obtain a $C^1$ partial foliation $H_\lambda(Z)$ of $Z$.
The transverse measures on the complementary components piece together to a transverse measure on all of $H_\lambda(Z)$.
It is sometimes useful to view $H_\lambda(Z)$ as an element of $\MF(S)$ by collapsing each horocyclic $n$-gon to a point.
Indeed, this defines a map 
\[H_\lambda: \T^\lambda(S) \to \MF(S).\]
\begin{remark}
There is a similar construction of an \emph{orthogeodesic foliation} $\cO_\lambda(Z)$ which is defined for \emph{any} geodesic lamination $\lambda$ and \emph{any} $Z\in \T(S)$.
When $\lambda \in \GL_\Delta(S)$ and $Z\in \T^\lambda(S)$, then $\cO_\lambda(Z)$ and $H_\lambda(Z)$ agree in $\MF(S)$.
See \cite{CF} for details.
\end{remark}

\subsection{Shearing coordinates}\label{sub:shear}
Configurations of ideal triangles  $T_1, T_2$ glued along an edge in $\HH^2$ are parameterized by their \emph{shear coordinate} $\sigma(T_1, T_2)$: a signed distance between the tips of the two interior horocyclic triangles along their shared geodesic edge.  The sign is positive if the tip of the horocyclic triangle in $T_2$ is to the left of the tip of the horocyclic triangle in $T_1$ when viewed from $T_1$; this assignment is symmetric, i.e., $\sigma(T_1, T_2) = \sigma(T_2, T_1)$.

Let $\lambda$ be a maximal geodesic lamination, and $Z\in \T(S)$.  
The horocycle foliation $H_\lambda(Z)$ gives us a way to identify the closest edges $g_1$ and $g_2$ of two ideal triangles $T_1$ and $T_2$ in the universal cover in an isometric  way, even when they are not glued along an edge (see  \cite[\S2]{Bon_SPB} or \cite[\S13.2]{CF}).
The shear parameter $\sigma_\lambda(Z)(T_1, T_2)$ is defined similarly for triangles glued along an edge, and is again symmetric.

To a train track $\tau$ carrying $\lambda$, the rule assigning $\sigma_\lambda(Z)(T_1, T_2)$ to triangles $T_1$ and $T_2$ that are adjacent across a branch $\tilde b \subset \widetilde \tau$ in $\widetilde Z$ defines a weight system on $\widetilde \tau$ that is invariant under the covering group.  
Thus $\sigma_\lambda(Z)$ defines an element of $W(\tau) \cong \cH(\lambda)$, and we have  a map \[\sigma_\lambda: \T(S) \to \cH(\lambda)\] called \emph{shear coordinates} adapted to $\lambda$.

We give a brief summary of some of the relevant (remarkable) properties of the shear coordinates, due to Bonahon \cite{Bon_SPB} (see also \cite{Bon_GLTHB, Bon_THDGL} and \cite{BonSoz}).

\begin{theorem}\label{thm:coordinates}
Given a maximal geodesic lamination $\lambda$, the following are true:
\begin{itemize}
\item For a measure $\mu \in \ML(S)$ with support contained in $\lambda$ and $Z\in \T(S)$, we have \[\ell_\mu(Z) = \omega_{\Th} ( \mu, \sigma_\lambda(Z)).\]
\item More generally, for any $\alpha\in \cH(\lambda)$, we have $\ell_\alpha(Z) = \omega_{\Th} (\alpha,\sigma_\lambda(Z) )$.
\item  The  map $\sigma_\lambda$ is a real analytic diffeomorphism onto its image $\cH^+(\lambda)\subset \cH(\lambda)$.
\item The set $\cH^+(\lambda)$ is a convex cone of \emph{positive} distributions, defined by the intersection of at most $3g-3$ linear inequalities $\omega_{\Th}( \mu,\cdot ) >0$, where $\mu$ is an ergodic measure whose support is contained in $\lambda$.
\item The earthquake flow in a measure $\mu \in \ML(S)$ supported in $\lambda$ is linear in shear coordinates:
\[\sigma_\lambda(\Eq_{t\mu}(Z)) = \sigma_\lambda(Z) + t\mu. \]
\item The pullback of the Thurston symplectic form $\omega_{\Th}$ on $\cH^+(\lambda)$ via $\sigma_\lambda$ is equal to (a constant multiple) of the Weil-Petersson symplectic form $\omega_{\WP}$ on $\T(S)$ \cite{BonSoz}. 
\end{itemize}
\end{theorem}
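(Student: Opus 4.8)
The plan is to derive all six bullets by assembling and specializing results of Bonahon \cite{Bon_SPB,Bon_GLTHB,Bon_THDGL} and Bonahon--Sözen \cite{BonSoz}, organized around the explicit description of $\sigma_\lambda$ in a single train track chart. First I would fix a maximal bi-recurrent generic train track $\tau$ snugly carrying $\lambda$ and work throughout under the identification $W(\tau)\cong \cH(\lambda)$; since $\lambda$ is maximal this space has dimension $6g-6 = \dim\T(S)$, every complementary region of $\tau$ (equivalently of $\lambda$) is an ideal triangle, and $\sigma_\lambda(Z)\in W(\tau)$ is the weight system whose value on a branch $b$ is the shear parameter $\sigma_\lambda(Z)(T_1,T_2)$ of the two ideal triangles adjacent across $b$, read off using the horocycle foliation $H_\lambda(Z)$ to align their closest edges. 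All six statements are then claims about this weight system.

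For the first two bullets I would run a local-to-global length computation. Passing to the orientation double branched cover $\hat S\to S$ on which $\lambda$ lifts to an orientable lamination $\hat\lambda$, the hyperbolic length element $d\ell$ along leaves and the transverse cocycle $\alpha$ each determine a class in the appropriate (relative) cohomology of $\hat S$, so that $\ell_\alpha(Z)=\iint_Z d\ell\, d\alpha$ is computed by the homological intersection pairing of these classes. The point is that the shear cocycle $\sigma_\lambda(Z)$ records exactly the horocyclic offsets that enter $d\ell$ as a leaf passes from one triangle to the next, and the bookkeeping of how a train path enters and exits each branch at a trivalent switch produces precisely the determinant appearing in the definition of $\omega_{\Th}$; hence the intersection pairing equals $\omega_{\Th}(\alpha,\sigma_\lambda(Z))$. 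Specializing $\alpha$ to a genuine transverse measure $\mu$ supported in $\lambda$ gives the first bullet. This is Bonahon's computation, which I would cite and sketch rather than reproduce.

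For the third and fourth bullets I would construct the inverse of $\sigma_\lambda$ explicitly: a weight system $w\in W(\tau)$ prescribes shearing data for gluing the ideal triangles of the cut surface along the branches of $\tau$, producing a developing immersion of $\widetilde S$ into $\HH^2$. This is the developing map of a genuine complete hyperbolic structure exactly when the induced partial horocyclic foliation is an honest measured foliation, equivalently when every leaf-cycle of $\tau$ has positive total horocyclic displacement; Bonahon shows these conditions are finite in number and coincide with the inequalities $\omega_{\Th}(\mu,w)>0$ as $\mu$ ranges over the at most $3g-3$ ergodic transverse measures supported in $\lambda$, which by the first bullet just say that all such $\ell_\mu$ are positive and are therefore automatic for an actual metric. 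Convexity of $\cH^+(\lambda)$ is then immediate as an intersection of open half-spaces, and real analyticity of $\sigma_\lambda$ and of its inverse follows since shear parameters are signed distances between horocycle tips, hence real-analytic functions of the $\PSL_2\RR$ holonomy, with analytic inverse given by the gluing. The fifth bullet I would check first for a weighted simple closed curve $\gamma$ carried by $\tau$, where $\Eq_{t\gamma}$ is a Fenchel--Nielsen twist that visibly adds $t$ times the weight of $\gamma$ to the shear across each branch it crosses, and then extend to arbitrary $\mu$ supported in $\lambda$ by density of weighted multicurves and continuity of both sides and of the earthquake flow. Finally, for the sixth bullet I would invoke Bonahon--Sözen \cite{BonSoz}: on $\hat S$ both $\omega_{\WP}$ (via the Goldman/Weil--Petersson description) and $\omega_{\Th}$ are computed, up to the same universal constant, by the cup-product intersection pairing on the relevant cohomology, so the two forms agree after pullback by $\sigma_\lambda$; I would note that the naive internal argument — matching the $\omega_{\WP}$- and $\omega_{\Th}$-Hamiltonian flows of the functions $\ell_\mu$ via bullets one and five — does not by itself suffice, since the measures supported in $\lambda$ need not span a Lagrangian's worth of directions. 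I expect the fourth and sixth bullets to be the main obstacles: identifying $\cH^+(\lambda)$ requires the global analysis of when triangle-gluing data integrate to a \emph{complete} hyperbolic surface rather than an incomplete one or a branched immersion, and the symplectic identification genuinely needs the cohomological machinery of \cite{BonSoz}; by contrast the length formulae and the analyticity and linearity statements are essentially local bookkeeping.
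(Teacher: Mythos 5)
The paper does not prove Theorem \ref{thm:coordinates}; it presents the six bullets explicitly as a survey of results due to Bonahon \cite{Bon_SPB} (see also \cite{Bon_GLTHB}, \cite{Bon_THDGL}), with the final bullet attributed to Bonahon--S\"ozen \cite{BonSoz}, and offers no argument of its own. Your proposal correctly identifies the same sources and assembles the statement from them, so your approach agrees with the paper's; the additional sketches you supply (the train-track bookkeeping for the length pairing, the gluing construction for the inverse of $\sigma_\lambda$ and the characterization of $\cH^+(\lambda)$, density of weighted multicurves for linearity of earthquakes, and the cohomological identification of $\omega_{\Th}$ with $\omega_{\WP}$) are reasonable outlines of the cited proofs, and your caveat that matching Hamiltonian flows of $\ell_\mu$ for measures supported in $\lambda$ is insufficient to pin down the symplectic form is well taken.
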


If $\mu\in \ML(S)$ is a measure whose support is contained in $\lambda$ and $\alpha\in \cH(\lambda)$ represents a tangent direction $\dot \alpha \in T_\mu\ML(S)$,  then combining the second bullet point in Theorem \ref{thm:coordinates} with Equation \eqref{eqn:dlength} gives 
\[\ell_\alpha (Z) =\omega_{\Th}(\alpha,\sigma_\lambda(Z) ) =  \ddtp \ell_{\mu+t\alpha}(Z).\]

For any direction $\alpha \in \cH(\lambda)$, there is an analytic \emph{shearing vector field} $X_\alpha$ on $\T(S)$ given by 
\[\left(\sigma_\lambda\right)_*X_\alpha = \alpha,\] where we identify the vector space $\cH(\lambda)$ with its tangent space at every point.
The following lemma follows directly from Theorem \ref{thm:coordinates} and says that length functions are Hamiltonian for shearing vector fields.
\begin{lemma}\label{lem:length_dual}
Let $\lambda$ be a maximal geodesic lamination, and let $\alpha \in \cH(\lambda)$.
The shearing vector field $X_\alpha$ is dual to $d\ell_\alpha$ with respect to the Weil-Petersson symplectic form.  That is, $d\ell_\alpha  = \omega_{\WP}(X_\alpha, \cdot)$.  
\end{lemma}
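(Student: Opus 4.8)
The plan is to deduce the lemma directly from the properties collected in Theorem \ref{thm:coordinates}, using $\sigma_\lambda$ to transport everything from $\T(S)$ to the linear model $\cH^+(\lambda)\subset\cH(\lambda)$, where $\omega_{\Th}$ is a constant linear symplectic form and $X_\alpha$ is the constant vector field with value $\alpha$. First I would recall that, since $\sigma_\lambda:\T(S)\to\cH^+(\lambda)$ is an analytic diffeomorphism (third bullet) and pushes $\omega_{\WP}$ forward to a constant multiple of $\omega_{\Th}$ (sixth bullet), it suffices to verify the identity $d\ell_\alpha = \omega_{\Th}(\alpha,\cdot)$ after pushing forward under $(\sigma_\lambda)_*$; the constant multiple is harmless because both $d\ell_\alpha$ and $X_\alpha$ are defined intrinsically and the equation is bilinear in the pair $(\text{form},\text{form})$, so rescaling $\omega$ rescales $X_\alpha$ inversely and leaves the statement invariant. (If one prefers, absorb the constant into the normalization once and for all, as the paper does.)

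Next I would carry out the computation on $\cH^+(\lambda)$. By the second bullet of Theorem \ref{thm:coordinates}, for every $Z$ we have $\ell_\alpha(Z) = \omega_{\Th}(\alpha,\sigma_\lambda(Z))$, i.e.\ the function $\ell_\alpha\circ\sigma_\lambda^{-1}:\cH^+(\lambda)\to\RR$ is the restriction of the \emph{linear} functional $\sigma\mapsto\omega_{\Th}(\alpha,\sigma)$. The differential of a linear functional is that same functional, constant over the domain, so $d(\ell_\alpha\circ\sigma_\lambda^{-1})_\sigma = \omega_{\Th}(\alpha,\cdot)$ for all $\sigma\in\cH^+(\lambda)$. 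On the other hand, $(\sigma_\lambda)_*X_\alpha = \alpha$ by the definition of the shearing vector field, i.e.\ under the identification of $\cH(\lambda)$ with its own tangent space the pushed-forward field is the constant field $\alpha$. Therefore, for any tangent vector $Y$ on $\T(S)$ with image $(\sigma_\lambda)_*Y$,
\[
\big((\sigma_\lambda)_*\omega_{\WP}\big)\big((\sigma_\lambda)_*X_\alpha,\ (\sigma_\lambda)_*Y\big)
= \omega_{\Th}\big(\alpha,\ (\sigma_\lambda)_*Y\big)
= d(\ell_\alpha\circ\sigma_\lambda^{-1})\big((\sigma_\lambda)_*Y\big),
\]
up to the fixed constant from the sixth bullet. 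Pulling back by $\sigma_\lambda$ (a diffeomorphism, so this is legitimate pointwise and the constant cancels against the normalization of $X_\alpha$), this reads $\omega_{\WP}(X_\alpha, Y) = d\ell_\alpha(Y)$ for all $Y$, which is exactly the claim $d\ell_\alpha = \omega_{\WP}(X_\alpha,\cdot)$.

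There is essentially no hard analytic step here: the content is entirely in Theorem \ref{thm:coordinates}, and the lemma is the observation that ``the differential of a linear function is that linear function, read through the symplectic duality.'' The one point that deserves a sentence of care — and the only place an error could creep in — is the bookkeeping of the constant relating $(\sigma_\lambda)_*\omega_{\WP}$ and $\omega_{\Th}$, together with the one-sided nature of the tangent cone: a priori $\ell_\alpha(Z)$ was introduced via a right derivative in Equation \eqref{eqn:dlength}, but the second bullet of Theorem \ref{thm:coordinates} already identifies $\ell_\alpha$ with the \emph{globally defined, two-sided} analytic function $\omega_{\Th}(\alpha,\sigma_\lambda(\cdot))$ on all of $\T(S)$, so its differential in the honest $C^\infty$ sense exists and equals $\omega_{\Th}(\alpha,\cdot)$; no one-sidedness survives at this stage. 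I would state the normalization convention explicitly (e.g.\ that we fix the constant so that $(\sigma_\lambda)_*\omega_{\WP}=\omega_{\Th}$, matching the definition $(\sigma_\lambda)_*X_\alpha=\alpha$) and then the proof is two lines.
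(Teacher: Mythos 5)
Your proof is correct and matches the paper's intent exactly: the paper gives no explicit proof, only the remark that the lemma ``follows directly from Theorem~\ref{thm:coordinates},'' which is precisely the pushforward computation you spell out. The one thing to tighten is your first justification for why the constant relating $(\sigma_\lambda)^*\omega_{\Th}$ and $\omega_{\WP}$ is harmless: since $X_\alpha$ is defined by $(\sigma_\lambda)_*X_\alpha=\alpha$, independently of any symplectic form, rescaling $\omega_{\WP}$ does \emph{not} rescale $X_\alpha$, so the identity $d\ell_\alpha=\omega_{\WP}(X_\alpha,\cdot)$ is genuinely sensitive to the normalization. Your fallback --- fixing the convention so that $(\sigma_\lambda)^*\omega_{\Th}=\omega_{\WP}$, which is what the paper does implicitly by suppressing the constant from both Theorem~\ref{thm:coordinates} and the statement of Lemma~\ref{lem:length_dual} --- is the correct resolution and should be promoted from an aside to the primary argument.
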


Thurston's \emph{stretch flow} also has a nice formulation in shear coordinates.
Stretch lines are certain kinds of directed geodesics for Thurston's asymmetric Lipschitz metric on $\T(S)$ obtained by gluing together \emph{stretch maps} on the ideal triangle along a maximal geodesic lamination $\lambda$; see \cite[\S4]{Th_stretch} for details.

If $\operatorname{stretch}(Z,\lambda,t)\in \T(S)$ is the hyperbolic metric obtained by stretching $Z$ along $\lambda$ for time $t$, then we have
\[\sigma_\lambda(\operatorname{stretch}(Z,\lambda,t)) = e^t\sigma_\lambda(Z).\]

\begin{remark}
There is also a notion of a \emph{generalized stretch flow} defined for points in the regular locus $\T^\lambda(S)$ for a (non-maximal) filling lamination $\lambda$ obtained by gluing together stretch maps on the regular ideal polygons in the complement of $\lambda$; the construction of generalized stretch maps is a straightforward generalization of the  construction of stretch maps.
See also \cite[\S15]{CF} for the construction of the \emph{dilation flow} which is defined for any  $(Z,\mu,t) \in \T(S) \times \ML(S)\times \RR$ and specializes to generalized stretch flow when $\mu$ is filling and regular on $Z$.
\end{remark}

\section{Hamiltonian Flows in shear coordinates}\label{sec:flow}

The goal of this section is to formulate and prove a more general version of Theorem \ref{thm:pA_Hamiltonian} from the introduction.
First we describe the linear action of (a bounded positive power of) $\varphi$ on the image of the shearing coordinates adapted to a (maximal completion) of the support of the attractive fixed point for $\varphi$ on $\PML(S)$.

Once we have the linear action of $\varphi$ in hand, the proof reduces to a computation in linear algebra in shear coordinates.
We then account for general Jordan decompositions of the linear action of $\varphi$.

\subsection{The linear action of a pseudo-Anosov mapping class}
Let $\varphi \in \Mod(S)$ be a pseudo-Anosov mapping class with projectively invariant measured laminations $[\mu_\pm]\in \PML(S)$.
Choose a lift $\mu_+$ in the class of $[\mu_+]$ and   $\mu_-$ lifting $[\mu_-]$ such that $i(\mu_+,\mu_-) = 1$. 

To understand the action of $\varphi$ near its attracting fixed point $[\mu_+]\in \PML$, we find a generic \emph{invariant train track} $\tau_0$ for $\varphi$.  That is, $\phi(\tau_0) \prec \tau_0$ where $\phi$ is a homeomorphism representing $\varphi$.
We also require that $\tau_0$ carry $\mu_+$ snugly so that $\tau_0$ is not maximal if $\mu_+$ is not maximal. 

A generic bi-recurrent invariant train track $\tau_0$ exists and  there is a choice of carrying map $h$ for $\phi(\tau_0) \prec \tau_0$ such that if $M_h$ is the incidence matrix obtained via \eqref{eqn:incidence}, then 
\[M_\phi: \RR^{b(\tau_0)} \xrightarrow{\phi}\RR^{b(\phi(\tau_0))} \xrightarrow{M_h} \RR^{b(\tau_0)}\]
is primitive irreducible \cite[Theorem 4.1]{PP:pA}.
The Perron-Frobenius eigenvalue of $M_\phi$ is $\Lambda$ and its corresponding eigenvector is the weight system on $\tau_0$ corresponding to $\mu_+$.
The restriction of $M_\phi$ to weight spaces 
\[A_\varphi: W(\tau_0) \to W(\tau_0)\]
only depends on $\varphi$.

If $\tau_0$ was not maximal, we now choose a generic maximal bi-recurrent completion $\tau$ which is invariant under a bounded power of $\varphi$.
That is, $\tau$ is obtained from $\tau_0$ by attaching branches to $\tau_0$ in the complementary components of $\tau_0$ that were not triangles, and for some $1\le k\le k_0(g)$, we have $\phi^k(\tau) \prec \tau$.
To make this choice, observe that $\phi$ permutes the complementary components of $\tau$,\footnote{We remark that if $\varphi$ was obtained from Penner's construction \cite{Penner:pA}, then $\varphi$ preserves not only the components of the complement of $\mu_+$ but fixes all spikes.} so some power bounded by the complexity of $S$ fixes the spikes of the complementary polygons.
Any choice of ideal triangulation of the ideal polygons complementary to the support of $\mu_+$ defines a maximal  geodesic lamination $\lambda_+$ that contains the support of $\mu_+$ as a sub-lamination; make this choice so that $\lambda_+$ is chain recurrent.  This induces a choice of additional branches of $\tau_0$ which can be combed apart so that again all switches are trivalent.

The resulting train track $\tau$ is generic bi-recurrent maximal and invariant, so we have an induced map of weight spaces 
\[B_{\varphi^k}: W(\tau) \to W(\tau)\]
that restricts to $A_\varphi ^k$ on the subspace $W(\tau_0)$.

Then $B_{\varphi^k}$ preserves the integer lattice and Thurston symplectic form $\omega_{\Th}$ on $W(\tau)$.  
Up to replacing $k$ with $2k$ (without changing notation),  the set of eigenvalues of $B_{\varphi^k}$ contains no negative real numbers.

By Theorem \ref{thm:tangent}, there is a convex cone with finitely many sides in $W(\tau)\cong \cH(\lambda_+)$ that identifies with a linear fragment of $T_{\mu_+}\ML(S)$ containing $W(\tau_0) \cong \cH(\mu_+)$ as a (vector) subspace.  
Since $B_{\varphi^k}$ describes the action of $\varphi^k$ on $\ML(S)$ in the PIL chart $W(\tau)$, $B_{\varphi^k}$ also describes the linearization on the directions in $\T_{\mu_+}\ML(S)$ coming from $\cH(\lambda_+)$.  

\subsection{Duality}\label{sec:duality}
We relate the spectrum of the operator $B_{\varphi^k}$ to the spectrum of a dual linear operator modeling the action of $\varphi^{-k}$ on certain directions near $\mu_-$.
This is essentially just a summary/repackaging of \cite[\S3.4 and  Epilogue]{PennerHarer}.

To a maximal train track $\tau$ there is a \emph{dual bi-gon track} $\tau^*$, such that $\phi^k(\tau) \prec \tau$ implies  $\phi^{-k} (\tau)^* \prec \tau^*$.  
This induces an integer linear isomorphism  
\[B'_{\varphi^{-k}}: W'(\tau^*) \to W'(\tau^*),\]
where $W'(\tau^*)$ is a linear quotient of the subspace $W(\tau^*)\le \RR^{b(\tau^*)}$ satisfying the switch conditions obtained as follows.
For every branch $b$ of $\tau$ there is a dual branch $b^*$ of $\tau^*$ meeting $b$ once, and for vectors $u \in \RR^{b(\tau)}$ and $v\in \RR^{b(\tau^*)}$ a non-degenerate bilinear pairing
\[\langle u,v \rangle = \sum_{b\in b(\tau)} u(b) \cdot v(b^*).\]
The paring induces a linear mapping 
\[\iota: W(\tau^*) \to W(\tau)^*,\]
which is surjective when $\tau$ is maximal.  By definition, we have $W'(\tau^*) = W(\tau^*)/\ker(\iota)$ so that $W'(\tau^*) \cong W(\tau)^*$ holds. 
Moreover, there is a \emph{positive cone} in $W'(\tau^*)$ that defines a chart for $\ML(S)$ in a natural way that contains $\mu_-$ (on a facet if $\tau$ was not maximal).
As such, there is a (unique) chain recurrent geodesic lamination $\lambda_-$ invariant under $\varphi^{-k}$ that contains the support of $\mu_-$ as a minimal sub-lamination and a natural isomorphism
$ W'(\tau^*) \cong \cH(\lambda_-)$ conjugating the linear $\varphi^{-k}$ actions.

The Thurston form $\omega_{\Th}$ on $W(\tau)$ then induces an isomorphism $W(\tau)^* \to  W(\tau)$.
Composing isomorphisms, we have 
\[\ast : W'(\tau^*) \to W(\tau)^*\to W(\tau),\]
which satisfies 
\[\omega_{\Th}( w, \ast [v]) = \langle w,v\rangle. \]
If $[v]$ and $w$ both lie in their respective positive cones, then \[\omega_{\Th}(w,\ast [v]) = i( \mu_{w}, \mu_{[v]}),\] for the corresponding measured laminations.

Finally, the relationship between $B_{\varphi^k}$ and $B_{\varphi^{-k}}'$ is 
\[\ast B'_{\varphi^{-k}}[v] = B_{\varphi^k}\inverse \ast [v].\]
In particular, the spectra of $B'_{\varphi^{-k}}$ and $B_{\varphi^{k}}$ coincide.
Moreover, direct computation shows that $\ast$ is a symplectomorphism with respect to the natural symplectic structure on $W'(\tau^*)$, so that if $\alpha_1, ..., \alpha_{3g-3}, \beta_1, ...,\beta_{3g-3}$ is a symplectic basis for  $W(\tau)$, then $\ast\inverse \alpha_1, ..., \ast\inverse\alpha_{3g-3},$  $\ast\inverse\beta_1, ..., \ast\inverse\beta_{3g-3}$ is a symplectic basis  for  $W'(\tau^*)$.

Summarizing, the action of $\varphi^k$ on $\ML(S)$ near $\mu_+$ is naturally symplectomorphic to the action of $\varphi^{-k}$ on $\ML(S)$ near $\mu_-$.

\subsection{Hamiltonian flows for pseudo-Anosov mappings with simple spectrum}\label{sec:flow_simple}

The spectrum of a symplectic linear map is symmetric about $1$; the eigenvalues come in reciprocal and complex conjugate pairs with the same multiplicity.

Recall that we chose $k$ so that $B_{\varphi^k}$ has no negative real eigenvalues.
We list the eigenvalues of $B_{\varphi^k}$ according to their algebraic multiplicity;\footnote{The algebraic multiplicity of a complex eigenvalue $\Lambda_i$ is the number of times it appears on the main diagonal in the Jordan decomposition over $\CC$.} let
\begin{itemize}
\item $\Lambda_\RR^+=\{\Lambda^k = \Lambda_1, \Lambda_2, ..., \Lambda_p\}$ denote the real eigenvalues of $B_{\varphi^k}$ larger than $1$;
\item $\Lambda_{\mathbb T}^+= \{\Lambda_{p+1}, ..., \Lambda_{p+q}\}$ denote the eigenvalues on the unit circle with argument in $[0,\pi)$;
\item $\Lambda_\HH^+ = \{\Lambda_{p+q+1}, ..., \Lambda_{p+q+r}\}$ denote the (complex) eigenvalues lying in the upper half plane with magnitude larger than $1$.
\end{itemize}

\begin{lemma}\label{lem:simple_jordan}
If $B_{\varphi^k}$ is diagonalizable over $\CC$, then  there is a symplectic basis of (generalized) eigenvectors \[\alpha_1, ..., \alpha_{p+q}, \alpha_{p+q+1}^\pm, ..., \alpha_{p+q+r}^\pm, \beta_1, ..., \beta_{p+q}, \beta_{p+q+1}^\pm, ..., \beta_{p+q+r}^\pm\in\cH(\lambda_+)\] that satisfies

\begin{itemize}
\item For $i = 1, ..., p$, we have 
\[
\begin{matrix}
B \alpha_i = \Lambda_i \alpha_i & \text{ and } & B\beta_i = \Lambda_i\inverse \beta_i.
\end{matrix}
\]
\item For $i = p+1, ..., p+q$, we have $\Lambda_i = e^{i\theta_i}$ for $\theta_i \in [0,\pi)$ and 
\[
\begin{matrix}
B\alpha_i = \cos(\theta_i) \alpha_i +\sin(\theta_i)\beta_i &\text{ and }& B \beta_i = \cos(\theta_i) \beta_i - \sin(\theta_i)\alpha_i.
\end{matrix}
\] 

\item For $i = p+q+1, ..., p+q+r$, we have $\Lambda_i=|\Lambda_i|e^{i\theta_i}$ for $\theta_i \in (0,\pi)$ and 
\[\begin{matrix}
B \alpha_i^+ =  |\Lambda_i|\left( \cos(\theta_i) \alpha_i^+ + \sin(\theta_i) \alpha_i^- \right)& & B\alpha_{i}^- = |\Lambda_i|\left( \cos(\theta_i) \alpha_{i}^- -\sin(\theta_i) \alpha_{i}^+\right) \\
&\text{ and }&\\
B \beta_i^+ =  \left|\Lambda_i\right|\inverse\left( \cos(\theta_i) \beta_i^+ -  \sin(\theta_i)\beta_{i}^- \right)& & B \beta_{i}^- =   \left|\Lambda_i\right|\inverse\left(\cos(\theta) \beta_{i}^- +\sin(\theta_i) \beta_{i}^+\right).
\end{matrix}\]

\end{itemize}
The subspaces $\langle \alpha_1, ..., \alpha_{p+q}, \alpha_{p+q+1}^\pm, ..., \alpha_{p+q+r}^\pm\rangle$ and $\langle \beta_1, ..., \beta_{p+q}, \beta_{p+q+1}^\pm, ..., \beta_{p+q+r}^\pm\rangle$ form a Lagrangian splitting of $\cH(\lambda_+)$ with $\omega_{\Th}(\alpha_i, \beta_j) = \delta_{ij}$, $\omega_{\Th}(\alpha_i^\pm, \beta_j^\pm) = \delta_{ij}$, and $\omega_{\Th}(\alpha_i^\mp, \beta_j^\pm) = 0$.
\end{lemma}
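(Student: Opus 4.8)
The plan is to use the symplectic structure of $B := B_{\varphi^k}$ to decompose $\cH(\lambda_+)$ into $B$-invariant, $\omega_{\Th}$-orthogonal symplectic subspaces, one for each orbit of eigenvalues under $\Lambda\mapsto\Lambda\inverse$ and complex conjugation, and then to write down the asserted real normal form on each block.

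First I would record two symmetries. Writing $J$ for the Gram matrix of $\omega_{\Th}$, the relation $B^T J B = J$ exhibits $B$ as conjugate to $(B\inverse)^T$, so the spectrum of $B$, counted with multiplicity, is invariant under $\Lambda\mapsto\Lambda\inverse$; since $B$ is a real (indeed integral) matrix it is also invariant under $\Lambda\mapsto\bar\Lambda$. Combined with the standing hypothesis that $B$ has no negative real eigenvalues, this places each eigenvalue into exactly one of the three listed families, with the eigenvalue $1$ occurring as the $\theta=0$ case of the unit-circle family. Second, if $Bv=\mu v$ and $Bw=\nu w$ with $\mu\nu\neq 1$ (extending $\omega_{\Th}$ bilinearly to $\cH(\lambda_+)\otimes\CC$), then $\omega_{\Th}(v,w)=\omega_{\Th}(Bv,Bw)=\mu\nu\,\omega_{\Th}(v,w)=0$, so eigenspaces for eigenvalues with product $\neq 1$ are $\omega_{\Th}$-orthogonal. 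Now group the eigenvalues into orbits $\mathcal O(\mu)=\{\mu,\bar\mu,\mu\inverse,\bar\mu\inverse\}$ (of size $1$, $2$, or $4$) and let $V_{\mathcal O}\subseteq\cH(\lambda_+)$ be the real subspace whose complexification is $\bigoplus_{\nu\in\mathcal O}\ker(B-\nu)$, which makes sense because each orbit is conjugation-stable. Diagonalizability of $B$ makes the $V_{\mathcal O}$ span $\cH(\lambda_+)$, and the orthogonality just noted makes distinct blocks $\omega_{\Th}$-orthogonal; since $\omega_{\Th}$ is nondegenerate, $\cH(\lambda_+)$ is then the $\omega_{\Th}$-orthogonal direct sum of the $V_{\mathcal O}$, each of which is a symplectic subspace preserved by $B$. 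It therefore suffices to build the stated bases block by block and to reindex them according to the enumeration of eigenvalues with multiplicity.

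On a real block $\mathcal O=\{\Lambda,\Lambda\inverse\}$ with $\Lambda>1$, the eigenspaces $\ker(B-\Lambda)$ and $\ker(B-\Lambda\inverse)$ are real, isotropic (because $\Lambda^2\neq1$), and nondegenerately $\omega_{\Th}$-paired, so a basis of one together with its $\omega_{\Th}$-dual basis of the other gives the required $\alpha_i,\beta_i$. On a complex off-circle block $\mathcal O=\{\Lambda,\bar\Lambda,\Lambda\inverse,\bar\Lambda\inverse\}$, I would pick $v\in\ker(B-\Lambda)$ and set $\alpha^+=\Re v$, $\alpha^-=-\Im v$; splitting $Bv=\Lambda v$ into real and imaginary parts produces exactly the scaled-rotation relations in the statement, and since $\ker(B-\Lambda\inverse)$ is a distinct eigenspace that is $\omega_{\Th}$-dually paired with $\ker(B-\Lambda)$ one may choose $w\in\ker(B-\Lambda\inverse)$ with any prescribed pairing, set $\beta^+=\Re w$, $\beta^-=-\Im w$, and normalize so that $\omega_{\Th}(\alpha^\pm_i,\beta^\pm_j)=\delta_{ij}$ with the mixed pairings vanishing (higher multiplicity is handled by running this over a basis). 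The delicate case — and the one I expect to be the main obstacle — is an elliptic block $\mathcal O=\{e^{i\theta},e^{-i\theta}\}$ with $\theta\in(0,\pi)$: here $\ker(B-e^{-i\theta})=\overline{\ker(B-e^{i\theta})}$, so the real and imaginary parts of a single complex eigenvector already span $V_{\mathcal O}$ and there is no independent second eigenspace in which to absorb the normalization. I would form the nondegenerate skew-Hermitian form $H(v,w)=\omega_{\Th}(v,\bar w)$ on $\ker(B-e^{i\theta})$, diagonalize the Hermitian form $iH$ in an $H$-orthogonal basis, rescale, and again pass to real and imaginary parts; one must then verify that each resulting real $2$-plane carries $B$ as the prescribed rotation by $\theta$ and that the $\omega_{\Th}$-pairings come out equal to $\delta_{ij}$, which is exactly where the signs and the convention $\theta\in[0,\pi)$ have to be tracked with care (the sub-case $\theta=0$, i.e.\ the eigenvalue $1$, is immediate, since $V_{\mathcal O}$ is fixed by $B$ and any symplectic basis of it works). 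Finally, assembling the per-block data and reindexing, the pairwise $\omega_{\Th}$-orthogonality of all the $\alpha$'s and of all the $\beta$'s — and hence the claimed Lagrangian splitting — follows from the $\omega_{\Th}$-orthogonality of distinct blocks together with the within-block normalizations.
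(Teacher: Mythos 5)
The paper's own ``proof'' is a citation to Laub--Meyer's canonical forms for real symplectic matrices, whereas you give a self-contained argument by decomposing $\cH(\lambda_+)$ into $B$-invariant symplectic blocks $V_{\mathcal{O}}$, one per eigenvalue orbit $\{\Lambda,\bar\Lambda,\Lambda^{-1},\bar\Lambda^{-1}\}$. That structural skeleton is correct: the spectral symmetries, the $\omega_{\Th}$-orthogonality of eigenspaces whose eigenvalues have product $\neq1$, the symplecticity and $B$-invariance of each $V_{\mathcal{O}}$, and the real-eigenvalue block all go through cleanly.

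The step where you claim to ``normalize so that $\omega_{\Th}(\alpha^\pm_i,\beta^\pm_j)=\delta_{ij}$'' does not close, however, and this is a genuine gap in both remaining block types. For an off-circle block, writing $v=\alpha^+-i\alpha^-\in\ker(B-\Lambda)$, $w=\beta^+-i\beta^-\in\ker(B-\Lambda^{-1})$, and $c=\omega_{\Th}(v,w)$, the only nonzero $\CC$-bilinear pairings among the four eigenspaces in the block are $\omega_{\Th}(v,w)=c$ and $\omega_{\Th}(\bar v,\bar w)=\bar c$; expanding gives $\omega_{\Th}(\alpha^+,\beta^+)=\tfrac12\Re c$ and $\omega_{\Th}(\alpha^-,\beta^-)=-\tfrac12\Re c$, which have opposite signs for \emph{every} choice of $v$ and $w$, so the normalization you assert is unachievable once the $B$-relations are fixed. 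One can see the obstruction without the complexification: the lemma's relations put $B=|\Lambda_i|R_{\theta_i}$ on the $(\alpha^+,\alpha^-)$-plane and $B=|\Lambda_i|^{-1}R_{-\theta_i}$ on the $(\beta^+,\beta^-)$-plane, and $B^{\mathsf T}JB=J$ for the $J$ corresponding to the stated pairings forces $R_{-2\theta_i}$ to be the identity, impossible for $\theta_i\in(0,\pi)$. For the elliptic blocks you flag the delicacy but leave it unresolved, and it is real: $R_\theta$ and $R_{-\theta}$ are not symplectically conjugate in dimension two (any $P$ with $PR_\theta P^{-1}=R_{-\theta}$ anticommutes with $R_\theta-R_{-\theta}$ and hence has negative determinant), so the sign of the rotation is a symplectic invariant, the Krein signature of $e^{i\theta}$; diagonalizing your Hermitian form $iH(v,w)=i\,\omega_{\Th}(v,\bar w)$ on $\ker(B-e^{i\theta})$ yields diagonal entries $\pm1$, not all $+1$, and on a block with the other sign one is forced to $B\alpha=\cos\theta\,\alpha-\sin\theta\,\beta$, opposite to the lemma. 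Both gaps in fact mirror a sign imprecision in the lemma's statement itself --- harmless for Theorem~\ref{thm:pA_Hamiltonian}, where the spectrum is real and positive so only the first bullet arises --- but your proposal asserts the normalization as always achievable, which is what fails; it must be weakened, e.g.\ by allowing $\omega_{\Th}(\alpha^-,\beta^-)=-1$ in the off-circle blocks or by letting $\theta_i$ carry the Krein sign in the elliptic ones.
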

\begin{proof}
This is the real Jordan normal form for (symplectic) linear maps diagonalizable over $\CC$.  See also \cite{Symplectic:canonical}.
\end{proof}

Recall from \S\ref{sec:length} that to each $\alpha\in \cH(\lambda_+)$ there is an analytic length function $\ell_\alpha: \T(S) \to \RR$, which is Hamiltonian for the corresponding shear vector field $X_\alpha$ and the Weil-Petersson symplectic form (Lemma \ref{lem:length_dual}). 

With this notation set, we can restate and prove a more general version of Theorem \ref{thm:pA_Hamiltonian}.
The proof is an elementary computation in symplectic linear algebra in shear coordinates.
\begin{theorem}\label{thm:simple_spectrum}
Let $\varphi \in \Mod(S)$ be a pseudo-Anosov mapping class with stretch factor $\Lambda>1$ and (projectively) invariant $\mu_{\pm}\in \ML(S)$ satisfying 
\[\varphi_*\mu_+ = \Lambda \mu_+ \text{ and } \varphi_*\mu_- = \Lambda\inverse \mu_-.\]

Let $\lambda_+$ be a chain recurrent maximal geodesic lamination that contains the support of $\mu_+$ as a sub-lamination, and let $1\le k\le k_0(g)$ be such that $\varphi^k$ preserves $\lambda_+$.
Assume that the linear action of $\varphi^k$ on $\cH(\lambda_+)$ is diagonalizable over $\CC$ with no negative real eigenvalues.

For the symplectic bases of eigenvectors afforded by Lemma \ref{lem:simple_jordan}, the time $1$ flow of the real-analytic Hamiltonian function 
\begin{align*}
F^{\lambda^+}_{\varphi^{-k}} = \sum_{i = 1}^{p} \log(\Lambda_i) ~\ell_{\alpha_i}\ell_{\beta_i}+ &\sum_{i = p+1}^{p+q}\frac{\theta_i}{2}(\ell_{\alpha_i}^2 + \ell_{\beta_i}^2) \\
&+ \sum_{i = p+q+1}^{p+q+r} \log(|\Lambda_i|)\left(\ell_{\alpha_i^+}\ell_{\beta_i^+}+\ell_{\alpha_i^-}\ell_{\beta_i^-}\right) + \theta_i\left(\ell_{\alpha_i^-}\ell_{\beta_i^+}-\ell_{\alpha_i^+}\ell_{\beta_i^-}\right) 
\end{align*}
coincides with the action of $\varphi^{-k}$ on $\T(S)$.

Moreover, the unique $\varphi^{-k}$ invariant Thurston stretch line forward-directed by $\lambda_+$ is a flow line parameterized proportionally to directed arclength.
\end{theorem}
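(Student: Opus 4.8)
The plan is to transport everything to shear coordinates $\sigma:=\sigma_{\lambda_+}\colon\T(S)\xrightarrow{\ \sim\ }\cH^+(\lambda_+)$ and reduce the statement to a computation in the symplectic vector space $(\cH(\lambda_+),\omega_{\Th})$. By Theorem~\ref{thm:coordinates}, $\sigma$ is a real-analytic diffeomorphism which (after fixing the normalization in that theorem) pulls $\omega_{\Th}$ back to $\omega_{\WP}$, and under which each length function $\ell_\alpha$ becomes the \emph{linear} functional $\sigma\mapsto\omega_{\Th}(\alpha,\sigma)$. Hence $F^{\lambda^+}_{\varphi^{-k}}$ becomes a homogeneous quadratic form $Q$ on $\cH(\lambda_+)$ — in particular it is real-analytic, indeed polynomial in shear coordinates — whose Hamiltonian vector field is the \emph{linear} endomorphism $A\in\mathfrak{sp}(\cH(\lambda_+),\omega_{\Th})$ determined by $\omega_{\Th}(A\,\cdot\,,\,\cdot\,)=dQ$. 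Since $\sigma$ is symplectic it conjugates the Hamiltonian flow of $X_{F^{\lambda^+}_{\varphi^{-k}}}$ on $\T(S)$ to the linear flow $t\mapsto\exp(tA)$ on $\cH^+(\lambda_+)$ — granting completeness, treated below — so the theorem reduces to the single identity $\exp(A)=L$, where $L$ denotes the linear model of the $\varphi^{-k}$-action.

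First I would pin down $L$. Naturality of shear coordinates under $\Mod(S)$ together with $\varphi^k\lambda_+=\lambda_+$ gives $\sigma(\varphi^{-k}.Z)=B_{\varphi^k}^{-1}\,\sigma(Z)$, using that (in the identification $W(\tau)\cong\cH(\lambda_+)$ of \S\ref{sec:flow}) $B_{\varphi^k}$ is the linear model of the $\varphi^k$-action; thus $L=B_{\varphi^k}^{-1}$. By hypothesis $L$ is symplectic, diagonalizable over $\CC$, with no eigenvalue in $(-\infty,0]$, so its principal logarithm $\log L$ is a well-defined element of $\mathfrak{sp}(\cH(\lambda_+),\omega_{\Th})$. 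The heart of the argument is then to check the identity $A=\log L$ block-by-block on the Lagrangian-adapted symplectic basis of Lemma~\ref{lem:simple_jordan}, taking $\omega_{\Th}(\alpha_i,\,\cdot\,)$ and $\omega_{\Th}(\beta_i,\,\cdot\,)$ (and their $\pm$ variants) as Darboux coordinates. On a real hyperbolic block $\langle\alpha_i,\beta_i\rangle$, $i\le p$, one has $L\alpha_i=\Lambda_i^{-1}\alpha_i$ and $L\beta_i=\Lambda_i\beta_i$, so $\log L$ restricts to $\operatorname{diag}(-\log\Lambda_i,\log\Lambda_i)$, which is exactly the Hamiltonian vector field of $\log(\Lambda_i)\,\ell_{\alpha_i}\ell_{\beta_i}$; on an elliptic block $L$ acts as the rotation of angle $-\theta_i$, whose logarithm is the Hamiltonian vector field of $\tfrac{\theta_i}{2}\bigl(\ell_{\alpha_i}^2+\ell_{\beta_i}^2\bigr)$; and each four-dimensional loxodromic block is the corresponding combination, matching the last line of the formula defining $F^{\lambda^+}_{\varphi^{-k}}$. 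Granting $A=\log L$, the time-one flow of $X_{F^{\lambda^+}_{\varphi^{-k}}}$ is $\exp(A)=L$, i.e.\ the action of $\varphi^{-k}$ on $\T(S)$.

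Completeness of the flow on the non-compact $\T(S)$ — equivalently, that $\exp(tA)$ preserves the cone $\cH^+(\lambda_+)$ for all $t$ — is the one step that is not pure linear algebra, and I would settle it using unique ergodicity of the stable lamination: $\mu_+=\alpha_1$ is the only ergodic transverse measure carried by $\lambda_+$, so by the fourth bullet of Theorem~\ref{thm:coordinates} the image cone is the single half-space $\cH^+(\lambda_+)=\{\sigma:\omega_{\Th}(\mu_+,\sigma)>0\}$. Since $A\in\mathfrak{sp}$ and $A\alpha_1=-\log(\Lambda_1)\,\alpha_1$, one gets $\omega_{\Th}\bigl(\mu_+,\exp(tA)\sigma\bigr)=\omega_{\Th}\bigl(\exp(-tA)\mu_+,\sigma\bigr)=\Lambda_1^{\,t}\,\omega_{\Th}(\mu_+,\sigma)$, which stays positive; hence the half-space is $\exp(tA)$-invariant and the flow is complete.

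Finally, for the ``moreover'' clause: in shear coordinates the stretch line forward-directed by $\lambda_+$ through the point with coordinate $\sigma_0$ is the ray $\{e^t\sigma_0:t\in\RR\}$ (the stretch-flow formula in Theorem~\ref{thm:coordinates}), so it is $\varphi^{-k}$-invariant exactly when $\sigma_0$ is an eigenvector of $L$ lying in $\cH^+(\lambda_+)$. By the half-space description the only real eigenline of $L$ meeting $\cH^+(\lambda_+)$ is $\RR_{>0}\beta_1$ — since $\omega_{\Th}(\mu_+,\beta_1)=1>0$ while $\omega_{\Th}(\mu_+,\alpha_i)=0=\omega_{\Th}(\mu_+,\beta_j)$ for $j\ge2$, and the elliptic and loxodromic blocks carry no real eigenline — with eigenvalue $\Lambda_1=\Lambda^k>1$; this gives existence and uniqueness of the invariant stretch line. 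Along this ray $X_{F^{\lambda^+}_{\varphi^{-k}}}=A$ acts by $A(e^t\beta_1)=\log(\Lambda_1)\,e^t\beta_1$, which is radial and hence tangent to the ray, so the ray is a flow line; integrating, the Hamiltonian time $s$ and the stretch parameter $t$ (which is the directed arclength for Thurston's asymmetric metric) are related by $t=(\log\Lambda_1)\,s$, a linear reparameterization, as claimed. One also sees directly that $F^{\lambda^+}_{\varphi^{-k}}$ vanishes identically on this line, since $Q$ is homogeneous quadratic with $Q(\beta_1)=0$: the only monomial of $Q$ not annihilated at $\beta_1$ is $\ell_{\alpha_1}\ell_{\beta_1}$, which vanishes there. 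Aside from the routine but error-prone bookkeeping of which of $\varphi^{\pm k}$ and which of $\Lambda_i^{\pm1}$ appears where — so that $\log L$ carries the correct sign and the stated $F^{\lambda^+}_{\varphi^{-k}}$, not its ``reciprocal'', results — the main obstacle is the completeness of the flow on the non-compact $\T(S)$; the unique-ergodicity/half-space observation resolves it and at the same time isolates the invariant stretch line needed for the last clause.
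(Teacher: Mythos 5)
Your argument is essentially the paper's: pass to shear coordinates via $\sigma_{\lambda_+}$ so that $F^{\lambda^+}_{\varphi^{-k}}$ becomes a homogeneous quadratic form $Q(\sigma)=-\tfrac12\omega_{\Th}(X\sigma,\sigma)$ with $X\in\mathfrak{sp}(\cH(\lambda_+))$ satisfying $\exp(X)=B_{\varphi^k}$ (the paper packages this as Lemma~\ref{lem:form}), verify this block-by-block against Lemma~\ref{lem:simple_jordan}, and then read off the ``moreover'' clause from the fact that $\RR_{>0}\beta_1$ is the unique positive eigenray. The one place your write-up goes beyond the paper's text is the explicit check that the linear flow $\exp(tA)$ preserves the open cone $\cH^+(\lambda_+)$ for all $t$, not just at integer times — a point the paper leaves implicit. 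Your resolution is the right one: unique ergodicity of $\mu_+$ identifies $\cH^+(\lambda_+)$ with the single half-space $\{\omega_{\Th}(\mu_+,\cdot)>0\}$ (the same fact the paper invokes to locate the invariant ray), and since $A\mu_+$ is a scalar multiple of $\mu_+$, the defining functional $\omega_{\Th}(\mu_+,\cdot)$ is rescaled by a positive factor along the flow, so the half-space is invariant and the Hamiltonian flow on $\T(S)$ is complete. This is a genuine strengthening of the exposition even though the route is the same.
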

\begin{remark}
There are pseudo-Anosov mapping classes whose projectively invariant measured laminations $\mu_+$ and $\mu_-$ are both maximal, and the linear action $A_\varphi$ on $W(\tau_0)\cong \RR^{6g-g}$ has simple spectrum with no eigenvalues on the unit circle. 
Indeed, the stretch factor $\Lambda$ of mapping classes arising from Penner's construction \cite{Penner:pA} have no Galois conjugates on the unit circle \cite{HS:coronal}, and it is possible to use Penner's construction to obtain $\Lambda$ whose degree over $\QQ$ is $6g-6$ \cite{Strenner:degree}.
Thus the characteristic polynomial of $A_\varphi$ agrees with the minimal polynomial of $\Lambda$.  Since irreducible polynomials have distinct roots, this provides examples as claimed at the beginning of the remark.

For such examples, we have $\tau_0 = \tau$, $\lambda_\pm = \mu_\pm$, $T_{\mu_\pm}\ML(S)\cong \cH(\mu_\pm)$, and $B_{\varphi^k}=A_{\varphi^k}$ for $k = 1,2$ (corresponding to whether or not $A_\varphi$ has any negative eigenvalues).
\end{remark}
\begin{remark}
Theorem \ref{thm:simple_spectrum} applied to $\varphi^{-k}$ which preserves the (dual) maximal lamination $\lambda_-$ whose support contains $\mu_-$ provides us with a very similar function $F^{\lambda_-}_{\varphi^k}$ whose Hamiltonian flow at time one coincides with the action of $\varphi^k$. The unique invariant stretch line forward directed by $\lambda_-$ is a flow line parameterized proportionally to arclength.  

From the discussion in \S\ref{sec:duality}, there is a dual symplectic basis of (generalized) eigenvectors $\ast\inverse \alpha_i$, $\ast\inverse\alpha_i^\pm$, $\ast\inverse \beta_i$ and $\ast\inverse\beta_i^\pm \in \cH(\lambda_-)$ compatible with the action of $B_{\varphi^{-k}}'$ on $\cH(\lambda_-)$. 
The function $F^{\lambda_-}_{\varphi^k}$ is obtained from $F^{\lambda_+}_{\varphi^{-k}}$ by negation and replacing each length function $\ell_{\alpha_i^\bullet}$ with the corresponding length function $\ell_{\ast\inverse\alpha_i^\bullet}$ for all decorations $\bullet$, and similarly with the $\beta$'s.
\end{remark}

To prove Theorem \ref{thm:simple_spectrum}, we leverage the following elementary lemma from symplectic linear algebra.
\begin{lemma}\label{lem:form}
Let $(V,\omega)$ be a symplectic vector space and $A \in \Sp(V)$ a symplectic linear map.  Let $\mathsf{sp}(V)$ denote the lie algebra of $\Sp(V)$ and suppose we have $X\in \mathsf{sp}(V)$ with $\exp(X) = A$.

Then $A$ is the Hamiltonian flow at time one of  the quadratic form \begin{align*}
F: V&\to \RR \\
p&\mapsto \frac12\omega(Xp, p).
\end{align*}
\end{lemma}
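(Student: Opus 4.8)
The plan is to reduce everything to the standard fact that a Hamiltonian flow of a quadratic form on a symplectic vector space is linear, and then to identify the generator of that linear flow with $X$. First I would recall that on the symplectic vector space $(V,\omega)$, the Hamiltonian vector field $X_F$ of a function $F$ is defined by $\omega(X_F,\cdot) = dF$. For the quadratic form $F(p) = \tfrac12\omega(Xp,p)$, I would compute the differential: using bilinearity and the antisymmetry of $\omega$ together with the fact that $X\in\mathsf{sp}(V)$ (i.e. $\omega(Xu,v) + \omega(u,Xv) = 0$), one gets $dF_p(v) = \tfrac12\omega(Xp,v) + \tfrac12\omega(Xp,v) = \omega(Xp,v)$, where the two terms combine precisely because $X$ is in the symplectic Lie algebra. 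Hence $dF_p = \omega(Xp,\cdot)$, which says exactly that $X_F(p) = Xp$; the Hamiltonian vector field of $F$ is the \emph{linear} vector field $p\mapsto Xp$.

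Next I would integrate this vector field. The flow of the linear ODE $\dot p = Xp$ starting at $p_0$ is $p(t) = \exp(tX)p_0$, which is defined for all $t\in\RR$ since $V$ is a vector space and $X$ a bounded linear operator; in particular the Hamiltonian flow $H_F$ is complete and given by $H_F(p_0,t) = \exp(tX)p_0$. Evaluating at $t=1$ and using the hypothesis $\exp(X) = A$ gives $H_F(p_0,1) = \exp(X)p_0 = Ap_0$, so the time-one flow of $X_F$ is the linear symplectomorphism $A$, as claimed.

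I expect the only point requiring genuine care — the ``main obstacle,'' such as it is — to be the differentiation step: making sure the factor of $\tfrac12$ is absorbed correctly and that the condition $X\in\mathsf{sp}(V)$ is invoked at the right place, since without it $dF_p$ would be $\tfrac12\big(\omega(Xp,\cdot) - \omega(\cdot,Xp)|_{\text{transposed}}\big)$ rather than $\omega(Xp,\cdot)$. Everything else is the standard correspondence between quadratic Hamiltonians and one-parameter subgroups of $\Sp(V)$, and I would simply cite \cite{Symplectic:canonical} for the general statement if a more detailed treatment is wanted.
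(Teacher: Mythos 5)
Your proposal is correct and follows essentially the same argument as the paper: differentiate $F$, use $X\in\mathsf{sp}(V)$ together with antisymmetry of $\omega$ to collapse the two terms to $\omega(Xp,\cdot)$, and then integrate the linear vector field $p\mapsto Xp$ to $\exp(tX)$. The only cosmetic difference is that you make the integration step explicit, whereas the paper states it without elaboration.
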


\begin{proof}
The elements $X\in \mathsf{sp}(V)$ satisfy $\omega(Xv,w) = -\omega(v,Xw)$, so the map $(v,w) \mapsto \omega(Xv,w)$ is symmetric and bilinear.
The time $t$ flow of the vector field $p \mapsto Xp \in T_pV\cong V$ is given by \[p\mapsto\exp(tX)p.\]
This means we just need to show that $dF(p)v = \omega(Xp, v)$ for all $v$.
We compute
\begin{align*}
dF(p)v &= \frac12\left(\omega(Xp,v) + \omega(Xv,p)\right)\\
& =\frac12\left(\omega(Xp,v)-\omega(v,Xp)\right)\\
&=\omega(Xp,v),
\end{align*}
which is what we wanted to show.
\end{proof}

\begin{proof}[Proof of Theorem \ref{thm:simple_spectrum}]
The first observation we make is that 
\[\varphi^k\left(\sigma_{\lambda_+}(Z)\right) =B_{\varphi^k}\sigma_{\lambda_+}(Z).\]

Let us examine the coordinate functions $x_i, y_i, x_i^\pm,$ and $y_i^\pm$ for $\sigma_{\lambda_+}(Z)$ with respect to our chosen symplectic basis adapted to $B_{\varphi^k}$:
\[\sigma_{\lambda_+}(Z) = \sum x_i \alpha_i + y_i \beta_i + \sum x_i^\pm \alpha_i^\pm + y_i^\pm\beta_i^\pm.\]
Using Theorem \ref{thm:coordinates}, we see
\[
\begin{matrix}
\ell_{\alpha_i^\bullet}(Z) = \omega_{\Th}(\alpha_i^\bullet,\sigma_{\lambda_+}(Z)) = y_i^\bullet & \text{ and } &  \ell_{\beta_i^\bullet}(Z) = \omega_{\Th}(\beta_i^\bullet,\sigma_{\lambda_+}(Z)) =- x_i^\bullet ,
\end{matrix}\]
where $\bullet$ represents one of the decorations `$+$', `$-$', or `$~$'.
This means that
\begin{equation}\label{eqn:shears}
\sigma_{\lambda_+}(Z) = \sum -\ell_{\beta_i} \alpha_i + \ell_{\alpha_i} \beta_i + \sum - \ell_{\beta_i^\pm} \alpha_i^\pm +\ell_{\alpha_i^\pm}\beta_i^\pm. 
\end{equation}

Comparing with Lemma \ref{lem:simple_jordan}, we specify an infinitesimal symplectic transformation  $X\in \mathsf{sp}(\cH(\lambda_+))$ with $\exp(X) = B_{\varphi^k}$ by the rules
\begin{itemize}
\item For $i = 1, ..., p$, 
\[
\begin{matrix}
X\alpha_i = \log(\Lambda_i) \alpha_i & \text{ and } & X\beta_i = -\log(\Lambda_i)\beta_i
\end{matrix}
\]
\item For $i = p+1, ..., p+q$, 
\[
\begin{matrix}
X\alpha_i = \theta_i \beta_i & \text{ and } & X\beta_i =- \theta_i\alpha_i
\end{matrix}
\]
\item For $i = p+q+1, ..., p+q+r$, 
\[
\begin{matrix}
X\alpha_i^+ = \log(|\Lambda_i|)\alpha_i^+ + \theta_i \alpha_i^- &  & X\alpha_i^- = \log(|\Lambda_i|)\alpha_i^- - \theta_i \alpha_i^+\\
& \text{ and } & \\
X\beta_i^+ = -\log(|\Lambda_i|)\beta_i^+ -\theta_i \beta_i^- & & X\beta_i^- = -\log(|\Lambda_i|)\beta_i^- +\theta_i \beta_i^+.
\end{matrix}
\]
\end{itemize}

By Lemma \ref{lem:form}, the Hamiltonian flow at time $1$ of the quadratic function in the shear coordinate functions
\begin{equation}\label{eqn:form}
Z \mapsto \frac12 \omega_{\Th}( X\sigma_{\lambda_+}(Z), \sigma_{\lambda_+}(Z))
\end{equation}
coincides with the action of $B_{\varphi^k}$ on $\cH(\lambda_+)$, hence the action of $\varphi^k$ on $\T(S)$ because $\sigma_{\lambda_+}$ is a real analytic symplectomorphism onto its image.

For geometric reasons, we wish to run this flow backwards to obtain the $\varphi^{-k}$ action on $\T(S)$, so we negate the function \eqref{eqn:form} so that its flow at time one is the linear map $B_{\varphi^k}\inverse$ on $\cH(\lambda_+)$.
Expanding this formula using \eqref{eqn:shears} yields the function $F^{\lambda_+}_{\varphi^{-k}}: \T(S) \to \RR$ from the statement of the theorem.

Now we explain the geometric rational for reversing the flow generated by \eqref{eqn:form}.
For any $Z\in \T(S)$, the Thurston stretch path directed by $\lambda_+$ through $Z$ is \[t\mapsto \sigma_{\lambda^+}\inverse(e^t\sigma_{\lambda_+}(Z)),\]
so that the stretch paths are the rays $e^t\sigma$ emanating from zero that satisfy the positivity condition $\omega_{\Th}(\mu, e^t \sigma)>0$ for all measures $\mu$ whose support is contained in $\lambda_+$.  
By unique ergodicity of $\mu_+$ and the fact that the isolated leaves of $\lambda_+$ cannot carry any measure, the positivity condition is just $\omega_{\Th}(\mu_+, e^t \sigma)>0$.

The path $t\mapsto Z_t$ defined by 
\[Z_t = \sigma_{\lambda^+}\inverse(e^t \beta_1)\]
is the Thurston stretch path forward directed by $\lambda_+$ parameterized according to  directed arclength, where the length of $\mu_+$ at time $t$ is 
\[\ell_{\mu_+}(Z_t) = \omega_{\Th}(\mu_+, e^t\beta_1) = \omega_{\Th}(\alpha_1, e^t\beta_1) =e^t.\]

No other eigenvector of $B_{\varphi^k}$ besides $\beta_1$ is positive.  Indeed, all other (generalized) eigenvectors $v$ satisfy $\omega_{\Th}(\mu_+,v) = \omega_{\Th}(\alpha_1,v)=0$.

We see that $\exp(tX)\beta_1 = \exp(-tk\log(\Lambda))\beta_1$, while $\exp(-tX)\beta_1 = \exp(tk\log(\Lambda))\beta_1$.
Thus the flow generated by $F_{\lambda_+}$  (which corresponds to $-X$) has $t\mapsto Z_{tk\log(\Lambda)}$ as a flow line, which is parametrized proportionally to the forward directed arclength (which is typically different from arclength with time reversed).  
\end{proof}

\subsection{More general Jordan blocks}\label{sec:flow_general}
In this section, we follow the proof of Theorem \ref{thm:simple_spectrum} to produce a (real valued) quadratic polynomial in length functions for each Jordan block in the normal form of $B_{\varphi^k}$ over $\CC$.
These quadratic polynomials poisson-commute with one another, and so summing over the Jordan blocks produces the desired function whose time one Hamiltonian flow induces $\varphi^{-k}$ on $\T(S)$.

For this we use the real canonical forms of Laub and Meyer \cite{Symplectic:canonical} to obtain a symplectic basis for the symplectic subspaces associated to certain $B_{\varphi^k}$-invariant subspaces in which $B_{\varphi^k}$ (or its logarithm) is particularly nice looking.
Note however that our notation does not agree with theirs. 
In what follows, we only treat Jordan blocks of size at least two, as Theorem \ref{thm:simple_spectrum} handles Jordan blocks of size one.

\para{Case: $\Lambda_i\in \Lambda_\RR^+$}
Suppose there is real eigenvalue $\Lambda_i\in \Lambda_{\RR}$ that has a Jordan block of size $k$ in the normal form over $\CC$.
That is, the $k$-by-$k$ matrix
\[ \begin{pmatrix}
\Lambda_i & 1 & & &\\
& \Lambda_i & 1 & &  \\
& &\ddots & & 1\\
& & & &\Lambda_i 
\end{pmatrix}
\]
appears on the diagonal for the Jordan normal form of $B_{\varphi^k}$.

Let $V(\Lambda_i)$ be the invariant subspace associated to this block (this is an abuse of notation as there might be several Jordan blocks with $\Lambda_i$ on the main diagonal).
Since $B_{\varphi^k}$ preserves a symplectic form, there is another Jordan block of size $k$ corresponding to $\Lambda_i\inverse$ with preserved subspace $V(\Lambda_i\inverse)$, where $V(\Lambda_i)$ and $V(\Lambda_i\inverse)$ are Lagrangians in the $2k$-dimensional symplectic subspace $W(\Lambda_i)$ they span \cite[\S\S2-3]{Symplectic:canonical}.

The normal form \cite[(1)]{Symplectic:canonical} gives us a symplectic basis $\alpha_1, ..., \alpha_k, \beta_1, ..., \beta_k$ of $W(\Lambda_i)\le \cH(\lambda_+)$ and a standard linear vector field $X\in \mathsf{sp}(W(\Lambda_i))$ with $\exp(X) = B_{\varphi^k}|_{W(\Lambda_i)}$. 
Writing $\sigma$ for the projection of $\sigma_{\lambda_+}(Z)$ to $W(\Lambda_i)$ and $\omega$ the restriction of $\omega_{\Th}$, we compute
\[-\frac 12 \omega(X\sigma, \sigma)=  \log(\Lambda_i)  \sum_{j = 1}^k\ell_{\alpha_j}\ell_{\beta_j} + \sum_{j = 1}^{k-1} \ell_{\alpha_{j+1}} \ell_{\beta_{j}}. \]
This is the desired quadratic polynomial for this Jordan block.  

\para{Case: $\Lambda_i\in \Lambda_\HH^+$}
This case is similar; we proceed with analogous notation as in the previous case.  
We have a complex eigenvalue of $B_{\varphi^k}$ with a Jordan block of size $k$ in the normal form over $\CC$.
We consider then the \emph{real} Jordan normal form, in which we can find a $2k$-by-$2k$ real Jordan block of the form
\[ \begin{pmatrix}
L_i & I & & &\\
& L_i & I & &  \\
& &\ddots & & I\\
& & & &L_i
\end{pmatrix}
\]
where 
\[L_i = |\Lambda_i| \begin{pmatrix}
\cos(\theta_i) & -\sin(\theta_i) \\
\sin(\theta_i) & \cos(\theta_i)
\end{pmatrix} \text{ and } I = \begin{pmatrix}
1&0 \\
0& 1
\end{pmatrix}.\]
Let $V(\Lambda_i)$ be the corresponding invariant subspace. 

Since $B_{\varphi^k}$ is symplectic, there is a real Jordan block of size $2k$ corresponding to $\Lambda_i\inverse$ and its complex conjugate with invariant subspace $V(\Lambda_i\inverse)$. 
The subspace $W(\Lambda_i)\le \cH(\lambda_+)$ spanned by these two has dimension $4k$ and $V(\Lambda_i)\oplus V(\Lambda_i\inverse)$ is a Lagrangian splitting.

The normal form \cite[(2)]{Symplectic:canonical} gives us a symplectic basis $\alpha_1^\pm, ..., \alpha_k^\pm, \beta_1^\pm, ..., \beta_k^\pm$ of $W(\Lambda_i)$ and a standard linear vector field $X\in \mathsf{sp}(W(\Lambda_i))$ with $\exp(X) = B_{\varphi^k}|_{W(\Lambda_i)}$. 
With similar notation as before, we compute

\begin{align*}
-\frac 12& \omega(X\sigma, \sigma) =\\
& \sum_{j = 1}^k  \log( |\Lambda_i|) \left(\ell_{\alpha_j^+}\ell_{\beta_j^+}+\ell_{\alpha_j^-}\ell_{\beta_j^-}\right) + \theta_i \left(\ell_{\alpha_j^-}\ell_{\beta_j^+} -\ell_{\alpha_j^+}\ell_{\beta_j^-}\right)  
 +\sum_{j = 1}^{k-1} \ell_{\alpha_{j+1}^+} \ell_{\beta_{j}^+}+ \ell_{\alpha_{j+1}^-} \ell_{\beta_{j}^-} . 
 \end{align*}
  
 \para{Case: unipotent blocks}
Let $W$ be the largest $B_{\varphi^k}$ invariant subspace of $\cH(\lambda_+)$ on which $B_{\varphi^k}$ is unipotent.
Then $W$ decomposes into a sum of minimal $B_{\varphi^k}$-invariant symplectic subspaces $W = U_1\oplus ... \oplus U_m$ and in each factor either $B_{\varphi^k}$ preserves a Lagrangian splitting or not (see \cite[Theorem 9]{Symplectic:canonical}).

For each minimal invariant subspace $V_i$, we find a symplectic basis $\alpha_1, ..., \alpha_k, \beta_1, ...,\beta_k \in \cH(\lambda_+)$ for $U_n$ and $X\in \mathsf{sp}(U_n)$ with $\exp(X) = B_{\varphi^k}|_{U_n}$.
If $B_{\varphi^k}$ preserves the Lagrangian splitting $U_n = \langle \alpha_1, ..., \alpha_k\rangle \oplus \langle \beta_1, ..., \beta_k\rangle$, then we use the normal form  \cite[(4)]{Symplectic:canonical} for $X$ to compute
\[-\frac12 \omega(X\sigma,\sigma) = \sum_{j = 1}^{k-1}\ell_{\alpha_{j+1}}\ell_{\beta_{j}}.\]

Otherwise, $B_{\varphi^k}$ preserves only $\langle \alpha_1, ..., \alpha_k\rangle$ and $k$ is even.  In this case we use \cite[(3)]{Symplectic:canonical} for the normal form of $X$ and compute
\[-\frac12 \omega(X\sigma,\sigma) = \frac{(-1)^{\frac k2}}{2}\ell_{\beta_k}^2+\sum_{j = 1}^{k-1}\ell_{\alpha_{j+1}}\ell_{\beta_{j}} .\]

  \para{Case: $\Lambda_i \not= 1 \in \Lambda_{\mathbb T}^+$}
  We proceed similarly as in the last case, where we let $W(\Lambda_i)$ be the largest $B_{\varphi^k}$ invariant subspace of $\cH(\lambda_+)$ corresponding to all the (real) Jordan blocks with with 
  \[L_i =\begin{pmatrix}
  \cos(\theta_i) & \sin(\theta_i)\\
  -\sin(\theta_i) & \cos(\theta_i)
  \end{pmatrix}• \]
along their main (block) diagonal.

Then $W(\Lambda_i)$ decomposes into a sum of minimal $B_{\varphi^k}$-invariant symplectic subspaces $W(\Lambda_i) = U_1(\Lambda_i)\oplus ... \oplus U_m(\Lambda_i)$ and again, each summand in this decomposition may or may not have an invariant Lagrangian splitting \cite[Theorem 14]{Symplectic:canonical}.

In case $B_{\varphi^k}$ preserves a Lagrangian splitting in $U_{n}(\Lambda_i)$, the dimension of $U_n(\Lambda_i)$ is divisible by $4$, and we find a symplectic basis $\alpha_1^+, ..., \alpha_k^+, \alpha_1^-, ..., \alpha_k^-, \beta_1^+, ..., \beta_k^+, \beta_1^-, ..., \beta_k^-$ for $U_n(\Lambda_i)$ in which $X$ is in its canonical form \cite[(9)]{Symplectic:canonical}.
We compute
\[-\frac12\omega(X\sigma, \sigma) =  \sum_{ j = 1} ^{k-1} \ell_{\alpha_{j+1}^+}\ell_{\beta_{j}^+} + \ell_{\alpha_j^-}\ell_{\beta_{j+1}^-} + \theta_i \sum_{j = 1}^k\ell_{\alpha_{k+1-j}^-}\ell_{\beta_j^+} - \ell_{\alpha_{k+1-j}^+}\ell_{\beta_j^-}.\]

There are two additional subcases when $B_{\varphi^k}$ preserves no Lagrangian splitting; that $k$ is even or odd. 
Either way there is a symplectic basis $\alpha_1, ..., \alpha_k, \beta_1, ..., \beta_k$ for $U_n(\Lambda_i)$ where $X$ is in the normal form \cite[(6)]{Symplectic:canonical} if $k$ is odd, and we have 
\[-\frac12\omega(X\sigma, \sigma) = \frac{\theta_i}{2}\sum_{j = 1}^{k}(-1)^{j+1}\left( \ell_{\alpha_j} \ell_{\alpha_{k+1-j}} + \ell_{\beta_j} \ell_{\beta_{k+1-j}} \right) + \sum_{j = 1}^{k-1} \ell_{\alpha_{j+1}}\ell_{\beta_j} \]

If $k$ is even, then $X$ is in its normal form \cite[(7)]{Symplectic:canonical} which allows us to compute
\[-\frac12\omega(X\sigma, \sigma) =- \frac{\theta_i}{2} \sum_{j = 1} ^{k}(\ell_{\alpha_j}\ell_{\alpha_{k+1-j}} + \ell_{\beta_j}\ell_{\beta_{k+1-j}}) + \frac12\sum_{j= 1}^{k-1}(-1)^j( \ell_{\alpha_{j+1}}\ell_{\alpha_{k+1-j}} + \ell_{\beta_j}\ell_{\beta_{k-j}}). \]

\section{Geometric train tracks}\label{sec:geom_tt}

The rest of the paper is devoted to finding an expression for the Poisson bracket between length functions for H\"older distributions. 
For this, we require estimates on the geometry of train track neighborhoods of geodesic laminations (Proposition \ref{prop:tt_geometry}).

\begin{note}[Constants]\label{note:constants}
All constants in the rest of the paper are understood to be local, and are often absorbed into ``big O'' notation.  It will be important however, that our `constants' may be considered as continuous functions of the relevant data so that we can make uniform estimates.  
\end{note}

\subsection{Horocyclic train track neighborhoods}
Recall that $\mathcal {GL}_\Delta(S)$ consists of minimal and filling geodesic laminations, and let $\lambda \in \mathcal {GL}_\Delta(S)$. 
Recall also that $\T^\lambda(S)$ is the locus of hyperbolic surfaces where the complement of $\lambda$ consists of regular ideal polygons, and let $Z\in \T^\lambda(S)$.

The following construction is due to Thurston \cite[Chapter 9]{Thurston:notes}.
For $\epsilon<1$, the \emph{$\epsilon$-horocyclic neighborhood} $\cN_\epsilon(\lambda)\subset Z$ is obtained by removing (open) segments of leaves of the horocycle foliation $H_\lambda(Z)$ (\S\ref{sub:horofoliation}) in the complementary polygons of $\lambda$ with length smaller than $\epsilon$. 
Note that $\cN_\epsilon(\lambda)$ is a closed set containing $\lambda$. 

If $H_\lambda(Z)$ has no closed leaves, then the leaf space $\tau_\epsilon = \cN_\epsilon(\lambda)/\sim$ of the $\epsilon$-horocyclic neighborhood has the structure of a train track.  We think of $\tau_\epsilon$ as being $C^1$ embedded in $\cN_\epsilon(\lambda)$.
The collapse map  $\cN_\epsilon(\lambda)\to \tau_\epsilon$ extends to a homotopy equivalence $Z\to Z$ homotopic to the identity on $Z$ witnessing $\lambda \prec \tau_\epsilon$.
Even if $H_\lambda(Z)$ does have closed leaves, if we take $\epsilon$ small enough, the collapse map will still extend to a homotopy equivalence of $Z$ homotopic to the identity; see Proposition \ref{prop:tt_geometry}.

Any train track $\tau_\epsilon = \tau(Z, \lambda, \epsilon)$ constructed from a triple of data  as above will be called a \emph{geometric train track} and comes equipped with its collapse map $\pi: \cN_\epsilon(\lambda)\to \tau_\epsilon$.  Minimality of $\lambda$ implies that we make take the parameter $\epsilon$ smaller if necessary to ensure that $\tau_\epsilon$ is trivalent.  
The \emph{ties} of $\tau_\epsilon$ or $\cN_\epsilon(\lambda)$ are connected components of restrictions of the leaves of $H_\lambda(Z)$ to the neighborhood; all the ties are $C^1$ and meet $\tau_\epsilon$ transversally. 
A train path $\gamma$ is \emph{induced by} or \emph{follows a leaf of $\lambda$} if there is a segment $g$ of a leaf $h\subset \lambda$ such that $\pi (g) = \gamma$. 

Given $k>0$, we may find $\epsilon>0$ depending only on $\inj(Z)$ and the topology of $S$ such that immersed train paths in $\tau(Z,\lambda,\epsilon)$ have average geodesic curvature at most $k$ \cite{CEG} on long enough segments. 

\subsection{Geometry of train track neighborhoods}\label{subsec:geometric_tt}
A train path $\gamma$ induced by a leaf of $\lambda$ has a well defined \emph{length} $\ell(\gamma)$, which is given by $\ell(g)$ for any segment $g\subset \pi\inverse(\gamma)$ of a leaf of $\lambda$. This notion of length is well defined as transporting a geodesic segment of a leaf of $\lambda$ along the the (orthogonal) horocycle foliation is length preserving.  The length $\ell(\tau)$ of $\tau$ is the sum of the lengths of the branches of $\tau$.

We will need a number of geometrical facts about train track neighborhoods, as constructed above; similar estimates are scattered about throughout the literature, e.g., in \cite{Brock:length} and \cite{CF:SHSHII}.  We supply proofs for completeness. 

\begin{proposition}\label{prop:tt_geometry}
Let $\lambda \in \mathcal {GL}_\Delta(S)$, suppose $Z\in \T^\lambda(S)$ is $\delta$-thick, i.e. $\inj(Z)\ge \delta$, and let $\epsilon<1$.
If $\cN_\epsilon(\lambda)$ has no closed ties, then $\tau_\epsilon =\tau(Z, \lambda, \epsilon)$ is a geometric train track with collapse map $\pi: \cN_\epsilon(\lambda)\to \tau_\epsilon$, and the following are true:
\begin{enumerate}
\item Every closed train path in $\tau_\epsilon$ has length at least $2\delta$.
\item The inequality $\ell(\tau_\epsilon) \le 6|\chi(S)|(\log1/\epsilon+d/2)$ holds, where $d$ is the maximum distance between adjacent horocycles of length $1$ in any polygonal component of $Z\setminus \lambda$.
\item There is a constant $E$ depending only on $\delta$ and the topology of $S$ such that  $\ell(t)\le E\epsilon$ for every tie $t\subset \cN_\epsilon (\lambda)$.
\item If $\epsilon <2E\inverse \delta$, then $\cN_\epsilon(\lambda)$ has no closed ties.
\item For $s<\epsilon$, any tie of $\cN_\epsilon(\lambda)$ meets at most $O(\log(\epsilon/s)+1)$ branches of $\tau_s$, counted with multiplicity.   
\end{enumerate}

\end{proposition}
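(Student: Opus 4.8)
The plan is to prove the five assertions essentially in order, as each builds geometric vocabulary for the next. For (1): a closed train path in $\tau_\epsilon$ is the image under the collapse map $\pi$ of a quasi-geodesic loop in $\cN_\epsilon(\lambda)$ freely homotopic to a closed geodesic on $Z$; since $\pi$ is a homotopy equivalence homotopic to the identity and transporting along ties is length non-increasing onto the underlying geodesic, the length of the train path dominates the length of the corresponding closed geodesic, which is at least $2\inj(Z)\ge 2\delta$. The one subtlety is making sure a closed train path is not null-homotopic or collapsible — this uses that geometric train tracks carry $\lambda$ and that closed leaves/ties have been excluded, so every branch lies on a bi-infinite or closed train path following a leaf.

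For (2): the complement $Z\setminus \lambda$ is a union of ideal polygons; by Gauss--Bonnet the total number of polygon sides (equivalently, cusps) is $-3\chi(S) \cdot \tfrac{?}{}$ — more carefully, a finite-area hyperbolic surface cut along a filling maximal-type lamination decomposes into ideal triangles or polygons with total count controlled by $|\chi(S)|$. Each branch of $\tau_\epsilon$ corresponds to a strip of the horocyclic neighborhood running between two horocyclic arcs of length $\ge\epsilon$, and the length of such a branch following a leaf is bounded by the distance one travels between the horocycle of length $1$ and the horocycle of length $\epsilon$ in a polygon spike, which is $\log(1/\epsilon)$, plus the bounded excursion $d/2$ through the ``thick'' part of the polygon between adjacent length-$1$ horocycles. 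Summing over at most $6|\chi(S)|$ branches gives the stated bound. The combinatorial count $6|\chi(S)|$ is the number of branches of a trivalent train track carrying a maximal lamination, which I would quote or derive from an Euler characteristic count on $\tau_\epsilon$.

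For (3): a tie $t$ of $\cN_\epsilon(\lambda)$ is a horocyclic arc (or a concatenation of a bounded number of them, one per complementary polygon it crosses) each of length $<\epsilon$; since $\delta$-thickness bounds from below the injectivity radius and hence the combinatorial complexity — the number of polygons a single tie can cross is bounded in terms of $\delta$ and the topology (a tie cannot re-enter the same polygon too many times without creating a short geodesic) — the total length is $\le E\epsilon$ with $E = E(\delta, S)$. Assertion (4) is then immediate: a closed tie would be a closed horocyclic leaf of $H_\lambda(Z)$, freely homotopic to a closed geodesic of length $\le E\epsilon$ by the estimate in (3), but thickness forces that geodesic to have length $\ge 2\delta$, contradicting $\epsilon < 2E^{-1}\delta$.

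The main obstacle, and the technical heart, is (5): bounding how many branches of the \emph{finer} track $\tau_s$ a single tie of the coarser neighborhood $\cN_\epsilon(\lambda)$ crosses, for $s<\epsilon$. The strategy is that passing from $\cN_\epsilon(\lambda)$ to $\cN_s(\lambda)$ only adds structure deep in the polygon spikes: a tie $t$ of $\cN_\epsilon(\lambda)$ of length $<\epsilon$ sits at horocyclic depth roughly $\log(1/\epsilon)$, while the new branches of $\tau_s$ created between depths $\log(1/\epsilon)$ and $\log(1/s)$ are nested horocyclic strips, and $t$ can cross at most one strip per ``layer,'' with the number of layers being $O(\log(\epsilon/s))$; the $+1$ absorbs the bounded number of pre-existing branches and polygon transitions (controlled as in (3)). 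Making this precise requires a careful picture of how the collapse map and the foliation interact in a spike — essentially that the branches of $\tau_s$ meeting a fixed short transversal are linearly ordered by horocyclic length and the lengths decrease geometrically, so a transversal of length $<\epsilon$ spans a $\log$-sized interval of them. I would set this up by working in a single ideal polygon component, parametrizing the horocycle foliation by the length of the horocyclic arc, and counting branches of $\tau_s$ as the dyadic-type scales between $s$ and $\epsilon$, then multiplying by the bounded number of polygons $t$ visits.
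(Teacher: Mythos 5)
Your items (1) and (4) are essentially right, but (3) and (5) contain a genuine gap, and (2) has a combinatorial error that accidentally cancels.

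The decisive missing idea is the \emph{geometric contraction of successive spike returns}, which the paper uses in both (3) and (5). In (3) you claim a tie crosses only a bounded number of complementary polygons; this is false. A tie of $\cN_\epsilon(\lambda)$ meets $\lambda$ in a Cantor set, and its complement is countably many spike arcs, with the same spike typically re-entering the tie infinitely often. What controls the total length is not a bound on the number of crossings but the observation that between two consecutive visits of a fixed spike to the tie, the spike must traverse an essential closed train path (which by (1) has length $\ge 2\delta$), so the horocyclic arc lengths at successive visits shrink by a factor $e^{-2\delta}$. Since there are at most $6|\chi(S)|$ spikes, summing the geometric series gives $\ell(t)\le 6|\chi(S)|\epsilon\sum_{r\ge 0}e^{-2r\delta}=:E\epsilon$. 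Exactly the same contraction, run in reverse (arcs of $t\setminus\tau_s$ must have length $\ge s$, but the $K$-th return of any one spike has length $\ge s\,e^{(K-1)2\delta}$, yet also $\le\epsilon$), combined with a pigeonhole over the $6|\chi(S)|$ spikes, yields the logarithmic count in (5). Your ``nested strips / one per layer'' picture is suggestive but never isolates this contraction mechanism or the pigeonhole step, and as written it does not produce the bound.

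For (2), your count is off: a maximal trivalent track on a closed surface has $9|\chi(S)|$ branches, not $6|\chi(S)|$. The correct bookkeeping is by \emph{sides} of the complementary polygons: there are $6|\chi(S)|$ sides (three per triangle, $2|\chi(S)|$ triangles), each side follows a train path of length at most $d+2\log(1/\epsilon)$, and each branch is covered exactly twice by sides, so $\ell(\tau_\epsilon)\le\tfrac12\cdot 6|\chi(S)|\cdot(d+2\log(1/\epsilon))=6|\chi(S)|(\log(1/\epsilon)+d/2)$. Your per-branch bound of $\log(1/\epsilon)+d/2$ is not correct for an individual branch; the coincidence in the final constant masks the error in the decomposition.
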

\begin{proof}
The bound in item (1) follows from the fact that every closed curve carried by $\tau_\epsilon$ is homotopically essential, hence has length at least $2\inj(Z)\ge 2 \delta$.
Indeed, if a closed curve $\gamma$ carried by $\tau_\epsilon$ is not essential, then it  bounds an immersed disk in $Z$.
After adjusting by a homotopy, this disk is a union of polygonal components of $\tau_\epsilon$, and $\gamma$ runs along the boundary.
But the presence of sharp corners (spikes) on the boundary prevents the existence of a tangent map from $\gamma$ to $\tau_\epsilon$ induced by any $C^1$ carrying map.

We consider item (2). 
The length of a train path running from spike to spike along the boundary of a complementary polygon is equal to the distance along $\lambda$ between horocycles of length $\epsilon$.
A computation shows that this is at most $d +2\log1/\epsilon$.
Since every branch of $\tau_\epsilon$ has two sides, each polygon contributes at most half this quantity to the total length of $\tau_\epsilon$ for each of its interior edges.
The total number of interior edges is maximized when every polygon is a triangle.
Maximal laminations have zero area, hence there are $2|\chi(S)|$ triangles in the complement, each of area $\pi$. 
Stringing together these inequalities yields (2).

For item (3), we recall a theorem of Birman and Series \cite{BS}, which asserts that the Hausdorff dimension of the union over all Hausdorff limits of geodesic closed curves with bounded self intersection is zero.  In particular, for any transversal to a geodesic lamination, the intersection with $\lambda$ has one dimensional Lebesgue measure zero.  So the length of a tie is the sum of the lengths of its intersections with the spikes of $Z\setminus \lambda$.  There are at most $6|\chi(S)|$ spikes of $\lambda$, and each time a spike passes through $t$, it follows an essential train path in $\tau_\epsilon$ before returning to $t$.  Such a train path has length is at least $2\delta$, so the next horocyclic intersection of the spike with $t$ has length at most $e^{-2\delta}$ times the previous length.  The longest horocyclic arc through $t$ is less than $\epsilon$ in every spike, so we obtain 
\[\ell(t) \le 6|\chi(S)| \epsilon \sum_{r=0}^\infty e^{-r2\delta} =: E(\delta)\epsilon.\]

To prove (3), we observe that no leaf of $H_\lambda(Z)$ in the universal cover is closed, so closed ties of $\cN_{\epsilon}(\lambda)$ are homotopically essential. This means that their length is at least $2\delta$ and at most $E\epsilon$ by item (3).  Item (4) follows.

To prove the last item,  let $N$ be the number of intersections of $t\subset \cN_{\epsilon}(\lambda)$ with branches of $\tau_s$.  
The components of $t\setminus \tau_s$ correspond to spikes of $\lambda$ meeting $t$ in horocyclic arcs of length between $\epsilon$ and $s$.
There are at most $6|\chi(S)|$ spikes of $\lambda$, so the pigeon hole principal implies that there is a spike that cuts through $t$ at least $K = \lfloor \frac{N-1}{6|\chi(S)|}\rfloor$ times.
Before returning to $t$, this spike makes a non-trivial loop in $\tau_\epsilon$.
Thus the longest horocyclic arc of $t$ meeting this spike has length at least $se^{(K-1)2\delta}$.
On the other hand, this arc has length at most $\epsilon$.  
This gives 
\[K <\frac{1}{2\delta}\log(\epsilon/s) +1.\]

Remembering that $ \frac{N-1}{6|\chi(S)|}-1\le K$, we obtain 
\[N \le 6|\chi(S)|\left(\frac{1}{2\delta} \log(\epsilon/s) +2\right)+1 = O(\log (\epsilon/s)+1)\]
This concludes the proof of the proposition.    
\end{proof}

We will need the following estimate on the angle made by any pair of leaves of geodesic laminations which are close in some metric.

\begin{lemma}\label{lem:triple_angle_close}
Let $\lambda\in \mathcal {GL}_\Delta(S)$ and $Z\in \T^\lambda(S)$ be $\delta$-thick.  Suppose that $\lambda'$ is contained in an $\epsilon$-neighborhood of  $\lambda\subset Z$.  Let $k$ be a geodesic transversal to a branch $b$ of a geometric train track $\tau_\epsilon = \tau(Z,\lambda,\epsilon)$, and suppose that $k$ meets leaves $\ell\subset \lambda$ and $\ell'\subset \lambda'$.  
Denoting by $\theta$ and $\theta' \in (0, \pi)$ the angles made between $\ell$ and $k$ and $\ell'$ and $k$, respectively.  Then we have
\[|\cos(\theta) - \cos(\theta')| = O (\epsilon) ,\]
with implicit constants depending on $\delta$, the topology of $S$, and the angle made by $k$ with any leaf of $\lambda$.  
\end{lemma}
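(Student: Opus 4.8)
The plan is to lift to the universal cover $\widetilde Z\cong\HH^2$ and reduce everything to Lemma~\ref{lem:lipschitz}, which converts a bound on the distance between a point on one complete geodesic and a point on a \emph{disjoint} complete geodesic into a bound in $\hat d$ on their two unit tangents, hence into a bound on the angles the geodesics make with a fixed transversal. Two things have to be arranged: that the two points where $\ell$ and $\ell'$ meet $k$ are $O(\epsilon)$-close, and that $\ell$ and $\ell'$ (appropriately lifted) can be fed to Lemma~\ref{lem:lipschitz} even though they may intersect.

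First I would bound the distance between the crossing points. Write $x=\ell\cap k$ and $x'=\ell'\cap k$; both lie on the sub-arc of $k$ crossing the branch $b$. Let $R_b=\pi^{-1}(b)$ be the rectangle of ties collapsing to $b$; every tie has length at most $E\epsilon$ by Proposition~\ref{prop:tt_geometry}(3), and the ties are orthogonal to $\lambda$. Since all leaves of $\lambda$ crossing a single tie have unit tangents within $cE\epsilon$ of one another in $\hat d$ (Lemma~\ref{lem:lipschitz} applied along that tie), the ties of $R_b$ all point in nearly the same direction, so the angle $k$ makes with them is bounded below in terms of $\delta$, the topology of $S$, and the angle $k$ makes with the leaves of $\lambda$. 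Hence $k\cap R_b$ has length $O(\epsilon)$, and in particular $d_Z(x,x')=O(\epsilon)$.

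Next I would compare the two angles. Let $t$ be the tie of $R_b$ through $x$. For $\epsilon$ small (up to replacing $\epsilon$ by a fixed multiple to compare the metric and horocyclic neighborhoods), $\lambda'$ is carried by $\tau_\epsilon$ with leaves running along branches across $\cN_\epsilon(\lambda)$ (Lemma~\ref{lem:lipschitz}), so $\ell'$ meets $t$ at a point $y'$ with $d_Z(x,y')\le E\epsilon$ and the sub-arc of $\ell'$ from $y'$ to $x'$ has length $O(\epsilon)$. Lifting, choose lifts $\widetilde x,\widetilde y'$ on a common lift of $t$ together with the lifts $\widetilde\ell\ni\widetilde x$ and $\widetilde\ell'\ni\widetilde y'$. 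If $\widetilde\ell$ and $\widetilde\ell'$ are disjoint, Lemma~\ref{lem:lipschitz} gives $\hat d(\widetilde x_{\widetilde\ell},\widetilde y'_{\widetilde\ell'})\le cE\epsilon$ at once. If they meet, I claim the crossing angle is $O(\epsilon)$: both $\widetilde\ell,\widetilde\ell'$ run the full length of the single branch $b$ inside a neighborhood whose ties have length $\le E\epsilon$, while each branch of $\tau_\epsilon$ (being essentially a truncated spike of a complementary polygon) has length $\Theta(\log 1/\epsilon)\gg 1$, and two geodesics through a common point staying within $E\epsilon$ of each other over that length must cross at angle $O(E\epsilon/\log(1/\epsilon))=O(\epsilon)$; alternatively, $\widetilde\ell'$ stays within $O(\epsilon)$ of some leaf $\widetilde h$ of $\widetilde\lambda$ disjoint from $\widetilde\ell$, to which one applies Lemma~\ref{lem:lipschitz} and then absorbs the $O(\epsilon)$ discrepancy. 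Either way $\hat d(\widetilde x_{\widetilde\ell},\widetilde y'_{\widetilde\ell'})=O(\epsilon)$, and flowing along $\widetilde\ell'$ from $\widetilde y'$ to the lift of $x'$ costs another $O(\epsilon)$, so $\hat d(\widetilde x_{\widetilde\ell},\widetilde{x'}_{\widetilde\ell'})=O(\epsilon)$.

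Finally, the unit tangent of $k$ at $x'$ differs in $\hat d$ from that at $x$ by $O(\epsilon)$, being its image under geodesic flow along $k$ over a distance $O(\epsilon)$; since the angle between two unit tangents at a common point is a Lipschitz function of the pair in $\hat d$ (which induces the hyperbolic metric and is right $\PSO(2)$-invariant) and $\cos$ is $1$-Lipschitz, this yields $|\cos\theta-\cos\theta'|=O(\epsilon)$ with the advertised dependence of the constant. The one genuinely delicate step is the crossing-angle claim in the third paragraph — ruling out that $\ell$ and $\ell'$ meet $k$ while making a large angle with one another. This is exactly where the confinement of $\ell'$ to $\cN_\epsilon(\lambda)$, the product structure of $R_b$, and the tie-length bound of Proposition~\ref{prop:tt_geometry} enter, forcing $\ell'$ to be $\epsilon$-fellow-travelled by, hence by Lemma~\ref{lem:lipschitz} nearly tangent to, the leaves of $\lambda$ threading $R_b$; everything else is bookkeeping with $\hat d$.
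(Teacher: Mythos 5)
Your global plan — lift to $\HH^2$ and funnel everything through Lemma~\ref{lem:lipschitz} — is the same in spirit as the paper, but the key step is executed differently, and as you yourself flag, the crossing case is where the trouble lies. The paper does \emph{not} try to directly compare an arbitrary $\ell\subset\lambda$ with an arbitrary $\ell'\subset\lambda'$. It first observes that the angle function is Lipschitz on $k\cap\lambda$ (Lemma~\ref{lem:lipschitz}), and since $k\cap\lambda$ and $k\cap\lambda'$ are both confined to the $O(\epsilon)$-long sub-arc $k\cap\cN_\epsilon(\lambda)$, \emph{all} leaves of $\lambda$ (and likewise of $\lambda'$) make angles with $k$ that agree to within $O(\epsilon)$. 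This reduces the lemma to a \emph{single} cross-comparison, and for that the paper carefully chooses the two \emph{outermost} leaves $\ell_0\subset\lambda$ and $\ell_0'\subset\lambda'$ that $k$ first crosses: being boundary leaves of complementary polygons $P$, $P'$ with $P'\supset P$ (up to truncated spikes), they induce the \emph{same} train path, and because $Z\in\T^\lambda(S)$ (regular polygons), that train path runs spike-to-spike along a polygon side and has length at least $2\log(1/\epsilon)$. Fellow-traveling at distance $(E+2)\epsilon$ over that length forces $O(\epsilon)$-tangency.

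Your replacement for this, option (a), asserts that \emph{each branch} $b$ of $\tau_\epsilon$ has length $\Theta(\log(1/\epsilon))$. That is not justified by anything in Proposition~\ref{prop:tt_geometry} (which only bounds the \emph{total} length $\ell(\tau_\epsilon)$ from above), and it is not true in general: switches of $\tau_\epsilon$ occur at spike truncations, and two distinct truncations can sit arbitrarily close together for particular values of $\epsilon$, giving short branches. So "both geodesics run the full length of $b$" does not deliver the $\gg 1$ fellow-traveling length you need, and after leaving $b$ the train paths of $\ell$ and $\ell'$ may genuinely diverge at a switch (they need not be boundary leaves of nested polygons, so nothing forces them to the same side of the cut-off spike). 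Your option (b) has a similar soft spot: $h$ being disjoint from $\ell$ is automatic, but $h$ and $\ell'$ may still intersect, so Lemma~\ref{lem:lipschitz} does not apply to the pair $(h,\ell')$, and "$\ell'$ stays within $O(\epsilon)$ of $h$" is a statement about positions, not unit tangents — it cannot be "absorbed" into a tangency estimate without exactly the divergence argument you were trying to set up. The fix is precisely the paper's: work with the outermost leaves $\ell_0,\ell_0'$, which by construction follow the same long polygon-side train path, and let the Lipschitz property of the angle function on $k\cap\lambda$ and $k\cap\lambda'$ transport the conclusion to the given $\ell,\ell'$.
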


\begin{proof}
Let us consider the function on $k\cap \lambda$ measuring the angle made between leaves of $\lambda$ and $k$.
By the universal Lipschitz property of geodesic laminations (Lemma \ref{lem:lipschitz}), this  function is locally $L$-Lipschitz.
So the angle made by any leaf of $\lambda$ with $k$ is at most $L$ times the diameter of the set $k\cap \lambda$, which is contained in the foliated band of $\cN_{\epsilon}(\lambda)$ collapsing to the branch $b$.
By Proposition \ref{prop:tt_geometry}, the length of any tie foliated this band is at most $E\epsilon$. 
Then the diameter of $k\cap \lambda$ is bounded by $AE\epsilon$, with $A$ depending on the angle made between $k$ and some leaf of $\lambda$.
We have shown that for any points $p$ and $p'$ of $k\cap \lambda$, the angles made between $k$ and $\lambda$ at $p$ and $p'$ differ by at most $LAE\epsilon=O(\epsilon)$.

If we can show also that some leaf of $\lambda$ is $O(\epsilon)$ close in the unit tangent bundle to some leaf of $\lambda'$ at their intersections with $k$, then the lemma will follow, appealing to the fact that cosine is Lipschitz.
Give $k$ an orientation and let $\ell_0\subset \lambda$ and $\ell_0'\subset \lambda'$ be the first geodesics that $k$ crosses.
Then $\ell_0$ and $\ell_0'$ bound polygons $P$ and $P'$ complementary to $\lambda$ and $\lambda'$, hence induce the same train path of length at least $2\log(1/\epsilon)$ running between two spikes of $\tau_\epsilon$.
Proposition \ref{prop:tt_geometry} implies that $\ell_0$ and $\ell_0'$ fellow travel at distance $(E+2)\epsilon$ along this path of length at least $2\log(1/\epsilon)$.
This is enough to show that $\ell_0$ and $\ell_0'$ are $O(\epsilon)$ close in the unit tangent bundle at their intersections with $k$, and concludes the proof of the lemma.
\end{proof}

\subsection{Growth}
We consider an oriented geodesic transversal $k\subset Z$ meeting a single branch $b\subset \tau_\epsilon$. The orientation of $k$ defines a linear order on the geodesics that meet $k$, and we locally give orientations that form a positively oriented basis for the tangent space of $Z$ at every point of intersection; the orientation can be made (non-equivariantly) global when working in universal covers.

For a component $d\subset k\setminus \lambda$, $d^+$ is the positive endpoint of $d$ meeting the leaf $h_d^+$ of $\lambda$ and $d^-$ is the negative endpoint meeting the leaf $h_d^-$; note that $h_d^-$ and $h_d^+$ are asymptotic and form an \emph{oriented spike} $s_d = (h_d^-, h_d^+)$, where the orientation is induced by that of $k$.  
The bi-infinite train paths induced by $h_d^-$ and $h_d^+$ fellow travel along a ray in one direction and eventually diverge in the other.  The \emph{divergence radius} $r(d)\ge 1$ is the number of branches that train paths following $h_d^-$ and $h_d^+$ agree on in their non-asymptotic direction.\footnote{The divergence radius $r(d)$ depends on the train track $\tau_\epsilon$ and branch $b$ that $k$ crosses, but we omit these data from the notation.}
There are also points $k^-$ and $k^+ \subset k\cap \lambda$ meeting leaves $h^-$ and $h^+$ closest to the start and end of $k$, respectively.
Choosing a point in $d$, we let $k_d$ be the subsegment of $k$ joining the negative endpoint of $k$ to that point.

We will need some bounds on the growth of the size of a H\"older distribution $\alpha \in \cH(\lambda)$ evaluated on $k_d$.
Recall from \S\ref{sub:THD} that there is a  canonical integer linear isomorphism between $W(\tau_\epsilon)\le \RR^{b(\tau_\epsilon)}$ and $\cH(\lambda)$. 
We define a norm $\| \cdot \|$ on $\cH(\lambda)$ as the restriction of the $\ell^\infty$ norm on $\RR^{b(\tau_\epsilon)}$ to $W(\tau_\epsilon)$.  
The following facts are extremely useful when working with shear coordinates; see Lemmas 3, 4, 5, and 6 of \cite{Bon_SPB}, Lemmas 4, 5, and 6 of \cite{BonSoz}, or Lemmas 14.2, 14.3, and 14.5 of \cite{CF}.
\begin{lemma}\label{lem:depth}
With notation as above and $\alpha\in \cH(\lambda)$, we have a constant $A$ depending on the angles of intersection of $k$ with $\lambda$ such that
\begin{enumerate}
\item $|\alpha(k_d)| \le \| \alpha\| r(d)$; and 
\item $\ell(d) \le  A \epsilon\exp\left(\frac{-r(d) 2\inj(Z)}{9|\chi(S)|}\right)$; and
\item For each $r\ge 1$ and each spike $s$ of $\lambda$, there is at most one component $d$ of $k\setminus\lambda$ contained in $s$ such that $r(d) = r$, and there are $6|\chi(S)|$ spikes of $\lambda$. 
\end{enumerate}
\end{lemma}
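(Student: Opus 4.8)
The plan is to dispatch the three items separately, using the collapse map $\pi\colon\cN_\epsilon(\lambda)\to\tau_\epsilon$ and the geometry recorded in Proposition~\ref{prop:tt_geometry}; throughout one assumes $\epsilon$ small enough, say $\epsilon<2E\inverse\delta$, so that $\cN_\epsilon(\lambda)$ has no closed ties and $\tau_\epsilon$ is a genuine trivalent geometric train track. Item~(3) is the most elementary. The bound on the number of spikes is Gauss--Bonnet: $Z\setminus\lambda$ is a disjoint union of ideal polygons of total area $2\pi|\chi(S)|$, an ideal $n$-gon has area $(n-2)\pi$ and $n$ spikes, so the spikes number at most $6|\chi(S)|$, with equality exactly when $\lambda$ is maximal. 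For the injectivity statement, fix a spike $s$ bounded by leaves $h^-,h^+$ of $\lambda$ asymptotic to a common ideal point. The components $d$ of $k\setminus\lambda$ lying in $s$ are, since $k$ is an embedded geodesic, nested in the strip between $h^-$ and $h^+$, so they come with a strictly increasing ``depth'' toward the ideal tip; following $h^-$ and $h^+$ in their non-asymptotic direction from the branch carrying $\pi(d)$, a deeper $d$ forces the two train paths to agree on strictly more branches before parting, so $d\mapsto r(d)$ is injective on these components.

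For item~(1) I would use that $\alpha$ is a transverse cocycle: it is invariant under homotopy through transversals to $\lambda$, finitely additive, and its value on a tie crossing a single branch $b$ is the branch weight $w_\alpha(b)$, of absolute value at most $\|\alpha\|$. Sliding the two endpoints of $k_d$ inside the complementary polygons that contain them (which crosses no leaf of $\lambda$, hence leaves $\alpha(k_d)$ unchanged) and then pushing $k_d$ across the foliated bands of $\cN_\epsilon(\lambda)$, one writes $\alpha(k_d)$ as a signed sum of at most $r(d)$ of the branch weights --- one for each branch on which the two boundary leaves $h_d^\pm$ of the spike $s_d$ travel together --- giving $|\alpha(k_d)|\le\|\alpha\|\,r(d)$. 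This is Bonahon's estimate, which I would simply transcribe in the present notation.

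The main obstacle is item~(2), the exponential decay, which requires interlocking a hyperbolic-geometry estimate with a combinatorial one. Geometrically: the two sides $h_d^-,h_d^+$ of $s_d$ are complete geodesics sharing an ideal endpoint; the arc $d$ has hyperbolic length comparable, with a constant $A$ depending only on the angle $k$ makes with $\lambda$, to the distance between $h_d^-$ and $h_d^+$ measured along $k$; following these two geodesics in their non-asymptotic direction their distance increases, and they remain joined by a tie of $\cN_\epsilon(\lambda)$ --- hence remain in a common branch of $\tau_\epsilon$ --- only until this distance reaches $\asymp E\epsilon$ (Proposition~\ref{prop:tt_geometry}(3)), so the hyperbolic length of the segment along which they agree is $\gtrsim\log(E\epsilon/\ell(d))$. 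Combinatorially: that segment is a train path traversing $r(d)$ branches of $\tau_\epsilon$, and since $\tau_\epsilon$ has at most $9|\chi(S)|$ branches, any subpath of $9|\chi(S)|$ branches revisits a branch and therefore contains a closed train path, of length at least $2\inj(Z)$ by Proposition~\ref{prop:tt_geometry}(1); hence a train path of $r(d)$ branches has length at least $\bigl(\tfrac{r(d)}{9|\chi(S)|}-1\bigr)\,2\inj(Z)$. Comparing, $\log(E\epsilon/\ell(d))\gtrsim\bigl(\tfrac{r(d)}{9|\chi(S)|}-1\bigr)2\inj(Z)$, and solving for $\ell(d)$ yields $\ell(d)\le A\epsilon\exp\!\bigl(-\tfrac{r(d)\,2\inj(Z)}{9|\chi(S)|}\bigr)$ after absorbing $E\,e^{2\inj(Z)}$ and the angle constant into $A$. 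The delicate points are making the identification between ``number of branches agreed upon'' (the definition of $r(d)$) and ``hyperbolic length of the segment of agreement'' precise, and checking --- per Note~\ref{note:constants} --- that all constants may be taken to depend continuously on $\inj(Z)$, the topology of $S$, and the crossing angles, so that the estimate can later be applied uniformly.
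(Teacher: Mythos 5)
Your proposal is correct and follows essentially the same route as the paper, which gives no independent proof here: it cites Bonahon's lemmas for item (1) and remarks that items (2) and (3) were already established in the course of proving Proposition~\ref{prop:tt_geometry} (the spike-return/essential-loop pigeonhole and the $e^{-2\inj(Z)}$ decay of horocyclic widths per return are exactly your argument). The only caveats are cosmetic bookkeeping --- e.g.\ running the pigeonhole over repeated visits to a single branch rather than over blocks of $9|\chi(S)|$ branches to land exactly on the stated denominator, and absorbing the resulting $e^{2\inj(Z)}$ factor into $A$ --- which is at the same level of precision as the paper's own estimates.
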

Note that we essentially proved items (2) and (3) in the course of proving Proposition \ref{prop:tt_geometry}.

We can relate the weight deposited on a branch of a geometric train track $\tau_\epsilon = \tau(Z,\lambda, \epsilon)$ with its weights on $\tau_{\epsilon_0} = \tau(Z, \lambda, \epsilon_0)$, for $\epsilon\ll \epsilon_0$.  
For the next Lemma, let $\|\cdot\|$ denote the restriction of the $\ell^\infty$-norm on $\RR^{b(\tau_{\epsilon_0})}$ to $W(\tau_{\epsilon_0})$.  

\begin{lemma}\label{lem:norm}
Let $t_b$ be a tie for a branch of $\tau_\epsilon$ and $\alpha\in \cH(\lambda)$.    Then for $\epsilon$ small enough compared to $\epsilon_0$,  $|\alpha(t_b)| \le C \|\alpha\| \log(\epsilon_0/\epsilon)$ holds,  
with $C$ depending on $\inj(Z)$, and the topology of $S$.
\end{lemma}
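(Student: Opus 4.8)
The plan is to decompose the tie $t_b$ of $\tau_\epsilon$ into ties of the coarser track $\tau_{\epsilon_0}$ and to count how many pieces occur. Taking $\epsilon$ small compared to $\epsilon_0$ we have $\epsilon<\epsilon_0$, hence $\cN_{\epsilon_0}(\lambda)\subseteq \cN_\epsilon(\lambda)$ (a larger threshold removes more of the complementary spike regions). Consequently every tie of $\cN_{\epsilon_0}(\lambda)$, being a connected subset of a leaf of $H_\lambda(Z)$ meeting $\cN_\epsilon(\lambda)$, lies inside a unique tie of $\cN_\epsilon(\lambda)$; thus $t_b$ is partitioned as $t_b=\big(t_b\setminus\cN_{\epsilon_0}(\lambda)\big)\sqcup J_1\sqcup\cdots\sqcup J_M$, where $J_1,\dots,J_M$ are exactly the ties of $\cN_{\epsilon_0}(\lambda)$ contained in $t_b$, i.e. cross-sections of branches $b_1,\dots,b_M$ of $\tau_{\epsilon_0}$. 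By transverse invariance of $\alpha$, the value $\alpha(J_i)$ equals the $\alpha$-weight of the branch $b_i$ in $\tau_{\epsilon_0}$, so $|\alpha(J_i)|\le \|\alpha\|$. Since $\cN_{\epsilon_0}(\lambda)\supseteq\lambda$, the set $t_b\setminus\cN_{\epsilon_0}(\lambda)$ is disjoint from $\lambda$ and carries no $\alpha$-mass; finite additivity of the transverse cocycle $\alpha$ then gives $\alpha(t_b)=\sum_{i=1}^M\alpha(J_i)$, whence $|\alpha(t_b)|\le M\|\alpha\|$.

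It remains to bound $M$, the number of ties of $\cN_{\epsilon_0}(\lambda)$ inside $t_b$, equivalently the number of times $t_b$ crosses the spine $\tau_{\epsilon_0}$. Here I would rerun the pigeonhole argument from the proof of Proposition \ref{prop:tt_geometry}(5): between two consecutive pieces $J_i,J_{i+1}$ the tie leaves $\cN_{\epsilon_0}(\lambda)$, so it passes through a spike of $\lambda$ whose cross-horocyclic arc has length in the window $[\epsilon,\epsilon_0)$ (present at scale $\epsilon$, removed at scale $\epsilon_0$). There are at most $6|\chi(S)|$ spikes of $\lambda$, and by Proposition \ref{prop:tt_geometry}(1) each successive return of a fixed spike to $t_b$ traverses a nontrivial loop in $\tau_\epsilon$ of length at least $2\delta$, so the corresponding cross-horocyclic arcs along that spike differ by a factor of at least $e^{2\delta}$; since they all lie in $[\epsilon,\epsilon_0)$, a fixed spike meets $t_b$ at most $\tfrac{1}{2\delta}\log(\epsilon_0/\epsilon)+1$ times. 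Summing over spikes gives $M=O\big(\log(\epsilon_0/\epsilon)+1\big)$, with implied constant depending only on $\inj(Z)\ge\delta$ and the topology of $S$. Choosing $\epsilon$ small enough that $\log(\epsilon_0/\epsilon)\ge 1$ absorbs the additive constant and yields $|\alpha(t_b)|\le C\|\alpha\|\log(\epsilon_0/\epsilon)$.

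The step needing the most care — and the likely main obstacle — is this count: Proposition \ref{prop:tt_geometry}(5) is phrased for a track $\tau_s$ at a scale $s$ \emph{below} the neighborhood scale, whereas here $\tau_{\epsilon_0}$ sits at the \emph{larger} scale $\epsilon_0>\epsilon$, so the geometric picture is slightly different (the tie $t_b$ now sticks out of $\cN_{\epsilon_0}(\lambda)$ rather than being confined to a coarser neighborhood), and one must check that the estimate still goes through; the point is that the only quantity entering the pigeonhole bound is the ratio of the two scales, so the argument transfers essentially verbatim. As elsewhere (Note \ref{note:constants}) one must also verify that the auxiliary constants — the tie-length constant $E$, the loop-length lower bound $2\delta$, the spike count $6|\chi(S)|$ — are uniform over the relevant family of data, so that the final $C$ depends only on $\inj(Z)$ and the topology of $S$.
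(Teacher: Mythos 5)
Your proof does not work: the containment between the two horocyclic neighborhoods is backwards, and this inverts the whole geometric picture. As $\epsilon\to 0$ the neighborhood $\cN_\epsilon(\lambda)$ \emph{shrinks} onto $\lambda$: it consists of $\lambda$ together with only the thin portions of the spikes, namely the horocyclic arcs of length less than $\epsilon$. This is the convention forced by Proposition \ref{prop:tt_geometry}: its item (3) bounds tie lengths by $E\epsilon$ precisely because every horocyclic arc crossed by a tie of $\cN_\epsilon(\lambda)$ has length less than $\epsilon$, and its item (5) is about a tie of the \emph{coarser} neighborhood meeting many branches of the \emph{finer} track. So for $\epsilon<\epsilon_0$ one has $\cN_\epsilon(\lambda)\subseteq\cN_{\epsilon_0}(\lambda)$, the tie $t_b$ of $\tau_\epsilon$ is a short sub-arc of a \emph{single} tie $t_0$ of $\tau_{\epsilon_0}$, and it contains no complete tie of the coarse track. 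Your decomposition $t_b=\bigl(t_b\setminus\cN_{\epsilon_0}(\lambda)\bigr)\sqcup J_1\sqcup\cdots\sqcup J_M$ therefore does not exist, and the step $|\alpha(J_i)|\le\|\alpha\|$ has nothing to apply to. The genuine difficulty of the lemma is exactly the one your picture hides: $\alpha$ is only a finitely additive \emph{signed} cocycle, so the value of $\alpha$ on a proper sub-arc of a tie is not controlled by the value on the whole tie, and can in fact be as large as $\|\alpha\|\log(\epsilon_0/\epsilon)$.

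The correct argument (the paper's) goes as follows. Write $t_0\supseteq t_b$ for the coarse tie containing $t_b$, pick points $x_\pm$ in the gaps of $t_0\setminus\cN_\epsilon(\lambda)$ adjacent to $t_b$, and let $t_\pm$ be the sub-arcs of $t_0$ from its endpoints to $x_\pm$. Finite additivity plus the fact that arcs missing $\lambda$ carry no mass give $\alpha(t_b)=\alpha(t_0)-\alpha(t_-)-\alpha(t_+)$ up to sign. Then $|\alpha(t_0)|\le\|\alpha\|$ by definition of the norm, while $|\alpha(t_\pm)|\le\|\alpha\|\,r(t_\pm)$ by Lemma \ref{lem:depth}(1); since the gaps at $x_\pm$ have horocyclic length at least $\epsilon$ but at most $\epsilon_0$, Lemma \ref{lem:depth}(2) forces $r(t_\pm)\le\frac{9|\chi(S)|}{2\inj(Z)}\log(\epsilon_0/\epsilon)$. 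The logarithm thus comes from the divergence radius of the gaps adjacent to $t_b$, not from counting coarse ties inside $t_b$; your proposal never engages with this mechanism, so the gap is not a technicality but the entire content of the lemma.
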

\begin{proof}
There is a branch $b_0$ of $\tau_0$ onto which the tie $t_b$ collapses.  Let $t_0$ be a tie of $\tau_0$ containing $t_b$, and give $t_0$ an orientation.  Choose points $x_-$ and $x_+$ in $t_0\setminus \cN_\epsilon(\lambda)$ adjacent to $t_b$ on its negative and positive sides, respectively.  Take a subarc $t_-$ joining the negative endpoint of $t_0$ to $x_-$ and $t_+$ joining the positive endpoint of $t_0$ to $x_+$.  By transverse invariance and finite additivity, we have 
\[ \alpha( t_b) = \alpha(t_-) + \alpha(t_+) -\alpha(t_0).\] 

According to Lemma \ref{lem:depth} item (2), $\epsilon \le \epsilon_0\exp \left(\frac{ -r(t_+)2\inj(Z)}{9|\chi(S)|}\right)$\footnote{We take $A$ from Lemma \ref{lem:depth} item (2) to be equal to $1$ here, since we are working with the lengths of horocyclic arcs.}, which in particular implies that 
\[r(t_+)\le \frac{9|\chi(S)|}{2\inj(Z)}\log(\epsilon_0/\epsilon),\] 
and similarly that $r(t_-) \le \frac{9|\chi(S)|}{2\inj(Z)}\log(\epsilon_0/\epsilon)$.
Then by Lemma \ref{lem:depth} item (1), we deduce that 
\begin{equation}\label{eqn:cocycle_bounded}
|\alpha(t_b)| \le \|\alpha\| \left( 1 + 2 \frac{9|\chi(S)|}{2\inj(Z)}\log(\epsilon_0/\epsilon)\right) \le C\|\alpha\|\log(\epsilon_0/\epsilon),
\end{equation}
demonstrating the lemma for $\epsilon$ small enough with $C>\frac{9|\chi(S)|}{\inj(Z)}$ large enough to absorb the additive error. 
\end{proof}

\section{Dynamics of the stretch flow}\label{sec:stretch_flow}
In the next section, we will obtain estimates on the rate of Hausdorff convergence of supports for certain $C^1$ paths of measured laminations (Theorem \ref{thm:measure_H_close}).
The argument is dynamical in nature, and our discussion relies on a dictionary between surface-lamination pairs and half-translation structures, i.e., holomorphic quadratic differentials.

In this section, we discuss some preliminaries on dynamics of the earthquake and stretch flows on the moduli space of hyperbolic surfaces equipped with a geodesic lamination, as well as on the structure of the invariant measures corresponding to components $\cQ$ of strata of quadratic differentials.
The main result from this section is Corollary \ref{cor:recurrent}, which states that almost every minimal filling measured geodesic lamination of topological type coming from $\cQ$ has a certain recurrence property with respect the to $\cQ$-Thurston measure on $\ML(S)$.

\subsection{Holomorphic quadratic differentials}
Let $Z\in \T(S)$.
    A \emph{holomorphic quadratic differential} $q\in \Omega^{2,0}(Z)$ is a holomorphic section of the symmetric square of the holomorphic cotangent bundle. 
    Path integration against a branch of the square root  of $q$ produces holomorphic charts to $\CC$ where the transitions are translations and translations with $\pi$-rotation.
    The transition maps preserve the Euclidean metric and directional foliations in $\CC$, so $q$ defines a flat metric on $Z$ away from the zeros of $q$, which we equip with its imaginary $(\cH, |dy|)$ and real $(\mathcal V, |dx|)$ measured foliations. 
    The metric completion of this flat structure has cone points of excess angle $\pi k$ at each zero of order $k$ where there are  $k$-pronged singularities of both $\cH$ and $\mathcal V$.  A \emph{saddle connection} is a (singularity free) Euclidean segment joining zeros of $q$.

	For each $q \in \Omega^{2,0}(X)$, there is a constant $\epsilon>0$ such that for all essential simple closed curves $\gamma$ 	\begin{equation}\label{eqn:binding}
	     i((\mathcal V, |dx|), \gamma) + i((\cH, |dy|),\gamma)>\epsilon.
	\end{equation}
	We call a pair of meausred foliations \emph{binding} if they satisfy \eqref{eqn:binding}.
	It turns out that this property  characterizes those pairs of measured foliations that can be realized as the real and imaginary foliations of a holomorphic quadratic differential.  
	Indeed, let $\Delta$ be the subset of $\MF(S) \times \MF(S)$ that do not bind, and let $\mathcal {QT}(S)$ be the complex vector bundle of holomorphic quadratic differentials over $\T(S)$.
	\begin{theorem}[{\cite{GM}}]\label{thm:GM}
	    The map that assigns to a holomorphic quadratic differential the real and imaginary measured foliations is a $\operatorname{Mod}(S)$-equivariant homeomorphism.  That is, 
	    \[\mathcal {QT}(S) \cong \MF(S)\times \MF(S) \setminus \Delta.\]
	\end{theorem}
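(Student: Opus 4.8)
The plan is to verify that $\Phi\colon q\mapsto\big((\mathcal V,|dx|),(\cH,|dy|)\big)$, defined on the nonzero holomorphic quadratic differentials (the zero differential being sent to the excluded degenerate pair), is a continuous, $\Mod(S)$-equivariant injection whose image is exactly $\MF(S)\times\MF(S)\setminus\Delta$, and then to conclude it is a homeomorphism. It helps to record at the start that $\Delta$ is the set of pairs $(\mu,\nu)$ admitting a common disjoint nonzero $\xi\in\MF(S)$, that is, with $i(\mu,\xi)=i(\nu,\xi)=0$ (equivalently, using compactness of $\PMF(S)$, the pairs for which $\inf_{\xi}\{i(\mu,\xi)+i(\nu,\xi)\}$ over nonzero $\xi$ vanishes; this refines \eqref{eqn:binding}, as it also rules out $i(\mu,\nu)=0$). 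Equivariance is built into the constructions. Continuity holds because, for each essential simple closed curve $\gamma$, the $q$-height $i((\cH,|dy|),\gamma)$ and $q$-width $i((\mathcal V,|dx|),\gamma)$ vary continuously with $q\in\QT(S)$, and these functionals determine the topology of $\MF(S)$. That $\Phi(q)\notin\Delta$ follows from two facts: first, $|q|$ is a nonpositively curved Euclidean cone metric, so each essential $\gamma$ has a $|q|$-geodesic representative $\gamma^\ast$ of length at least the positive $|q|$-systole, and since $\gamma^\ast$ realizes the intersection numbers of $\gamma$ with both foliations, $i((\cH,|dy|),\gamma)+i((\mathcal V,|dx|),\gamma)\ge\int_{\gamma^\ast}\sqrt{dx^2+dy^2}=\ell_{|q|}(\gamma^\ast)>0$; and second, $i(\cH(q),\mathcal V(q))=\Area(q)>0$, since the joint transverse measure of the horizontal and vertical foliations of $q$ is its area form.

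Injectivity of $\Phi$ amounts to the statement that a nonzero quadratic differential is recovered, as a marked flat cone surface, from the Whitehead classes of its horizontal and vertical foliations. Cutting $X$ along the union $\Sigma$ of all singular leaves — the separatrices issuing from the zeros of $q$, in both the horizontal and the vertical directions, together with the zeros themselves — decomposes $X$ into finitely many open flat rectangles, each a metric product $(0,a)\times(0,b)$ whose side lengths $a,b$ are read off as the transverse measures of $\mathcal V$ and $\cH$ across it; the pattern by which these rectangles are reglued along $\Sigma$ is dictated by the singular structure of the two foliations. Since the marking identifies this combinatorial-metric datum for any two differentials inducing the same pair of foliations, one recovers the same marked $(X,q)$.

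For surjectivity I would first establish that $\Phi$ is proper onto $\MF(S)\times\MF(S)\setminus\Delta$. Suppose $\Phi(q_n)$ converges there, say to a pair with intersection number $a$; then $\Area(q_n)=i(\cH(q_n),\mathcal V(q_n))\to a$, and $a>0$ precisely because the limit avoids $\Delta$, so the areas are bounded. If the underlying surfaces $X_n$ left every compact subset of $\T(S)$, some essential simple closed curve $\gamma$ would be pinched, $\operatorname{Ext}_{X_n}(\gamma)\to 0$, and then $i((\cH,|dy|),\gamma)\le\ell_{|q_n|}(\gamma^\ast)\le\sqrt{\operatorname{Ext}_{X_n}(\gamma)\,\Area(q_n)}\to 0$, and likewise the vertical intersection number; thus $\gamma$ would be disjoint from both limiting foliations, contradicting that the limit lies outside $\Delta$. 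So the $X_n$ remain in a compact set while $\|q_n\|=\Area(q_n)$ stays bounded, and hence a subsequence of $q_n$ converges in $\QT(S)$, necessarily to a nonzero limit. Therefore $\Phi$ is proper, hence a closed map, so its image is closed.

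To finish, note that $\QT(S)$ (minus its zero section) and $\MF(S)\times\MF(S)\setminus\Delta$ are both manifolds of dimension $12g-12$, so by invariance of domain the continuous injection $\Phi$ is open and its image is open. And $\MF(S)\times\MF(S)\setminus\Delta$ is connected: $\Delta$ is a countable union of subsets of codimension at least two — the loci $\{i(\cdot,\gamma)=0\ \text{in both coordinates}\}$ over essential simple closed curves $\gamma$, together with the thin locus $\{i(\mu,\nu)=0\}$ — and the complement of such a union in the connected manifold $\MF(S)\times\MF(S)$ is connected. A nonempty clopen subset of a connected space is the whole space, so $\Phi$ is a continuous open bijection, hence a homeomorphism; equivariance was already recorded. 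The main obstacle is the injectivity step: making the reconstruction of $q$ from its two measured foliations rigorous in the presence of Whitehead equivalence — reconciling the collapses and expansions of singular-leaf segments that simultaneously serve as saddle connections of $q$ — is where the genuine content sits, the properness estimates and the invariance-of-domain endgame being comparatively routine.
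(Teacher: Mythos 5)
The paper does not supply its own proof of this theorem; it is quoted with a citation to Gardiner--Masur. So there is no in-paper argument to compare against, and what follows is an assessment of your proposed proof on its own terms.

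The scaffolding is sound: equivariance is formal, continuity via intersection numbers with curves is standard, your observation that the image avoids $\Delta$ because $i(\cH(q),\mathcal V(q))=\Area(q)>0$ together with the positivity of $\ell_{|q|}(\gamma^\ast)$ is correct, and the closing move (a proper, continuous, injective map between manifolds of the same dimension with connected target is a homeomorphism by invariance of domain) is a clean way to finish. Your reformulation of $\Delta$ over all of $\MF(S)$ rather than only over simple closed curves is in fact the accurate one and a genuine improvement over the shorthand in the surrounding text --- with the weaker condition over curves, a filling minimal but non--uniquely ergodic $\mu$ paired with a different measure $\nu$ on the same support would ``bind'' yet satisfy $i(\mu,\nu)=0$, so could not come from a quadratic differential.

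The serious gap is the injectivity step, and it is not where you locate it. You propose cutting $X$ along the union $\Sigma$ of all horizontal and vertical separatrices and reading off $q$ from the resulting finite collection of flat rectangles. But for a generic $q$ both directional foliations are minimal, so the separatrices in each direction are dense in $X$; the complement of $\Sigma$ then has empty interior, and there is no decomposition into finitely many rectangles at all. The construction you describe works when both foliations are periodic (Jenkins--Strebel type), which is a dense but measure-zero set, and the difficulty is not in reconciling Whitehead moves but in the fact that the combinatorial object you want to compare simply does not exist in general. The standard route to injectivity here is genuinely different: one either invokes the Hubbard--Masur heights theorem fiberwise and shows the base point $X$ is pinned down by an extremal-length minimization (e.g.\ that $(\mu,\nu)$ binding forces $X\mapsto\operatorname{Ext}_X(\mu)+\operatorname{Ext}_X(\nu)$ to have a unique critical point, which is the common Riemann surface), or one proves the result for rational pairs by the rectangle construction and passes to the limit using properness and uniqueness of the limit. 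Either way there is real analytic content that the cutting argument does not supply.

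A secondary issue: in the properness argument you assert that if $X_n$ leaves every compact subset of $\T(S)$ then some essential curve $\gamma$ is pinched, $\operatorname{Ext}_{X_n}(\gamma)\to 0$. That inference is valid in moduli space but not in Teichm\"uller space, where the marking can wander off to infinity while the metrics remain thick (e.g.\ along a pseudo-Anosov axis no fixed curve has extremal length tending to $0$). Properness is true, but the correct mechanism is again extremal length of the limiting pair: if $(\mathcal V_n,\cH_n)\to(\mu,\nu)$ outside $\Delta$, then $\operatorname{Ext}_{X_n}(\mathcal V_n)=\operatorname{Ext}_{X_n}(\cH_n)=\Area(q_n)$ stays bounded, and properness of $X\mapsto\operatorname{Ext}_X(\mu)+\operatorname{Ext}_X(\nu)$ for a binding pair forces $X_n$ to stay in a compact set. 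Note this is precisely the ingredient that also rescues injectivity, which is a sign that the extremal-length formulation is the right framework for the whole theorem, not just the boundary estimates.
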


The moduli space of quadratic differentials $\QM(S) = \QT(S)/\Mod(S)$ is stratified by specifying the number and multiplicities of zeros.
Thus for a partition $\kappa$ of $4g-4$, there is a corresponding  stratum of quadratic differentials $\mathcal Q(\kappa) \subset \QT(S)$.  
There are finitely many components in each stratum, classified by topological data (except for some exceptional cases in genus $4$).
Each stratum is a complex orbifold; there are natural local period coordinates on each component of each stratum obtained by specifying a basis for a certain relative cohomology group recording the periods.

There is a natural action of $\mathsf{GL}_2\RR$ on $\QT(S)$ preserving the strata given by postcomposing the natural half-translation charts with  a given linear map of $\mathbb C= \RR^2$.
The $1$-parameter groups 
\[g_t = \begin{pmatrix}e^t & 0 \\ 0 & e^{-t}\end{pmatrix} \text{ and } u_t =  \begin{pmatrix}1 & t \\ 0 & 1\end{pmatrix}\] correspond to the \emph{Teichm\"uller geodesic} and \emph{horocycle} flows, respectively.  
There is a $\mathsf{GL}_2\RR$-invariant \emph{Masur-Veech} probability measure $\mu_{\mathcal Q}$ in the class of Lebesgue on the unit area locus of each component $\mathcal Q$ of each stratum.
Both $g_t$ and $u_t$ are ergodic with respect to $\mu_{\mathcal Q}$  \cite{Masur_IETsMF, Masur:ergodic_MCG, Veech:ergodic}.

\subsection{The dynamical conjugacy}
Let $\kappa=(\kappa_1, ..., \kappa_j)$ be a partition of $4g-4$ and consider the locus $\ML(\kappa)$ of measured geodesic laminations with minimal and filling support whose complement consists of $j$ polygons with $\kappa_{i}+2$ sides.
We consider the ``stratum'' of regular pairs \[\PT(\kappa) = \bigcup _{\mu\in \ML(\kappa)}\{(Z,\mu): Z\in  \T^\mu(S)\},\] with quotient $\PM(\kappa) = \PT(\kappa)/\Mod(S)$.  The unit length locus is denoted $\PoM(\kappa)= \PoT(\kappa)/\Mod(S)$. 
Using the horocycle foliation construction, we can define a map 
\[H : \bigcup_\kappa\PT(\kappa) \to \QT(S)\]
where $H(Z,\mu)$ is the unique quadratic differential $q(H_\mu(Z), \mu)$ with real foliation $H_\mu(Z)$ and imaginary foliation equivalent to $\mu$.
For $\kappa = (1, ..., 1)$, the following is due to Mirzakhani, while for arbitrary $\kappa$, we use the results of Calderon-Farre.
\begin{theorem}[ \cite{MirzEQ,CF,CF:SHSHII}]\label{thm:conjugacy}
For each $\kappa$, the map $H$ restricts to a Borel-Borel $\Mod(S)$-equivariant bijection between $\PT(\kappa)$ and the locus of $\QT(\kappa)$ with no horizontal saddle connections with the following additional properties
\begin{itemize}
\item $\ell_\mu(Z) = \Area(H(Z,\mu))$. 
\item The earthquake flow is mapped to the Teichm\"uller horocycle flow, i.e., \[H(\Eq_{t\mu}(Z),\mu) = u_t. H(Z,\mu).\]
\item The generalized stretch flow is mapped to the Teichm\"uller geodesic flow, i.e., \[H(\operatorname{stretch}(Z,\mu,t),e^{-t}\mu) = g_t.H(Z,\mu).\]
\end{itemize}
\end{theorem}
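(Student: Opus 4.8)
The plan, following \cite{MirzEQ} and \cite{CF}, is to factor $H$ through the horocyclic foliation construction and the Gardiner--Masur parametrization of Theorem~\ref{thm:GM}, and then to reduce the two flow identities to the case of a weighted simple closed curve, where the orthogeodesic foliation admits an explicit flat description. First I would check that $H$ is well defined on $\bigcup_\kappa\PT(\kappa)$: since $\mu$ is filling, $i(\gamma,\mu)>0$ for every essential simple closed curve $\gamma$, so the pair $(H_\mu(Z),\mu)$ is automatically binding and Theorem~\ref{thm:GM} furnishes a unique $q=H(Z,\mu)\in\QT(S)$ with real foliation $H_\mu(Z)$ and imaginary foliation equivalent to $\mu$. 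Collapsing a horocyclic $(\kappa_i+2)$-gon in a complementary region of $\mu$ to a point yields a $(\kappa_i+2)$-pronged singularity of $q$, i.e.\ a zero of order $\kappa_i$, so $q\in\QT(\kappa)$; and because $\mu$ is minimal its leaves are non-isolated, which forces the imaginary foliation of $q$ (equivalent to $\mu$) to have no saddle connection, that is, $q$ has no horizontal saddle connections. Equivariance is immediate from the naturality of $H_\mu$ and of the Gardiner--Masur map, and Borel measurability of $H$ and $H\inverse$ follows from their continuity on the relevant loci.

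The technical core is that the horocyclic foliation map $H_\mu\colon\T^\mu(S)\to\MF(S)$ is a homeomorphism onto its image, and that every measured foliation in that image is realized by a unique point of $\T^\mu(S)$. When $\mu$ is maximal this is precisely Bonahon's shear-coordinate theorem (Theorem~\ref{thm:coordinates}): $\sigma_\mu$ is an analytic diffeomorphism of $\T(S)=\T^\mu(S)$ onto the positive cone $\cH^+(\mu)$, through which $H_\mu$ factors. For non-maximal filling $\mu$ one runs the same argument using a train track carrying $\mu$ snugly as a chart, the image being the non-maximal analogue of the convex cone $\cH^+(\mu)$; this is the part I expect to be the main obstacle, since it requires the full Calderon--Farre analysis of the orthogeodesic foliation on the regular locus $\T^\mu(S)$. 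Granting it, bijectivity of $H$ is formal: if $H(Z,\mu)=H(Z',\mu')$, equality of imaginary foliations gives $\mu=\mu'$ --- the non-projective measure being recovered because distinct positive multiples of $\mu$ give differentials of distinct area, with Levitt's correspondence $\MF(S)\cong\ML(S)$ passing from foliation to lamination --- and then equality of real foliations gives $Z=Z'$ by injectivity of $H_\mu$. Conversely, a differential $q\in\QT(\kappa)$ without horizontal saddle connections has imaginary foliation that is minimal and filling (this is exactly where the absence of saddle connections is used), hence equivalent to some $\mu\in\ML(\kappa)$, and surjectivity of $H_\mu$ produces the unique $Z\in\T^\mu(S)$ whose horocyclic foliation is the real foliation of $q$.

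For the remaining identities: $\Area(q)=i(\mathcal H,\mathcal V)=i(\mu,H_\mu(Z))$, and since horocycles meet the leaves of $\mu$ orthogonally, the transverse measure of $H_\mu(Z)$ restricted to a leaf of $\mu$ is hyperbolic arclength along that leaf, so this integral is $\ell_\mu(Z)$. For the earthquake--horocycle identity, $u_t$ fixes the imaginary foliation (so it remains $\simeq\mu$) and shears the flat structure along the $\mu$-direction; one checks on a weighted multicurve --- where the flat structure is a union of Euclidean cylinders and the earthquake is a fractional twist --- that this shear is exactly the effect of $\Eq_{t\mu}$ on the orthogeodesic foliation, and then passes to an arbitrary $\mu\in\ML(\kappa)$ by approximating with weighted multicurves and using continuity of both sides (the linearity of $\Eq_{t\mu}$ in shear coordinates, Theorem~\ref{thm:coordinates}, makes the limiting bookkeeping transparent). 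Finally, $\operatorname{stretch}(\cdot,\mu,t)$ depends only on the support of $\mu$, preserves the horocyclic foliation as an unmeasured foliation, and multiplies its transverse measure by $e^t$ (a direct consequence of the stretch-map construction), whereas $g_t$ scales the real transverse measure of a differential by $e^t$ and the imaginary one by $e^{-t}$; comparing the real and imaginary foliations of $H(\operatorname{stretch}(Z,\mu,t),e^{-t}\mu)$ with those of $g_t.H(Z,\mu)$ and invoking injectivity of the Gardiner--Masur map yields the third identity, the rescaling $\mu\mapsto e^{-t}\mu$ being exactly what is needed to match the imaginary foliations.
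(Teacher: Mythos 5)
The theorem you are proving is not proved in the paper at all; the paper cites it from \cite{MirzEQ} for the principal stratum and from \cite{CF,CF:SHSHII} for arbitrary $\kappa$, so there is no in-paper argument to compare against. Your sketch correctly identifies the skeleton --- factor $H$ through the horocyclic foliation and the Gardiner--Masur bijection, invoke Bonahon's shear-coordinate parametrization of $\T^\mu(S)$ to handle injectivity and surjectivity of $H_\mu$ when $\mu$ is maximal, and defer to the Calderon--Farre analysis of the orthogeodesic foliation for non-maximal $\mu$ --- and you correctly flag that last ingredient as the main technical burden. The area formula and the $e^{\pm t}$ scaling computation for the stretch/Teichm\"uller geodesic flows are also right.

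There is, however, a genuine gap in your argument for the earthquake/horocycle identity. You propose to verify $H(\Eq_{t\gamma}(Z),\gamma)=u_t.H(Z,\gamma)$ on weighted simple multicurves $\gamma$, where the flat side is a union of cylinders, and then ``pass to arbitrary $\mu\in\ML(\kappa)$ by approximating with weighted multicurves and using continuity of both sides.'' But a weighted multicurve is neither minimal nor filling, so $(Z,\gamma)\notin\PT(\kappa)$ for any $\kappa$, and $H$ is simply not defined there: the identity you want to check on multicurves does not make sense as written. Even the extended orthogeodesic-foliation map $\cO\colon\T(S)\times\ML(S)\to\QT(S)$, which is defined at $(Z,\gamma)$, is only a Borel-Borel bijection and not continuous --- the topological type of the complementary regions, hence the singularity data of $\cO_\lambda(Z)$, jumps as a multicurve $\gamma_n$ converges in measure to a minimal filling $\mu$ --- so one cannot pass to the limit along multicurves and conclude anything about $\mu$. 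The cited proofs do not take this route: they argue directly on the lamination, comparing the affine structure of the earthquake flow in shear coordinates, $\sigma_\mu(\Eq_{t\mu}Z)=\sigma_\mu(Z)+t\mu$, with the action of $u_t$ in (train-track-adapted) period coordinates on the flat side, rather than approximating by closed curves.
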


\begin{remark}
The map $H$ is the restriction of an everywhere defined $\Mod(S)$-equivariant Borel-Borel bijection $\cO:\PT(S) \to \QT(S)$ using the \emph{orthogeodesic foliation construction}, which is (measure) equivalent to the horocycle foliation construction for polygonal, regular surface lamination pairs \cite{CF,CF:SHSHII}.
\end{remark}

The locus of $\QM(\kappa)$ with no horizontal saddle connections has zero measure for any $\mathsf{GL}_2(\RR)$-invariant measure, so $H$ has a measurable inverse on each stratum with respect to any such measure.
In particular, for each component $\cQ$ of $\QM(\kappa)$, we have a measurable inverse $H\inverse: \cQ \to \PM(\kappa)$ pushing the Masur-Veech measure $\mu_{\cQ}$ to a stretchquake  (earthquake and stretch flow)  invariant Borel  measure $\nu_{\cQ}=H\inverse_*\mu_{\cQ}$ on $\PM(\kappa)$, which induces a probability measure with the same name on $\PoM(\kappa)$.

The following is an immediate consequence of Theorem \ref{thm:conjugacy} and the corresponding facts that the Teichm\"uller flows are ergodic on the unit area loci for every component of every stratum of quadratic differentials.
\begin{corollary}\label{cor:stretchquake_ergodic}
The earthquake flow and generalized stretch flow are both ergodic on the unit length locus with respect to $\nu_{\cQ}$ for every component of every stratum. 
\end{corollary}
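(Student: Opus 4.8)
The plan is to transport ergodicity across the measurable conjugacy $H$ furnished by Theorem \ref{thm:conjugacy}. First I would pass to the quotient: since $H\colon \PT(\kappa)\to \QT(S)$ is $\Mod(S)$-equivariant, it descends to a Borel--Borel bijection $\PM(\kappa)\to \QM(\kappa)$ onto the sublocus $\QM(\kappa)^\circ$ of differentials with no horizontal saddle connections, with Borel inverse. Fixing a component $\cQ$ and using the identity $\ell_\mu(Z)=\Area(H(Z,\mu))$ from Theorem \ref{thm:conjugacy}, this bijection carries the unit length locus inside $\PoM(\kappa)$ onto the unit area locus $\cQ^{(1)}$ of $\cQ$. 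Next I would observe that $\QM(\kappa)^\circ$ is invariant under both $g_t$ and $u_t$ --- these flows rescale the horizontal foliation but preserve its set of saddle connections --- and that its complement is $\mu_\cQ$-null, since the locus of differentials carrying a horizontal saddle connection has measure zero for any $\mathsf{GL}_2\RR$-invariant measure. Thus, after deleting this null, invariant set, $H$ is a genuine Borel isomorphism intertwining the earthquake flow with $u_t$ and the generalized stretch flow with $g_t$.

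I would then set $\nu_\cQ=H^{-1}_*\mu_\cQ$, a Borel probability measure on $\cQ^{(1)}\subseteq \PoM(\kappa)$. Because $H$ intertwines the flows and $\mu_\cQ$ is $u_t$- and $g_t$-invariant, $\nu_\cQ$ is invariant under both the earthquake flow and the generalized stretch flow. For ergodicity of the earthquake flow, let $A$ be a Borel subset of $\PoM(\kappa)$ invariant under the earthquake flow with $\nu_\cQ(A)>0$. Then $H(A)\subseteq \cQ^{(1)}$ is a Borel set invariant under $u_t$ with $\mu_\cQ(H(A))=\nu_\cQ(A)>0$, so ergodicity of the horocycle flow for $\mu_\cQ$ forces $\mu_\cQ(H(A))=1$ and hence $\nu_\cQ(A)=1$. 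Replacing $u_t$ by $g_t$ and invoking ergodicity of the Teichm\"uller geodesic flow gives ergodicity of the generalized stretch flow by the identical argument.

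I do not expect a substantive obstacle; the content is entirely in Theorem \ref{thm:conjugacy} together with the classical ergodicity of $g_t$ and $u_t$ on $\cQ^{(1)}$ (\cite{Masur_IETsMF, Masur:ergodic_MCG, Veech:ergodic}). The single point that merits care is the bookkeeping of the exceptional locus: one must verify that the set of surface--lamination pairs whose image has a horizontal saddle connection is simultaneously $\Mod(S)$-invariant, invariant under both one-parameter flows, and $\mu_\cQ$-null, so that deleting it preserves the measurability, bijectivity, and flow-equivariance of $H$. Once this is in place the ergodicity transfers automatically, since a Borel isomorphism intertwining two flows carries ergodic invariant measures to ergodic invariant measures.
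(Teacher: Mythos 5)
Your proof is correct and fills in exactly the bookkeeping that the paper leaves implicit: the paper states the corollary follows immediately from Theorem~\ref{thm:conjugacy} together with ergodicity of the Teichm\"uller geodesic and horocycle flows, and your argument of transporting ergodicity across the Borel conjugacy $H$ (after excising the $\mu_\cQ$-null, flow-invariant locus of differentials with horizontal saddle connections) is precisely that deduction spelled out.
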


\subsection{$\cQ$-Thurston measures}\label{sub:Thurston}
With $\kappa = (1, ..., 1)$, Mirzakhani proved for the principal (connected, open) stratum $\cQ = \cQ(\kappa)$, that $\nu_{\cQ}$ is locally the product of the Weil-Petersson volume form with the Thurston measure on measured laminations \cite{MirzEQ}.  
The Thurston measure $\mu_{\Th}$ on $\ML(S)$ is constructed as the Lebesgue measure in maximal bi-recurrent train track charts, normalized so that the integer lattice has co-volume $1$ in each chart.  As the transitions between train track charts are piecewise integer-linear and invertible, these Lebesgue measures piece together into a globally defined $\Mod(S)$-invariant (ergodic) measure on $\ML(S)$; see, e.g., \cite{PennerHarer}.  

We outline the construction for the \emph{$\cQ$-Thurston measure} $\mu_{\Th}^\cQ$ on $\ML(S)$ and refer the reader to \cite{CF} for details.
Apart from some sporadic cases in genus four, the stratum $\cQ$ can be identified in terms of topological properties of the horizontal foliation of the $\mu_\cQ$-typical quadratic differential (those without horizontal saddle connections).
We let $\mathscr T(\cQ)$ be the set of isotopy classes of  bi-recurrent train tracks of type $\kappa$ on $S$ subject to the additional topological restrictions (such as orientability, hyperellipticity, etc.) imposed by the horizontal foliation of the $\mu_{\cQ}$-typical differential.  
Note that $\mathscr T(\cQ)$ is  $\Mod(S)$-invariant.

Define
\[\ML(\cQ) = \bigcup_{\tau \in \mathscr T(\cQ)} U_{\operatorname{snug}}(\tau),\]
where $U_{\operatorname{snug}}(\tau)\subset \ML(S)$ is the cone of measures snugly carried by $\tau$.\footnote{$U_{\operatorname{snug}}(\tau)$ is the complement of countably many integer-linear hyperplanes, and is therefore a Borel set.}
Then the $\mu_{\Th}^\cQ$ is defined in each cone $U_{\operatorname{snug}}(\tau)$ as the Lebesgue measure, normalized so that the integer lattice\footnote{the conditions cutting out $U_{>0}(\tau)$ from $\ML(S)$ are piecewise $\ZZ$-linear, so the integer lattice in $\ML(S)$ restricts to an integer lattice in $U_{>0}(\tau)$.} has co-volume $1$.
Again, the transitions between charts are piecewise integer-linear, so these measures can be patched together to form a $\Mod(S)$-invariant measure $\mu_{\Th}^\cQ$ on $\ML(S)$; then $\ML(S)\setminus \ML(\cQ)$ has $\mu_{\Th}^\cQ$-measure $0$.

The usual Thurston measure induces an inner and outer regular Borel probability measure on the unit length locus $\ML^1(Z)$ for any $Z\in \T(S)$, but the $\cQ$-Thurston measure is not outer regular or even locally finite on $\ML^1(Z)$, unless $\cQ$ is the principal stratum; see \cite{CF} and \cite{LM}.

Nevertheless, we have the following structural result regarding $\nu_{\cQ}$ generalizing Mirzakhani's disintegration formula.  

\begin{theorem}[\cite{CF}]\label{thm:measures}
Let $\kappa$ be a partition of $4g-4$, and $\cQ\subset \QM(\kappa)$ be a component.  The measure $\nu_{\cQ}$ on $\PM(\kappa)$ is locally the product of the $\cQ$-Thurston measure and a volume form on the regular locus $\T^\mu(S)$ induced by the (degenerate) Weil-Petersson symplectic form tangent to $\T^\mu(S)$.  
\end{theorem}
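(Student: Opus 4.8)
The plan is to transport the statement to the stratum $\cQ$ via the conjugacy $H$ of Theorem \ref{thm:conjugacy} and then read everything off in local period coordinates, where the Masur--Veech measure $\mu_\cQ$ is, by construction, Lebesgue measure. This generalizes Mirzakhani's disintegration formula for the principal stratum, and the key new input beyond her argument is precisely Theorem \ref{thm:conjugacy}. Fix a $\mu_\cQ$-typical differential $q_0 = H(Z_0,\mu_0)$, so $q_0$ has no horizontal saddle connections and $\mu_0\in\ML(\cQ)$, $Z_0\in\T^{\mu_0}(S)$. Choosing a basis of the relevant (anti-invariant) relative cohomology of the canonical orienting double cover adapted to $q_0$ gives a period chart $q\mapsto (z_1,\dots,z_N)\in\CC^N$ near $q_0$ in which $\mu_\cQ$ is Lebesgue measure $\prod dx_j\,dy_j$ (up to the unit-area normalization, which I would handle by working first with the cone measure and coning off at the end). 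By the conventions in Theorem \ref{thm:conjugacy} the imaginary parts $y = (y_1,\dots,y_N)$ record the imaginary foliation, i.e.\ the point $\mu\in\ML(\cQ)$, while the real parts $x = (x_1,\dots,x_N)$ record the real (horocyclic) foliation, i.e.\ the point $Z\in\T^\mu(S)$ via $H$; moreover $g_t$ acts by $(x,y)\mapsto(e^t x, e^{-t}y)$ and $u_t$ by $(x,y)\mapsto(x+ty,y)$, matching the stretch and earthquake flows, and in particular fixing $\mu$ amounts to fixing all of $y$. The Fubini decomposition $\prod dx_j\,dy_j = \big(\prod dx_j\big)\big(\prod dy_j\big)$ is then exactly the desired local product structure, and it remains to identify the two factors under $H\inverse$.

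For the $\ML(\cQ)$-factor, I would check that the affine chart $\mu\mapsto y$ agrees, up to an integral-linear change of coordinates, with a weight chart $U_{\operatorname{snug}}(\tau)$ for a train track $\tau\in\mathscr T(\cQ)$ carrying $\mu_0$ snugly: the branches of such a $\tau$ are read off from the combinatorics of the horizontal separatrix diagram of $q_0$, which is exactly the topological data cutting out $\mathscr T(\cQ)$ (with the sporadic genus-four strata requiring the usual case analysis), and the integral structures on $H^1$-periods and on train-track weights match. Since an integral-linear change of coordinates preserves Lebesgue measure normalized to give the integer lattice covolume one, and since $\mu_{\Th}^\cQ$ is by definition this normalized Lebesgue measure in weight charts of type $\mathscr T(\cQ)$, the factor $\prod dy_j$ pushes forward under $H\inverse$ to $\mu_{\Th}^\cQ$.

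For the $\T^\mu(S)$-factor, fix $\mu$, so that $x$ becomes an affine coordinate on the fibre $H(\cdot,\mu)\big(\T^\mu(S)\big)\cong\T^\mu(S)$ (this fibre has dimension $N$, the same as $\ML(\cQ)$). I would identify this affine structure with Bonahon's shear cocycle: choosing a chain-recurrent maximal completion $\lambda\supset\mu$, the map $Z\mapsto\sigma_\lambda(Z)$ restricted to $\T^\mu(S)$ is affine onto an affine slice of $\cH(\lambda)$ on which the coordinates along the $\mu$-branches are free and those along the triangulating branches of the complementary polygons take the constant values forced by regularity, and the free coordinates are affinely equivalent to $x$. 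Then the last bullet of Theorem \ref{thm:coordinates} — the Bonahon--S\"ozen comparison $\sigma_\lambda^*\omega_{\Th} = c\,\omega_{\WP}$, restricted to this slice, where $\omega_{\Th}$ restricts to the (possibly degenerate) skew pairing tangent to $\T^\mu(S)$ — shows that the volume form naturally attached to $\omega_{\WP}|_{\T^\mu(S)}$ is a constant multiple of $\prod dx_j$. Assembling the three steps and tracking the normalization constant through the two charts yields the theorem.

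The \emph{main obstacle} I expect is this last factor when $\mu$ is not maximal: $\omega_{\WP}|_{\T^\mu(S)}$ is genuinely degenerate (its rank has the parity of $j$), so one must make precise in what sense it "induces" a volume form on $\T^\mu(S)$ — most cleanly by working on the double-cover cohomology and comparing its integral lattice to the period lattice — and one must verify that the Bonahon--S\"ozen identity survives restriction to the non-maximal snug subspace, which is where the real content of \cite{CF} lies. A secondary, purely measure-theoretic point is that because $\mu_{\Th}^\cQ$ need not be locally finite on a unit-length slice $\ML^1(Z)$, the disintegration must be phrased over $\ML(\cQ)$ itself, as in the statement, and one must check that Borel structures and null sets match across the $H$-conjugacy — which is exactly why the full-measure "no horizontal saddle connection" locus from Theorem \ref{thm:conjugacy} is indispensable.
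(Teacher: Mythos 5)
The paper does not prove this theorem: it is imported verbatim from \cite{CF}, and the remark immediately following it explicitly says that part of the \emph{content} of the theorem is the very definition of the volume form on $\T^\mu(S)$ and of the (local, volume-preserving) charts that move between nearby regular loci $\T^\mu(S)$ and $\T^{\mu'}(S)$. So there is no in-paper proof to compare against, and your proposal should be read as a reconstruction of the \cite{CF} argument rather than of anything in this paper.

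As a reconstruction, the broad strategy is the right one: transport everything to period coordinates on $\cQ$ via $H$, use that $\mu_\cQ$ is Lebesgue there, decompose $\prod dx_j\,dy_j$ by Fubini, and identify the two factors. The flow equivariances you list (stretch $\leftrightarrow g_t$, earthquake $\leftrightarrow u_t$), the dimension count ($\dim\T^\mu(S)=2g-2+j=\dim\ML(\cQ)=N$), and the fact that $y$ records $\mu$ while $x$ records $H_\mu(Z)$ are all correct. The genuine gaps are exactly the ones you flag, but their role should be stated more sharply: they are not ``obstacles to the last step,'' they \emph{are} the theorem. (a) That $x$ gives an affine structure on $\T^\mu(S)$ compatible with the shear picture is the content of the shear-shape coordinates of \cite{CF}; it is not a corollary of Bonahon--S\"ozen, because Bonahon--S\"ozen concerns $\sigma_\lambda$ on all of $\T(S)$ for a maximal $\lambda$, and one has to build the non-maximal version and show the regularity constraints cut out an affine slice. (b) The identification of the $dy$-factor with $\mu^\cQ_{\Th}$ requires an honest $\ZZ$-linear matching between the relative-period lattice and the train-track integer weight lattice; asserting ``the integral structures match'' is precisely what must be proved. (c) Since the degenerate restriction $\omega_{\WP}\big|_{\T^\mu(S)}$ has no canonical top power, one must \emph{define} the volume form (which \cite{CF} does, roughly by pairing with a transversal to $\T^\mu(S)$ inside $\T(S)$), and this is not a secondary normalization issue. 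Finally, a point you mark as secondary is actually load-bearing: $H$ is only a Borel bijection onto the zero-saddle-connection locus, so the ``local product'' statement is measure-theoretic, not chart-theoretic, and one must verify that Borel structures, null sets, and the two normalizations agree across $H$. With all of that deferred to \cite{CF}, the proposal is a faithful skeleton but not a proof.
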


\begin{remark}
Part of the content of Theorem \ref{thm:measures} is the definition of the volume forms on regular loci and the construction of  (locally defined) volume-preserving diffeomorphisms between open sets in $\T^\mu(S)$ and $\T^{\mu'}(S)$ if $\mu$ and $\mu'$ are close enough in measure.  See \cite{CF} for a precise formulation and further discussion.
\end{remark}

We make use of the following corollary for arbitrary components $\cQ\subset \QM(\kappa)$.

\begin{corollary}\label{cor:recurrent}
Let $\cR(\cQ)\subset \ML(\cQ)$ be the set of measured geodesic laminations of type $\cQ$ satisfying the following property: for any $\mu \in \cR(\cQ)$ and any $Z\in\T^\mu(S)$, the projection of the anti-stretch path $(\operatorname{stretch}(Z, \mu, -t) , e^t\mu)\in \PT(\kappa)$ to $\PM(\kappa)$ returns to some compact set in $\PM(\kappa)$ infinitely often as $t\to \infty$.

Then $\cR(\cQ)$ has full $\mu_{\Th}^{\cQ}$-measure, i.e.,  $\mu_{\Th}^{\cQ}(\ML(\cQ)\setminus \cR(\cQ)) = 0$.  
\end{corollary}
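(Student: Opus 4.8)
The plan is to move the problem to the Teichm\"uller geodesic flow by means of the conjugacy $H$ and then apply two classical facts from the ergodic theory of quadratic differentials: generic unique ergodicity of the horizontal foliation, together with Masur's criterion. First I would use Theorem \ref{thm:conjugacy} to identify the anti-stretch path $t\mapsto(\operatorname{stretch}(Z,\mu,-t),e^{t}\mu)$ with the backward Teichm\"uller geodesic $t\mapsto g_{-t}H(Z,\mu)$; since $\ell_{e^{t}\mu}(\operatorname{stretch}(Z,\mu,-t))=\Area(g_{-t}H(Z,\mu))=\ell_{\mu}(Z)$ is constant along the path, after rescaling $\mu$ (which changes neither the hypotheses nor the conclusion) I may assume everything takes place in the unit-area locus. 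Because $H$ intertwines the generalized stretch flow on $\PM(\kappa)$ with the Teichm\"uller geodesic flow on the (flow-invariant) Borel locus of $\QM(\kappa)$ without horizontal saddle connections, the anti-stretch path of $(Z,\mu)$ returns to a compact set of $\PM(\kappa)$ infinitely often exactly when $g_{-t}H(Z,\mu)$ returns to a compact set of $\QM(\kappa)$ infinitely often. The structural point to keep track of is that under $H$ the lamination $\mu$ is (the Whitehead class of) the \emph{horizontal} foliation of $q:=H(Z,\mu)$, whereas the complementary foliation $H_{\mu}(Z)$ is what varies with $Z\in\T^{\mu}(S)$; as $Z$ ranges over $\T^{\mu}(S)$, $q$ ranges over all differentials in $\QT(\kappa)$ with horizontal foliation in the class of $\mu$ and no horizontal saddle connection.

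The heart of the argument is the claim that if $\mu$ is uniquely ergodic then for \emph{every} $Z\in\T^{\mu}(S)$ the geodesic $g_{-t}H(Z,\mu)$ returns to a compact set infinitely often. To prove this I would let $R\in\mathsf{GL}_2\RR$ be the rotation by a right angle, which preserves $\cQ$ and satisfies $g_{t}R=Rg_{-t}$, so that $g_{t}(R\cdot q)=R\cdot(g_{-t}q)$ for all $t$. The vertical foliation of $R\cdot q$ equals the horizontal foliation of $q$, namely the uniquely ergodic $\mu$; Masur's criterion then yields an $\epsilon>0$ and times $t_{n}\to\infty$ with $g_{t_{n}}(R\cdot q)$ in the compact set of unit-area differentials of $\cQ$ of systole at least $\epsilon$. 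Applying the homeomorphism $R^{-1}$ carries this recurrence to $\{g_{-t}q\}$, hence, by the first paragraph, to the anti-stretch path of $(Z,\mu)$. Consequently every uniquely ergodic $\mu\in\ML(\cQ)$ lies in $\cR(\cQ)$.

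It then remains to check that $\mu_{\Th}^{\cQ}$-almost every $\mu\in\ML(\cQ)$ is uniquely ergodic. Here I would invoke the classical theorem of Masur and Veech \cite{Masur_IETsMF,Veech:ergodic} that $\mu_{\cQ}$-almost every differential in $\cQ$ has uniquely ergodic vertical foliation; since $\mu_{\cQ}$ is $\mathsf{GL}_2\RR$-invariant, in particular invariant under the rotation $R$ above, the same statement holds for the horizontal foliation. Transporting this full-measure set through $H^{-1}$ (Theorem \ref{thm:conjugacy}), using that unique ergodicity of the horizontal foliation of $q$ is a property of the class of $\mu$ alone, together with the disintegration of $\nu_{\cQ}$ over $\mu_{\Th}^{\cQ}$ from Theorem \ref{thm:measures}, whose fibers $\T^{\mu}(S)$ carry positive volume for $\mu_{\Th}^{\cQ}$-almost every $\mu$, I would conclude that the non-uniquely-ergodic laminations are a $\mu_{\Th}^{\cQ}$-null subset of $\ML(\cQ)$. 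Combined with the previous paragraph this gives $\mu_{\Th}^{\cQ}(\ML(\cQ)\setminus\cR(\cQ))=0$.

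I expect the main difficulty to be bookkeeping rather than depth: keeping straight which of the two foliations of $H(Z,\mu)$ is $\mu$, so that the rotation $R$ converts the \emph{backward} geodesic flow together with the hypothesis that $\mu$ is uniquely ergodic into precisely the hypothesis of the \emph{forward} Masur criterion; and making the transfer of the full-measure set through the Borel conjugacy $H$ and the disintegration of $\nu_{\cQ}$ rigorous, especially the positivity of the fiber volumes.
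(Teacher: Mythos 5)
Your proof takes a genuinely different route from the paper's. The paper invokes Corollary \ref{cor:stretchquake_ergodic} (ergodicity of the generalized stretch flow for $\nu_\cQ$) to get Poincar\'e recurrence for $\nu_\cQ$-almost every pair, then applies the disintegration of Theorem \ref{thm:measures} and Fubini; this yields ``for $\mu_{\Th}^\cQ$-a.e.\ $\mu$, for a.e.\ $Z\in\T^\mu(S)$'' and the remaining upgrade to the universal quantifier over $Z$ in the definition of $\cR(\cQ)$ is left implicit. You instead pass through the rotation trick and Masur's criterion, thereby explicitly explaining why backward recurrence is a property of $\mu$ alone (because it is equivalent to forward non-divergence for a differential whose \emph{vertical} foliation is $\mu$). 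That is a real conceptual gain: your argument genuinely addresses the ``for any $Z\in\T^\mu(S)$'' quantifier rather than leaving it to the reader, and you replace ergodicity of the stretch flow with the independent (and classical) Masur--Veech unique-ergodicity theorem, so your proof does not rely on Corollary \ref{cor:stretchquake_ergodic} at all.

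However, there is a gap in your key claim that ``Masur's criterion then yields\dots the compact set of unit-area differentials of $\cQ$ of systole at least $\epsilon$.'' Masur's criterion gives non-divergence in $\QoM(S)$, i.e., infinitely many returns to a compact subset of the full moduli space of quadratic differentials. The set $\{q\in\cQ^1 : \mathrm{sys}(q)\ge\epsilon\}$ you name is \emph{not} compact in the stratum component $\cQ$: two zeros of $q$ can collide while the systole of the underlying flat (or hyperbolic) metric stays bounded below, producing a sequence that leaves every compact set of $\cQ$ yet remains in the thick part of $\QoM(S)$. The conclusion you need is recurrence to a compact set in $\PM(\kappa)$, which under $H$ corresponds to a compact set in the stratum, not merely in $\QM(S)$. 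To close this you would need a stratum version of non-divergence (controlling, say, the shortest saddle connection rather than the shortest closed curve), which does not follow from unique ergodicity alone without further argument; alternatively you could fall back on Poincar\'e recurrence for the finite measure $\mu_\cQ$ on $\cQ^1$, but then you lose the ``for every $Z$'' feature that was the main advantage of your approach and are back to the paper's Fubini argument.
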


\begin{proof}
By corollary \ref{cor:stretchquake_ergodic}, the (generalized) stretch flow is ergodic for $\nu_{\cQ}$ on the locus of constant length pairs in $\PM(\cQ)$.  
Thus $\nu_{\cQ}$-a.e. pair is recurrent in backward time.  
By Theorem \ref{thm:measures}, $\nu_{\cQ}$ is locally the product of the $\cQ$-Thurston measure and a volume element on the regular locus in $\T(S)$.  
An application of Fubini's Theorem completes the proof.
\end{proof}

\section{Measure vs. Hausdorff convergence}\label{sec:MH_convergence}

Our main result in this section (Theorem \ref{thm:measure_H_close}) relates measure convergence of certain $C^1$ paths of measured laminations with the Hausdorff convergence  of their supports along a subsequence, and may be of independent interest.
It appears that a better rate of convergence could be obtained by studying the Lyapunov exponents and Oseledets flag for (a hyperbolic geometric version of) the Kontsevich-Zorich cocycle.
However, we do not pursue this here, as the naive rate of convergence obtained below suffices for our purposes.

As we split a geometric train track for a pair $(Z,\mu) \in \PoT(\kappa)$ by taking a small parameter to zero, the measures deposited on the thin split track by $\mu$ are very small. The next proposition states that for $\nu_\cQ$ almost every pair, there is an infinite collection  of ``stopping times," where the measure deposited on the corresponding tracks is ``balanced.''

\begin{proposition}\label{prop:recurrence}
Let $\cQ$ be a component of a stratum $\QM(\kappa)$ of quadratic differentials, and $\nu_\cQ$ be the corresponding stretchquake invariant ergodic probability measure on the unit length locus $\PoM(\kappa)$.
For $\nu_\cQ$-almost every pair $(Z,\mu) \in \PoM(\kappa)$, there are constants $M$ and $t_0$ and a sequence $t_1,t_2, ...$ tending to infninty, such that the weights of $\mu$ deposited on the branches of $\tau_n = \tau(Z, \mu, e^{-e^{t_n}t_0})$ are bounded between $\frac{1}{M}e^{-t_n}$ and $Me^{-t_n}$ for all $n$.  
\end{proposition}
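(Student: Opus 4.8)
The plan is to deduce the statement from recurrence of the anti\nobreakdash-stretch flow to a fixed compact set, transporting uniform train\nobreakdash-track estimates back along a stretch map. Throughout I work on $\PoM(\kappa)$ with the flow $\Psi_s(Z,\mu)=(\operatorname{stretch}(Z,\mu,-s),\,e^{s}\mu)$. Since $\ell_{e^{s}\mu}(\operatorname{stretch}(Z,\mu,-s))=e^{s}e^{-s}\ell_\mu(Z)=1$, this flow preserves the unit length locus, and by Theorem \ref{thm:conjugacy} it is conjugate under $H$ to the backward Teichm\"uller geodesic flow, hence preserves $\nu_\cQ$ and is ergodic by Corollary \ref{cor:stretchquake_ergodic} (cf.\ Corollary \ref{cor:recurrent}).

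First I would fix $\delta>0$ and $M_0>1$ and a parameter $\epsilon_0=\epsilon_0(\delta)$ small enough that, by Proposition \ref{prop:tt_geometry}(4), $\cN_{\epsilon_0}(\mu')$ has no closed ties whenever $\inj(Z')\ge\delta$; let $G\subseteq\PoM(\kappa)$ be the Borel set of pairs $(Z',\mu')$ with $\inj(Z')\ge\delta$ such that every $\mu'$-weight of the geometric train track $\tau(Z',\mu',\epsilon_0)$ lies in $[M_0^{-1},M_0]$. Because $\tau(Z',\mu',\epsilon_0)$ is built from a neighborhood of $\operatorname{supp}\mu'$, each of its (boundedly many) branches carries strictly positive $\mu'$-weight; and $\ell_{\mu'}(Z')=1=\sum_b w_{\mu'}(b)\,\ell(b)$ bounds the weights from above in terms of the branch lengths. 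Hence $\nu_\cQ(\PoM(\kappa)\setminus G)\to0$ as $\delta\to0$ and $M_0\to\infty$, so for a suitable choice of $\delta,M_0$ we may pick a compact $\mathcal K\subseteq G$ with $\nu_\cQ(\mathcal K)>0$. By Birkhoff's theorem applied to the ergodic flow $\Psi_s$, for $\nu_\cQ$-a.e.\ $(Z,\mu)$ the set $\{s\ge0:\Psi_s(Z,\mu)\in\mathcal K\}$ is unbounded; fix such a pair, a sequence $s_n\to\infty$ with $(Z_n,\mu_n):=\Psi_{s_n}(Z,\mu)\in\mathcal K$, and note $\mu=e^{-s_n}\mu_n$.

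Next I would transport $\tau(Z_n,\mu_n,\epsilon_0)$ back to $Z$ along the stretch map $\phi_n\colon Z\to Z_n$ realizing $\operatorname{stretch}(Z,\mu,-s_n)$. The map $\phi_n$ is isotopic to the identity, so it carries the measured lamination $\mu$ to $\mu$ and preserves $\mu$-transverse measures; it is fin\nobreakdash-by\nobreakdash-fin affine on the horocycle foliation, scaling the transverse ($H_\mu$\nobreakdash-)measure by $e^{-s_n}$. Since the cut locus of $\cN_\epsilon(\mu)$ inside a fin sits at transverse depth comparable to $\log(1/\epsilon)$, one gets $\phi_n^{-1}(\cN_{\epsilon_0}(\mu_{Z_n}))=\cN_{\epsilon_n}(\mu_{Z})$ for a parameter $\epsilon_n$ with $\log(1/\epsilon_n)=c_n\,e^{s_n}$, $c_n$ ranging in a fixed interval $[c_1,c_2]\subset(0,\infty)$ depending only on $\epsilon_0$ and the topology. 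Because $\phi_n$ preserves $\mu$-weights and weights are linear in the transverse measure, the $\mu$-weights of $\tau_n:=\tau(Z,\mu,\epsilon_n)$ equal $e^{-s_n}$ times the $\mu_n$-weights of $\tau(Z_n,\mu_n,\epsilon_0)$, hence lie in $[e^{-s_n}M_0^{-1},\,e^{-s_n}M_0]$. Finally I would reparameterize: putting $t_0:=c_1$ and defining $t_n$ by $t_0 e^{t_n}=\log(1/\epsilon_n)$ gives $t_n=s_n+\log(c_n/c_1)$, so $\epsilon_n=e^{-t_0 e^{t_n}}$, $t_n\to\infty$, and $0\le t_n-s_n\le\log(c_2/c_1)=:C_0$; with $M:=M_0 e^{C_0}$ the $\mu$-weights of $\tau(Z,\mu,e^{-t_0 e^{t_n}})$ all lie in $[M^{-1}e^{-t_n},\,Me^{-t_n}]$, as asserted.

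I expect the main obstacle to be the step relating the horocycle\nobreakdash-neighborhood construction to the stretch deformation: one must verify, from the fin description of Thurston's stretch maps and the normalization of the horocyclic polygons in \S\ref{sub:horofoliation}, that $\phi_n$ genuinely carries a fixed\nobreakdash-parameter horocyclic neighborhood upstairs to a fixed\nobreakdash-parameter one downstairs with $\log(1/\epsilon_n)\asymp e^{s_n}$ --- the one subtlety being a bounded discrepancy in the cut parameter across complementary polygons of $\mu$ with different numbers of sides, which one absorbs either into the constant $t_0$ or by passing to the combinatorial type of the pulled\nobreakdash-back track --- and that all constants above ($\delta$, $M_0$, $\epsilon_0$, $c_1$, $c_2$) are genuinely uniform, which is precisely what is needed for the estimates of \S\S\ref{sec:MH_convergence}--\ref{sec:int} to be uniform over compacta.
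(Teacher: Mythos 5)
Your proof follows the same high-level strategy as the paper's: ergodicity of the (anti-)stretch flow on $\PoM(\kappa)$ provides recurrence to a compact set $\mathcal K$ of bounded geometry, and the exact action of stretch maps on horocyclic neighborhoods transports uniform weight bounds on $\mathcal K$ back to balanced train tracks $\tau_n$ on $Z$. The main technical difference is in how $\mathcal K$ is constructed. The paper fixes a pair $(Z,\mu)$ in the full-measure backward-recurrence set, takes $\tau_0=\tau(Z,\mu,e^{-t_0})$, and invokes structural stability of a generic train track to choose $\mathcal K$ as a small neighborhood of $(Z,\mu)$ in which the geometric track is always \emph{isotopic} to $\tau_0$; the weight bound $M$ is then read off a single fixed track by compactness and the requirement that the projection of $\mathcal K$ to $\ML(S)$ sit in the interior of the positive cone of $\tau_0$. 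You instead define a good set $G$ by injectivity-radius and weight-bound conditions on a combinatorially \emph{varying} geometric track, argue its complement has small measure, and take a compact $\mathcal K\subset G$ of positive measure by inner regularity. This works (the number of branches is topologically bounded), and buys a pair-independent $\mathcal K$, but forgoes the rigidity that the paper uses to pin down the track up to isotopy. Two imprecisions worth fixing: (i) since the flow lives on moduli space, recurrence to $\mathcal K$ involves mapping classes $\varphi_n$, so $\mu=e^{-s_n}\varphi_n^{-1}.\mu_n$ rather than $e^{-s_n}\mu_n$, and the stretch map from $Z$ lands on $\varphi_n^{-1}.Z_n$; the conclusion survives because weights are $\Mod(S)$-equivariant, but the bookkeeping should be explicit. (ii) The horocycle transformation under the anti-stretch map is \emph{exact}: time $-s$ anti-stretch carries horocycles of length $e^{-t}$ to those of length $e^{-e^{-s}t}$, so $\epsilon_n=e^{-t_0e^{s_n}}$ on the nose and $t_n=s_n$ directly; the bounded discrepancy $[c_1,c_2]$ you absorb into the constants is superfluous once this formula is used.
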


\begin{proof}
Let $\cR^-(\cQ)\subset \PoM(\kappa)$ be the set of pairs which recur to compact sets in backward time under the stretch flow on $\PoM(\kappa)$.  
By ergodicity (Corollary \ref{cor:stretchquake_ergodic}), $\cR^-(\cQ)$ has full $\nu_{\cQ}$ measure.
Let $\widetilde\cR^-(\cQ)\subset \PoT(\kappa)$ be the full preimage under the $\Mod(S)$ orbit projection.

Let $(Z,\mu)\in \widetilde \cR^-(\cQ)$ and let  $t_0$ be small enough so that the geometric train track $\tau_0 = \tau(Z,\mu, e^{-t_0})$ snugly carries $\mu$.  
By taking $t_0$ smaller if necessary, we may also assume that $\tau_0$ is generic/trivalent.
Since $\tau_0$ is generic, it is structurally stable, i.e., if $(Z', \mu')$ is close enough to $(Z,\mu)$ in the sense that the metric structures of $Z$ and $Z'$ are close and the supports of $\mu$ and $\mu'$ are Hausdorff close on both metrics, then $ \tau(Z', \mu', e^{-t_0})$ is isotopic to $\tau_0$.  
The set of pairs $\mathcal K\subset \PoT(\kappa)$ near $(Z,\mu)$ satisfying this condition  has positive $\nu_\cQ$-measure; making $\mathcal K$ smaller if necessary, we also require that the closure of the projection of $\mathcal K$ to $\ML(S)$ stays in the interior of the positive cone of measures carried by $\tau_0$.
Then there is a constant $M$ such that
\[ \frac{1}{M}\le \mu'(t_b)\le M\]
for any transversal/tie $t_b$ to any branch $b$ of $\tau_0$ and for all $\mu'$ with $(Z',\mu')\in \mathcal K$.

Now, let $t_1, t_2, ...$ be an unbounded increasing sequence of recurrence times to the projection of $\mathcal K$ in the moduli space $\PoM(\kappa)$ under the anti-stretch flow.  
That is, we have mapping classes $\varphi_n\in \Mod(S)$ such that
\[\varphi_n.(\operatorname{stretch}(Z,\mu,-t_n), e^{t_n}\mu)\in\mathcal K. \]

The time $s$ anti-stretch mapping is in the homotopy class of the identity and  takes horocycles of length $e^{-t}$ in the spikes of $\mu$ to horocycles of length $e^{-e^{-s}t}$ in the spikes of $\mu$.
Thus 
\[\tau_n=\tau(Z,\mu,e^{-e^{t_n}t_0}) \text{ is isotopic to } \tau(\operatorname{stretch}(Z,\mu,-t_n), \mu, e^{-t_0}).\] 
We set $Z_n = \varphi_n.\operatorname{stretch}(Z,\mu,-t_n)$ and $\mu_n = \varphi_n.e^{-t}\mu$. By definition of $\mathcal K$, we have that $\tau(Z_n,\mu_n,e^{-t_0})$ is isotopic to $\tau_0$.
In particular, $\varphi_n.\tau_n$ is isotopic to $\tau_0$, and $\mu_n$ lies in the interior of the positive cone in the weight space of $\tau_0$.

By our choice of $M$, we know 
\[\frac{1}{M}\le \mu_n(t_b) \le M,\]
for any transversal $t_b$ to any branch $b$ of $\tau_0$.
Equivalently, 
\[ \frac{e^{-t_n}}{M}\le  \mu(t_{b'}) \le Me^{-t_n}\]
for any transversal $t_{b'}$ to any branch $b'$ of $\tau_n$, and this is what we wanted to show.
\end{proof}

We use the previous collection of balanced train tracks converging to $\mu$ to control the rate of geometric convergence of (germs of) $C^1$ paths of measured laminations.
\begin{theorem}\label{thm:measure_H_close}
For $\mu_{\Th}^\cQ$-almost every $\mu\in \ML(\cQ)$ and any chain recurrent diagonal extension $\lambda$,  for every $\alpha\in \cH(\lambda)$ representing a tangent vector to $\mu$ in $\ML(S)$, and every $Z\in \T(S)$, there are constants $c_1, c_2 \ge 1$, $a\in (0,1]$, and $t_0 >0$ depending continuously on $Z$, $\mu$, and the size of $\alpha$ such that the following holds. 

There is a sequence $t_1, t_2, ... $ tending to infinity such that if $s\le c_1e^{-2t_n}$, then a $c_2\exp\left({-at_0e^{t_n}}\right)$ neighborhood of  $\mu_s:= \mu+s\alpha$ on $Z$ contains  $\mu$. 
\end{theorem}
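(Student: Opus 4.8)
The plan is to produce, for the special times $t_n$, a single highly-split geometric train track that carries \emph{both} $\mu$ and (a sublamination of) $\mu_s$ with strictly positive branch weights, and then to convert this into a Hausdorff estimate using the quantitative control on train-track geometry from Proposition \ref{prop:tt_geometry}. First a reduction: after rescaling $\mu$ so that $\ell_\mu(Z)=1$ — which only rescales $\alpha$ and the constants — and, if $Z\notin\T^\mu(S)$, replacing $Z$ by a regular metric $Z'\in\T^\mu(S)$ and comparing the two families of horocyclic neighborhoods of $\mu$ (a comparison that costs only a change of constants and leaves the doubly-exponential scale intact), I may assume $(Z,\mu)\in\PoM(\kappa)$. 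Then Proposition \ref{prop:recurrence} supplies $M$, $t_0$, a sequence $t_n\to\infty$, and geometric train tracks $\tau_n=\tau(Z,\mu,\epsilon_n)$ with $\epsilon_n:=e^{-t_0 e^{t_n}}$, on whose branches $\mu$ deposits weights in $[M^{-1}e^{-t_n},\,M e^{-t_n}]$. Keep also the base track $\tau_0=\tau(Z,\mu,e^{-t_0})$, snug for $\mu$ (and for a fixed snug completion carrying $\lambda$), and let $\|\cdot\|$ be the corresponding $\ell^\infty$-norm, in which $\alpha$ has finite size.

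Next I would control the $\alpha$-weight on $\tau_n$. A tie $t_b$ of a branch $b$ of $\tau_n$ is a short transversal near $\mu$, so $\alpha$ evaluated on it only sees its restriction to $\supp(\mu)$, to which Lemma \ref{lem:norm} applies: for all $n$ past some index (once $\epsilon_n$ is small enough relative to $e^{-t_0}$) it gives $|\alpha(t_b)|\le C\|\alpha\|\log(e^{-t_0}/\epsilon_n)=C\|\alpha\|\,t_0(e^{t_n}-1)\le C\|\alpha\|\,t_0 e^{t_n}$; discard the finitely many initial terms of the sequence. Since the $\mu$-weight on $b$ is at least $M^{-1}e^{-t_n}$, the weight system induced by $\mu_s=\mu+s\alpha$ on $\tau_n$ satisfies $(\mu+s\alpha)(t_b)\ge M^{-1}e^{-t_n}-sC\|\alpha\|t_0 e^{t_n}>0$ whenever $s<c_1 e^{-2t_n}$, for $c_1:=(MC\|\alpha\|t_0)^{-1}$ (the exponent $2$ here is exactly what this balance produces). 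Thus for such $s$ the weight system induced by $\mu_s$ on $\tau_n$ is strictly positive on every branch; equivalently, $\mu$ and a sublamination of $\mu_s$ are simultaneously carried, with positive weights, by the geometric train track $\tau(Z,\mu,\epsilon_n)$.

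The crux is the geometric conversion. Fix $p\in\supp(\mu)\subset\cN_{\epsilon_n}(\mu)$, and let $t_p$ be the tie through $p$; by Proposition \ref{prop:tt_geometry}(3), $\ell(t_p)\le E\epsilon_n$, so $p$ lies within $E\epsilon_n$ of the point $x$ where $\tau_n$ crosses $t_p$. The branch of $\tau_n$ through $x$ carries positive $\mu_s$-weight, so some leaf of $\mu_s$ realized inside $\cN_{\epsilon_n}(\mu)$ runs through the tie-band over that branch and in particular crosses $t_p$; straightening that leaf to its $Z$-geodesic moves it by a controlled amount, since leaves realized in $\cN_{\epsilon_n}(\mu)$ follow train paths of a geometric train track and hence have small geodesic curvature in the $\delta$-thick surface $Z$. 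For $\epsilon_n$ small this yields a straightening error at most $c\,\epsilon_n^{a}$ with $a\in(0,1]$ and $c$ depending only on $\delta$ and the topology of $S$, so $d_Z(p,\supp(\mu_s))\le E\epsilon_n+c\epsilon_n^{a}\le c_2\,\epsilon_n^{a}=c_2\exp(-a t_0 e^{t_n})$ with $c_2:=E+c$. Hence a $c_2\exp(-at_0 e^{t_n})$-neighborhood of $\mu_s$ on $Z$ contains $\mu$, as claimed; continuity of the constants in $Z$, $\mu$ and the size of $\alpha$ is inherited from the structural stability of the generic tracks used in Proposition \ref{prop:recurrence} and from the explicit bounds of Proposition \ref{prop:tt_geometry}.

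I expect the principal obstacle to be exactly the straightening estimate of the last paragraph: upgrading the (merely qualitative) small-curvature property of train paths in $\tau(Z,\mu,\epsilon_n)$ to a bound of the form $c\,\epsilon_n^{a}$ on the distance between such a path and its $Z$-geodesic representative, uniformly over compact subsets of $\T(S)$ and with constants depending continuously on the data. A secondary point needing care is the opening reduction to a regular metric and to unit-length $\mu$: one must check that the comparison of horocyclic train-track neighborhoods across the two metrics only changes constants and preserves the rate $\epsilon_n=e^{-t_0 e^{t_n}}$.
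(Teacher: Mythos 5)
Your overall strategy matches the paper's: use Proposition~\ref{prop:recurrence} to produce a family of balanced geometric train tracks $\tau_n$ along stretch-recurrence times, use Lemma~\ref{lem:norm} to bound $|\alpha(t_b)|$ by $C\|\alpha\|\,t_0 e^{t_n}$, deduce that $\mu_s$ is carried whenever $s\le c_1 e^{-2t_n}$, and convert carrying into a Hausdorff estimate via Proposition~\ref{prop:tt_geometry}. The exponent-$2$ bookkeeping and the role of the balanced weights are exactly right.

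However, there is a genuine gap, and it sits precisely in the two steps you yourself flag as ``needing care.'' You write that the reduction from $Z\in\T(S)$ to a regular $Z'\in\T^\mu(S)$ ``costs only a change of constants and leaves the doubly-exponential scale intact,'' and then you locate the H\"older exponent $a$ in the straightening of carried paths. Both attributions are wrong. In the paper, the carried lamination $\mu_s$ is straightened on $Z'$ and compared to $\lambda'$ there, with error controlled by scalars $k_n\downarrow 1$ coming from the vanishing of average geodesic curvature of long train paths: this produces a bound $d_{Z'}^H(\mu_s,\lambda')\le 2k_n\exp(-t_0e^{t_n})$ with no power loss. The exponent $a$ enters only afterwards, from the fact that the Hausdorff metrics on $\mathcal{GL}(S)$ induced by two different hyperbolic metrics $Z'$ and $Z$ are \emph{H\"older}, not Lipschitz, equivalent, via~\cite{BonZhu:HD}: $d_Z^H\le c\,(d_{Z'}^H)^a$. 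This is precisely the step you wave away as ``only constants.'' Without citing this H\"older comparison (or reproving it), your reduction to $Z'\in\T^\mu(S)$ is not justified, and the $a$ you introduce in the straightening estimate has no source; the straightening error is controlled multiplicatively, not by a power.

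A secondary omission: since $\alpha\in\cH(\lambda)$ can place mass on the isolated (diagonal) leaves of the chain recurrent extension $\lambda$ of $\mu$, the measured lamination $\mu_s$ is carried by a \emph{diagonal extension} $\tau_n'$ of $\tau_n$, not by $\tau_n$ itself. You mention the snug completion at the outset but do not use it when establishing carrying. The paper builds $\tau_n'$ explicitly and invokes Theorem~\ref{thm:tangent} to see that the weights on the diagonal branches stay nonnegative; the positivity check via Lemma~\ref{lem:norm} is then run only on the branches of $\tau_n$. This step is needed to conclude that $\mu_s$ is carried at all, and your argument that ``a sublamination of $\mu_s$'' is carried by $\tau_n$ does not directly give the claimed neighborhood containment for $\mu_s$.
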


\begin{proof}
Assuming that $\mu$ is in the $\mu_{\Th}^\cQ$-full measure set  $\cR({\cQ})$ from Corollary \ref{cor:recurrent}, we can find a surface $Z' \in \T^\mu(S)$ such that $H(Z',\mu) \in \widetilde \cR^-(\cQ)$.
We apply Proposition \ref{prop:recurrence} to obtain constants $M$, $t_0$, and a sequence $t_1, t_2, ...$ tending to infinity so that for the family of geometric train tracks $\tau_n = \tau(Z', \mu, e^{-t_0e^{t_n}})$, we have 
\[\frac {e^{-t_n}}M \le \mu(t_b) \le e^{-t_n}M, \]
for any transversal $t_b$ to any branch $b$ of $\tau_n$.

Let $\tau_n'$ be a diagonal extension of $\tau_n$ corresponding to the isolated leaves of $\lambda$.
Then $\tau_n'$ carries $\mu_s$ if, for any branch $b$ and a transversal $t_b$ to that branch, we have $\mu_s(t_b) \ge 0$.
By Theorem \ref{thm:tangent}, since $\alpha$ represents a tangent vector to $\mu$, $\mu_s$ is positive on any branch of $\tau_n'$ corresponding to an isolated leaf of $\lambda$.

We will determine how small $s$ must be in order for $\mu_s$ to be carried by $\tau_n'$; let $b$ be a branch of $\tau_n'$ that does not come from an isolated leaf of $\lambda$, and let $t_b$ be a transversal.
Using Lemma \ref{lem:norm}, we have 
\[\mu(t_b) - s |\alpha(t_b)|\ge \frac{e^{-t_n}}{M} - sC\|\alpha\|e^{t_n},\]
where $\| \alpha\|$ is the $\ell^\infty$ norm of $\alpha$ as a weight system on $\tau_0 = \tau(Z',\mu, e^{-t_0})$ and $C = 9|\chi(S)|/\inj(Z')$.
This is non-negative if $s\le \frac{e^{-2t_n}}{MC\|\alpha\|} = c_1e^{-2t_n}$.  Thus if $s$ is at least this small,  $\mu_s$ must be carried by $\tau_n'$.

Adjusting $\tau_n$ by isotopy if necessary, the average geodesic curvature of long train paths in $\tau_n$ can be made arbitrarily small as $n\to \infty$.  It follows that there are scalars $k_1, k_2, ...$ decreasing to $1$, such that $\mu_s$, straightened in $Z'$, lies in the $k_n\exp\left(-t_0 e^{t_n}\right)$-neighborhood of $\tau_n'$ as long as $s\le c_1e^{-2t_n}$.  Furthermore, $\mu$ is carried by $\tau_n'$, so its straightening is also contained in the $k_n\exp\left(-t_0 e^{t_n}\right)$-neighborhood of $\tau_n\subset \tau_n'$.  

Let $\mu\subset\lambda'\subset \lambda$ be the diagonal extension of $\mu$ on which $\alpha$ is supported. 
Let $d_{Z'}^H$ denote the Hausdorff distance on closed subsets with respect to the hyperbolic structure $Z'$.
 We have proved that 
 \[d_{Z'}^H(\mu_s,\lambda')\le 2k_n\exp\left(-t_0 e^{t_n}\right) \text{ as long as } s\le c_1e^{-t_n}.\]
The Hausdorff metrics on the space of geodesic laminations are all H\"older equivalent to one another \cite{BonZhu:HD}, so there are constants $c\ge 1$ and $a\in (0, 1]$ depending on $Z$ and $Z'$ 
such that 
\[d_{Z}^H(\mu_s, \lambda') \le cd_{Z'}^H(\mu_s,\lambda')^a. \]
Collecting constants appropriately completes the proof.
\end{proof}

\section{Product Distributions and shearing cocycles}\label{sec:int}
In this section, we make sense of the double integrals with respect to the product distribution $\alpha\otimes \beta$ evaluated on nice enough functions.  
We observe that Fubini's Theorem holds for H\"older distributions, thought of as H\"older currents on the space of geodesics of $S$ for functions that are in the closure of simple tensors (Lemma \ref{lem:Fubini}).  
Then the geometric intersection form is extended to transverse H\"older cocycles, and we explain its meaning as the second derivative of the usual intersection form on $\ML(S)$ (Lemma \ref{lem:intersection}).

Next, we establish some basic estimates, most of which can be extracted from \cite{Bon_THDGL, Bon_GLTHB, Bon_SPB}.    And show that certain ``Riemann sums'' can be used to compute double integrals of H\"older functions for which we can apply Fubini's Theorem (Proposition \ref{prop:R_sums_prod}).  The proof of this proposition is more technical than anticipated; it is also  more general than what we require in Section \ref{sec:cosine} to establish Theorem \ref{thm:cosine_intro}, where we only consider \emph{smooth} functions.

We conclude with a description of Bonahon's shearing cocyles associated to a H\"older cocycle on a maximal geodesic lamination.  These shearing cocyles correspond to (germs of) analytic paths in $\T(S)$. 
\subsection{Currents and intersection}

A \emph{geodesic H\"older current} is a H\"older distribution on the space $\mathcal G(\widetilde S)$ of geodesics in the universal cover which is invariant under the action of $\pi_1(S)$ by covering transformations.  The space of geodesics has a well defined bi-H\"older structure, independent of negatively curved metric on $S$.  Given $\lambda_i \in \mathcal {GL}_0(S)$ and H\"older cocyles $\alpha_i\in \cH(\lambda_i)$ ($i = 1,2$), we can associate uniquely a geodesic H\"older current to $\alpha_i$ whose support is contained in the leaves of $\widetilde{\lambda_i}$ \cite[Proposition 5]{Bon_THDGL}.  
We  form the tensor product H\"older distribution $\alpha_1\otimes \alpha_2$ on the product $\mathcal {G}(\widetilde S)\times \mathcal {G}(\widetilde S)$ which is again invariant by covering transformations. 

For any H\"older continuous function $f:\mathcal {G}(\widetilde S)\times \mathcal {G}(\widetilde S) \to \RR$, we have  $\alpha_1\otimes\alpha_2(f) = \alpha_1\otimes \alpha_2(f|_{\widetilde{\lambda_1}\pitchfork\widetilde{\lambda_2}})$, where  $\widetilde{\lambda_1}\pitchfork\widetilde{\lambda_2}$ denotes the subset $(h_1, h_2)\in \widetilde{\lambda_1}\times \widetilde{\lambda_2}$  such that the endpoints of $h_1$ are interlaced with the endpoints of $h_2$ in the circle at infinity, i.e. $h_1$ meets $h_2$ transversally in $\HH^2$.  

We define $\mathcal {DG}(\widetilde S)$ as the subspace of $\mathcal {G}(\widetilde S)\times \mathcal {G}(\widetilde S)$ consisting of geodesics meeting transversally and denote by $\mathcal {DG}(S)$ the quotient by the diagonal action of $\pi_1(S)$, which is free and properly discontinuous.  
A choice of hyperbolic metric $Z\in \T(S)$ on $S$ identifies $\mathcal {DG}(S)$ homomorphically with the total space of the bundle  $\mathbb PT(S)\oplus\mathbb PT(S)\setminus \Delta$ equipped with a bundle projection to $S$.  We can endow $\mathcal {DG}(S)$ with an isotopy class of  Riemannian metric $m_Z$ such that the right action of a maximal compact $K\le \Isom^+(\tX)$ on each factor of the fiber over a point of $Z$, is isometric. We let $\mathcal {DG}(Z)$ denote $\mathcal {DG}(S)$ equipped with the isotopy class of $m_Z$.

The following Fubini-type theorem for distributions on $\mathcal {DG}(S)$ follows from the analogous result for distributions on $\RR^4$.  
\begin{lemma}[c.f. Theorem 40.4 of \cite{Treves}]\label{lem:Fubini}
Given $Z\in \T(S)$, suppose that $f \in \cH^a(\mathcal {DG}(Z))$ is in the closure 
of the linear span of simple tensors $\cH^a(\mathcal G(Z))\otimes \mathcal \cH^a(\mathcal G(Z))$ (with respect to the $a$-H\"older norm), then
\begin{equation}\label{eqn:Fubini}
\alpha_1 \otimes \alpha_2 (f) = \int \left( \int f(x,y) ~d\alpha_1(x)\right) ~d\alpha_2(y) = \int \left( \int f(x,y) ~d\alpha_2(y)\right) ~d\alpha_1(x). 
\end{equation}
In other words, Fubini's Theorem holds.  In particular, if $f$ is smooth, then \eqref{eqn:Fubini} holds.
\end{lemma}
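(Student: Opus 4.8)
The plan is to reduce the statement on $\mathcal{DG}(S)$ to the classical Fubini theorem for distributions on $\RR^4$ (the cited Treves, Theorem 40.4), using a partition of unity and the local product structure. First I would recall that a transverse H\"older distribution $\alpha_i\in\cH(\lambda_i)$ gives rise, by \cite[Proposition 5]{Bon_THDGL}, to a $\pi_1(S)$-invariant geodesic H\"older current on $\mathcal G(\widetilde S)$ supported on the leaves of $\widetilde{\lambda_i}$, and hence the tensor product $\alpha_1\otimes\alpha_2$ is a well-defined $\pi_1(S)$-invariant H\"older distribution on $\mathcal G(\widetilde S)\times\mathcal G(\widetilde S)$, supported on $\widetilde{\lambda_1}\pitchfork\widetilde{\lambda_2}$. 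Passing to the quotient, $\alpha_1\otimes\alpha_2$ descends to a compactly supported H\"older distribution on $\mathcal{DG}(S)=\mathcal{DG}(\widetilde S)/\pi_1(S)$, which (by compactness of $S$) has support contained in a compact subset of $\mathcal{DG}(Z)$.

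Next I would set up the localization. Cover the support of $\alpha_1\otimes\alpha_2$ in $\mathcal{DG}(Z)$ by finitely many product charts $V_k\cong U_k\times U_k'$, where $U_k, U_k'$ are open subsets of $\mathcal G(\widetilde S)$ diffeomorphic to open subsets of $\RR^2$ chosen so that $\alpha_1$ restricted to $U_k$ and $\alpha_2$ restricted to $U_k'$ are honest H\"older distributions on $\RR^2$; this is possible since $\mathcal G(\widetilde S)$ has a bi-H\"older structure and the currents are locally pulled back from distributions on a transversal. Take a H\"older partition of unity $\{\rho_k\}$ subordinate to $\{V_k\}$ with $\sum_k\rho_k\equiv 1$ on the support. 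For a simple tensor $f=g\otimes h$ with $g,h\in\cH^a(\mathcal G(Z))$, each piece $\rho_k f$ is a finite sum of $a$-H\"older functions of product type on $U_k\times U_k'$, and the classical distributional Fubini theorem on $\RR^2\times\RR^2$ applies to give
\[
(\alpha_1\otimes\alpha_2)(\rho_k f)=\int\!\!\left(\int \rho_k f(x,y)\,d\alpha_1(x)\right)d\alpha_2(y)=\int\!\!\left(\int \rho_k f(x,y)\,d\alpha_2(y)\right)d\alpha_1(x).
\]
Summing over $k$ recovers the identity for simple tensors $f$.

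The final step is the passage from simple tensors to their closed linear span in the $a$-H\"older norm. Here I would observe that all three expressions in \eqref{eqn:Fubini}, as well as $f\mapsto(\alpha_1\otimes\alpha_2)(f)$, are continuous linear functionals on $\cH^a(\mathcal{DG}(Z))$: the outer and inner integrals are bounded because $\alpha_i$ is bounded on functions of fixed H\"older exponent and, crucially, one must check that for $f\in\cH^a(\mathcal{DG}(Z))$ the partial evaluation $x\mapsto\int f(x,y)\,d\alpha_2(y)$ is again $a$-H\"older (uniformly), so that $\alpha_1$ may legitimately be applied to it. Since the iterated-integral functionals and $(\alpha_1\otimes\alpha_2)(\cdot)$ agree on the dense set of simple tensors and are all continuous, they agree on the closure, which is the hypothesis on $f$; the smooth case follows since smooth functions on a compact manifold lie in this closure (e.g.\ by Stone--Weierstrass-type density of $C^\infty(\mathcal G(Z))\otimes C^\infty(\mathcal G(Z))$ in $\cH^a$). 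The main obstacle is precisely this last uniform-H\"older-regularity check for the partial evaluation $x\mapsto\int f(x,y)\,d\alpha_2(y)$: one has to control H\"older constants uniformly in the charts and argue that slicing a H\"older function against a transverse H\"older distribution does not destroy H\"older continuity in the remaining variable; once that is in hand, the continuity-and-density argument closes the proof.
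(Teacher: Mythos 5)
Your proposal reaches the right conclusion, and its skeleton (density of simple tensors in the $a$-H\"older norm plus continuity of the three functionals) is the same as the paper's. The main divergence is that you insert a localization step — partition of unity, product charts $V_k \cong U_k\times U_k'$, and an explicit appeal to the distributional Fubini theorem on $\RR^2\times\RR^2$ — before handling simple tensors. The paper skips this entirely, and rightly so: for a simple tensor $f = g\otimes h$ the identity $\alpha_1\otimes\alpha_2(g\otimes h) = \alpha_1(g)\,\alpha_2(h)$ is the \emph{definition} of the product distribution, and both iterated integrals produce the same quantity immediately, with no chart manipulation or reference to \cite{Treves} required. The localization also introduces a small technical wrinkle you would have to resolve: once you multiply $g\otimes h$ by a partition-of-unity function $\rho_k$ on $\mathcal{DG}(Z)$, the resulting $\rho_k f$ is generally \emph{not} a finite sum of simple tensors, so the asserted reduction inside each chart would itself need the density-plus-continuity argument you are trying to set up. Where your write-up is actually ahead of the paper's is in explicitly naming the genuine analytic point — that one must verify the partial evaluation $y \mapsto \int f(x,y)\,d\alpha_1(x)$ is again (uniformly) H\"older so that $\alpha_2$ may be applied and the iterated integral is continuous on the closed span of simple tensors. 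The paper's sketch asserts this ``clearly holds \ldots\ by linearity and continuity'' without comment; the substantive verification appears only later, in Claim~\ref{clm:F_Holder} inside the proof of Proposition~\ref{prop:R_sums_prod}, in a special case. So: drop the localization (it buys nothing and creates a gap), keep the density-and-continuity argument, and if you want a complete proof, supply the uniform-H\"older-regularity estimate you flagged, which is the one genuine analytic content of the lemma.
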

\begin{proof}[Sketch of proof]
We have already observed above, for any $\pi_1(S)$-invariant H\"older continuous function $f: \mathcal G(\widetilde{S})\times \mathcal G(\widetilde{S})\to \RR$, that $\alpha_1 \otimes \alpha_2(f) = \alpha_1 \otimes \alpha_2(f|_{{\lambda_1}\pitchfork {\lambda_2}})$.  
Then $f|_{{\lambda_1}\pitchfork {\lambda_2}}$ has compact support contained in (a fundamental domain for) $\mathcal {DG}(S)$. 

For compactly supported $a$-H\"older continuous functions $f_i: \mathcal G(\widetilde S)\to \RR$, the product $f(x,y) = f_1(x)f_2(y)$ is compactly supported on $\mathcal G(\widetilde S)\times \mathcal G(\widetilde S)$ and $a$-H\"older continuous.    Equality \eqref{eqn:Fubini} clearly holds for such simple tensors, finite linear combinations of simple tensors, and limits of finite linear combinations of simple tensors, by linearity and continuity of the distributions $\alpha_1$, $\alpha_2$, and $\alpha_1\otimes \alpha_2$ restricted to $a$-H\"older functions.
Moreover, the closure of compactly supported smooth simple tensors $C_c^\infty(\mathcal G(Z))\otimes C_c^\infty(\mathcal G(Z))$ contains $C_c^\infty (\mathcal {DG}(Z))$  with respect to the $a$-H\"older norm for any $a\in (0,1)$, and this completes the proof of the lemma.  
\end{proof}

Now, given $\lambda_1, \lambda_2\in \mathcal {GL}_0(S)$ and $\alpha_i \in \cH(\lambda_i)$, we define the \emph{intersection number} 
\[ i(\alpha_1, \alpha_2) = \alpha_1\otimes \alpha_2(1) = \iint_{\mathcal {DG}(S)}~d\alpha_1 d\alpha_2 \in \RR.\]
The intersection number between transverse H\"older distributions is a natural extension of the intersection number of the space of measured laminations.  
\begin{lemma}\label{lem:intersection}
With respect to an auxiliary hyperbolic metric $Z\in \T(S)$, the intersection number can be computed on (geometric) train tracks $\tau_1=\tau(Z, \lambda_1, \epsilon_1)$ and $\tau_2=\tau(Z,\lambda_2, \epsilon_2)$ carrying $\lambda_1$ and $\lambda_2$ with small geodesic curvature, respectively as follows:
\begin{equation}\label{eqn:tt_integral}
i(\alpha_1, \alpha_2) = \sum_{p\in \tau_1\pitchfork \tau_2} \alpha_1(t_1(p)) \alpha_2(t_2(p)),
\end{equation}
where $t_i(p)$ is a tie of $\tau_i$ through $p$. 

The intersection number defines a bilinear pairing $\cH(\lambda_1) \times \cH(\lambda_2)\to \RR$, and if $\alpha_1$ and $\alpha_2$ represent tangent vectors to measures $\mu_1$ and $\mu_2$ with support contained in $\lambda_1$ and $\lambda_2$, respectively, then $i(\alpha_1, \alpha_2)$ is the second derivative of the geometric intersection pairing $i:\ML(S)\times \ML(S)\to \RR_{\ge0}$ at the point $(\mu_1, \mu_2)$ in the directions $\alpha_1$ and $\alpha_2$.  
\end{lemma}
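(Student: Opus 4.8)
\textbf{Proof plan for Lemma \ref{lem:intersection}.}

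The plan is to prove the three assertions in turn, with the bulk of the work being the train-track formula \eqref{eqn:tt_integral}. First I would establish \eqref{eqn:tt_integral} by applying the Fubini-type statement (Lemma \ref{lem:Fubini}) to the constant function $f \equiv 1$ on $\mathcal{DG}(Z)$, which is certainly smooth, so the double integral $i(\alpha_1,\alpha_2) = \alpha_1\otimes\alpha_2(1)$ equals the iterated integral. The geometric content is then a partition argument: choose $\epsilon_1,\epsilon_2$ small enough that $\tau_i = \tau(Z,\lambda_i,\epsilon_i)$ snugly carries $\lambda_i$ with small geodesic curvature, and observe that the transverse intersection $\widetilde{\lambda_1}\pitchfork\widetilde{\lambda_2}$, projected to $\mathcal{DG}(S)$, is partitioned according to which pair of branches $(b_1,b_2)\in b(\tau_1)\times b(\tau_2)$ the two leaves traverse at the point of crossing — equivalently, through which intersection point $p\in\tau_1\pitchfork\tau_2$ the crossing is ``collapsed'' by the two collapsing maps $\pi_i:\cN_{\epsilon_i}(\lambda_i)\to\tau_i$. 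On the piece of $\widetilde{\lambda_1}\pitchfork\widetilde{\lambda_2}$ collapsing to a fixed $p$, the current $\alpha_1\otimes\alpha_2$ restricted there factors as a product of the two transverse cocycles evaluated on the ties $t_i(p)$; summing the finitely many contributions (the train tracks being compact, $\tau_1\pitchfork\tau_2$ is finite) gives \eqref{eqn:tt_integral}. I would want to be a little careful that the choice of ties $t_i(p)$ through $p$ does not matter — this is exactly transverse invariance of the cocycle $\alpha_i$ — and that the decomposition really is a partition, i.e. every transverse crossing of the leaves of $\lambda_1$ and $\lambda_2$ is accounted for exactly once by the carrying maps; this uses that the carrying maps are homotopic to the identity so intersection numbers of carried objects are computed by the combinatorial incidence data.

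Next, bilinearity of $i(\cdot,\cdot): \cH(\lambda_1)\times\cH(\lambda_2)\to\RR$ is then immediate from \eqref{eqn:tt_integral}, since each summand $\alpha_1(t_1(p))\,\alpha_2(t_2(p))$ is bilinear in $(\alpha_1,\alpha_2)$ (the ties, hence the functionals being evaluated, do not depend on the cocycles), and the sum is finite.

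Finally, for the second-derivative interpretation: suppose $\alpha_i$ represents a tangent vector to a measure $\mu_i$ supported in $\lambda_i$, so that $\mu_i + t\alpha_i$ lies in $U_{\operatorname{snug}}(\tau_i)$ for small $t\ge 0$ after choosing $\tau_i$ appropriately (Theorem \ref{thm:tangent}). On the product chart $U_{\operatorname{snug}}(\tau_1)\times U_{\operatorname{snug}}(\tau_2)$ the geometric intersection form is \emph{bilinear} in the weight systems — this is the local bilinearity of $i$ recalled in the subsection on the intersection and symplectic forms, and one sees it directly from the train-track formula $i(w_1,w_2) = \sum_{p\in\tau_1\pitchfork\tau_2} w_1(b_1(p))\,w_2(b_2(p))$ for weighted multicurves, which extends continuously and hence holds on all of the product of positive cones. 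Differentiating $(s,t)\mapsto i(\mu_1+s\alpha_1,\ \mu_2+t\alpha_2)$, a bilinear function of $(s,t)$, the mixed partial $\partial_s\partial_t$ at $(0,0)$ picks out precisely the cross term, which by the same formula equals $\sum_{p} \alpha_1(t_1(p))\,\alpha_2(t_2(p)) = i(\alpha_1,\alpha_2)$ by part one. The main obstacle I anticipate is the bookkeeping in the first step — matching the analytic object $\alpha_1\otimes\alpha_2$ restricted to $\widetilde\lambda_1\pitchfork\widetilde\lambda_2$ with the combinatorial sum over $\tau_1\pitchfork\tau_2$ in a way that is manifestly independent of all the choices (the $\epsilon_i$, the ties, the carrying maps) — rather than any single hard estimate; the estimates needed (finiteness of $\tau_1\pitchfork\tau_2$, local product structure of the current near each crossing) are all either standard or already available from \S\ref{sec:geom_tt}.
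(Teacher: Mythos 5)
Your proposal matches the paper's proof in all three parts: Fubini plus the local product structure of $\alpha_1\otimes\alpha_2$ over the geodesic quadrilaterals near each crossing point of $\tau_1\pitchfork\tau_2$, bilinearity read off directly from \eqref{eqn:tt_integral}, and the mixed second derivative of the bilinear function $(s,t)\mapsto i(\mu_1+s\alpha_1,\mu_2+t\alpha_2)$ on the product of positive cones. One small point of precision: for the ``partition'' step, homotopy to the identity alone does not preclude excess intersections between $\tau_1$ and $\tau_2$ — the paper instead invokes that geometric train tracks with small geodesic curvature are in \emph{minimal position}, which is the correct mechanism (you set this up but cite the wrong reason for it).
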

\begin{proof}
That $\tau_1$ and $\tau_2$ are geometric with small geodesic curvature on $Z$ ensures that they are in minimal position.  The support of $\alpha_1 \otimes \alpha_2$ is contained in the transverse intersections between $\lambda_1$ and $\lambda_2$; take a geodesic quadrilateral $Q_p$ for each intersection $p\in \tau_1\pitchfork \tau_2$ with opposite sides contained in leaves of $\lambda_i$ and ties $t_1(p)$ and $t_2(p)$ running through $Q_p$.  Then $\iint_{Q_p} d\alpha_1 d\alpha_2 = \alpha_1(t_1(p)) \alpha_2(t_2(p))$ is immediate from Lemma \ref{lem:Fubini}.  By finite additivity and transverse invariance, any subdivision of $Q_p$ into smaller rectangles with smaller transversals yields the same result after summation.  

Note that \eqref{eqn:tt_integral} is identical to the expression for the geometric intersection pairing between measures carried by $\tau_1$ and $\tau_2$.  Indeed, the non-negative cone in the weight space of $W(\tau_i)$ defines a (possibly high codimension) subspace of measures in $\ML(S)$, for $i =1,2$, and the intersection pairing is bilinear restricted to pairs of measured laminations in the product of these convex cones; a coordinate expression is given by \eqref{eqn:tt_integral}.  Using the canonical isomorphisms $W(\tau_i) \cong \cH(\lambda_i)$, we have demonstrated bilinearity of $i: \cH(\lambda_1)\times \cH(\lambda_2) \to \RR$.

For each measure $\mu_i$, there is a convex cone with finitely many sides in $\cH(\lambda_i) \cong W(\tau_i)$ that represents a linear fragment of the one sided tangent space to $\mu_i$ in $\ML(S)$ (see \cite[Theorem 22]{Bon_GLTHB} or \S\ref{sub:THD}).  For each pair $\alpha_1$ and $\alpha_2$ in these tangent cones and small positive $s$ and $t$, $\mu_1 + s\alpha_1$ and $\mu_2+t\alpha_2$ define measured laminations carried by $\tau_1$ and $\tau_2$, respectively.  
As in the previous paragraph, the intersection form on non-positive cones extends by the same formula to $W(\tau_1)\times W(\tau_2)$.  To compute the second derivative, we can then write
\[\ddtp \left.\frac{d}{ds}\right|_{s=0^+} i(\mu_1 +s\alpha_1, \mu_2 +t\alpha_2)  =\ddtp(\alpha_1, \mu_2 +t\alpha_2) = i(\alpha_1, \alpha_2).\]
This completes the proof of the lemma.
\end{proof}

\subsection{Integration}
We will integrate transverse H\"older distributions with respect to H\"older continuous functions $f: k \to \RR$ on transverse arcs $k$.  The integral turns out to only depend on the restriction of $f$ to $k\cap \lambda$ \cite[Lemma 1]{Bon_THDGL}.  The fact that $k\cap \lambda$ has Hausdorff dimension $0$ \cite{BS} is an important ingredient in establishing the following integral formula.
\begin{theorem}[Theorem 10 of \cite{Bon_THDGL}]\label{thm:int_formula}
With notation as above and in Section \ref{subsec:geometric_tt}, the integral $\alpha(f) = \int_k f ~d\alpha$ is given by
\[\alpha(f) = \alpha(k)f(k^+) + \sum_{d\subset k\setminus \lambda} \alpha(k_d)\left(f(d^-) - f(d^+)\right),\]
where the sum is taken over all components $d\subset k\setminus \lambda$ except for the two extreme components.
\end{theorem}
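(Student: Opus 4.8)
The plan is to reduce the statement to an explicit computation on a well-chosen countable partition of the transversal $k$ and then pass to the limit. First I would recall that by \cite[Lemma 1]{Bon_THDGL} the integral $\alpha(f) = \int_k f\,d\alpha$ depends only on $f|_{k\cap\lambda}$, so we may replace $f$ by any H\"older extension; this is where the fact that $k\cap\lambda$ has Hausdorff dimension $0$ (\cite{BS}) does the essential work, since it lets us control the difference between $f$ and a locally constant approximation at a cost that is negligible against the finitely-additive mass of $\alpha$. Concretely, enumerate the components $d$ of $k\setminus\lambda$ by decreasing length (equivalently, by increasing divergence radius $r(d)$, using Lemma \ref{lem:depth}(2)); only the two extreme components $d^{\mathrm{ext}}_{\pm}$ are unbounded, i.e.\ have an endpoint at $k^-$ or $k^+$.

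Next I would set up the telescoping sum. Fix $N$ and let $k = k^{(N)}_0 \sqcup d_1 \sqcup k^{(N)}_1 \sqcup \dots \sqcup d_N \sqcup k^{(N)}_N$ be the decomposition obtained by cutting $k$ along the $N$ longest complementary gaps $d_1,\dots,d_N$; here the $k^{(N)}_j$ are the closures of the remaining pieces, ordered along the orientation of $k$. Using a locally constant approximation $f_N$ of $f$ that equals $f(d^+)$ on the piece of $k$ lying immediately positive of $d$ (so that $f_N$ jumps only across the gaps $d_1,\dots,d_N$), finite additivity of $\alpha$ gives
\[
\alpha(f_N) \;=\; \sum_{j=0}^{N} f_N\big(k^{(N)}_j\big)\,\alpha\big(k^{(N)}_j\big).
\]
Now rewrite $\alpha(k^{(N)}_j)$ as a difference of the partial sums $\alpha(k_{d})$ appearing in the statement (where $k_d$ is the subsegment of $k$ from the negative endpoint to a point of $d$): $\alpha(k^{(N)}_j) = \alpha(k_{d_{j+1}}) - \alpha(k_{d_j})$ for the interior pieces, with the two boundary pieces producing the $\alpha(k)f(k^+)$ term. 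Abel summation (summation by parts) then turns $\sum_j f_N(k^{(N)}_j)\,(\alpha(k_{d_{j+1}})-\alpha(k_{d_j}))$ into $\alpha(k)f(k^+) + \sum_{j=1}^N \alpha(k_{d_j})\,(f(d_j^-) - f(d_j^+))$, which is exactly the truncation of the claimed formula to the $N$ longest gaps.

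Finally I would show the tail of the series converges absolutely and that $\alpha(f_N)\to\alpha(f)$. For convergence of the series: by Lemma \ref{lem:depth}(1) we have $|\alpha(k_d)|\le \|\alpha\| r(d)$, by Lemma \ref{lem:depth}(2) we have $\ell(d)\le A\epsilon\exp(-c\, r(d))$ with $c = 2\inj(Z)/9|\chi(S)|>0$, and by the H\"older continuity of $f$ together with the dimension-zero property of $k\cap\lambda$, $|f(d^-)-f(d^+)|$ is bounded by a H\"older power of $\ell(d)$, hence by $C\exp(-c'\,r(d))$; since for each integer $r$ there are at most $6|\chi(S)|$ gaps with $r(d)=r$ (Lemma \ref{lem:depth}(3)), the series $\sum_d |\alpha(k_d)|\,|f(d^-)-f(d^+)| \le \sum_r 6|\chi(S)|\,\|\alpha\|\,r\,C e^{-c' r}$ converges. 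For $\alpha(f_N)\to\alpha(f)$: the function $f - f_N$ is supported (after restriction to $k\cap\lambda$) on the union of the gaps not among the $N$ longest, is bounded in H\"older norm uniformly in $N$, and tends to $0$ pointwise on $k\cap\lambda$; boundedness of $\alpha$ on a fixed-exponent H\"older ball gives $\alpha(f-f_N)\to 0$. I expect the main obstacle to be the uniform control of the H\"older norm of the truncating approximations $f_N$ (or, equivalently, justifying the interchange of limit and the distribution $\alpha$): one must choose the locally constant approximants carefully so that their H\"older seminorms do not blow up as $N\to\infty$, and this is exactly where one exploits that the gaps have exponentially decaying length with controlled multiplicity, so that the "jumps" of $f_N$ happen across sets whose diameters shrink geometrically.
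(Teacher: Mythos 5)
This theorem is cited in the paper from \cite[Theorem~10]{Bon_THDGL} and is not proved there, so there is no internal argument to compare against; I will evaluate your proposal on its own terms. Your overall architecture (reduce to $f|_{k\cap\lambda}$, approximate by a piecewise-constant function adapted to the $N$ deepest cuts, telescope, control the tail) is the right one, and your convergence estimate for the series using Lemma~\ref{lem:depth} is correct. But the Abel-summation step as you state it is wrong, and the error is not merely cosmetic. With $f_N$ equal to $f(d_j^+)$ on the piece $k^{(N)}_j$, summation by parts gives
\[
\alpha(f_N) \;=\; f(d_N^+)\,\alpha(k) \;+\; \sum_{j=1}^{N}\bigl(f(d_{j-1}^+)-f(d_j^+)\bigr)\,\alpha(k_{d_j}),
\]
which pairs the \emph{positive} endpoint of $d_{j-1}$ with the \emph{positive} endpoint of $d_j$, not $d_j^-$ with $d_j^+$. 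It is therefore not ``exactly the truncation of the claimed formula''; the discrepancy is
\[
\bigl(f(k^+)-f(d_N^+)\bigr)\alpha(k) \;+\; \sum_{j=1}^N \bigl(f(d_j^-)-f(d_{j-1}^+)\bigr)\,\alpha(k_{d_j}),
\]
and you need to show that this vanishes as $N\to\infty$. This is true, but it requires a quantitative estimate in which $|\alpha(k_{d_j})|\le\|\alpha\| r(d_j)$ grows roughly linearly while the H\"older oscillation $|f(d_j^-)-f(d_{j-1}^+)|$ is controlled by the $a$-th power of the length of the piece $k^{(N)}_{j-1}$. Your choice to truncate by ``the $N$ longest gaps'' makes this awkward, because ordering by length is not literally the same as ordering by divergence radius (only up to bounded ambiguity, since several gaps can share a radius), and the piece lengths $\ell(k^{(N)}_{j-1})$ are then hard to bound. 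If instead you truncate by divergence radius (cut all $d$ with $r(d)\le N$), then Lemma~\ref{lem:depth}(2)--(3) show that each remaining piece has diameter $O(e^{-cN})$ while the number of cut gaps is $O(N)$ and each $|\alpha(k_{d})|=O(N)$, so the discrepancy is $O(N^2 e^{-caN})\to 0$; this is the estimate your proof is missing.

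On the final convergence $\alpha(f_N)\to\alpha(f)$: pointwise convergence on $k\cap\lambda$ with uniformly bounded $a$-H\"older norm is not by itself enough, since a bounded functional on $\cH^a$ need not be weak-* continuous against such sequences. What rescues you is the interpolation inequality: uniform convergence ($\|f-f_N\|_\infty\to 0$, which follows since the pieces have total length $\to\ell(k)\cdot 0$ by the measure-zero property of $k\cap\lambda$) together with a uniform $a$-H\"older bound gives $\|f-f_N\|_b\to 0$ for every $b<a$, and $\alpha$ is bounded on $b$-H\"older functions. You correctly identify ``boundedness of $\alpha$ on a fixed-exponent H\"older ball'' as the mechanism, but you should make the exponent drop explicit, since it is the only reason the conclusion follows.
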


Given a geodesic transversal $k$ to $\lambda$, and an $a$-H\"older function $f: k\cap \lambda \to \RR$, let 
\[\|f\|_a =\sup_x |f(x)| + \sup_{x\not= y} \frac{|f(x) - f(y)|}{|x-y|^a} \ \]
 denote the $a$-H\"older norm of $f$ restricted to $\lambda\cap k$.   We also define $ \alpha(k) = \int_k d\alpha $.  The following can be extracted from the proof of \cite[Theorem 11]{Bon_THDGL}, but we include a proof here for convenience of the reader.

\begin{lemma}\label{lem:angle_error}
Let $\lambda\in \mathcal {GL}_0(S)$,  $\alpha \in \cH(\lambda)$, $Z\in \T(S)$ be $\delta$-thick, $f\in \cH^a(k)$, and $\epsilon>0$.
Let $\tau_\epsilon = \tau(Z, \lambda, \epsilon)$ be a geometric train track, and let $k$ be a geodesic transversal to a branch $b$ of $\tau_\epsilon$.  
There exists $C>0$ depending on the topology of $S$, the angle that $k$ makes with $\lambda$, $\delta$, and $a$ such that if  $\epsilon$ is small enough, then
\[\left|\alpha(f) - \alpha(k)f(k^+)\right| \le C\|f\|_a\cdot \|\alpha\| \epsilon^a.\]
\end{lemma}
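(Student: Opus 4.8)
The estimate compares the full integral $\alpha(f)$ against the ``leading term'' $\alpha(k)f(k^+)$, and by Theorem \ref{thm:int_formula} the difference is exactly the tail series
\[
\alpha(f) - \alpha(k)f(k^+) = \sum_{d\subset k\setminus\lambda} \alpha(k_d)\bigl(f(d^-)-f(d^+)\bigr),
\]
summed over all non-extreme components $d$ of $k\setminus\lambda$. So the whole proof is a matter of bounding this series term-by-term and summing a geometric series, using the three items of Lemma \ref{lem:depth} together with the $a$-H\"older control on $f$. First I would organize the sum by divergence radius: for each integer $r\ge 1$, Lemma \ref{lem:depth}(3) says each of the $6|\chi(S)|$ spikes of $\lambda$ contributes at most one component $d$ with $r(d)=r$, so there are at most $6|\chi(S)|$ terms with divergence radius $r$.

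\textbf{Bounding a single term.} Fix a component $d$ with $r(d)=r$. The cocycle factor is controlled by Lemma \ref{lem:depth}(1): $|\alpha(k_d)|\le \|\alpha\|\, r$. For the function factor, $d^-$ and $d^+$ are the two endpoints of $d$, so $|f(d^-)-f(d^+)| \le \|f\|_a \,|d^- - d^+|^a = \|f\|_a\, \ell(d)^a$ (measuring along $k$, which is a geodesic transversal so arclength along $k$ controls the H\"older modulus). Now Lemma \ref{lem:depth}(2) gives $\ell(d) \le A\epsilon \exp\!\bigl(\tfrac{-r\cdot 2\inj(Z)}{9|\chi(S)|}\bigr)$, hence
\[
\ell(d)^a \le A^a \epsilon^a \exp\!\left(\frac{-r\cdot 2a\inj(Z)}{9|\chi(S)|}\right).
\]
Combining, the $d$-term is at most $\|\alpha\|\,\|f\|_a\, A^a \epsilon^a\, r\, e^{-cr}$ where $c = \tfrac{2a\inj(Z)}{9|\chi(S)|} \ge \tfrac{2a\delta}{9|\chi(S)|} > 0$.

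\textbf{Summing.} Grouping by divergence radius and using the count $6|\chi(S)|$ from Lemma \ref{lem:depth}(3),
\[
\left|\alpha(f)-\alpha(k)f(k^+)\right| \le 6|\chi(S)|\, A^a\, \|\alpha\|\,\|f\|_a\, \epsilon^a \sum_{r\ge 1} r\, e^{-cr}.
\]
The series $\sum_{r\ge 1} r e^{-cr} = e^{-c}/(1-e^{-c})^2$ converges and depends only on $c$, i.e. only on $\delta$, $a$, and $|\chi(S)|$; absorbing it together with $6|\chi(S)|A^a$ into a constant $C$ (which depends on the topology of $S$, on $\delta$, on $a$, and on the angle $k$ makes with $\lambda$ through the constant $A$ of Lemma \ref{lem:depth}) gives the claimed bound $C\|f\|_a\|\alpha\|\epsilon^a$. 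The phrase ``if $\epsilon$ is small enough'' is needed only to guarantee, via Proposition \ref{prop:tt_geometry}, that $\tau_\epsilon$ is genuinely a geometric trivalent train track with no closed ties so that the structure (divergence radii, spike count) underlying Lemma \ref{lem:depth} is in force.

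\textbf{Main obstacle.} There is no serious obstacle here; the only point requiring a little care is confirming that $|f(d^-)-f(d^+)|$ may legitimately be estimated by $\|f\|_a\,\ell(d)^a$ with $\ell(d)$ the length of $d$ along the transversal $k$ — i.e. that the H\"older modulus of $f$ (defined on $k\cap\lambda$ with the metric inherited from $k$) sees the separation of the two endpoints of the complementary gap $d$ as exactly its length. This is immediate from the definition of $\|f\|_a$, but it is the step where one must be sure the relevant ``distance'' in Lemma \ref{lem:depth}(2) (a length of a sub-arc of $k$) is the same one appearing in the H\"older norm. Everything else is the geometric-series bookkeeping sketched above.
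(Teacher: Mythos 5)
Your proposal matches the paper's argument exactly: apply Theorem \ref{thm:int_formula}, bound each term in the tail series using Lemma \ref{lem:depth}(1) for $|\alpha(k_d)|$, the $a$-H\"older modulus of $f$ together with Lemma \ref{lem:depth}(2) for $d(d^-,d^+)^a$, group terms by divergence radius via Lemma \ref{lem:depth}(3), and sum the resulting geometric-type series. The paper presents this as a single chain of four inequalities; your write-up is simply a more verbose version of the same computation, with the same constants and dependencies.
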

\begin{proof}
By Theorem \ref{thm:int_formula}, we have 
\begin{align*}
\left|\alpha(f) - \alpha(k)f(k^+)\right| & \le \sum_{d} \left| \alpha(k_d)(f(d^-) - f(d^+)    \right| \\
& \le  \sum_{d} r(d) \|\alpha\| \|f\|_a d(d^-,d^+)^a \\
&\le 6|\chi(S)| \cdot \|\alpha\| \cdot \|f\|_a\sum_{r=1}^\infty r A^a  \epsilon^a \exp\left( \frac{-ar 2\delta}{9|\chi(S)|}\right )\\
& \le  C \|\alpha\| \cdot \|f\|_a \epsilon^a.
\end{align*}

We have used Proposition \ref{prop:tt_geometry} item (2) and all three items of Lemma \ref{lem:depth} to justify the inequalities.  
\end{proof}

In general, we can approximate $\alpha_1\otimes \alpha_2(f)$ by ``Riemann sums'' of the form 
\begin{equation}\label{eqn:R_sum_prod}
(\alpha_1\otimes \alpha_2)_\epsilon(f) := \sum_{p\in \tau_1(\epsilon)\pitchfork \tau_2(\epsilon)} f(p) \alpha_1(t_1(p))\alpha_2(t_2(p)).
\end{equation}
where $t_i(p)$ is a tie for $\tau_i(\epsilon) = \tau(\lambda_i, Z, \epsilon)$  through points $p$ chosen in the set of transverse intersections $\lambda_1\pitchfork \lambda_2$ corresponding to  the intersections $\tau_1(\epsilon)\pitchfork \tau_2(\epsilon)\subset Z$, for $i = 1,2$. 

Setting notation for the proposition, take $\epsilon_0$ small enough so that $\tau_i(\epsilon_0)$ both define train track neighborhoods, and $W(\tau_i(\epsilon_0))\cong \cH(\lambda_i)$.  By $\|\alpha_i\|$, we mean the $\ell^\infty$ norm on $\RR^{b(\tau_i(\epsilon_0))}$. 

\begin{proposition}\label{prop:R_sums_prod}
With notation as above, then  the Riemann sums $(\alpha_1\otimes \alpha_2)_\epsilon(f)$ converge to $\alpha_1\otimes \alpha_2(f)$ as $\epsilon\to 0$, for any $\pi_1(S)$-invariant $a$-H\"older function $f: \mathcal {G}(\widetilde S) \times  \mathcal {G}(\widetilde S) \to \RR$ in the closure of the linear span of simple tensors $\cH^a ( \mathcal {G}(\widetilde S)) \otimes \cH^a(  \mathcal {G}(\widetilde S))$.  
Moreover the convergence is at rate
\[ | (\alpha_1 \otimes \alpha_2)_\epsilon(f) - \alpha_1 \otimes \alpha_2(f)|  = O\left( \epsilon^{b}\right), \]
where $b\in (0,a)$. The implicit constants depend (continuously)  on $b$, $\|f\|_a$, $\|\alpha_1\|$, $\|\alpha_2\|$, $\inj(Z)$, and the topology of $S$. 
\end{proposition}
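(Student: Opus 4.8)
The plan is to reduce the statement to two separate one-variable error estimates via Lemma~\ref{lem:Fubini} and Lemma~\ref{lem:angle_error}, and then handle the passage from simple tensors to their closure by a density argument that controls all constants uniformly. First I would treat the case of a simple tensor $f(x,y) = f_1(x) f_2(y)$ with $f_i \in \cH^a(\mathcal G(\widetilde S))$. In this case both the Riemann sum \eqref{eqn:R_sum_prod} and the integral $\alpha_1 \otimes \alpha_2(f)$ factor. Indeed, by Lemma~\ref{lem:Fubini}, $\alpha_1 \otimes \alpha_2(f) = \int(\int f_1(x) f_2(y)\, d\alpha_1(x))\, d\alpha_2(y)$; restricting to the support $\lambda_1 \pitchfork \lambda_2$, and using the train track description of the intersection from Lemma~\ref{lem:intersection}, one sees that $(\alpha_1 \otimes \alpha_2)_\epsilon(f)$ is a ``partial Riemann sum'' obtained by replacing the inner integral $\int f_1 \, d\alpha_1$ over each tie-band of $\tau_1(\epsilon)$ by the single-point approximation $f_1(p)\alpha_1(t_1(p))$, and similarly for the outer integral. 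Lemma~\ref{lem:angle_error} is exactly the tool that bounds each such one-point replacement: applied to the transversal $t_2(p)$ and the $a$-H\"older function $y \mapsto \int f_1(x,\cdot)\,d\alpha_1(x)$ (whose H\"older norm is controlled by $\|f_1\|_a \|\alpha_1\|$ via a second application of Lemma~\ref{lem:angle_error} fiberwise), it yields an error of size $O(\|f\|_a \|\alpha_1\|\|\alpha_2\| \epsilon^a)$ in each variable. The slight loss from $\epsilon^a$ to $\epsilon^b$ is there to absorb the growth in the number of branches: by Proposition~\ref{prop:tt_geometry}(2) the track $\tau_i(\epsilon)$ has length $O(\log 1/\epsilon)$, hence $O((\log 1/\epsilon)^2)$ intersection points, and $(\log 1/\epsilon)^2 \epsilon^a = O(\epsilon^b)$ for any $b < a$.

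Next I would upgrade from simple tensors to finite linear combinations by linearity of both sides, and then to the closure by density. Here the key point is uniformity: if $f = \lim_k g_k$ in $\cH^a$-norm with $g_k$ in the span of simple tensors, I would pick a fixed $b \in (0,a)$ and an intermediate exponent, and split
\[
| (\alpha_1 \otimes \alpha_2)_\epsilon(f) - \alpha_1 \otimes \alpha_2(f)| \le | (\alpha_1 \otimes \alpha_2)_\epsilon(f - g_k)| + | (\alpha_1 \otimes \alpha_2)_\epsilon(g_k) - \alpha_1 \otimes \alpha_2(g_k)| + |\alpha_1 \otimes \alpha_2(g_k - f)|.
\]
The middle term is $O(\epsilon^b)$ for fixed $k$ by the simple-tensor case. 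For the outer two terms I need bounds on the \emph{operator norms} of $(\alpha_1 \otimes \alpha_2)_\epsilon$ and of $\alpha_1 \otimes \alpha_2$ on $\cH^a$: the latter is immediate since $\alpha_1 \otimes \alpha_2$ is a bounded functional on $a$-H\"older functions; the former requires showing $\sum_{p \in \tau_1(\epsilon) \pitchfork \tau_2(\epsilon)} |\alpha_1(t_1(p))| \, |\alpha_2(t_2(p))| = O(1)$ uniformly in $\epsilon$, which follows from Lemma~\ref{lem:depth}(1),(3) (bounding $|\alpha_i(t_i(p))|$ by $\|\alpha_i\| r(d)$ with at most $6|\chi(S)|$ spikes at each divergence radius, giving a geometric sum) — this is essentially the content already used in the proof of Lemma~\ref{lem:angle_error}. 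With these uniform operator bounds, choose $k = k(\epsilon)$ growing slowly enough that $\|f - g_k\|_a$ is small, balancing against the $\epsilon^b$ term.

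The main obstacle I anticipate is the bookkeeping in the ``nested'' application of Lemma~\ref{lem:angle_error}: one must verify that the function $y \mapsto \int_{t_1} f(x,y)\, d\alpha_1(x)$ obtained after integrating out the first variable is genuinely $a'$-H\"older on the relevant transversal $t_2$ with norm controlled by $\|f\|_a\|\alpha_1\|$, and that this is compatible with the support restriction to $\lambda_1 \pitchfork \lambda_2$ and with the choice of representative points $p$. A second subtlety is that $\tau_1(\epsilon)$ and $\tau_2(\epsilon)$ must be in minimal position and the quadrilaterals $Q_p$ from the proof of Lemma~\ref{lem:intersection} must be uniformly controlled in size; this is where we use that both tracks are geometric with small geodesic curvature, and where the constant depends on the angle between $\lambda_1$ and $\lambda_2$ — which is fine since in the applications (and as stated) $\lambda_1$ and $\lambda_2$ meet transversally, so angles are bounded away from $0$ along the compact set $\lambda_1 \pitchfork \lambda_2$. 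Everything else is routine, and keeping all constants as continuous functions of the data (per Note~\ref{note:constants}) is automatic from the cited lemmas.
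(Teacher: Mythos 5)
Your proposal correctly identifies the central analytical ingredient — Fubini followed by a nested application of Lemma~\ref{lem:angle_error} — but the route you take through simple tensors plus a density argument has two genuine gaps, and the obstacle you flag at the end is in fact the proof, not an afterthought.

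First, the density argument cannot deliver the quantitative rate. The hypothesis that $f$ lies in the closure of simple tensors is only used to invoke Lemma~\ref{lem:Fubini}; once Fubini holds, the paper estimates the iterated integral directly, without decomposing $f$. Your plan instead wants to transfer a rate from finite linear combinations $g_k$ of simple tensors to the limit $f$, but the error you would obtain for $g_k = \sum_i c_i f_i^{(1)} \otimes f_i^{(2)}$ scales with $\sum_i |c_i|\,\|f_i^{(1)}\|_a\,\|f_i^{(2)}\|_a$, not with $\|g_k\|_a$, and that quantity is unbounded as $k \to \infty$ — so choosing $k(\epsilon)$ to ``balance'' does not work. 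Second, your claimed uniform bound $\sum_p |\alpha_1(t_1(p))|\,|\alpha_2(t_2(p))| = O(1)$ is false: Lemma~\ref{lem:depth}(1),(3) bounds $|\alpha(k_d)|$ for a \emph{subsegment} $k_d$ of a fixed transversal, not the total cocycle $\alpha(t_b)$ of a tie of the $\epsilon$-track; for the latter one must use Lemma~\ref{lem:norm}, which gives $|\alpha_i(t_i(p))| = O(\|\alpha_i\|\log(1/\epsilon))$, and combined with the $O(\log^2(1/\epsilon))$ count of intersections from Proposition~\ref{prop:tt_geometry}(5) the sum grows like $\log^4(1/\epsilon)$. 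This is harmless in the direct estimate (it is absorbed into the loss from exponent $a$ to $b < a$), but it destroys the operator-norm bound your density argument needs.

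Finally, you write that for a general $f$ the once-integrated function $y\mapsto \int f(x,y)\,d\alpha_1(x)$ is ``$a'$-Hölder \dots via a second application of Lemma~\ref{lem:angle_error} fiberwise,'' and you flag this as the main obstacle. That is exactly where the real work lies, and Lemma~\ref{lem:angle_error} alone does not give it to you: that lemma bounds a single integral against a constant, not the modulus of continuity of the integral as a function of $y$. The paper's Claim~\ref{clm:F_Holder} proves a weaker statement — $F_p$ is $b$-Hölder only for $b < a$ — and the proof requires an additional truncation in the divergence radius $r(d)$ with a cutoff $k \sim \log(\epsilon/\Delta)$ chosen to balance two competing error terms. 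Without that argument, the outer application of Lemma~\ref{lem:angle_error} has nothing to act on. So the proposal as written does not close, and fixing it amounts to supplying precisely the direct iterated-integral estimate that the paper gives.
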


\begin{proof}
First we note that $\alpha_1\otimes \alpha_2$ is supported on $\lambda_1\pitchfork \lambda_2$, and on our reference surface $Z$, these transverse intersections are contained in the union $\cup Q_p$ of geodesic quadrilaterals, as in the proof of Lemma \ref{lem:intersection}. So, we may think of $f$ as a H\"older continuous function on $Z$, supported on $\lambda_1\pitchfork \lambda_2$, and write
\begin{align}\label{eqn:sum_p}
\left| \alpha_1\otimes \alpha_2 (f) - (\alpha_1\otimes \alpha_2)_\epsilon (f) \right| &= \left| \sum_{p} \iint_{Q_p} f - f(p) ~d\alpha_1d\alpha_2 \right|.
\end{align}

We now take a moment to point out that the usual integral inequality $| \int f~d\mu| \le \int |f| ~d\mu$ for measures $\mu$ does not necessarily hold for general distributions, as our `integrals' are signed.  As such, we will need to use care when distributing absolute value signs.  For instance, by the triangle inequality and Fubini's Theorem for distributions which are H\"older approximated by simple tensors (Lemma \ref{lem:Fubini}), we can bound 
\begin{align*}
\eqref{eqn:sum_p} \le \sum_p \left| \int \left( \int f(x, y) -f(x_p, y_p) ~d\alpha_1(x)\right) ~d\alpha_2 (y)\right| ,
\end{align*}
where $x$ represents a geodesic segment of $\lambda_1\cap Q_p$ and $y$ represents a geodesic segment of $\lambda_2\cap Q_p$, so that $(x,y)\in Z$ represents their intersection, and we take $p = (x_p, y_p)$.  

For $y$ fixed, we denote $f_y(x) = f(x,y)$ and define $F_p(y) = \int f_y(x) - f(x_p, y_p)~d\alpha_1(x)$.  In order to evaluate the iterated integral above, we need to know that $F_p(y)$ is $b$-H\"older continuous for some $b$. 

\begin{claim}\label{clm:F_Holder}
For $\epsilon$ small enough and all $p$, $F_p(y)$ is $b$-H\"older continuous for $b\in (0,a)$.
\end{claim}
\begin{proof}[Proof of Claim \ref{clm:F_Holder}]
Let $y_1, y_2$ be segments of leaves of $\lambda_2\cap Q_p$ at Hausdorff distance $\Delta <E\epsilon$, which is a bound for the width of the $\epsilon$-horocyclic neighborhood of $\lambda_2$ (Proposition \ref{prop:tt_geometry} item (3)).  
We consider 
\begin{align}\label{eqn:F_Holder_1}
|F_p(y_1) - F_p(y_2)| \le  | \alpha_1 (f_{y_1} - f_{y_2})  - \alpha_1(t_1(p))&(f_{y_1}(x^+) -f_{y_2}(x^+)) | \\ 
& + |\alpha_1(t_1(p))| \cdot |(f_{y_1}(x^+) -f_{y_2}(x^+))|,\nonumber
\end{align}
where $x^+$ is the last intersection of $y_1$ and $y_2$ with a leaf of $\lambda_1$ in $Q_p$.  Note that the intersections $y_1$ with $\lambda_1$ and $y_2$ with $\lambda_2$ are in bi-H\"older correspondence, so it makes sense to compare these functions.     Using the fact that $f$ is $a$-H\"older and Lemma \ref{lem:norm}, we know 
\begin{align} \label{eqn:F_Holder_easy}
|\alpha_1(t_1(p))| \cdot |(f_{y_1}(x^+) -f_{y_2}(x^+))| \le C\|\alpha\| \log(1/\epsilon) \|f\|_a \Delta^a.
\end{align}

Now we focus on bounding the first term of \eqref{eqn:F_Holder_1}, which by Theorem \ref{thm:int_formula} is 
\begin{align}\label{eqn:F_Holder_sum}
\left| \sum_{d\subset k \setminus \lambda_1} \alpha(k_d) \left(f_{y_1}(d^-) -f_{y_2}(d^-)- f_{y_1}(d^+) + f_{y_2}(d^+)\right)  \right| ,
\end{align}
where the sum is taken over components $d$ not containing an endpoint of a geodesic transversal $k$ (which we could take to be $y_1$ or $y_2$, for example).  

We separate this sum into two pieces.  Namely, we organize the components $d$ according to their divergence radius (measured with respect to $\tau_1(\epsilon)$), and separate out those which have depth at most $k$, whose value will be determined later.  
Proceeding as in the proof of Lemma \ref{lem:angle_error}, we can bound the previous equation by
\begin{align*}
  \sum_{r(d)\le k} |\alpha( k_d)||(f_{y_1}(d^-) &- f_{y_2}(d^-))  +(f_{y_2}(d^+) - f_{y_1}(d^+)) |  \\ 
  & + \sum_{r(d)>k} |\alpha( k_d)||(f_{y_1}(d^-) - f_{y_1}(d^+)) +(f_{y_2}(d^+) - f_{y_2}(d^-)) | \\
 & \le 6|\chi(S)| C \log(1/\epsilon)\|\alpha\|\cdot \|f\|_a \left( 2k\Delta^a + \sum_{r=k}^\infty 2r A^a  \epsilon^a \exp\left( \frac{-ar 2\delta}{9|\chi(S)|}\right ) \right) \\
  & \le 6|\chi(S)| C \log(1/\epsilon)\|\alpha\|\cdot \|f\|_a \left( 2k \Delta^a + 2C'k\epsilon^a\exp\left( \frac{-a2\delta(k-1) }{9|\chi(S)|}\right )\right) 
\end{align*}
  We want $\Delta^a \approx k\epsilon^a\exp\left( \frac{-a2\delta(k-1) }{9|\chi(S)|}\right)$, which means we should choose $k$ on the order of $\frac{9|\chi(S)|}{2\delta} \log(\epsilon/\delta)$.

Substituting $k$ chosen above back in, we obtain the upper bound
$\eqref{eqn:F_Holder_sum} = O(\Delta^b), $
where $b\in (0,a)$.  Together with \eqref{eqn:F_Holder_easy}, we have bounded \eqref{eqn:F_Holder_1} by $O(\Delta^b)$ for $b\in (0,a)$.  Since $\Delta$ was the distance between $y_1$ and $y_2$, this completes the proof of the claim that $F_p$ is $b$-H\"older continuous.  
\end{proof}

We have shown that the iterated integrals, summed over $p$, make sense, and therefore proceed to bound them.  We now recall that Proposition \ref{prop:tt_geometry} item (5) tells us that there are at most $O(\log^21/\epsilon)$ intersection points $p$, and so it suffices to bound any of the integrals in the sum.  
As such we want to bound 
\begin{align}
\left| \int F_p(y)~d\alpha_2(y)\right| & \le \left| \int F_p(y) - F_p(y^+)~d\alpha_2(y) \right| +|\alpha_2(t_2(p))|\cdot |F_p(y^+)| \nonumber\\
& \le C\|F_p\|_b \|\alpha_2\| \epsilon^{b} + C\log(1/\epsilon) \|\alpha_2\| |F_p(y^+)|, \label{eqn:almost_final_bound}
\end{align}
Where we have used Lemma \ref{lem:angle_error} and Lemma \ref{lem:norm} to move from the first to the second line of the inequality.  If now we can provide a good bound for $\|F_p\|_b$ in terms of $\epsilon$, then we will arrive at the proposition.
\begin{claim}\label{clm:F_bound_b}
We have bounds
\[|F_p(y)|  = O\left(\epsilon^{b}\right) \]
and $\|F_p\|_b = O\left(\epsilon^{b}\right) +O(1)$, where $b\in (0,a)$.  
\end{claim}
\begin{proof}[Proof of Claim \ref{clm:F_bound_b}]
Again we have 
\begin{align*}
|F_p(y)|  & =  \left |\int f_y(x) - f(x_p, y_p) ~d\alpha_1(x) \right| \\
& \le \left| \int f_y(x) - f_y(x^+) ~d\alpha_1(x) \right| + |\alpha_1(t_1(p))|\cdot |f(x^+,y) - f(x_p, y_p)|\\
& \le C \|f_y\|_a \|\alpha_1\| \log(1/\epsilon) \epsilon^{a} + C \|\alpha_1\|\log(1/\epsilon)  \|f\|_a (2E\epsilon)^a = O\left( \epsilon^{b}  \right),
\end{align*}
for $b\in (0,a)$.  Going from the second line to the third, we invoke Lemma \ref{lem:angle_error}, Lemma \ref{lem:norm}, and the Proposition \ref{prop:tt_geometry} item (3) to see that $\diam (Q_p) \le 2E\epsilon$; it is also clear that $\|f_y\|_a\le \|f\|_a$.  

Thus we have bounded $\|F_p\|_\infty$, while the proof of Claim \ref{clm:F_Holder} provides the second bound of $O(1)$, independent of $\epsilon$, for $\sup_{y_1\not=y_2}\frac{|F_p(y_1) - F_p(y_2)|}{d(y_1, y_2)^b}$.
\end{proof}
Putting Claim \ref{clm:F_bound_b} together with \eqref{eqn:almost_final_bound}, we have 
\[\left| \int F_p(y)~d\alpha_2(y) \right|  = 
O\left(\epsilon^{b}\right),\]
with $b\in (0,a)$. 
We have already observed that the quantity \eqref{eqn:sum_p} which we are trying to bound is the sum over  $O(\log^2(1/\epsilon))$-many points $p$ of terms given above.  This gives the final estimate $O(\log^2(1/\epsilon) \epsilon^{b}) = O(\epsilon^{c})$ for any $c\in (0,a)$, and concludes the proof of the proposition.
\end{proof}

\subsection{Shearing deformations}
We recall the construction of shearing deformations and shearing cocycles introduced in \cite[\S5]{Bon_SPB}, although we follow (more) closely the exposition  and notation given in   \cite{BonSoz:variation}.
We fix a hyperbolic structure $Z\in \T(S)$ and maximal chain recurrent geodesic lamination $\lambda\in \mathcal {GL}_0(Z)$ and consider a transverse H\"older cocycle $\alpha\in \cH(\lambda)$.
For small time $t\le t_Z$ depending on the geometry of $\lambda\subset Z$ and the size of $\alpha$, there is a $\PSL_2\RR$-valued \emph{shearing cocycle} 
\[E^{t\alpha}: \left(\tX\setminus\tlambda\right)^2\to \Isom^+(\tX)\cong \PSL_2\RR,\]
 which is constant on the triangular complementary components.   
 
 Intuitively, $E^{t\alpha}$ explains how to cut $\tX$ apart along $\tlambda$ and reglue equivariantly by right- and left-earthquakes.  We will see that the recipe for this deformation involves infinite ordered products of parabolic isometries of the plane, adjusting the shearing of complementary triangles relative to one another, and convergence is not automatic (as would essentially be the case if we were dealing with right-earthquakes or left-earthquakes, exclusively).   
 Formally, the shearing cocycle defines a one-parameter analytic family of discrete and faithful representations $\rho_{t\alpha}: \pi_1(Z,z) \to \Isom^+\tX$, given by post-composition: if $\gamma\in \pi_1(Z,z) \le \Isom^+\tX$, then 
 \[\rho_{t\alpha}(\gamma) := E^{t\alpha}(\Tx, \gamma \Tx)\circ \gamma,\]
  where $T_z\subset \tX\setminus \tlambda$ is a fixed lift of a triangle containing a lift of the basepoint $z$ \cite{Bon_SPB}.  We hence obtain an analytic one-parameter family $E^{t\alpha}Z = [\tX/\rho_{t\alpha}]\in \T(S)$.   
  
 With $\Tx$ fixed, we can define a partial order $<$ on the components  of $\tX\setminus\tlambda$ different from $\Tx$: for such components $T$ and $T'$, we declare that $T<T'$ if  $T$ separates $T'$ from $\Tx$.  A  non-backtracking path $k_T$ starting at $\Tx$ and ending in some component $T$ gives an orientation to all geodesics of $\tlambda$ that it meets making the intersection positive with respect to the underlying orientation of $\HH^2$.
 Each component $T'<T$ has two oriented boundary geodesics $g_2^{T'}$ and $g_1^{T'}$ meeting $k_T$, where  $g_2^{T'}$ is closest to $\Tx$.  Let $k_{T'}$  be the restriction of $k_T$ from it's start to some point of intersection with $T'$, and take $\alpha(T') = \alpha(k_{T'}) = \alpha(\Tx, T')$.
 
 For an oriented geodesic $g\subset\HH^2$, $E_g^a$ is the hyperbolic translation along $g$ with (signed) magnitude $a\in \RR$.   
  Define $E_T^a:=E_{g_2^T}^a \circ E_{g_1^T}^{-a}$ for $a\in \RR$, called the \emph{elementary shear in $T$}.
  
  We may now define 
  \begin{equation*}\label{eqn:shear_cocycle}
  E^{\alpha}(\Tx, T) : = \overrightarrow {\prod_{T'\le T}} E_{T'}^{\alpha(T')} \prod E_{g_2^T}^{\alpha(T)},
  \end{equation*}
  where the first (ordered) infinite product is the limit
    \begin{equation}\label{eqn:approx}
    \lim E_{T_1}^{\alpha(T_1)}\circ E_{T_2}^{\alpha(T_2)} \circ ... \circ E_{T_n}^{\alpha(T_n)}, 
      \end{equation}
    as the chain $T_1< T_2< ... <T_n<T$ increases to the set of all triangles smaller than $T$. 
    It was shown in \cite[\S5]{Bon_SPB} that the limit exists for $\alpha$ small enough compared to $\inj(Z)$, and defines a $\pi_1(Z,z)$-equivariant cocycle.  
 
 Under the shear coordinates described earlier (Theorem \ref{thm:coordinates}), we have 
 \[\sigma_\lambda(E^{t\alpha}Z) = \sigma_\lambda(Z) + t\alpha\in \cH^+(\lambda).\]  
 
\section{A cosine formula}\label{sec:cosine}
The goal of this section is to compute the Poisson bracket between length functions for transverse H\"older distributions on transversally intersecting geodesic laminations.
We consider the change of length of a transverse cocycle supported on $\lambda_1$ under a deformation in the direction of a transverse cocycle supported on $\lambda_2$.
Without loss of generality, we may assume that $\lambda_1$ and $\lambda_2$ are both maximal.

More precisely, let $\alpha_1\in \cH(\lambda_1)$ and $\alpha_2\in \cH(\lambda_2)$, and consider the \emph{shearing vector fields} $X_{\alpha_1} : =(\sigma_{\lambda_1}\inverse)_*\alpha_1$ and $X_{\alpha_2} : = (\sigma_{\lambda_2}\inverse)_*\alpha_2$ on $\T(S)$.
We would like to compute $d\ell_{\alpha_1} X_{\alpha_2}$, which by Lemma \ref{lem:length_dual}, is equivalent to computing the Poisson bracket 
\[\{ \ell_{\alpha_1}, \ell_{\alpha_2}\} = \omega_{\WP} (X_{\alpha_1},X_{\alpha_2}) = -\{ \ell_{\alpha_2}, \ell_{\alpha_1}\}.\]

Note that when $\mu_1 \in \cH(\lambda_1)$ and $\mu_2 \in \cH(\lambda_2)$ are (positive) measures, then Kerckhoff's  cosine formula \cite[Lemma 3.2]{Kerckhoff:NR} (see also \cite[Theorem 3.3]{Wolpert:symplectic}) gives 
\begin{equation}\label{eqn:EQ}
\{\ell_{\mu}, \ell_{\nu}\} (Z) = \iint_Z \cos (\lambda_1, \lambda_2)~ d\mu_1 d\mu_2,
\end{equation}
where if $p\in \lambda_1\pitchfork \lambda_2$, then $\cos(\lambda_1, \lambda_2)(p)$ is the cosine of the angle from the corresponding leaf of $\lambda_1$ measured in the positive direction to the corresponding leaf of $\lambda_2$ near $p$ on $Z$.  

Let $\cQ$ be a component of a stratum of quadratic differentials with associated $\cQ$-Thurston measure $\mu_{\Th}^\cQ$ (c.f. \S\ref{sub:Thurston}).
Our main result in this section is the following generalization of \eqref{eqn:EQ}.
\begin{theorem}\label{thm:cosine}
For $\mu_{\Th}^\cQ$ almost every $\mu \in \ML(S)$ with chain recurrent diagonal extension $\lambda_1$, for every chain recurrent geodesic lamination $\lambda_2$,  $\alpha_2\in \cH(\lambda_2)$,  and $Z\in \T(S)$,  if $\alpha_1\in \cH(\lambda_1)$ represents a tangent vector to $\mu$ in $\ML(S)$, then   \[\{\ell_{\alpha_1}, \ell_{\alpha_2}\} (Z) = d\ell_{\alpha_1}X_{\alpha_2}(Z) = \iint_Z \cos(\lambda_1, \lambda_2) ~d\alpha_1d\alpha_2.\]
\end{theorem}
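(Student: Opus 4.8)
The plan is to prove the cosine formula first for the special case where $\alpha_1 = \mu$ is itself a transverse measure (equivalently, we shear along $\mu$ by the earthquake flow), then bootstrap to general $\alpha_1 \in \cH(\lambda_1)$ representing a tangent vector to $\mu$ by a limiting argument, and finally to reduce the general $\alpha_2$ case to the measure case by linearity. For the first reduction, observe that by Lemma \ref{lem:length_dual}, $d\ell_{\alpha_2} X_\mu$ is minus the Poisson bracket $\{\ell_{\alpha_2},\ell_\mu\}$, and since the earthquake flow $\Eq_{t\mu}$ is the Hamiltonian flow of $\ell_\mu$, we have $\{\ell_{\alpha_2}, \ell_\mu\}(Z) = \ddt \ell_{\alpha_2}(\Eq_{t\mu}(Z))$. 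Using Theorem \ref{thm:coordinates}, $\ell_{\alpha_2}(\Eq_{t\mu}(Z)) = \omega_{\Th}(\alpha_2, \sigma_{\lambda_2}(\Eq_{t\mu}(Z)))$; but in $\lambda_2$-shear coordinates the earthquake in $\mu$ is \emph{not} linear (it is linear in $\lambda_1$-shear coordinates), so the real work is to understand how $\sigma_{\lambda_2}(\Eq_{t\mu}(Z))$ varies. Here I would invoke Bonahon's shearing cocycle description from \S\ref{sec:int}: the earthquake deformation is described by the $\PSL_2\RR$-valued cocycle $E^{t\mu}$, and differentiating the trace/holonomy identities for $\ell_{\alpha_2}$ at $t=0$ expresses the derivative as a sum over the intersections $\lambda_1 \pitchfork \lambda_2$ of a local contribution, which a direct $\HH^2$ computation identifies as $\cos(\lambda_1,\lambda_2)(p)$ weighted by the product of transverse masses. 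This is precisely Kerckhoff's original argument (Proposition \ref{prop:cosine}), and I would cite \cite{G:derivatives} for the $\alpha_2$-cocycle version, obtaining $\{\ell_{\alpha_2},\ell_\mu\}(Z) = \iint_Z \cos(\lambda_2,\lambda_1)\,d\alpha_2\,d\mu = -\iint_Z \cos(\lambda_1,\lambda_2)\,d\mu\,d\alpha_2$, which by antisymmetry of the Poisson bracket and of the cosine function gives the claimed formula with $\alpha_1$ replaced by $\mu$.

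The heart of the theorem is the passage from $\mu$ to a general tangent direction $\alpha_1 \in \cH(\lambda_1)$. By Bonahon's formula \eqref{eqn:dlength}, $\ell_{\alpha_1}(Z) = \ddtp \ell_{\mu + t\alpha_1}(Z)$, so $\{\ell_{\alpha_1},\ell_{\alpha_2}\}(Z) = d\ell_{\alpha_1} X_{\alpha_2}(Z)$ is a \emph{mixed second derivative} of the length function on $\T(S)\times \ML(S)$: morally, $\ddtp \ddsp \ell_{\mu+t\alpha_1}(\Phi^s_{\alpha_2}(Z))$ where $\Phi^s_{\alpha_2}$ is the shear flow of $X_{\alpha_2}$. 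The strategy is to justify exchanging the two limits. For each fixed small $t>0$, $\mu_t := \mu + t\alpha_1$ is a genuine measured lamination (supported on a sublamination of $\lambda_1$, by Theorem \ref{thm:tangent}), so the measure case already proved gives $\{\ell_{\mu_t}, \ell_{\alpha_2}\}(Z) = \iint_Z \cos(\cdot,\lambda_2)\,d\mu_t\,d\alpha_2$, which by bilinearity of the product-distribution integral equals $\iint_Z\cos(\cdot,\lambda_2)\,d\mu\,d\alpha_2 + t\iint_Z \cos(\cdot,\lambda_2)\,d\alpha_1\,d\alpha_2$. Dividing by $t$ and letting $t\to 0^+$ would give the answer \emph{provided} we can show $\frac{1}{t}\big(\{\ell_{\mu_t},\ell_{\alpha_2}\} - \{\ell_\mu, \ell_{\alpha_2}\}\big)(Z) \to \{\ell_{\alpha_1},\ell_{\alpha_2}\}(Z)$, i.e. that the derivative in $t$ of $\{\ell_{\mu_t},\ell_{\alpha_2}\}(Z)$ at $t=0^+$ is $\{\ell_{\alpha_1},\ell_{\alpha_2}\}(Z)$ — this is the limit-exchange. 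To make this rigorous, I would write $\{\ell_{\mu_t},\ell_{\alpha_2}\}(Z) = -d\ell_{\alpha_2} X_{\mu_t}(Z) = -\ddsp \ell_{\alpha_2}(\sigma_{\lambda_1}^{-1}(\sigma_{\lambda_1}(Z)+s\mu_t))$, a derivative in $\lambda_1$-shear coordinates where the deformation \emph{is} linear, so $\{\ell_{\mu_t},\ell_{\alpha_2}\}(Z) = -D\ell_{\alpha_2}\circ\sigma_{\lambda_1}^{-1}\big|_{\sigma_{\lambda_1}(Z)}(\mu_t)$, which is \emph{manifestly linear} in $\mu_t$, hence in $t$, with derivative $-D(\ell_{\alpha_2}\circ\sigma_{\lambda_1}^{-1})(\alpha_1) = -d\ell_{\alpha_2}X_{\alpha_1}(Z) = \{\ell_{\alpha_1},\ell_{\alpha_2}\}(Z)$. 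So the $t$-limit is essentially free, and the content shifts to the second reduction.

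Thus the genuine technical obstacle is showing that for the measures $\mu_t = \mu+t\alpha_1$, the integral $\iint_Z \cos(\cdot, \lambda_2)\,d\mu_t\,d\alpha_2$ converges, as $t\to 0^+$, to $\iint_Z \cos(\lambda_1,\lambda_2)\,d\mu\,d\alpha_2 + t(\cdots)$ with the cosine function taken with respect to the \emph{limiting} lamination $\lambda_1$ — that is, that the supports of $\mu_t$ converge to (a sublamination of) $\lambda_1$ fast enough that the angle functions $\cos(\mathrm{supp}(\mu_t),\lambda_2)$ converge uniformly to $\cos(\lambda_1,\lambda_2)$ on the relevant intersection set, and that the Riemann-sum approximations of Proposition \ref{prop:R_sums_prod} are uniform in $t$. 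This is exactly where the genericity hypothesis on $\mu$ enters: Theorem \ref{thm:measure_H_close} provides, for $\mu_{\Th}^\cQ$-a.e.\ $\mu$, a sequence of scales $e^{-t_n}$ and Hausdorff-convergence rates ensuring $\mathrm{supp}(\mu_s) = \mathrm{supp}(\mu+s\alpha_1)$ lies in an exponentially small neighborhood of $\lambda_1$ once $s \le c_1 e^{-2t_n}$; combined with Lemma \ref{lem:triple_angle_close} (which bounds the discrepancy of the cosine functions by $O(\epsilon)$ when laminations are $\epsilon$-Hausdorff-close) and with the uniform Riemann-sum estimate of Proposition \ref{prop:R_sums_prod} (whose constants depend only continuously on $\|\alpha_i\|$, $\inj(Z)$, topology), one controls the error $\big|\iint_Z\cos(\mathrm{supp}\,\mu_s,\lambda_2)\,d\mu_s\,d\alpha_2 - \iint_Z \cos(\lambda_1,\lambda_2)\,d\mu_s\,d\alpha_2\big|$ along the subsequence $s = c_1 e^{-2t_n}$, and then uses the linearity in $s$ established above to interpolate to all small $s$. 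I expect the bookkeeping of making all the constants uniform over the range of $s$ and over a compact set of $Z$'s — so that the final limit exchange is legitimate — to be the most delicate part, but no genuinely new idea beyond \S\S\ref{sec:geom_tt}, \ref{sec:MH_convergence}, \ref{sec:int} should be needed; the proof is the assembly of those estimates around the skeleton $\{\ell_{\alpha_1},\ell_{\alpha_2}\} = -d\ell_{\alpha_2}X_{\alpha_1}$, which is linear in $\alpha_1$.
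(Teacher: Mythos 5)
Your proposal is correct and follows the same overall route as the paper: first the measure case (the paper's Lemma \ref{lem:curve} via the explicit $\PSL_2\RR$ trace computation, then Proposition \ref{prop:cosine} by approximating the measure by weighted multicurves), and then the bootstrap to cocycles via the difference quotients $\ell_s = (\ell_{\mu+s\alpha_1}-\ell_\mu)/s$, splitting the Riemann sum and controlling the error with Theorem \ref{thm:measure_H_close}, Lemma \ref{lem:triple_angle_close}, and Proposition \ref{prop:R_sums_prod}.

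Your observation that $\{\ell_{\mu_t},\ell_{\alpha_2}\}(Z) = -d\ell_{\alpha_2}X_{\mu_t}(Z)$ is \emph{affine linear} in $t$, because $X_{\mu_t}(Z) = (d\sigma_{\lambda_1}|_Z)^{-1}(\mu+t\alpha_1) = X_\mu(Z)+tX_{\alpha_1}(Z)$, is a genuine simplification. It shows $d\ell_s X_{\alpha_2}(Z)$ is \emph{constant in $s$}, equal to $\{\ell_{\alpha_1},\ell_{\alpha_2}\}(Z)$, for every $Z$ and every admissible $s>0$. The paper instead argues via the classical calculus lemma (if $\ell_s\to\ell_{\alpha_1}$ pointwise and $d\ell_s X_{\alpha_2}$ converges uniformly near $Z$, then the limit of the derivatives is the derivative of the limit), which requires uniform convergence over compact sets of $Z$. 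Your framing makes the limit-exchange vacuous at this step: one only needs to show the Riemann sums converge along the recurrence subsequence $s_n$ at the single point $Z$, and the constancy pins the value down. (Uniformity is still needed in the proof of Proposition \ref{prop:cosine} itself, where the approximating multicurves $\gamma_i$ do not lie in a single $\cH(\lambda)$.)

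Two slips worth correcting. First, $\mu_t = \mu+t\alpha_1$ is \emph{not} supported on a sublamination of $\lambda_1$; its support is a generically \emph{different} maximal lamination carried by the same snug train track and Hausdorff-close to $\lambda_1$ — this is precisely why Theorem \ref{thm:measure_H_close} (recurrence of the anti-stretch path) and the genericity hypothesis on $\mu$ are needed, and also why the naive distributional bilinearity of $\iint\cos(\cdot,\lambda_2)\,d\mu_t\,d\alpha_2$ in $t$ fails: the cosine function changes with $\mathrm{supp}(\mu_t)$. You clearly appreciate this in the subsequent discussion, so I take the parenthetical as a slip of the pen. Second, a sign: with the paper's conventions $\omega_{\WP}(X_f,\cdot)=df$ and $\{f,g\}=\omega_{\WP}(X_f,X_g)$, one has $d\ell_{\alpha_2}X_\mu = \omega_{\WP}(X_{\alpha_2},X_\mu) = +\{\ell_{\alpha_2},\ell_\mu\}$, not minus; you later compensate by antisymmetry, but the intermediate sign should be corrected.
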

\begin{remark}
We only use the restrictive hypotheses on $\mu$ and and $\lambda_1$ in the final limiting argument, so Lemma \ref{lem:curve} and Proposition \ref{prop:cosine} hold without qualification for all $\alpha_1\in \cH(\lambda_1)$ and $\alpha_2\in\cH(\lambda_2)$, and $\lambda_1$ is allowed to be any chain recurrent geodesic lamination. It is natural to ask whether this restriction is necessary or not, especially given the asymmetry of theorem and the (anti-)symmetry of the Poisson bracket.
\end{remark}

We may sometimes denote the corresponding function as $\Cos(\alpha, \beta) : \T(S)\to \RR$.  
Our proof of Theorem \ref{thm:cosine}, which occupies the rest of this section, follows the strategy of \cite[\S II.3.9]{EM} for simple curves and the limiting argument of \cite{Kerckhoff:analytic} for measures on  laminations.  A more delicate limiting argument proves the theorem for general transverse H\"older distributions.  
\begin{lemma}\label{lem:curve}
Let $\gamma$ be a closed geodesic in $Z$, $\lambda$ be a chain recurrent geodesic lamination and $\alpha\in \cH(\lambda)$.  Then 
\[\{\ell_\gamma, \ell_\alpha\}(Z) = d\ell_\gamma X_\alpha (Z) = \int_\gamma \cos(\gamma, \lambda)~d\alpha = \iint_Z \cos(\gamma, \lambda)~d\alpha d\gamma.\]
\end{lemma}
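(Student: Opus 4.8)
The plan is to prove Lemma \ref{lem:curve} first for the case where $\alpha = \nu$ is an actual transverse measure supported on a lamination $\lambda$, by approximation, and then bootstrap to general H\"older cocycles $\alpha\in\cH(\lambda)$ using linearity and the Riemann-sum estimates of \S\ref{sec:int}. For a closed geodesic $\gamma$, the flow of $X_\alpha = (\sigma_\lambda^{-1})_*\alpha$ through $Z$ is the shearing deformation $E^{t\alpha}Z$ described in \S\ref{sec:int}, since in shear coordinates $\sigma_\lambda(E^{t\alpha}Z)=\sigma_\lambda(Z)+t\alpha$. So I need to compute
\[
d\ell_\gamma X_\alpha(Z) = \ddt\,\ell_\gamma(E^{t\alpha}Z).
\]
First I would reduce to the case that $\gamma$ crosses $\lambda$ transversally (if $\gamma$ is disjoint from $\lambda$ the deformation is trivial near $\gamma$ and both sides vanish). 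When $\alpha=\nu$ is a weighted simple multicurve $\sum a_i c_i$ disjoint from $\gamma$ up to the $c_i$ themselves — actually, better: when $\lambda$ is a weighted multicurve — the shearing deformation is literally a composition of Fenchel--Nielsen twists, and the classical first-variation formula of Kerckhoff/Wolpert gives $\ddt\,\ell_\gamma(\mathrm{tw}_{tc_i}Z) = \sum_{p\in\gamma\cap c_i}\cos\theta_p$. Summing over $i$ with weights gives $\int_\gamma\cos(\gamma,\lambda)\,d\nu$ for multicurves $\nu$. Then I invoke continuity: both $\ell_\gamma\circ E^{t\bullet}Z$ (as a function of the cocycle, by analyticity of shear coordinates and real-analyticity of length) and the right-hand side $\int_\gamma\cos(\gamma,\lambda)\,d\bullet$ depend continuously on $\nu\in\ML(S)$ in the weak-$*$ topology, and multicurves are dense, so the formula extends to all measured laminations $\nu$. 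The last equality $\int_\gamma\cos(\gamma,\lambda)\,d\nu = \iint_Z\cos(\gamma,\lambda)\,d\nu\,d\gamma$ is just Fubini (Lemma \ref{lem:Fubini}) applied to the product current $\gamma\otimes\nu$, noting $\gamma$ is itself a transverse measure (Dirac masses along its lift) with $\cos(\gamma,\lambda)$ a bounded function supported on $\gamma\pitchfork\lambda$.

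To pass from measures to general $\alpha\in\cH(\lambda)$, I would use that $\cH(\lambda)$ is spanned by actual transverse measures (for $\lambda$ maximal and chain recurrent, one can write $\alpha$ as a real-linear combination of positive transverse measures, e.g. via the cone decomposition of $\cH^+(\lambda)$ or simply $\alpha = \alpha_+ - \alpha_-$ after adding a large multiple of an ergodic measure and subtracting it back). Both $d\ell_\gamma X_\alpha(Z)$ and $\int_\gamma\cos(\gamma,\lambda)\,d\alpha$ are $\RR$-linear in $\alpha$: the left side because $X_\alpha$ is linear in $\alpha$ in shear coordinates and $d\ell_\gamma$ is linear, the right side because the integral against the fixed bounded function $\cos(\gamma,\lambda)$ is linear in the distribution $\alpha$. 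Hence the identity for measures forces the identity for all $\alpha$. The statement $\{\ell_\gamma,\ell_\alpha\}(Z)=d\ell_\gamma X_\alpha(Z)$ is immediate from Lemma \ref{lem:length_dual}: $\{\ell_\gamma,\ell_\alpha\} = \omega_{\WP}(X_{\ell_\gamma},X_\alpha) = d\ell_\gamma(X_\alpha)$, since $X_\alpha$ is the shearing vector field dual to $d\ell_\alpha$ and $X_{\ell_\gamma}$ is dual to $d\ell_\gamma$.

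The main obstacle I anticipate is making the differentiation under the limit rigorous in the non-multicurve cases — that is, justifying $\ddt\,\ell_\gamma(E^{t\alpha}Z) = \lim_{\nu_n\to\alpha}\ddt\,\ell_\gamma(E^{t\nu_n}Z)$ (swap of $\ddt$ and $\lim$). Here I would lean on two facts already available: (i) real-analyticity of $t\mapsto E^{t\alpha}Z$ as a path in $\T(S)$ (the shearing cocycle $E^{t\alpha}$ depends analytically on $t$ by Bonahon's construction recalled in \S\ref{sec:int}, at least for $|t|\le t_Z$ uniform on the relevant family of cocycles), together with real-analyticity of $\ell_\gamma$ on $\T(S)$; this gives that $t\mapsto\ell_\gamma(E^{t\alpha}Z)$ is real-analytic with derivative at $0$ varying continuously in $\alpha$ over a bounded family. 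Alternatively, and more in keeping with the paper's methods, one can compute $d\ell_\gamma X_\alpha(Z)$ directly by differentiating the trace formula $2\cosh(\ell_\gamma/2)=|\tr\rho_{t\alpha}(\gamma)|$ and using the explicit infinite-product form of the shearing cocycle from \S\ref{sec:int}, recognizing the resulting telescoping sum over triangles crossed by $\gamma$ as $\sum_{p\in\gamma\cap\lambda}\cos\theta_p$ — this is essentially the computation of \cite[\S II.3.9]{EM} transcribed into Bonahon's shearing cocycle formalism, and it handles $\alpha$ directly without any density argument. I would present the direct computation as the main argument and mention the approximation route as a sanity check.
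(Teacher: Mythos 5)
Your third paragraph is the route the paper actually takes and is the correct one: differentiate the trace of the finite approximating products $\rho_{t,n}(\gamma)$, carry out the explicit $\PSL_2\RR$-matrix computation for each elementary shear $E_T^{t\alpha(T)}$, convert to cosines via Euclidean trigonometry, and use holomorphicity of $t\mapsto\rho_t(\gamma)$ to exchange $\ddt$ with $\lim_{n\to\infty}$. You should make one thing explicit there: the finite derivative is the telescoping sum $\sum_i \alpha(T_i)\bigl(\cos(\gamma,g_2^{T_i})-\cos(\gamma,g_1^{T_i})\bigr)$ plus a boundary term, and recognizing its limit as $\int_\gamma\cos(\gamma,\lambda)\,d\alpha$ is \emph{exactly} the content of the integral formula for H\"older distributions (Theorem~\ref{thm:int_formula}), which in turn needs the universal Lipschitz bound for $\cos(\gamma,\cdot)$ along transversals (Lemma~\ref{lem:lipschitz}). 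That identification is not a notational convenience; it is where Bonahon's integration theory does its work and should not be waved away.

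Your approximation-plus-linearity argument in the first two paragraphs, which you propose to keep as a sanity check, contains a genuine error. Transverse measures supported on a fixed lamination $\lambda$ do \emph{not} span $\cH(\lambda)$: the cone of transverse measures supported in $\lambda$ has dimension at most $3g-3$ (there are at most that many ergodic invariant probability measures), whereas $\dim\cH(\lambda)=6g-6$ when $\lambda$ is maximal, and more generally $\dim\cH(\lambda)$ can strictly exceed $3g-3$. Adding a large multiple $C\mu$ of an ergodic measure to a general $\alpha\in\cH(\lambda)$ does not produce a transverse measure --- $\alpha+C\mu$ remains only a finitely additive signed cocycle --- and the dimension count already shows no such device can succeed. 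So $\RR$-linearity cannot propagate the identity from measures supported in $\lambda$ to all of $\cH(\lambda)$, and that route does not close. (Multicurve density in $\ML(S)$ gets you to arbitrary \emph{measures} supported on $\lambda$, which is Kerckhoff's extension and is what Proposition~\ref{prop:cosine} does for the other variable; it does not get you to non-measure directions in $\cH(\lambda)$.) This is precisely why the paper argues directly with the shearing cocycle in Lemma~\ref{lem:curve} rather than by density.
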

\begin{proof}
This is a relatively straightforward computation following \cite[\S II.3.9]{EM}; we include the details for completeness.  

Let $\gamma\in \pi_1(Z,z) \le \Isom^+(\widetilde Z) \cong \PSL_2\RR$.  The absolute value $\tau(\gamma) = \tau$ of the trace of either lift of $\gamma$ to $\SL_2\RR$ is related to the translation length $\ell(\gamma) = \ell$ by 
\[ 2\exp (\ell/2) = \tau + \sqrt{\tau^2-4}. \]
If $\gamma_t$ is an analytic path in $\PSL_2\RR$ with $\gamma_0 = \gamma$, then 
\[\frac{d\ell}{dt}= \frac{2}{\sqrt{\tau^2 -4}}\frac{d\tau}{dt}.\]

Complete $\lambda$ to a maximal chain recurrent geodesic lamination, and view $\alpha$ as a transverse cocycle on the new object.  For $T< \gamma. T_x$, and $t\ge0$, we would like to calculate the trace $E_T^{t\alpha(T)}\gamma$, which is invariant by conjugation.  We may therefore normalize the situation in the upper half plane as follows:
$g_2^{T}$ is the geodesic between $0$ and $\infty$, $g_1^T$ is the geodesic between $1$ and infinity, and $\gamma$ goes from $a<0$ to $b>0$.  The matrix representing $\gamma$ in $\SL_2\RR$ with respect to this normalization is given by 
\[ \frac{1}{b-a} \begin{pmatrix}
b e^{\ell/2} - ae^{-\ell/2} & ab(e^{-\ell/2} - e^{\ell/2}) \\
e^{\ell/2} - e^{-\ell/2} &  b e^{-\ell/2}-a e^{\ell/2}
\end{pmatrix},
\]
While
\[E_{T}^{t\alpha}  = E_{g_2^T}^{t\alpha(T)} \circ E_{g_1^T}^{-t\alpha(T)} = 
\begin{pmatrix}
e^{\alpha(T)/2} & 0 \\
0 & e^{-\alpha(T)/2}
\end{pmatrix}
\begin{pmatrix}
1 & 1\\
0 & 1
\end{pmatrix}
\begin{pmatrix}
e^{-\alpha(T)/2} & 0 \\
0 & e^{\alpha(T)/2}
\end{pmatrix}
\begin{pmatrix}
1 & -1\\
0 & 1
\end{pmatrix}.
\]

Direct computation along this path yields
\[\left.\frac{d\tau }{dt}\right|_{t=0} = - \alpha(T) \frac{\sinh\ell/2}{a-b},\]
so that 
\[ \left.\frac{d\ell}{dt}\right|_{t=0}   = -2\alpha(T) \frac{ \sinh\ell/2}{a-b} \frac{1}{\sqrt{\cosh^2\ell/2 - 4}} = \alpha(T) \frac{-2}{a-b}.\]
The center of the Euclidean circle joining $a$ to $b$ is $(b+a)/2$ and its radius is $(b-a)/2$.  An exercise in Euclidean trigonometry shows that 
\[ -\cos(\gamma, g_2^T) = \frac{b+a}{2}/\frac{b-a}{2} = \frac{b+a}{b-a} \text{  and } \cos(\gamma, g_1^T) = \left(1 - \frac{b+a}{2}\right) /\frac{b-a}{2} = \frac{2-(b+a)}{b-a}, \]
where $\cos(\gamma, g_i^T)$ is the cosine of the angle from $\gamma$ to $g_i^T$ in the positive direction.
Thus 
\[\left.\frac{d\ell}{dt}\right|_{t=0}  =\alpha(T) \left(\cos(\gamma, g_2^T) -\cos(\gamma, g_1^T) \right). \]
Similarly, the derivative of length at time $t=0$ for $E_{g_2^{\gamma.{\Tx}}}^{t\alpha(\gamma.{\Tx})}\gamma$ is given by 
\[ \left.\frac{d\ell}{dt}\right|_{t=0} = \alpha(\gamma.\Tx) \cos(\gamma, g_2^{\gamma.{\Tx}}).\]

For an approximating chain $T_1 < ... < T_n<\gamma.\Tx$,  define 
\[\rho_{t,n} (\gamma) : = E_{T_1}^{t\alpha(T_1)} \circ ... \circ E_{T_n}^{t\alpha(T_n)}\circ E_{g_2^{\gamma.{\Tx}}}^{t\alpha(\gamma.{\Tx})}\circ \gamma \in \PSL_2\RR, \]
so that 
\[\lim_{n\to \infty} \rho_{t,n}(\gamma) =\rho_t(\gamma) \]
uniformly on compact sets in $\HH^2$.
Then
\[ \left.\frac{d}{dt}\right|_{t =0} \ell( \rho_{t,n}(\gamma)) = \sum_{i =1}^n \alpha(T_i) \left(\cos(\gamma, g_2^{T_i}) -\cos(\gamma, g_1^{T_i}) \right) +\cos(\gamma, g_2^{\gamma.\Tx}). \]

The function $\cos(\gamma, \mu)$ is $c$-Lipschitz for some universal $c>1$ \cite[Lemma 1.1]{Kerckhoff:NR}, 
so by Theorem \ref{thm:int_formula}, this series converges to $\alpha\left ( \cos ( \gamma, \lambda)\right)$ as the chain $T_1< ... <T_n<\gamma.\Tx$ increases to  $\{T : T< \gamma.Tx\}$.  

The family $\rho_t(\gamma) =  E^{t\alpha}(\Tx,\gamma.\Tx)\circ \gamma$ varies holomorphically in a small complex parameter $t$ \cite[Theorem 31]{Bon_SPB}.  By holomorphicity, we may exchange the limit: 
\[\left.\frac{d}{dt}\right|_{t=0} \ell(\rho_t(\gamma)) = \left.\frac{d}{dt}\right|_{t =0} \lim_{n\to \infty}\ell(\rho_{t,n}(\gamma)) = \lim_{n\to \infty} \left.\frac{d}{dt}\right|_{t=0} \rho_{t,n}(\gamma) =\alpha\left(\cos(\gamma, \lambda)\right),\]
which is what we wanted to prove.
\end{proof}
Given maximal $\lambda_1$ and $\lambda_2 \in \mathcal {GL}_0(S)$, let $\tau_1(\epsilon)$ and $\tau_2(\epsilon)$ be geometric train tracks constructed from $(Z,\lambda_1, \epsilon)$ and $(Z,\lambda_2,\epsilon)$, respectively, where $\epsilon$ is small enough as in \S\ref{subsec:geometric_tt}.  Let $p$ denote a point of transverse intersection, and $Q_p$ be the corresponding geodesic quadrilateral with opposite sides contained in leaves of $\lambda_1$ and $\lambda_2$.  
We establish the following special case of Theorem \ref{thm:cosine} by a limiting argument; compare with \cite[Proposition 2.5]{Kerckhoff:analytic}. Most of the technical work for the proof was done in \S\ref{sec:int}. 

\begin{proposition}\label{prop:cosine}
For any  $\lambda_1$ and $\lambda_2$, chain recurrent geodesic laminations on $Z \in \T(S)$, $\alpha\in \cH(\lambda_1)$, and $\nu\in \ML(S)$, a measure supported in $\lambda_2$, we have
\[\{\ell_\nu, \ell_\alpha\}(Z) = \iint_Z\cos(\lambda_2,\lambda_1)~d\alpha d\nu.\]
\end{proposition}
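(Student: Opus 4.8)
The plan is to reduce the statement for a measure $\nu$ supported in $\lambda_2$ to the already-proven case of a single closed geodesic (Lemma \ref{lem:curve}), using a limiting argument in the spirit of \cite[Proposition 2.5]{Kerckhoff:analytic}, and then to identify the limiting right-hand side with the double integral against the product distribution via the Riemann-sum machinery of \S\ref{sec:int} (Lemma \ref{lem:Fubini} and Proposition \ref{prop:R_sums_prod}). The first step is to approximate $\nu \in \ML(S)$ by a sequence of weighted simple closed curves $\gamma_n$ with $\gamma_n \to \nu$ in $\ML(S)$, which is possible since the positive cone on multicurves is dense in $\ML(S)$. The earthquake flow $\Eq_{t\gamma_n}$ depends continuously on the earthquake measure, so $\Eq_{t\gamma_n} \to \Eq_{t\nu}$ uniformly on compact subsets of $\T(S)$ for $t$ in a fixed interval (this is exactly the continuity statement recorded in \S\ref{sec:angle} and Theorem \ref{thm:coordinates}). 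Since $\ell_\alpha$ is analytic (hence $C^1$) on $\T(S)$, the derivatives $\left. \frac{d}{dt}\right|_{t=0}\ell_\alpha(\Eq_{t\gamma_n}(Z))$ converge to $\left. \frac{d}{dt}\right|_{t=0}\ell_\alpha(\Eq_{t\nu}(Z))$, provided the convergence of the flows is $C^1$ in $t$ locally uniformly in $Z$; for the earthquake flow this follows because in shear coordinates adapted to $\lambda_2$ the flow is simply translation by $t\gamma_n$ (fifth bullet of Theorem \ref{thm:coordinates}), so it is literally linear in $t$ and affine-continuous in the measure. Using antisymmetry of the Poisson bracket and Lemma \ref{lem:length_dual}, $\{\ell_\nu,\ell_\alpha\}(Z) = -\{\ell_\alpha,\ell_\nu\}(Z) = -d\ell_\nu X_\alpha(Z)$; but it is cleaner to write $\{\ell_\nu,\ell_\alpha\}(Z) = d\ell_\nu X_\alpha(Z)$ — wait, by Lemma \ref{lem:curve} the natural direction is $\{\ell_{\gamma_n},\ell_\alpha\}(Z) = d\ell_{\gamma_n} X_\alpha(Z) = \iint_Z \cos(\gamma_n,\lambda_1)\, d\alpha\, d\gamma_n$, so I would instead take the limit as $\left. \frac{d}{dt}\right|_{t=0}\ell_{\gamma_n}(E^{t\alpha}Z) \to \left. \frac{d}{dt}\right|_{t=0}\ell_\nu(E^{t\alpha}Z)$, which holds because $\ell_\bullet(Z)$ is continuous on $\ML(S)$ (uniformly on compacta in $\T(S)$) and the shearing path $t\mapsto E^{t\alpha}Z$ is analytic and independent of $\nu$; analyticity in $t$ lets us swap the $t$-derivative past the $n$-limit exactly as in the proof of Lemma \ref{lem:curve}.

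The second step is to show that the right-hand sides converge: $\iint_Z \cos(\gamma_n,\lambda_1)\, d\alpha\, d\gamma_n \to \iint_Z \cos(\lambda_2,\lambda_1)\, d\alpha\, d\nu$. Here I would invoke Lemma \ref{lem:Fubini}, writing the left side as $\alpha\otimes\gamma_n(\cos(\cdot,\cdot))$, where $\cos$ is the bounded Lipschitz function on $\mathcal{DG}(Z)$ measuring the angle between intersecting geodesics (Lipschitz by \cite[Lemma 1.1]{Kerckhoff:NR}); since $\gamma_n \to \nu$ weakly as currents and $\cos$ is continuous with the appropriate decay, and since $\alpha$ is a fixed Hölder distribution, the pairings converge. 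One subtlety: the function $\cos(\lambda_1,\lambda_2)$ is only defined on the transverse intersection locus, and weak convergence of $\gamma_n$ to $\nu$ together with the fact that the supports of $\gamma_n$ eventually approach the support of $\nu$ in the Hausdorff sense means the angle functions converge pointwise on the relevant intersections; combined with the uniform bound $|\cos|\le 1$ and Lipschitz control one gets convergence of the integrals, at least once one notes the product current $\alpha\otimes\gamma_n$ is supported in a fixed compact part of $\mathcal{DG}(S)$ (a fundamental domain), so no mass escapes to infinity.

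The main obstacle I anticipate is making the limit argument on the \emph{left-hand side} fully rigorous, i.e. justifying the interchange of $\left. \frac{d}{dt}\right|_{t=0}$ and $\lim_{n\to\infty}$. The clean way, following Lemma \ref{lem:curve}, is to use that $t \mapsto E^{t\alpha}Z$ extends to a holomorphic family of representations in a complex disc (Bonahon, \cite[Theorem 31]{Bon_SPB}, cited in \S\ref{sec:int}), and that $\ell_{\gamma_n}$, being $\frac12$ of an arccosh of a trace, is real-analytic and the family $\ell_{\gamma_n}(E^{t\alpha}Z)$ is uniformly bounded on a fixed complex disc (since the $\gamma_n$ converge, so their geodesic lengths on the nearby surfaces $E^{t\alpha}Z$ are uniformly bounded). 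Then by Cauchy's estimates the derivatives at $t=0$ converge, and we may pass to the limit. A secondary point to handle carefully is that for a \emph{weighted} curve $\gamma_n$ with weight $w_n$, Lemma \ref{lem:curve} as stated is for a single geodesic, so one applies it to each component and uses bilinearity/homogeneity of both sides; and one should check that the convergence $\gamma_n\to\nu$ can be arranged within a single train-track chart carrying $\lambda_2$ snugly so that the intersection integrals $\iint_Z\cos(\gamma_n,\lambda_1)\,d\alpha\,d\gamma_n$ are genuinely given by the bilinear Riemann-sum formula \eqref{eqn:tt_integral} and hence depend continuously (in fact linearly) on $\gamma_n$. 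Once these two interchanges are justified, equating the two limits gives $\{\ell_\nu,\ell_\alpha\}(Z)=\iint_Z\cos(\lambda_2,\lambda_1)\,d\alpha\,d\nu$, which is the proposition.
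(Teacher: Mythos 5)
Your overall strategy --- approximate $\nu$ by weighted multicurves $\gamma_n$, apply Lemma \ref{lem:curve} to each, and pass to the limit in both sides --- is the same as the paper's, and the reduction to two convergence statements is correct. But your outline diverges from the paper in the interchange argument and leaves a genuine gap in the convergence of the right-hand sides.

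On the interchange of the $t$-derivative at $0$ with $\lim_n$: the paper does not use Cauchy estimates or complex lengths. It observes that $\ell_{\gamma_n}\to\ell_\nu$ pointwise, that $d\ell_{\gamma_n}X_\alpha$ equals the cosine integral by Lemma \ref{lem:curve}, and that if these derivatives converge \emph{uniformly on a neighborhood of $Z$} (hence uniformly in $t$ along the shearing path $E^{t\alpha}Z$), the elementary calculus theorem on limits of derivatives gives $d\ell_\nu X_\alpha(Z)=\lim_n\,\iint_Z\cos(\gamma_n,\lambda_1)\,d\alpha\,d\gamma_n$. This transfers all the work to uniform convergence of the right-hand sides. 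Your Cauchy-estimate route could be made to work, but it requires uniform boundedness of the \emph{complex} lengths $\ell_{\gamma_n}(E^{t\alpha}Z)$ on a fixed $t$-disc, and for complex $t$ one is off the Fuchsian locus; the parenthetical assertion that these are uniformly bounded ``since the $\gamma_n$ converge'' is not automatic and is not a step the paper needs.

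The genuine gap is that your argument for $\iint_Z\cos(\gamma_n,\lambda_1)\,d\alpha\,d\gamma_n\to\iint_Z\cos(\lambda_2,\lambda_1)\,d\alpha\,d\nu$ gives only pointwise-in-$Z$ convergence, whereas the reduction above requires uniformity on compacta in $\T(S)$. Viewing $\cos$ as a fixed Lipschitz function on $\mathcal{DG}(Z)$, the pairing is linear in $\gamma_n$, and weak-$*$ convergence plus a Fubini step does give pointwise convergence; but no rate and no uniformity. The paper makes this quantitative: it replaces both $\alpha\otimes\gamma_n(\cos)$ and $\alpha\otimes\nu(\cos)$ by Riemann sums at a common scale $\epsilon$ (Proposition \ref{prop:R_sums_prod}, error $O(\epsilon^a)$ with implicit constants depending continuously on $Z$), then compares the two Riemann sums term-by-term: for $n$ large, $\gamma_n$ is fully carried by $\tau_2(\epsilon)$, hence contained in an $O(\epsilon)$-neighborhood of $\lambda_2$, so Lemma \ref{lem:triple_angle_close} bounds the angle discrepancy by $O(\epsilon)$ while the weight discrepancy is controlled directly. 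Finally, a small but real inaccuracy: you assert the supports of $\gamma_n$ approach the support of $\nu$ ``in the Hausdorff sense.'' Measure convergence does not imply Hausdorff convergence of supports --- that is precisely the subtlety requiring the recurrence hypotheses in Theorem \ref{thm:cosine}. What is actually needed (and what the proof uses) is the one-sided containment that $\gamma_n$ lies in an $\epsilon$-neighborhood of $\lambda_2$ because it is carried by $\tau_2(\epsilon)$.
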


\begin{proof}
It suffices to prove that if $\gamma_i$ is a sequence of weighted multicurves converging in measure to $\nu$, then 
\[\iint_Z \cos(\gamma_i,\lambda_1) ~d\alpha d\gamma_i \to \iint_Z \cos(\lambda_2,\lambda_1)~d\alpha d\nu,\]
uniformly on compact subsets of $\T(S)$.  Indeed, we know that $\ell_{\gamma_i} \to \ell_{\nu}$ and that $d\ell_{\gamma_i}X_\alpha = \iint \cos(\gamma_i, \lambda_1) ~d\alpha d\gamma_i$ by Lemma \ref{lem:curve}, so if the convergence is uniform near $Z$, then $d\ell_{\nu}X_\alpha(Z) = \iint_Z \cos(\lambda_2,\lambda_1)~d\alpha d\nu$.

We consider  $\epsilon_0$ small enough (as in Proposition \ref{prop:tt_geometry} item (4)) and construct $\tau_1(\epsilon_0)$ and $\tau_2(\epsilon_0)$, endowing the weight spaces with the restriction of $\ell^\infty$ norms $\| \cdot \|$ on $\RR^{b(\tau(\epsilon_0))}$, $i = 1,2$.  
For $\epsilon<\epsilon_0$, we consider the geodesic quadrilaterals $Q_p$ corresponding to intersection points $p \in \mu\pitchfork \nu$ near  transverse intersection points $\tau_1(\epsilon)\pitchfork \tau_2(\epsilon)$.  
The functions $\cos_Z(\lambda_2,\lambda_1)$ and $\cos_Z(\gamma_i, \lambda_1)$ are restrictions of a smooth function $\cos_Z: \mathcal {DG}(Z)\to \RR$ to the support of $\alpha\otimes \nu$ and $\alpha\otimes \gamma_i$, respectively; see \S\ref{sec:int}. 

In particular, the derivative of $\cos_Z$ is bounded on the (compact) support of $\cos_Z(\lambda_2,\lambda_1)$; thus $\cos_Z(\lambda_2,\lambda_1)$ is Lipschitz (in fact $c$-Lipschitz for a universal constant $c$ \cite[Lemma 1.1]{Kerckhoff:NR}). 
We can therefore apply Proposition \ref{prop:R_sums_prod} to see that 
\[\left| (\alpha\otimes \nu)_\epsilon(\cos_Z(\lambda_2,\lambda_1)) - \alpha\otimes \nu(\cos_Z(\lambda_2,\lambda_1))\right| = O(\epsilon^{a}),\]
for $a\in (0,1)$ and where the implicit constants depend continuously on $\inj(Z)$,  the Lipschitz constant $c$ of $\cos_Z(\lambda_2,\lambda_1)$  as well as $\|\alpha\|$, $\|\nu\|$, $a$, and the topology of $S$.    

For a fixed $\epsilon$ and  $i$ large enough, $\gamma_i$ is carried fully by $\tau_2(\epsilon)$, i.e. gives every branch of $\tau_2(\epsilon)$ positive measure.  We can agin apply the proof of Proposition \ref{prop:R_sums_prod} to see that 
\[\left| (\alpha\otimes \gamma_i)_\epsilon(\cos_Z(\gamma_i,\lambda_1)) - \alpha\otimes \nu(\cos_Z(\gamma_i,\lambda_1))\right| = O(\epsilon^{a}),\]
for $a\in (0,1)$ and implicit constants depending all all of the same data replacing $\|\nu\|$ with $\|\gamma_i\|$.  

It therefore suffices to compare the Riemann sums directly; we borrow notation for the Riemann sum from the proof of Proposition \ref{prop:R_sums_prod}, and choose the points $p_i \in \gamma_i \cap \lambda_1$ and $p\in \lambda_2\cap \lambda_1$ on the same leaf of $\lambda$.
\begin{align*}
\left| (\alpha\otimes \gamma_i)_\epsilon(\cos_Z(\gamma_i,\lambda_1)) - (\alpha\otimes \nu)_\epsilon(\cos_Z(\lambda_2,\lambda_1)) \right| \le \sum_{\tau_1(\epsilon) \pitchfork \tau_2(\epsilon)} &| \cos_Z(\gamma_i,\lambda_1)(p_i)\gamma_i(t_2(p)) \\ &- \cos_Z(\lambda_2,\lambda_1)(p)\nu(t_2(p))| |\alpha(t_1(p))|
\end{align*}
holds for $i$ large enough compared to $\epsilon$.  Furthermore, passing to a subsequence, we can assume that $|\gamma_i(t_2(p)) - \nu(t_2(p))| = O(\epsilon)$ for all $p$ and $i$ large enough.  

Since $\gamma_i$ is carried on $\tau_2(\epsilon)$, it is contained in an $O(\epsilon)$ neighborhood of $\lambda_2$.  
We can apply Lemma \ref{lem:triple_angle_close} to obtain 
\[|\cos_Z(\gamma_i, \lambda_1)(p_i) - \cos_Z(\lambda_2,\lambda_1)(p)| = O\left(\epsilon\right),\] for $i$ large enough.  As in the proof of Proposition \ref{prop:R_sums_prod}, there are at most $O(\log^21/\epsilon)$ intersection points over which we sum, and $|\alpha(t_1(p))|  = O(\log1/\epsilon)$ (Lemma \ref{lem:norm}).  Combining these observations, we can bound the difference of the Riemann sums by 
\[ O\left(\log^3(1/\epsilon)\cdot \epsilon\right) = O(\epsilon^{a}),  \]
for $a\in (0,1)$, and all implicit constants depending continuously $\inj(Z)$ and the other relevant data.  

By the triangle inequality, convergence of the Riemann sums approximating $\iint_Z\cos(\lambda_2,\lambda_1)~d\alpha d\nu$ are approximated uniformly by those approximating $\iint_Z\cos(\gamma_i, \lambda_1)~d\alpha d\gamma_i$ on compact sets of $\T(S)$, and the proposition follows.
\end{proof}

To prove the general statement of Theorem \ref{thm:cosine}, without loss of generality, we regard $\alpha_1\in \cH(\lambda_1)$ as tangent vector to the measure $\mu$ and consider a family of difference quotients
\[\ell_s = \frac{\ell_{\mu + s\alpha_1} -\ell_{\mu}}{s},\]
where $\mu_s := \mu+s\alpha_1$ represents a measured lamination carried on a train track $\tau_1(\epsilon_0) = \tau(Z, \lambda_1, \epsilon_0)$ for small enough $s$.   

We know that $\ell_s$ are analytic functions converging to the analytic function $\ell_{\alpha_1}$ on $\T(S)$ (see \eqref{eqn:dlength}).  
We want to prove that the derivatives converge to the cosine formula from the theorem statement uniformly near $Z$ as $s\to 0$.
\begin{proof}[Proof of Theorem \ref{thm:cosine}]
By Proposition \ref{prop:cosine}, we have 
\begin{equation}\label{eqn:ell_s}
d\ell_s X_{\alpha_2} = \frac1s\left( \iint \cos(\mu_s,\lambda_2)~d\alpha_2 d\mu_s - \iint \cos(\lambda_1, \lambda_2) ~d\alpha_2 d\mu \right) 
\end{equation}
 
 We have assumed that $\mu$ is generic in the sense of Theorem \ref{thm:measure_H_close}, so there is a sequence $\epsilon_1, \epsilon_2, ...$ tending to zero, constants $c_1$ and  $c_2$ and $a\in(0,1]$ depending continuously on $Z$ such that if we take $s_n = c_1(\log1/\epsilon_n)^{-2}$, then a $c_2\epsilon_n^a$-neighborhood of $\lambda_1$ contains $\mu_{s_n}$.   
 Let $\epsilon_n' = c_2\epsilon_n^a$.   

As in the proof of Proposition \ref{prop:cosine}, each of the two terms in \eqref{eqn:ell_s} is uniformly well approximated by their Riemann sums at scale $\epsilon_n'$.  Since $\mu_{s_n}\prec \tau_1(\epsilon_n')$, we know that $\mu_{s_n}(t_b) = \mu(t_b) + s_n \alpha_1(t_b)$ for all branches $b$ or $\tau_1(\epsilon_n')$.  Borrowing notation from Proposition \ref{prop:R_sums_prod}, we can choose $p_n \in \mu_{s_n}\cap \lambda_2$ and $p\in \lambda_1 \cap \lambda_2$ on the same leaf of $\lambda_2$ as in the proof of Proposition \ref{prop:cosine}.  These points at distance at most $O(\epsilon)$ apart; we abuse notation and write them as the same point.  We can approximate the first term in \eqref{eqn:ell_s} 
by
\begin{align*}
\sum_{p\in\tau_1(\epsilon_n') \pitchfork \tau_2(\epsilon_n')} &\frac{1}{s_n} \cos(\mu_{s_n}, \lambda_2)(p)\cdot  \alpha_2(t_2(p)) \mu_{s_n}(t_1(p)) 
\\&= \sum_{p} \frac{1}{s_n} \cos(\mu_{s_n}, \lambda_2)(p)\cdot  \alpha_2(t_2(p)) (\mu(t_1(p)) + s_n\alpha_1(t_1(p)))\\
&=\sum_{p} \frac{ \cos(\mu_{s_n}, \lambda_2)(p)}{s_n}\cdot \alpha_2(t_2(p)) \mu(t_1(p)) + \sum_p \cos(\mu_{s_n}, \lambda_2)(p)\cdot  \alpha_2(t_2(p))\alpha_1(t_1(p)))
\end{align*}
with error $O({\epsilon_n'}^b)$ with $b\in (0,1]$ and uniform constants depending continuously on the relevant data.
So at scale $\epsilon_n'$, \eqref{eqn:ell_s} is approximated by 
\begin{equation}\label{eqn:R_sum_thm}
\sum_p \left(\frac{  \cos(\mu_{s_n}, \lambda_2)(p)- \cos(\lambda_1, \lambda_2)(p)  }{s_n}\right) \alpha_2(t_2(p))\mu(t_1(p)) + \sum_p \cos(\mu_{s_n}, \lambda_2)(p)\cdot  \alpha_2(t_2(p))\alpha_1(t_1(p)))
\end{equation}
again with error $O({\epsilon_n'}^b)$ with $b\in (0,1]$ and uniform constants.

By the triangle inequality and Proposition \ref{prop:R_sums_prod}, it therefore suffices to prove that the first term in \eqref{eqn:R_sum_thm} goes to zero uniformly as $n$ tends to infinity, and that the second term tends uniformly to $\alpha_2\otimes \alpha_1 \left(\cos_Z(\lambda_1, \lambda_2)\right)$.  However, we will see that the first claim implies the second.

To see uniform convergence  of the first term in \eqref{eqn:R_sum_thm}, we apply Lemma \ref{lem:triple_angle_close} to obtain
\[\left| \cos(\mu_{s_n}, \lambda_2)(p)- \cos(\lambda_1, \lambda_2)(p)  \right| = O\left({\epsilon_n'}\right)= O\left(\epsilon_n^{a} \right) ,\] 
with uniform constants, as usual. There are $O\left(\log^2(1/\epsilon_n')\right)$ intersection points $p$,  $|\alpha_2(t_2(p))| = O(\log(1/\epsilon_n'))$, and $|\mu(t_1(p))|= O(1)$.  By our choice of $s_n$,  the absolute value of the first term in \eqref{eqn:R_sum_thm} goes to zero at rate \[O\left(({\epsilon_n'}\log^3(1/\epsilon_n') \log^2(1/\epsilon_n)\right) = O\left(\epsilon_n^{a}\log^5(1/\epsilon_n) \right) = O\left(\epsilon_n^{b}\right),\]
where $b\in (0, a)$, uniformly near $Z$.

Now, essentially the same argument applied to the second term of \eqref{eqn:R_sum_thm}, also invoking the triangle inequality and Proposition \ref{prop:R_sums_prod} yields  
\begin{align*}
\left|  \sum_{p\in\tau_1(\epsilon_n') \pitchfork \tau_2(\epsilon_n')} \cos(\mu_{s_n}, \lambda_2)(p)\cdot  \alpha_2(t_2(p))\alpha_1(t_1(p))) - \iint_Z\cos(\lambda_1, \lambda_2) ~d\alpha_2 d\alpha_1\right| \\ = O\left(\epsilon_n^{a} \log^4(1/\epsilon_n)\right) = O\left(\epsilon_n^b\right) 
\end{align*}
for $b\in (0, a)$, uniformly near $Z$.  This completes the proof of the theorem.  
\end{proof}

Applying Theorem \ref{thm:cosine} to the results of \S\ref{sec:flow}, and basic properties of the Poisson bracket (Leibniz rule, antisymmetry, and linearity) we immediately obtain the following formula where $\varphi$ as it is in the introduction, i.e., $\varphi$ has maximal invariant measured laminations $[\mu_\pm]$ and simple, positive real spectrum for its linear action on an invariant train track.
A more general formulation can again be obtained from the results in \S\ref{sec:flow}, but this formula is somewhat challenging to parse, even in this simplified setting. 
Recall our notation that if $\alpha_1\in \cH(\lambda_1)$ and $\alpha_2 \in \cH(\lambda_2)$, then 
\[\Cos(\alpha_1, \alpha_2)(Z) = \iint_Z \cos(\lambda_1, \lambda_2)~d\alpha_1d\alpha_2.\]
Recall also that there is a natural duality isomorphism $\ast: \cH(\mu_-) \to \cH(\mu_+)$; to ease notational burden below, let $A_i = \ast\inverse \alpha_i$ and $B_i = \ast\inverse \beta_i$ where $\alpha_1, ..., \alpha_{3g-3}, \beta_1, ..., \beta_{3g-3}$ is a symplectic basis of eigenvectors for the linear $\varphi$ action on $\cH(\mu_+)$.
\begin{corollary}\label{cor:Poisson_pA}
We have an equality 
\begin{align*}
\{-F_{\varphi\inverse}^{\mu_+},F_\varphi^{\mu_-}\}  
= \sum_{i, j = 1}^{3g -g} &\log (\Lambda_i) \log(\Lambda_j)\left( \ell_{\alpha_i}\ell_{A_j} \Cos(\beta_i,B_j)+\right. \\
& \left.\ell_{\alpha_i}\ell_{B_j} \Cos(\beta_i, A_j) + \ell_{\beta_i}\ell_{A_j} \Cos(\alpha_i, B_j)+\ell_{\beta_i}\ell_{B_j} \Cos(\alpha_i,A_j)\right).
\end{align*}
\end{corollary}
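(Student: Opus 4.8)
The plan is to deduce Corollary \ref{cor:Poisson_pA} purely formally from Theorem \ref{thm:cosine} together with the algebraic identities for the Poisson bracket (bilinearity, the Leibniz rule, and antisymmetry), using the explicit formulae for the two Hamiltonian potentials. First I would recall, from \S\ref{sec:flow_simple} (in the simple-spectrum setting of the introduction), that
\[
-F^{\mu_+}_{\varphi\inverse} = -\sum_{i=1}^{3g-3}\log(\Lambda_i)\,\ell_{\alpha_i}\ell_{\beta_i},
\qquad
F^{\mu_-}_{\varphi} = -\sum_{j=1}^{3g-3}\log(\Lambda_j)\,\ell_{A_j}\ell_{B_j},
\]
where $A_j = \ast\inverse\alpha_j$ and $B_j = \ast\inverse\beta_j$; here I use that the dual symplectic basis of eigenvectors for the $\varphi\inverse$-action on $\cH(\mu_-)$ is $\ast\inverse\alpha_1,\ldots,\ast\inverse\alpha_{3g-3},\ast\inverse\beta_1,\ldots,\ast\inverse\beta_{3g-3}$, that $\ast$ conjugates the linear actions and preserves the spectrum (\S\ref{sec:duality}), and that $F^{\mu_-}_\varphi$ is obtained from $F^{\mu_+}_{\varphi\inverse}$ by negation and replacing each $\ell_{\alpha_i^\bullet}$ with $\ell_{\ast\inverse\alpha_i^\bullet}$.

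Next I would expand the bracket $\{-F^{\mu_+}_{\varphi\inverse}, F^{\mu_-}_\varphi\}$ using bilinearity to pull out the $3g-3$-fold double sum, and the Leibniz rule $\{fg,h\} = f\{g,h\} + g\{f,h\}$ applied twice (once for the product $\ell_{\alpha_i}\ell_{\beta_i}$ and once for $\ell_{A_j}\ell_{B_j}$) to reduce everything to the four elementary brackets
\[
\{\ell_{\alpha_i},\ell_{A_j}\},\quad \{\ell_{\alpha_i},\ell_{B_j}\},\quad \{\ell_{\beta_i},\ell_{A_j}\},\quad \{\ell_{\beta_i},\ell_{B_j}\}
\]
multiplied by the appropriate length functions. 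Concretely, $\{\ell_{\alpha_i}\ell_{\beta_i},\ell_{A_j}\ell_{B_j}\}$ expands to $\ell_{\alpha_i}\ell_{A_j}\{\ell_{\beta_i},\ell_{B_j}\} + \ell_{\alpha_i}\ell_{B_j}\{\ell_{\beta_i},\ell_{A_j}\} + \ell_{\beta_i}\ell_{A_j}\{\ell_{\alpha_i},\ell_{B_j}\} + \ell_{\beta_i}\ell_{B_j}\{\ell_{\alpha_i},\ell_{A_j}\}$, and the two outer minus signs combine with the $\log\Lambda_i\log\Lambda_j$ coefficients to give exactly the claimed overall sign.

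The only analytic input is Theorem \ref{thm:cosine}, which I would invoke to replace each bracket $\{\ell_{\alpha_i},\ell_{A_j}\}$ by $\iint_Z \cos(\mu_+,\mu_-)\,d\alpha_i\,dA_j = \Cos(\alpha_i,A_j)$, and similarly for the other three. Here I must check that the hypotheses of Theorem \ref{thm:cosine} are met: in the introduction's setting $\mu_+$ and $\mu_-$ are maximal, so $\lambda_1 = \mu_+$ and $\lambda_2 = \mu_-$; moreover the invariant Teichm\"uller axis of $\varphi$ recurs to a compact set in $\QM(S)$ in backward time (as noted after Theorem \ref{thm:cosine_intro}), so $\mu_+$ lies in the $\mu_{\Th}^\cQ$-full-measure set of Theorem \ref{thm:cosine} with $\lambda_1 = \mu_+$ its own diagonal extension; and $\alpha_i,\beta_i\in\cH(\mu_+)$, $A_j,B_j\in\cH(\mu_-)$ are arbitrary H\"older cocycles with $\mu_-$ chain recurrent and transverse to $\mu_+$. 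Thus Theorem \ref{thm:cosine} applies to each of the sixteen brackets appearing per $(i,j)$, and substituting yields the stated identity after collecting the four surviving terms. I anticipate the main (minor) obstacle is purely bookkeeping: tracking signs through the double Leibniz expansion and confirming that, since the off-diagonal eigenvector brackets like $\{\ell_{\alpha_i},\ell_{\alpha_j}\}$ and $\{\ell_{\beta_i},\ell_{\beta_j}\}$ do \emph{not} vanish in general (they are $\Cos$-integrals, not symplectic pairings), no further simplification of the four-term-per-$(i,j)$ sum is possible or claimed — the formula is genuinely a sum over all ordered pairs $(i,j)$, not just $i=j$.
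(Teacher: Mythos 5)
Your argument is correct and is precisely the paper's proof: expand $\{-F^{\mu_+}_{\varphi\inverse},F^{\mu_-}_\varphi\}$ by bilinearity and two applications of the Leibniz rule to produce the four cross-brackets $\{\ell_{k_i},\ell_{K_j}\}$ with $k_i\in\{\alpha_i,\beta_i\}$ and $K_j\in\{A_j,B_j\}$ per ordered pair $(i,j)$, then convert each to $\Cos(k_i,K_j)$ by Theorem \ref{thm:cosine}, whose hypotheses hold for $\mu_+$ because its Teichm\"uller axis is periodic hence backward-recurrent. One small slip in your closing remark: the brackets $\{\ell_{\alpha_i},\ell_{\alpha_j}\}$ and $\{\ell_{\beta_i},\ell_{\beta_j}\}$ are \emph{not} $\Cos$-integrals — Theorem \ref{thm:cosine} requires the underlying laminations to meet transversally — and in fact they vanish identically, since for cocycles on the same maximal lamination $\{\ell_\alpha,\ell_\beta\}=\omega_{\Th}(\alpha,\beta)$ (by Theorem \ref{thm:coordinates} and Lemma \ref{lem:length_dual}) and $\alpha_1,\dots,\alpha_{3g-3}$ (resp.\ $\beta_1,\dots,\beta_{3g-3}$) span a Lagrangian; moreover there are four brackets per $(i,j)$, not sixteen. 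Neither issue affects your proof, as those brackets never arise in the Leibniz expansion.
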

\begin{remark}\label{rmk:along_stretch_line}
Along the invariant stretch line $Z_{\mu_+}^t = \sigma_{\mu_+}\inverse (e^t \ast \mu_-) = \sigma_{\mu_+}\inverse (e^t \beta_1)$ for $\mu_+$, many terms vanish:
\begin{align*}
\{-F_{\varphi\inverse}^{\mu_+},F_\varphi^{\mu_-}\} (Z_{\mu_+}^t) =  e^t\log(\Lambda) \sum_{j = 1}^{3g-3} \log(\Lambda_j) \left ( \ell_{A_j}\Cos(\beta_1, B_j) +\ell_{B_j}\Cos(\beta_1, A_j)\right) (Z_{\mu_+}^t).
\end{align*}
This expression can be considered as a measurement for how different the invariant stretch line directed by $\mu_+$ is from the invariant stretch line directed by $\mu_-$.  By $\varphi$ invariance, it admits upper and lower bounds.
\end{remark}

We now give proofs of Corollaries \ref{cor:derivative_intro} and \ref{cor:stretch_variation} from the introduction.
\begin{proof}[Proof of Corollary \ref{cor:derivative_intro}]
We wish to compute $\left(\Sigma_{\lambda_2\lambda_1}\right)_*$, and hence to write \[X_{x_j} = \sum_{i = 1}^{3g-3}a_{i,j}X_{z_i}+c_{i,j}X_{w_i},\] from which will can write the $j$th column in the Jacobian matrix with respect to our chosen symplectic bases.

We can solve for the coefficients $a_{i,j}$ and $c_{i,j}$ by using linearity of the symplectic form, the fact that $\omega_{\WP}(X_{z_i},X_{w_i}) = 1 = -\omega_{\WP}(X_{w_i}, X_{z_i})$, and Lemma \ref{lem:length_dual} to deduce
\[-c_{i,j} = \omega_{\WP} (X_{x_j} , X_{z_i}) = \{\ell_{x_j}, \ell_{z_i}\} \text{ and } a_{i,j} = \omega_{\WP} (X_{x_j} , X_{w_i}) = \{\ell_{x_j}, \ell_{w_i}\}\]
Applying  Theorem \ref{thm:cosine} gives the formulae for $a_{i,j}$ and $c_{i,j}$ in terms of cosines.  The computations for the columns corresponding to $X_{y_j}$ are similar.  

The computation of the inverse matrix is identical, but we can also use the relation \[\begin{pmatrix}
A &B \\
C & D
\end{pmatrix}\inverse = 
\begin{pmatrix}
D^t &-B^t \\
-C^t & A^t
\end{pmatrix}\]
for symplectic matrices written in block form.
\end{proof}
\begin{proof}[Proof of Corollary \ref{cor:stretch_variation}]
The formula follows from the definition of the stretch vector field, bi-linearity of $\omega_{\WP}$, and Theorem \ref{thm:cosine}. 
If $\alpha$ is a transverse measure $\mu$, then $d\log\ell_\mu X_\nu^{\stre}(Z)<1$ with equality if and only if $\mu \le \nu$ \cite[Section 5]{Th_stretch}.  

In fact, if $\alpha \in \cH(\mu)$ is arbitrary, then $d\log\ell_\alpha X_\nu^{\stre}(Z) = 1$, because \[d\log\ell_\alpha X_\nu^{\stre}(Z) = \frac{1}{\ell_\alpha(Z)}\omega_{\WP} (X_\alpha, X_\nu^{\stre}) = \frac{1}{\ell_\alpha(Z)} \omega_{\Th} (\alpha, \sigma_\nu(Z)),\]
while $\omega_{\Th}(\alpha, \sigma_\nu(Z)) = \ell_\alpha (Z)$.  
\end{proof}

\bibliography{ref_pA.bib}{}
\bibliographystyle{amsalpha.bst}
\Addresses

\end{document}